\documentclass[11pt]{amsart}

\usepackage[utf8]{inputenc}
\usepackage[T1]{fontenc}

\usepackage[english]{babel}

\usepackage{amsmath,amsfonts,amssymb}
\usepackage{mathrsfs}
\usepackage{dsfont} 
\usepackage{mathtools} 
\usepackage{esint} 
\usepackage{xfrac} 
\usepackage{defs} 
\usepackage{fonts} 

\usepackage{amsthm}
\usepackage[shortlabels]{enumitem} 
\usepackage[colorlinks=true,urlcolor=blue, citecolor=red,linkcolor=blue,linktocpage,pdfpagelabels, bookmarksnumbered,bookmarksopen]{hyperref} 
\usepackage[paper=a4paper,hmargin=1in,vmargin=1in, marginparwidth=0.8in]{geometry} 

\usepackage[colorinlistoftodos,prependcaption,linecolor=blue,backgroundcolor=blue!25,bordercolor=blue,textsize=tiny]{todonotes}
\usepackage{cite} 
\usepackage{cleveref}
\usepackage{comment}




\setlength\parindent{2em}

\belowdisplayskip=18pt plus 6pt minus 12pt \abovedisplayskip=18pt
plus 6pt minus 12pt
\parskip 8pt plus 1pt

	
	
	\numberwithin{equation}{section}

	
	\newtheorem{theorem}{Theorem}[section]
	\newtheorem{proposition}[theorem]{Proposition}
	\theoremstyle{definition}
	\newtheorem{definition}[theorem]{Definition}
	\theoremstyle{plain}
	\newtheorem{lemma}[theorem]{Lemma}
	\newtheorem{corollary}[theorem]{Corollary}
	\theoremstyle{remark}
	\newtheorem{remark}[theorem]{Remark}
	

	\title[Weighted nonlocal operators]{Weighted nonlocal operators and their applications in semi-supervised learning}
	%
	\author{Qiang Du, James M. Scott}
	\address[Qiang Du]{Department of Applied Physics and Applied Mathematics, and Data Science Institute, Columbia University, 500 W. 120th St., New York, NY 10027, USA.}
    \email{qd2125@columbia.edu}

\address[James M. Scott]{Department of Applied Physics and Applied Mathematics, Columbia University, 500 W. 120th St., New York, NY 10027, USA.}
\email{jms2555@columbia.edu}
	%
	%
	%
	%

	%
	%
	%
	%
	
	\keywords{Semi-supervised learning, nonlocal function spaces, nonlocal boundary-value problems, PDEs with weights, discrete-to-continuum limits}

        \subjclass{45K05, 35J20, 46E35}
        	
	\begin{document}

    \begin{abstract}
    Motivated by problems in machine learning, we study a class of variational problems characterized by nonlocal operators.
These operators are characterized by power-type weights, which are singular at a portion of the boundary. 
We identify a range of exponents on these weights for which the variational Dirichlet problem is well-posed. This range is determined by the ambient dimension of the problem, the growth rate of the nonlocal functional, and the dimension of the boundary portion on which the Dirichlet data is prescribed.
We show the variational convergence of solutions to solutions of local weighted Sobolev functionals in the event of vanishing nonlocality.
		\end{abstract}

		\maketitle

\section{Introduction}

        In data analytics and machine learning, one popular paradigm is to estimate a function using a set of labeled data. However, it is often challenging to obtain samples of labeled data, experimentally or computationally. Meanwhile, in many applications, 
        unlabeled data can be relatively more readily accessible.
        With mixed samples of labeled and unlabeled data, \textit{semi-supervised learning} seeks to use labeled data to assign labels to unlabeled data 
       \cite{van2020survey}.
		Among various strategies for extending labeled data for semi-supervised learning, graph-Laplacian-based learning algorithms and their variants have been proposed and used successfully \cite{zhu2003semi,nadler2009semi,belkin2008towards,streicher2023graph,weihs2024consistency}. Our present work is motivated by these studies and their close connections to a wide range of research on the various forms of graph Laplacians and diffusion maps, as well as their continuum representations by partial differential operators and nonlocal operators, in many applications such as supervised and unsupervised learning, image processing, the modeling of point clouds, and discretizations of manifolds \cite{berry2016variable,coifman2006diffusion,Gilboa2008Nonlocal,he2005laplacian,antil2021fractional,elmoataz2015p,burago2015graph,maskey2024fractional,roith2023continuum,Shi2017Convergence}.
     
		When the set of labeled data is sparse, i.e., the data are labeled at a low rate, poor approximations may be produced in the large graph limit \cite{Calder2020Properly,dong2020cure,streicher2023graph}.
         To improve performance, various approaches have been proposed. For example, in \cite{dong2020cure}, it was shown that the second-order Laplacian can be replaced by high-order elliptic operators to yield well-defined problems due to improved solution regularity.
          In \cite{shi2024continuum}, a formulation of the discrete $p$-biharmonic operators and their continuum limits were studied.
         Meanwhile, in \cite{Calder2020Properly}, it was shown that introducing a proper reweighting leads to the recovery of a well-posed weighted Laplacian,
     while maintaining the approximation of the labeled data set. In this case,
     the variational problem 
     is set
     to minimize over all $u : \cX \to \bbR$
		\begin{equation*}
		  \sum_{\bx,\by \in \cX } \dist(\bx,\Gamma)^{-\beta} \rho_{\bx,\by} |u(\bx)-u(\by)|^2 \text{ subject to } u(\bx) = g(\bx) \text{ on } \Gamma,
		\end{equation*}
        where $\bx$, $\by$ are points in the data set $\cX$ -- which is a graph with edge weights $\rho_{\bx,\by}$ -- and where $\Gamma \subset \cX$ is the set of labeled data points with label function $g : \Gamma \to \bbR$. 
		Among their other main results, the authors of \cite{Calder2020Properly} show that the minimization problem recovers the energy 
        \begin{equation*}
            \int_{\Omega} \dist(\bx,\Gamma)^{\beta} |\grad u(\bx)|^2 \, \rmd \bx
        \end{equation*}
        in an appropriate
        large graph limit, i.e., the discrete-to-continuum limit.
		This weighted graph Laplacian-based learning problem can thus be viewed as a harmonic extension problem for an elliptic operator with singular/degenerate coefficients, with ``boundary conditions'' specified by the Dirichlet conditions on the labeled set $\Gamma$. In \cite{Calder2020Properly}, it is shown that the well-posedness of this boundary value problem relies on choosing $\beta \in \bbR$ in a suitable range of exponents, which in turn is characterized by the ambient dimension of the problem.

        Another approach to address the challenge of sparse labeled data in the semi-supervised learning problem is to replace the graph Laplacian with the graph $p$-Laplacian, for some $p > 2$.
        In the aforementioned large graph limit, minimizers of the $p$-Dirichlet energy $\int_\Omega |\grad u(\bx)|^p \, \rmd \bx$ 
        enjoy greater regularity thanks to the Sobolev embedding theorem.
        This feature has been used to obtain consistent semi-supervised learning problems in the large graph limit via convergence of minimizers, and additionally to obtain desirable convergence rates of these minimizers; see for instance \cite{Flores2022Analysis,Calder2018game,Slepcev2019Analysis}.
		
		A key strategy in the discrete-to-continuum convergence results of \cite{Calder2020Properly,Flores2022Analysis} is the use of a nonlocal Laplacian as an intermediary, i.e. the discrete graph-based functional is cast in a continuum setting via the functional
		\begin{equation*}
			\iint \dist(\bx,\Gamma)^{-\beta} \bar{\rho}(\bx,\by) |u(\bx)-u(\by)|^2 \, \rmd \by \, \rmd \bx.
		\end{equation*}
		Precisely, by studying optimal transport between measures and their induced metrics, the discrete Laplacian for general graphs can be placed into a nonlocal continuum formulation; for details of this strategy see \cite{Calder2020Properly,Trillos2015rate,GarciaTrillos2016Continuum}.

    The main objective of this paper is to study general nonlocal functionals and the associated nonlocal operators for semi-supervised learning, with the aim of  developing a rigorous mathematical theory that can offer practical guidance to more effective parameter tuning in applications.
    To this end, we seek to provide a unified theory of general weighted problems for general nonlocal $p$-Laplacian operators. 
    It is our hope that the analysis developed in this work can be readily applied to the discrete graph-based learning problems via well-understood tools; for instance, the optimal transport theory \cite{Trillos2015rate}, the interaction between the  nonlocal horizon and dense graph length scales \cite{GarciaTrillos2016Continuum}, and the regularity theory for solutions to graph Laplacian equations \cite{Calder2022Lipschitz}. 
    
    At the same time, we consider a class of nonlocal kernel $\bar{\rho}$ which, 
    to the extent of the authors' knowledge, has not been considered in the literature on semi-supervised learning. A parameter in $\bar{\rho}$ controls the maximum extent of the edge weights between points, which is often chosen to be a fixed constant. The models considered here allow the length scale to be position-dependent, i.e. vary depending on the grid point.
    Such a spatial-dependent length scale has been discussed in the literature, see for example, \cite{berry2016variable}. Innovative in the context of our work is the use of heterogeneous localization \cite{Tian2017Trace,Scott2023Nonlocal,Scott2023Nonlocala}. This latter choice in the model parameter allows us to consider problems posed on a bounded domain
    and gives further insight into learning problems with boundary, e.g. 
    the learning of a manifold with boundary \cite{GarciaTrillos2020Error}.

   In the first part of the paper we treat the labeled set $\Gamma$ as a finite set of points. However, one may anticipate cases where some of the labeled data may be clustered and can form a set of higher dimension, embedded within the graph.
    This scenario is to depict cases that the data set, while sparse, could be concentrated in the form of disjoint clusters.
   Thus, $\Gamma$ can be taken to be of general dimension, say having a dimension $\ell \in \{1,\ldots,d-1\}$. We treat this more general case in the latter part of the paper.
  
		\subsection{The variational problem}\label{subsec:Intro:VarProb}
		
		Let $\Omega^* \subset \bbR^d$ be a bounded Lipschitz domain, and let $\Gamma \subset \overline{\Omega}^*$ be a given finite set of points.
		We define $\Omega := \Omega^* \setminus \Gamma$. Thus $\p \Omega = \p \Omega^* \cup (\Gamma \cap \Omega^*)$ and $\overline{\Omega} = \overline{\Omega}^*$.
        The set $\Omega$ can be interpreted as the unlabeled data set in the following sense:
        Let $\mu$ be the uniform probability distribution on $\Omega$.
        Given a set $X_n$ of $n$ randomly sampled unlabeled data points, i.e. independent and identically distributed random variables $X_n = \{ \bx_1,\ldots,\bx_n\} \subset \Omega$ each with probability distribution $\mu$, there exists a \textit{transportation map} from $\mu$ to the empirical measure $\mu_n$ on the set $\cX_n := X_n \cup \Gamma$.
        That is, there exists a Borel measurable function $T_n : \Omega \to \Omega$ such that $\mu(T_n^{-1}(U)) = \mu_n(U)$ for all open sets $U \subset \overline{\Omega}$.
        Moreover, it was shown in \cite{Trillos2015rate}
        that $T_n$ is comparable in $L^\infty$ norm to the identity map on $\Omega$, with a constant of comparison vanishing as $n \to \infty$; see \Cref{thm:TransportMapConvergence} below and also \cite[Theorem A.3]{Calder2020Properly} for the precise rate of convergence.
        The discrete graph functions/functionals can therefore be treated as nonlocal continuum functions/functionals using this transportation map. Specifically, to compare functions $u_n : X_n \to \bbR$ that are defined on the discrete unlabeled data set in a consistent way with continuum functions $u : \Omega \to \bbR$ in the $L^p$-topology, 
        one can compare $\wt{u}_n := u_n \circ T_n : \Omega \to \bbR^d$ with $u$.
       In this context, as illustrated in
 \cite{Calder2020Properly,GarciaTrillos2016Continuum}, the discrete-to-continuum variational convergence is characterized by taking the number of data points $n \to \infty$. The asymptotic properties of the transportation maps $T_n$ as $n \to \infty$ are studied in \cite{Calder2020Properly,GarciaTrillos2016Continuum} and the variational convergence results therein make use of the $TL^p$ metric, which is defined in terms of transportation maps. 
The edge weights of the functionals are described by a radial function and a length scale, or \textit{horizon}, $\delta > 0$. The horizon is chosen to depend on the size of the sampled data, so that $\delta \to 0$ as $n \to \infty$. The precise scaling laws between $\delta$ and $n$ determine the variational convergence properties of the discrete-to-continuum limit.

        In our setting, we take advantage of the heterogeneous localization to formulate a well-defined nonlocal variational problem without being confined to functions that result from the embedding provided by the transportation map.
        This allows us to retain the horizon $\delta$ and consider the scaling regime $\delta \to 0$, without reference to the size of the data set $n$. We can regard all functions as maps defined on $\Omega$, not merely on discrete subsets. Our topologies for variational convergence are then described by nonlocal function spaces. That is, we can
        focus the work on the
        nonlocal-to-local continuum regime. Meanwhile, the discrete-to-continuum regime can be discussed in the nonlocal continuum setting without resorting to the local continuum limit.
        In this sense, the nonlocal formulation proposed and analyzed here provides a bridge linking the discrete problem and the local continuum formulation and offers an alternative path to model large point clouds.  Moreover, we hope the nonlocal-to-local convergence results of this work will also better inform the choice of parameters in solving practical semi-supervised learning problems.
        
        The reference probability measure $\mu$ used in this work is the rescaled Lebesgue measure. However, the results of this work remain true if Lebesgue measure is replaced with any probability measure $\wt{\mu}$
        that is absolutely continuous with respect to Lebesgue measure, with its density function bounded from above and away from zero. This is the more general setting considered in \cite{Calder2020Properly}, and we consider only Lebesgue measure in this article for simplicity.
        
The continuum variational problem formulated for semi-supervised learning is as follows: Given exponents $p \in (1,\infty)$ and $\beta \in \bbR$, a finite set of points $\Gamma \subset \overline{\Omega^*}$ corresponding to the labeled data set, and a label function $g : \Gamma \to \bbR$, we search for an extension of the given labels $g$ to the entire data set $\Omega$, i.e. we search for a label function $u:\Omega \to \bbR$ that solves the problem:
	\begin{equation}\label{eq:Intro:MinProb}
			\begin{gathered}
		 \text{ Minimize } 
				\cE_{\delta}(u) := \frac{1}{p} \int_{\Omega} \int_{\Omega} \frac{1}{\gamma(\bx)^\beta} \rho \left( \frac{|\by-\bx|}{\delta \eta(\bx)} \right) \frac{|u(\by)-u(\bx)|^p}{(\delta \eta(\bx))^{d+p}} \, \rmd \by \, \rmd \bx, \\
       \text{ over } u\in \mathfrak{W}^{p}[\delta](\Omega;\beta)
				\text{ subject to } u(\bx) = g(\bx) \text{ for } \bx \in \Gamma.
			\end{gathered}
		\end{equation}

Here, we adopt notation introduced in \cite{Scott2023Nonlocal}, namely,
the constant scalar parameter $\delta>0$
controls the maximum range of interactions, the function
 $\eta(\bx) = \dist(\bx,\p \Omega)$ denotes the distance function, and the nonlocal kernel $\rho$ satisfies \eqref{assump:VarProb:Kernel} in \Cref{sec:assump-ker} below.
We introduce the function $\gamma(\bx) \approx \dist(\bx,\Gamma)$, defined precisely in \eqref{assump:weight} below, that controls the singularity/degeneracy at $\Gamma$;
the exponent $\beta \in \bbR$ controls the size of this singularity/degeneracy.

  To define the nonlocal function space $\mathfrak{W}^{p}[\delta](\Omega;\beta)$ associated with \eqref{eq:Intro:MinProb}, we first
  define the weighted Lebesgue space
$L^p(\Omega;\beta)$ to be the class of all Lebesgue-measurable functions with $\vnorm{u}_{L^p(\Omega;\beta)} <\infty$, where
\begin{equation*}
    \Vnorm{u}_{L^p(\Omega;\beta)}^p
		:= \int_{\Omega} \frac{|u(\bx)|^p}{ (\gamma(\bx))^\beta } \, \rmd \bx .
\end{equation*}
We then introduce a nonlocal seminorm: 
    \begin{equation}\label{eq:Intro:NonlocalSeminorm}		
        [u]_{\mathfrak{W}^{p}[\delta](\Omega;\beta)}^p :=   \frac{ \overline{C}_{d,p} (d+p) }{ \sigma(\bbS^{d-1}) } \int_{\Omega} \int_{B(\bx,\delta d_{\p \Omega}(\bx))} \frac{1}{\gamma(\bx)^\beta} \frac{|u(\by)-u(\bx)|^p}{ (\delta \eta(\bx))^{d+p} } \, \rmd \by \, \rmd \bx,
    \end{equation}
		where, in the normalizing constant, $\sigma$ denotes the surface measure and $\bbS^{d-1} \subset \bbR^d$ is the unit sphere.
		These constants are defined so that the nonlocal seminorm is consistent with a weighted Sobolev seminorm in a precise way, as will be discussed later.
The corresponding nonlocal weighted function space is then given by
		\begin{equation*}
			\mathfrak{W}^{p}[\delta](\Omega;\beta) :=
			\{ u \in L^p(\Omega;\beta) \, :\, [u]_{ \mathfrak{W}^{p}[\delta](\Omega;\beta) } < \infty \}, 
            \qquad \text{ for } \beta < d,
		\end{equation*}
which is a reflexive Banach space with the norm
    \begin{equation*}
        \Vnorm{u}_{\mathfrak{W}^{p}[\delta](\Omega;\beta)}^p := \Vnorm{u}_{L^p(\Omega;\beta)}^p + [u]_{\mathfrak{W}^{p}[\delta](\Omega;\beta)}^p.
    \end{equation*}

The behavior of a function $u$ with the above norm finite will compensate the weight accordingly;  in fact, we will show that for kernel and weight functions
satisfying desirable properties (see \Cref{sec:assump-wei} and \Cref{sec:assump-ker})
and for a range of the parameter $\beta$, the values of $u$ on $\Gamma$ can be prescribed.

The reason for us to consider the heterogeneous localization $\eta(\bx) = \dist(\bx,\p \Omega)$, despite the fact that $\p \Omega \setminus \Gamma$ is an unlabeled set, is two-fold. First, the formulation provides a well-posed theory for the nonlocal problem that is also consistent with a local classical boundary-value problem. 
    Incorporating nonlocal versions of a flux condition on $\p \Omega \setminus \Gamma$, or on the complement of such a set, would introduce additional technical complications that would nevertheless resolve in the local limit. 
        For instance, we could repeat the arguments of this work for a problem posed on a torus (i.e. with periodic boundary conditions), with $\eta(\bx) \approx \dist(\bx,\Gamma)$, with no appreciable technical differences. Second, we hope to demonstrate that heterogeneous localization can be used to analyze the discrete semi-supervised learning problem on manifolds with boundary.

We next present some assumptions to clarify the kernel and weight functions used in the above problem formulations. Although these assumptions are analogous to those discussed in \cite{Scott2023Nonlocal, Scott2023Nonlocala}, a notable difference is the introduction of $\Gamma$-dependent weights for the present study.
\subsection{Assumptions on the weight function}\label{sec:assump-wei}
First, in the case of a discrete labeled data set $\Gamma$, we assume that there exists $R > 0$ depending only on $\Gamma$ such that $B(\bx_0,4R) \cap \Gamma = \{\bx_0\}$ for all $\bx_0 \in \Gamma$, where $B(\bx_0,R)$ denotes the Euclidean ball centered at $\bx_0 \in \mathbb{R}^d$ of radius $R$.
		We then define the weight function $\gamma(\bx)$ as a function in $C^{\infty}(\overline{\Omega} \setminus \Gamma) \cap C^0(\overline{\Omega})$ such that
\begin{equation}\label{assump:weight}
		\begin{gathered}
			\gamma(\bx) = \dist(\bx,\Gamma) = |\bx-\bx_0| \text{ whenever } \exists \bx_0 \in \Gamma \text{ such that } \bx \in B(\bx_0,R), \\
				\gamma(\bx) \equiv 1 \; \forall \bx \in \Omega \setminus \left( \cup_{\bx_0 \in \Gamma} B(\bx_0,2R) \right), \\
                \text{ and for each multi-index } \alpha \in \bbN^d_0, \\
				\exists \kappa_\alpha > 0 \text{ such that } |D^\alpha \gamma(\bx)| \leq \kappa_\alpha |\dist(\bx,\Gamma)|^{1-|\alpha|}, \: \forall \bx \in \Omega.
\end{gathered}\tag{\ensuremath{\rmA_{\gamma}}}
		\end{equation}
		Such a function can be constructed via 
        mollification and cutoff functions.
    Our assumptions on $\gamma$ are similar to those on the weight function used in the discrete graph Laplacian in \cite{Calder2020Properly}, with some distinctions. First, the continuum energy functionals considered in the majority of this work are finite for smooth functions, with no truncation of the weights required. We treat nonlocal functionals with truncated weights only during comparison with discrete functionals in \Cref{sec:Graph}. Second, our analysis does not make use of the exact transition strategy adopted on the sets $B(\bx_0,2R) \setminus B(\bx_0,R)$ for $\bx_0 \in \Gamma$.

\subsection{Assumptions on the scaling parameter}\label{sec:assump-lam}

We refer to the function $\eta(\bx) = \dist(\bx,\p \Omega)$ as a \textit{heterogeneous localization} function.
We define the rescaled function
		$\eta_\delta(\bx)$ which
		is given by	\begin{equation}\label{eq:localizationfunction}
			\eta_\delta(\bx) := \delta \eta(\bx), \quad\forall \bx \in \overline{\Omega}.
		\end{equation}
The scaling parameter $\delta > 0$ measures the maximum range of nonlocal interactions.
For the study of the variational problems, the maximum admissible value of the bulk horizon parameter $\delta$ is chosen to depend on $\lambda(\bx)$ as follows:
		\begin{equation}\label{assump:Horizon}
			\begin{gathered}
				\delta \in (0,\underline{\delta}_0)
                \;\text{ where }\;
				\underline{\delta}_0 := \frac{1}{3 \kappa } 
                \; \text{ and } \;
                \kappa := \kappa_0^2 \kappa_1.
				\tag{\ensuremath{\rmA_{\delta}}}
			\end{gathered}
		\end{equation}
        We note that 
        thanks to \eqref{assump:weight}, 
        we have for all $\delta < \underline{\delta}_0$,
        \begin{equation*}
            |\gamma(\bx) - \gamma(\by)| < \frac{1}{3} \gamma(\bx),\; \text{ if }\; |\bx-\by| < \eta_\delta(\bx).
        \end{equation*}
		This guarantees that desired coordinate changes in the integrals defining the nonlocal seminorm can be carried out.

\subsection{Assumptions on the nonlocal kernel}\label{sec:assump-ker}

Following the discussions in \cite{Scott2023Nonlocal,Scott2023Nonlocala},
in the nonlocal functional specified in \eqref{eq:Intro:MinProb},
the nonlocal kernel $\rho : \bbR \to [0,\infty)$ is assumed to satisfy
		\begin{equation}\label{assump:VarProb:Kernel}
			\begin{gathered}
				\rho \in L^{\infty}(\bbR), \; \rho(x)\geq 0, \;
                \text{ and } 
				[-c_\rho,c_{\rho}] \subset \supp \rho \Subset (-1,1) \text{ for fixed } c_{\rho} > 0.\\
				\;\text{ Moreover, } \rho(x) \text{ is nonincreasing } \forall x \geq 0, \;  \rho(-x) = \rho(x) \; \forall x\in  \bbR, \text{ and } \\
                \int_{B(0,1)} |\bz|^{p} \rho(|\bz|) \, \rmd \bz =
                \frac{ \sqrt{\pi} \, \Gamma ( \frac{d+p}{2} ) }{ \Gamma(\frac{p+1}{2} ) \Gamma(\frac{d}{2}) } := \overline{C}_{d,p},
			\end{gathered}
			\tag{\ensuremath{\rmA_{\rho}}}
		\end{equation}
  		with $\Gamma(z)$ denoting the Euler gamma function and $B(0,1)$ denoting the unit ball centered at the origin.
		Note that $\int_{\bbS^{d-1}} |\bsomega \cdot \be|^p \, \rmd \sigma(\bsomega)=
    \frac{\overline{C}_{d,p}}{\sigma(\bbS^{d-1})} 
        $, where $\be$ is any fixed unit vector.

        \subsection{Main results}\label{subsec:MainResults}

        The well-posedness of \eqref{eq:Intro:MinProb} is contingent on functions in the energy space $\mathfrak{W}^p[\delta](\Omega;\beta)$ having well-defined values on $\Gamma$. 
        Specifically, functions in $\mathfrak{W}^p[\delta](\Omega;\beta)$ have a well-defined trace on $\Gamma$ if and only if $d - p < \beta$. Other structural properties of the function space for this range of $\beta$ will allow us to conclude the following theorem, proved as part of \Cref{thm:equi:wellposed} in \Cref{sec:VarProb}.
        \begin{theorem}\label{thm:Intro:WellPosed}
            Assume $d \geq 1$, $p \in (1,\infty)$, and $d-p<\beta < d$. Then there exists a unique solution $u \in \mathfrak{W}^p[\delta](\Omega;\beta)$ to \eqref{eq:Intro:MinProb}.
        \end{theorem}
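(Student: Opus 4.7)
The plan is to apply the direct method of the calculus of variations on the affine constraint set $\mathcal{A}_g := \{u \in \mathfrak{W}^p[\delta](\Omega;\beta) : u(\bx_0) = g(\bx_0) \text{ for all } \bx_0 \in \Gamma\}$. In the range $d-p<\beta<d$, the trace theory developed in the preceding sections (the existence of well-defined pointwise values on $\Gamma$, norm-continuous in $\mathfrak{W}^p[\delta](\Omega;\beta)$) guarantees that the constraint is meaningful and that $\mathcal{A}_g$ is a closed affine subspace of a reflexive Banach space. Nonemptiness of $\mathcal{A}_g$ follows by constructing an explicit smooth representative $u_g$ from bump functions centered at the finitely many points of $\Gamma$ with the prescribed values; since $\beta<d$, such a representative has finite weighted $L^p$-norm and, being Lipschitz away from $\Gamma$, also finite nonlocal seminorm.

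Coercivity is the crux. The assumption $[-c_\rho,c_\rho]\subset\supp\rho$ together with the monotonicity of $\rho$ implies that $\cE_\delta(u)$ controls a positive multiple of $[u]^p_{\mathfrak{W}^p[\delta](\Omega;\beta)}$. Writing an arbitrary $u\in\mathcal{A}_g$ as $u=u_g+w$ with $w$ vanishing on $\Gamma$, I would invoke the weighted nonlocal Poincar\'e-type inequality bounding $\Vnorm{w}_{L^p(\Omega;\beta)}$ by a constant multiple of $[w]_{\mathfrak{W}^p[\delta](\Omega;\beta)}$ for functions with zero trace on $\Gamma$; this is precisely where the threshold $\beta>d-p$ enters, since it is the borderline at which a pointwise trace on $\Gamma$ controls the weighted $L^p$-mass in a surrounding nonlocal neighborhood. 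Combining these estimates shows that a minimizing sequence $\{u_n\}\subset\mathcal{A}_g$ is uniformly bounded in $\Vnorm{\cdot}_{\mathfrak{W}^p[\delta](\Omega;\beta)}$.

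By reflexivity, after passing to a subsequence, $u_n\rightharpoonup u^*$ weakly. Because the pointwise evaluation at each $\bx_0\in\Gamma$ is a bounded linear functional on $\mathfrak{W}^p[\delta](\Omega;\beta)$, it is weakly continuous, so $u^*\in\mathcal{A}_g$. Weak lower semicontinuity of $\cE_\delta$ follows from the convexity of $t\mapsto|t|^p$ applied to the difference $u(\by)-u(\bx)$ integrated against the nonnegative measure $\gamma(\bx)^{-\beta}\rho(|\by-\bx|/\eta_\delta(\bx))\eta_\delta(\bx)^{-d-p}\,\rmd\by\,\rmd\bx$, either by the classical lower-semicontinuity theorem for convex integrands or by Mazur's lemma combined with Fatou. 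Hence $\cE_\delta(u^*)\leq\liminf_n\cE_\delta(u_n)$ and $u^*$ is a minimizer.

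Uniqueness follows from strict convexity: for $p>1$, the strict convexity of $t\mapsto|t|^p$ together with the Poincar\'e-type control established above makes $\cE_\delta$ strictly convex on $\mathcal{A}_g$, so any two minimizers must coincide. I expect the principal obstacle to be the weighted nonlocal Poincar\'e inequality in the coercivity step, in which the singular weight $\gamma^{-\beta}$ concentrated at $\Gamma$ and the heterogeneous localization $\eta_\delta$ degenerating at $\p\Omega$ must be handled simultaneously; the sharpness of the constraint $\beta>d-p$ reflects the underlying capacity-type argument showing that the trace at a single point can control the weighted nonlocal seminorm in a surrounding region, and establishing this cleanly is where most of the technical effort is concentrated.
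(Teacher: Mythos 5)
Your proposal is correct and follows essentially the same route as the paper: the direct method with coercivity obtained from the kernel-equivalence estimate (\Cref{thm:EnergySpaceIndepOfKernel}) together with the Hardy-type inequality for the zero-trace class — your ``weighted nonlocal Poincar\'e inequality'' is exactly \Cref{thm:Hardy:Nonlocal} combined with the identifications $\mathfrak{W}^{p}_{0,\Gamma}[\delta](\Omega;\beta)=\mathfrak{V}^{p}[\delta](\Omega;\beta)$ and the trace characterization (\Cref{thm:V=W:Nonlocal}, \Cref{thm:TraceChar:Nonlocal}) — plus weak lower semicontinuity and strict convexity for uniqueness. The only cosmetic difference is that you impose the constraint through weak continuity of the trace functionals on the set $\mathcal{A}_g$, while the paper phrases it as $u-g\in\mathfrak{V}^{p}[\delta](\Omega;\beta)$ with $g=E_\Gamma\wt{g}$ and recovers $T_\Gamma u=\wt{g}$ afterwards; these formulations coincide by the results just cited.
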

The well-defined problem \eqref{eq:Intro:MinProb} provides a  nonlocal continuum formulation of the discrete learning problem in the infinite data limit. One may draw further connection to the local limit by letting $\delta\to 0$. In such a limit, we show the convergence of solutions to \eqref{eq:Intro:MinProb} to the following local variational problem:
\begin{equation}\label{eq:Intro:MinProb:Limit}
            \begin{gathered}
                \text{Minimize } \cE_0(u) = \frac{1}{p} \int_{\Omega} \frac{|\grad u(\bx)|^p}{\gamma(\bx)^\beta}
                \text{over } u \in W^{1,p}(\Omega;\beta), \\
                \text{ subject to } u(\bx) = g(\bx) \text{ for } \bx \in \Gamma.
            \end{gathered}
		\end{equation}
  Here, the local weighted Sobolev space is
        \begin{equation*}
			W^{1,p}(\Omega;\beta) := \{ u \in L^p(\Omega;\beta) \, : \, \vnorm{\grad u}_{L^p(\Omega;\beta)} < \infty \}, \qquad  \beta < d, \quad 1 < p < \infty,
		\end{equation*}
		with norm
		\begin{equation*}
		  \vnorm{u}_{W^{1,p}(\Omega;\beta)}^p := \vnorm{u}_{L^p(\Omega;\beta)}^p + \vnorm{\grad u}_{L^p(\Omega;\beta)}^p.
		\end{equation*}
        
     The local energy $ \cE_0(u)$ can be derived formally from $\cE_\delta(u)$ via a Taylor expansion of $u$. 
        The analysis on the well-posedness of the minimization of $\cE_0$ was carried out in \cite{Calder2020Properly} for the case $p = 2$, and with the local weighted Sobolev space defined in a slightly different way. 
        In the next theorem, proved as part of \Cref{thm:LocLimit:Dirichlet}, we extend their analysis to the case of general $p$, and use spaces like $W^{1,p}(\Omega;\beta)$ whose weights mimic those in $\mathfrak{W}^p[\delta](\Omega;\beta)$.

        \begin{theorem}\label{thm:Intro:LocLimit:Dirichlet}
            Assume $d \geq 1$, $p \in (1,\infty)$, and $d-p<\beta < d$. Then, as $\delta\to 0$, the sequence of unique solutions $\{u_\delta\}_\delta \subset \mathfrak{W}^p[\delta](\Omega;\beta)$ to \eqref{eq:Intro:MinProb} converge strongly in $L^p(\Omega;\beta)$ to the unique solution $u \in W^{1,p}(\Omega;\beta)$ of \eqref{eq:Intro:MinProb:Limit}.
        \end{theorem}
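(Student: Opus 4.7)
The plan is to use a variational, $\Gamma$-convergence style argument: extract a strong $L^p(\Omega;\beta)$ limit point of $\{u_\delta\}_\delta$, show this limit is admissible for \eqref{eq:Intro:MinProb:Limit} and minimizes $\cE_0$, and then invoke uniqueness of the limiting minimizer to upgrade subsequential convergence to convergence of the full family. To get compactness, I fix a smooth extension $\widetilde g$ of the data $g$ that lies in both $\mathfrak{W}^p[\delta](\Omega;\beta)$ uniformly in $\delta$ and in $W^{1,p}(\Omega;\beta)$; minimality then gives $\cE_\delta(u_\delta) \leq \cE_\delta(\widetilde g) \leq C$, with $C$ independent of $\delta$ because the normalization $\overline C_{d,p}$ built into \eqref{eq:Intro:NonlocalSeminorm} calibrates the nonlocal seminorm to be controlled by the local one on smooth functions. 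Combined with the weighted nonlocal Poincar\'e estimate valid for $d-p<\beta<d$ (the range used in \Cref{thm:Intro:WellPosed}), this bounds $\|u_\delta\|_{\mathfrak{W}^p[\delta](\Omega;\beta)}$ uniformly, and a weighted nonlocal compactness theorem in the spirit of Bourgain--Brezis--Mironescu/Ponce, and more directly of \cite{Scott2023Nonlocal,Scott2023Nonlocala}, extracts a subsequence $u_{\delta_k} \to u_\infty$ strongly in $L^p(\Omega;\beta)$ with $u_\infty \in W^{1,p}(\Omega;\beta)$.

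Next I would establish the liminf inequality $\cE_0(u_\infty) \leq \liminf_k \cE_{\delta_k}(u_{\delta_k})$. The pointwise nonlocal-to-local calibration is the identity $\int_{\bbS^{d-1}} |\bsomega\cdot\be|^p \, \rmd \sigma(\bsomega) = \overline C_{d,p}/\sigma(\bbS^{d-1})$ already baked into the definition of $\mathfrak{W}^p[\delta](\Omega;\beta)$; the $\bx$-dependent localization $\eta(\bx)$ only rescales the angular integration and does not affect the prefactor, while $\gamma(\bx)^{-\beta}$ is essentially constant on each ball $B(\bx,\eta_\delta(\bx))$ by \eqref{assump:Horizon} and can therefore be frozen through a dyadic decomposition near $\Gamma$. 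Because the trace onto $\Gamma$ is well defined and continuous on $\mathfrak{W}^p[\delta](\Omega;\beta)$ uniformly in $\delta$ precisely when $d-p<\beta$, the Dirichlet condition $u_{\delta_k}|_\Gamma = g$ passes to $u_\infty|_\Gamma = g$, so $u_\infty$ is admissible for \eqref{eq:Intro:MinProb:Limit}. For the matching upper bound, for any competitor $v$ admissible for \eqref{eq:Intro:MinProb:Limit} I would build a recovery sequence $v_\delta \in \mathfrak{W}^p[\delta](\Omega;\beta)$ with $v_\delta|_\Gamma = g$, $v_\delta \to v$ in $L^p(\Omega;\beta)$, and $\limsup_\delta \cE_\delta(v_\delta) \leq \cE_0(v)$; combined with the minimality of $u_{\delta_k}$ this forces $\cE_0(u_\infty) \leq \cE_0(v)$ for every admissible $v$, and uniqueness of the local minimizer (the extension of the well-posedness in \cite{Calder2020Properly} to general $p$ announced just before the statement) identifies $u_\infty$ and promotes the subsequential convergence to convergence of the entire family.

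The hardest part will be the construction of the recovery sequence compatibly with the singular weight $\gamma^{-\beta}$ at $\Gamma$: when the nonlocal kernel ``sees across'' a point of $\Gamma$ it can generate spurious energy in the region where $\gamma^{-\beta}$ blows up. A natural remedy is a two-scale diagonal: first perturb $v$ to equal $g(\bx_0)$ on $B(\bx_0,\varepsilon)$ for each $\bx_0 \in \Gamma$ while leaving it essentially unchanged outside, then mollify to obtain a smooth admissible competitor, take $v_\delta$ equal to this mollification and let $\delta \to 0$ first and $\varepsilon \to 0$ afterwards; the hypothesis $d-p<\beta$ is exactly what makes the excess $\cE_0$-energy introduced by the cutoff vanish as $\varepsilon \to 0$. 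A secondary point is that one needs \emph{strong} $L^p(\Omega;\beta)$ convergence rather than merely weak; this comes either directly from the weighted nonlocal compactness in the first step or, once convergence of the $\cE_\delta$-values has been established, from the uniform convexity of $L^p$.
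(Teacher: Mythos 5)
Your overall architecture --- a uniform energy bound via a fixed admissible extension of the data, weighted nonlocal compactness, a liminf inequality plus a recovery sequence, and then uniqueness of the local minimizer to upgrade to convergence of the whole family --- is essentially the paper's proof (\Cref{thm:Compactness}, \Cref{prop:GammaLimit:Dirichlet}, and equi-coercivity via \eqref{eq:CoercivityEstimate1} fed into the standard $\Gamma$-convergence framework). The one place you genuinely diverge is the recovery step, and there your anticipated difficulty does not exist in this model: in the paper the recovery sequence is simply the constant sequence $v_\delta=v$, because any competitor for \eqref{eq:Intro:MinProb:Limit} has $v-g\in V^{1,p}(\Omega;\beta)$, which embeds into $\mathfrak{V}^{p}[\delta](\Omega;\beta)$ uniformly in $\delta$ by \Cref{thm:Embedding:Nonlocal}, and $\cE_\delta(v)\to\cE_0(v)$ by \Cref{thm:LocalizationOfEnergy}. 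The kernel can never ``see across'' $\Gamma$: since $\Gamma\subset\p\Omega$ and $\eta(\bx)=\dist(\bx,\p\Omega)\leq\dist(\bx,\Gamma)$, the interaction radius $\eta_\delta(\bx)$ collapses near $\Gamma$, and \eqref{eq:Comp1} shows $\gamma(\by)$ is comparable to $\gamma(\bx)$ whenever $|\bx-\by|<\eta_\delta(\bx)$; so your two-scale cutoff/diagonal construction is unnecessary (though not wrong). Conversely, the step you leave vague is where the paper's actual work lies: the liminf inequality along the \emph{varying} sequence $u_{\delta_k}$ is not a consequence of the calibration of $\overline{C}_{d,p}$ and freezing $\gamma^{-\beta}$ (those only give $\cE_\delta(u)\to\cE_0(u)$ for fixed $u$); the paper mollifies with the boundary-localized convolution $K_\veps$, transfers the energy bound via \eqref{eq:Comp:ConvolutionEstimate}, passes $\delta\to 0$ on interior sets using \Cref{thm:LocalizationOfEnergy}, and then sends $\veps\to 0$, and some such argument must be supplied. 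Two smaller remarks: the paper gets admissibility of the limit not from uniform trace continuity but from \Cref{thm:Compactness}, which places the limit of $u_{\delta_k}-g$ in $V^{1,p}(\Omega;\beta)$, hence of trace zero by \Cref{thm:V=W} and \Cref{thm:TraceChar} (your trace-passage argument can be made rigorous, but it needs the uniform-in-$\delta$ Lebesgue-point estimates of \Cref{thm:Trace:Nonlocal}); and the fallback ``uniform convexity of $L^p$'' does not by itself yield strong $L^p(\Omega;\beta)$ convergence here --- the compactness theorem, as in your primary route and in the paper, is what delivers it. Finally, note the paper does not merely cite but proves the well-posedness of the local limit problem for general $p$, via the coercivity estimates \eqref{eq:CoercivityEstimate1:Local}--\eqref{eq:CoercivityEstimate2:Local}, so a complete write-up should include that step as well.
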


        \begin{remark}
            In the case of highly singular weight corresponding to $\beta > d$, it turns out that functions in a weighted nonlocal space can only have trace zero, as shown in \Cref{subsec:NonlocalSpaces} later. Therefore, this range of parameters results in a function space too inflexible to use in the semi-supervised learning problem. Additionally, the analysis of the minimization problems is trivialized; the unique solution to \eqref{eq:Intro:MinProb} with $g = 0$ is $u(\bx) \equiv 0$, and so all the results of \Cref{subsec:MainResults} clearly hold.
                \end{remark}

    \begin{remark}
    We note that the framework developed here offers various extensions that could be of interest in applications. For example, the case of a labeled set with connected components of distinct dimensions has a similar program of analysis; this can be found in \Cref{sec:GeneralCodim}.
    \end{remark}        

    With the necessary mathematical framework in place, we demonstrate that the nonlocal energy can be recovered from an appropriately-defined graph energy in the infinite data limit $n \to \infty$. Recalling the definitions of the data set $\cX_n$ and transportation maps $T_n$ above, we consider the following discrete functional with truncated parameters
    \begin{equation*}
    \cE_{n,\delta,\tau}(u) := \frac{1}{n^2} \sum_{\bx,\by \in \cX_n} \rho \left( \frac{|\by-\bx|}{\eta_\delta^\tau(\bx)} \right) \frac{|u(\bx)-u(\by)|^p}{(\gamma^\tau(\bx))^{\max\{\beta,0\}} (\gamma(\bx))^{\min\{\beta,0\}} \eta_\delta^\tau(\bx)^{d+p}},
    \end{equation*}
    where for $\tau > 0$ the functions $\eta_\delta^\tau$ and $\gamma^\tau$ are defined as
    \begin{equation}\label{eq:TruncatedParameters}
    \begin{gathered}
        \eta_\delta^\tau(\bx) := \delta \max \{ \eta(\bx), \tau \}, \qquad \qquad 
        \gamma^\tau(\bx) := \max \{ \gamma(\bx), \tau \}.
    \end{gathered}
    \end{equation}
    Since $\eta(\bx) = \gamma(\bx) = 0$ on $\Gamma$, using the full weight parameters in the discrete energy would require some additional smoothness of $u$ on $\Gamma$, so the weights are replaced by the truncations in order to ensure that $\cE_{n,\delta,\tau}(u)$ is well-defined. The truncation $\tau$ will be chosen to depend on $n$, and will satisfy $\tau_n \to 0$ in the continuum limit $n \to \infty$. 
    {The precise scaling law is described in the following theorem, in which the nonlocal continuum energy is obtained in the $n \to \infty$ limit of the discrete energies $\cE_{n,\delta,\tau_n}(u \circ T_n^{-1})$ for $u$ in the nonlocal function space.}

\begin{theorem}\label{thm:GraphToNonlocalConv}
    Let $d-p < \beta < d$ and $\beta \geq 0$. Let $\delta \in (0,\underline{\delta}_0)$, and for the sequence of transportation maps $T_n$ defined above that satisfy \Cref{thm:TransportMapConvergence} set $\zeta_n := 2 \vnorm{T_n-Id}_{L^\infty(\Omega)}$. 
    Define a sequence $\tau_n$ such that $\tau_n \to 0$ as $n \to \infty$ and such that the sequence $c_n :=  \frac{\zeta_n}{\tau_n}$ satisfies $\lim\limits_{n \to \infty} c_n = 0$.
    Then there exists $n_0$ depending only on $\underline{\delta}_0$ and $\Gamma$ such that for all $n \geq n_0$
    \begin{equation}
    \label{eq:Discrete:Finite}
        \cE_{n,\delta,\tau_n}(u \circ T_n^{-1}) \leq \frac {C(d,p,\beta,\rho,\kappa,\Omega)}{ \delta^p }
        \vnorm{u}_{\mathfrak{W}^p[\delta](\Omega;\beta)}^p,
        \qquad \forall u \in \mathfrak{W}^p[\delta](\Omega;\beta),
    \end{equation}
    and moreover 
    \begin{equation}\label{eq:Discrete:conv}
    \lim\limits_{n \to \infty} \cE_{n,\delta,\tau_n}(u \circ T_n^{-1}) = |\Omega|^{-2} \cE_{\delta}(u), \qquad \quad \forall\, u \in \mathfrak{W}^p[\delta](\Omega;\beta).
    \end{equation}
\end{theorem}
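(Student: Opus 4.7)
The plan is to recast the discrete sum as a double integral on $\Omega \times \Omega$ via the property $\mu(T_n^{-1}(U)) = \mu_n(U)$, where $\mu := |\Omega|^{-1}\mathcal{L}^d|_\Omega$ is the uniform probability measure on $\Omega$. Writing $v_n := u \circ T_n^{-1}$, this yields
\begin{equation*}
\cE_{n,\delta,\tau_n}(v_n) = |\Omega|^{-2} \int_\Omega \int_\Omega F_n(\bx,\by) \, \rmd \by \, \rmd \bx,
\end{equation*}
where
\begin{equation*}
F_n(\bx,\by) := \rho\!\left(\frac{|T_n(\by) - T_n(\bx)|}{\eta_\delta^{\tau_n}(T_n(\bx))}\right) \frac{|u(\bx)-u(\by)|^p}{(\gamma^{\tau_n}(T_n(\bx)))^\beta\, \eta_\delta^{\tau_n}(T_n(\bx))^{d+p}}.
\end{equation*}
Both conclusions will follow by comparing $F_n$ with the continuum integrand $F(\bx,\by)$ of $p\cE_\delta(u)$, together with control of the thin region where the truncation is active.

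The key technical device is a Lipschitz comparison relying on (i) $\eta$ being $1$-Lipschitz, (ii) $\gamma$ being $\kappa_1$-Lipschitz via the $|\alpha|=1$ case of \eqref{assump:weight}, and (iii) $\vnorm{T_n - \mathrm{Id}}_{L^\infty(\Omega)} = \zeta_n/2 \to 0$ with $c_n = \zeta_n/\tau_n \to 0$. These produce, for $n$ large and all $\bx \in \Omega$,
\begin{equation*}
\tfrac{1}{2}\gamma^{\tau_n}(\bx) \leq \gamma^{\tau_n}(T_n(\bx)) \leq 2\gamma^{\tau_n}(\bx), \qquad \tfrac{1}{2}\eta^{\tau_n}(\bx) \leq \eta^{\tau_n}(T_n(\bx)) \leq 2\eta^{\tau_n}(\bx).
\end{equation*}
Since $|\by - \bx| \leq |T_n(\by) - T_n(\bx)| + \zeta_n$ and $\rho$ vanishes outside $(-1,1)$, the support of $F_n$ lies within $\{|\by - \bx| \leq C\delta\eta^{\tau_n}(\bx)\}$ for some $C$ independent of $n$. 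Together with $\rho \in L^\infty$, this yields the uniform bound $F_n \leq C' \tilde F_n$ where
\begin{equation*}
\tilde F_n(\bx,\by) := \mathbf{1}_{B(\bx,\, C\delta\eta^{\tau_n}(\bx))}(\by) \frac{|u(\bx) - u(\by)|^p}{\gamma^{\tau_n}(\bx)^\beta (\delta \eta^{\tau_n}(\bx))^{d+p}}.
\end{equation*}

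For \eqref{eq:Discrete:Finite}, I split $\Omega = \Omega_n^{\mathrm{int}} \cup \Omega_n^{\mathrm{lay}}$ with $\Omega_n^{\mathrm{int}} := \{\eta(\bx) > 2\tau_n,\ \gamma(\bx) > 2\tau_n\}$. On $\Omega_n^{\mathrm{int}}$ we have $\eta^{\tau_n} = \eta$ and $\gamma^{\tau_n} = \gamma$, so the integral of $\tilde F_n$ is bounded by $C\delta^{-p}\vnorm{u}_{\mathfrak{W}^p[\delta](\Omega;\beta)}^p$, up to the equivalence of nonlocal seminorms at comparable horizons. On the thin layer $\Omega_n^{\mathrm{lay}}$, the truncations give $\gamma^{\tau_n}, \eta^{\tau_n} \geq \tau_n$, and $\mathcal{L}^d(\Omega_n^{\mathrm{lay}}) \lesssim \tau_n$ by Lipschitzness of $\p\Omega$ and finiteness of $\Gamma$; the rescaling $\by = \bx + \delta\eta^{\tau_n}(\bx)\bz$ combined with $|u(\bx)-u(\by)|^p \leq 2^{p-1}(|u(\bx)|^p + |u(\by)|^p)$ reduces the layer integral to pieces bounded by $\vnorm{u}_{L^p(\Omega;\beta)}^p$ and $[u]_{\mathfrak{W}^p[\delta](\Omega;\beta)}^p$, delivering \eqref{eq:Discrete:Finite}. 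For \eqref{eq:Discrete:conv}, pointwise convergence $F_n \to F$ holds for a.e.\ $(\bx,\by) \in \Omega \times \Omega$ -- excluding the null sets $\Gamma \times \Omega$, $\Omega \times \Gamma$, and the measure-zero discontinuity set of $\rho$ -- by continuity of $\gamma, \eta$ and the uniform convergence $T_n \to \mathrm{Id}$. The domination $F_n \leq C'\tilde F_n$ on $\Omega_n^{\mathrm{int}}$ with the vanishing layer contribution then justifies passage to the limit via Vitali's convergence theorem.

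The chief obstacle is the thin-layer estimate: where $\eta(\bx)$ or $\gamma(\bx)$ is of order $\tau_n$, the truncations create a discrepancy between $F_n$ and $F$, and bounding this by $\vnorm{u}^p_{\mathfrak{W}^p[\delta](\Omega;\beta)}$ exploits the small measure of the layer together with the hypothesis $c_n \to 0$, which quantifies the rate at which the transport displacement beats the truncation scale. A closely related subtlety is that $\tilde F_n$ has slightly enlarged kernel support relative to the nonlocal seminorm, handled by an equivalence of seminorms at nearby horizons.
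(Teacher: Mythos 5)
Your overall architecture mirrors the paper's: rewrite the discrete energy as a double integral via the transport map, compare the truncated weights at $T_n(\bx)$ and $\bx$ using Lipschitz bounds and $c_n\to 0$, split $\Omega$ into an interior region where the truncation is inactive and a thin layer where it is active, and pass to the limit. The interior part and the transport-map comparisons are fine (the paper proves sharper two-sided versions, with constants $(1+\kappa_1 c_n/2)$ and a monotone approximation of $\rho$ by indicator kernels, so that the exact constant $|\Omega|^{-2}$ survives via a squeeze argument rather than Vitali). But there is a genuine gap in your treatment of the thin layer, which is exactly the hard step. After the crude bound $|u(\bx)-u(\by)|^p\le 2^{p-1}(|u(\bx)|^p+|u(\by)|^p)$ and integrating out $\by$ over a ball of radius $\sim\delta\tau_n$ against the kernel normalization $(\delta\tau_n)^{-(d+p)}$, the layer contribution is of size
\begin{equation*}
\frac{1}{(\delta\tau_n)^{p}}\int_{\{\eta(\bx)<2\tau_n\}\cup\{\gamma(\bx)<2\tau_n\}}\frac{|u(\bx)|^{p}}{\gamma^{\tau_n}(\bx)^{\beta}}\,\rmd\bx .
\end{equation*}
The small measure of the layer ($\lesssim\tau_n$) does not compensate the factor $\tau_n^{-p}$: for a general $u\in\mathfrak{W}^{p}[\delta](\Omega;\beta)$ the integral over $\{\eta<2\tau_n\}$ tends to zero with no rate, so this quantity is neither bounded by $C\delta^{-p}\vnorm{u}_{\mathfrak{W}^p[\delta](\Omega;\beta)}^p$ uniformly in $n$ (needed for \eqref{eq:Discrete:Finite}) nor does it vanish (needed for uniform integrability in your Vitali step). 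A bound of the form $\tau_n^{-p}\int_{\{\eta<\tau_n\}}|u|^p\gamma^{-\beta}\lesssim$ const is equivalent to a Hardy inequality $\int|u|^p/(\gamma^\beta\eta^p)<\infty$, which fails for generic $u$ because $u$ has no prescribed trace on $\p\Omega^*$ or on $\Gamma$.

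The paper's proof of the corresponding step (Theorem \ref{thm:NonlocalTruncToNonlocal}) shows what is actually required: decompose $u=u_\Gamma+\gamma^{\beta/p}\,(u_3+u_{\p\Omega^*})$, where $u_\Gamma=E_\Gamma\circ T_\Gamma u$ removes the $\Gamma$-trace, $u_{\p\Omega^*}$ is a Sobolev extension of the $\p\Omega^*$-trace of $u_2=(u-u_\Gamma)/\gamma^{\beta/p}$, and $u_3=u_2-u_{\p\Omega^*}$ has zero trace on $\p\Omega$. Then the trace-zero piece is handled by the weighted Hardy inequality $\int|u_3|^p/\eta^p\lesssim[u_2]^p_{\mathfrak{W}^p[\delta](\Omega;0)}$ (\Cref{thm:FxnSpaceProp} item 7 together with \Cref{lma:NonlocalTrunc:Remainder2} and \Cref{thm:Combined:Nonlocal}); the two trace pieces are controlled on the shrinking layer via the Muckenhoupt-weighted extension to $\bbR^d$ (\Cref{thm:ApExtension}) and \Cref{lma:NonlocalTrunc:Remainder1}, with the $n\to\infty$ vanishing coming from dominated convergence of an $L^1$ density over a shrinking set; and the region $\{\gamma<\tau_n\}$ near the labeled points is handled by the pointwise Hardy inequality $\int|u-u(\bx_0)|^p|\bx-\bx_0|^{-\beta-p}\lesssim[u]^p$ from \Cref{thm:Combined:Nonlocal} (this is also where $\beta\ge 0$ and the exponent $\delta^{-p}$ in \eqref{eq:Discrete:Finite} enter). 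Without this decomposition and these Hardy/extension inequalities, your layer estimate does not close, so as written the proposal does not prove either \eqref{eq:Discrete:Finite} or \eqref{eq:Discrete:conv}.
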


\begin{remark}
Although this theorem can be shown for arbitrary $\beta \in \bbR$, for the sake of illustration we prove it only for nonnegative $\beta$ 
that are also in $(d-p, d)$, i.e. those
for which \Cref{thm:Intro:WellPosed} holds.
\end{remark}

        \subsection{Organization of the paper}
		
        The paper is organized as follows: \Cref{sec:WeightedLocalSpaces} contains the necessary background on local weighted Sobolev spaces. In \Cref{sec:WeightedNonlocalSpaces} we establish analogous results for the nonlocal weighted Sobolev spaces. The well-posedness and the variational convergence for the problem \eqref{eq:Intro:MinProb} are proved in \Cref{sec:VarProb} and \Cref{sec:Conv}, respectively.
        We prove generalizations of all of these results for the case of higher-dimensional sets $\Gamma$ in \Cref{sec:GeneralCodim}. Finally, we connect the nonlocal energies considered here to weighted graph $p$-Laplacian-type energies via a discrete-to-nonlocal convergence result in \Cref{sec:Graph}.
		
		\section{Local weighted Sobolev spaces}\label{sec:WeightedLocalSpaces}
		
		We recall and establish some results on weighted Sobolev spaces which are viewed as local spaces, as their characterizations are based on norms of weak derivatives. These spaces have been studied extensively; see for instance the monographs \cite{Kozlov2001Spectral,Kufner1980Weighted}.
		Most of the results we need for these spaces are already known; we state the theorems in a way that provides a complete story for our purposes. We also include proofs whenever the existing literature has not covered the specific situations we are concerned with.
		Most of these proofs rely on the following inequalities adapted from the more general Hardy's inequality (see e.g. \cite[Theorem 330]{Hardy1952Inequalities}):
		Let $d \geq 1$, $1 < p < \infty$, and $\beta \in \bbR$.
		For a function $v :[0,\infty) \to [0,\infty)$ that is absolutely continuous and compactly supported on $[0,\infty)$ with $r^{(d-1-\beta)/p} v'(r) \in L^p((0,\infty))$, 
		\begin{align}
			\label{eq:Hardy1D:inf}
			\int_0^\infty \frac{|v(r)|^p}{r^{\beta-d+1+p}} \, \rmd r &\leq \left( \frac{p}{ d-p-\beta } \right)^p
			\int_0^\infty \frac{|v'(r)|^p}{r^{\beta-d+1}} \, \rmd r, \quad \text{ for } \beta \in (- \infty,d-p), \\
			\label{eq:Hardy1D:0}
			\int_0^\infty \frac{|v(r)-v(0)|^p}{r^{\beta-d+1+p}} \, \rmd r &\leq \left( \frac{p}{ \beta+p-d } \right)^p
			\int_0^\infty \frac{|v'(r)|^p}{r^{\beta-d+1}} \, \rmd r, \quad \text{ for } \beta \in (d-p,\infty).
		\end{align}
		
	\subsection{Local weighted spaces}
		  
        In this subsection we show density, trace and extension results for the reflexive Banach space $W^{1,p}(\Omega;\beta)$. 
		
		\begin{theorem}\label{thm:Density}
			For all $p \in (1,\infty)$ and all $\beta \in (-\infty,d)$, the class $C^{\infty}(\overline{\Omega})$ is dense in $W^{1,p}(\Omega;\beta)$. 
		\end{theorem}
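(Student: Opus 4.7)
The plan is to localize using a smooth partition of unity and handle the region away from $\Gamma = \{\bx_1, \ldots, \bx_N\}$ separately from neighborhoods of each point of $\Gamma$. Choose a partition of unity $\{\phi_0, \phi_1, \ldots, \phi_N\}$ of $\overline{\Omega}$ in which $\phi_0$ is supported in $\overline{\Omega^*} \setminus \bigcup_i B(\bx_i, R/2)$ and each $\phi_i$ is supported in $B(\bx_i, R)$, where by \eqref{assump:weight} we have $\gamma(\bx) = |\bx - \bx_i|$. Decomposing $u = \phi_0 u + \sum_i \phi_i u$, it suffices to approximate each piece separately in $W^{1,p}(\Omega;\beta)$. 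The regular piece $\phi_0 u$ is supported on a set where $\gamma^{-\beta}$ is bounded above and away from zero, so the weighted $W^{1,p}$-norm is equivalent to the unweighted one, and approximation by functions in $C^\infty(\overline{\Omega^*}) = C^\infty(\overline{\Omega})$ is standard since $\Omega^*$ is a bounded Lipschitz domain.

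For each singular piece, I would introduce a radial cutoff $\chi_\epsilon\in C^\infty(\bbR^d)$ satisfying $\chi_\epsilon \equiv 0$ on $B(\bx_i,\epsilon)$, $\chi_\epsilon\equiv 1$ outside $B(\bx_i,2\epsilon)$, and $|\nabla\chi_\epsilon|\le C/\epsilon$. Setting $v_\epsilon := \chi_\epsilon\, \phi_i u$, convergence in $L^p(\Omega;\beta)$ follows directly from dominated convergence. Writing $\nabla v_\epsilon = \chi_\epsilon \nabla(\phi_i u) + (\phi_i u)\nabla\chi_\epsilon$, the first term converges to $\nabla(\phi_i u)$ in $L^p(\Omega;\beta)$ by dominated convergence, while the cutoff error obeys
\[
\int \frac{|\phi_i u|^p |\nabla\chi_\epsilon|^p}{\gamma^\beta} \, \rmd \bx
\;\le\; C \int_{B(\bx_i,2\epsilon)} \frac{|\phi_i u|^p}{\gamma^{p+\beta}} \, \rmd \bx,
\]
so it is enough to verify that $|\phi_i u|^p/\gamma^{p+\beta}$ is integrable, after which the region of integration shrinks to $\{\bx_i\}$ and the integral vanishes. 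In the subcritical regime $\beta<d-p$, this integrability follows directly from Hardy's inequality \eqref{eq:Hardy1D:inf} applied along rays emanating from $\bx_i$ (after passing to polar coordinates and using that $\phi_i u$ has compact support). Once $v_\epsilon$ has been constructed, it vanishes in a neighborhood of $\bx_i$, so on its support the weight is bounded below by a positive constant and the usual combination of mollification and Lipschitz extension across $\partial\Omega^*$ yields the required $C^\infty(\overline{\Omega})$ approximation.

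The main obstacle is the supercritical regime $\beta\in(d-p,d)$, where functions in $W^{1,p}(\Omega;\beta)$ may have a nontrivial limit at $\bx_i$, and naive truncation by $\chi_\epsilon$ destroys this value. Here I would first identify $c_i := \lim_{r\to 0}\fint_{B(\bx_i,r)} u$, whose existence can be established through a Cauchy argument applied to the averages on dyadic annuli via a Poincaré-type estimate and the finiteness of $\int |\nabla u|^p/\gamma^\beta$. Replacing $\phi_i u$ by $\phi_i u - c_i \tilde\phi_i$ for a smooth cutoff $\tilde\phi_i$ equal to $1$ near $\bx_i$ yields a function whose limit at $\bx_i$ vanishes, so Hardy's inequality \eqref{eq:Hardy1D:0} provides the analogous integrability. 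The previous cutoff-and-mollify procedure applies to the remainder, and adding back the smooth contribution $c_i\tilde\phi_i$ produces the desired $C^\infty(\overline{\Omega})$ approximation. The borderline case $\beta = d-p$, in which neither form of Hardy's inequality applies, can be treated by an approximation-in-$\beta$ argument or by substituting a logarithmic cutoff scale in place of the algebraic one.
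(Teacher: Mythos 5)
Your route is genuinely different from the paper's. The paper splits at $\beta = d(1-p)$: for the entire range $d(1-p)<\beta<d$ (which already contains the critical and supercritical cases) it observes that $|\bx-\bx_0|^{-\beta}$ is a Muckenhoupt $A_p$-weight and simply cites Chua's density theorem for $A_p$-weighted Sobolev spaces on Jones domains, and it only runs a hands-on cutoff-plus-Hardy argument in the degenerate range $\beta\le d(1-p)$, where \eqref{eq:Hardy1D:inf} applies. Your plan is to make the cutoff construction self-contained on the whole range $\beta<d$, splitting instead at $\beta=d-p$. The subcritical part is essentially the paper's degenerate-case argument extended to all $\beta<d-p$ and is sound in substance, with one adjustment you should make explicit: for a general $u\in W^{1,p}(\Omega;\beta)$ the radial restriction need not be absolutely continuous up to $r=0$, so \eqref{eq:Hardy1D:inf} as stated (which assumes absolute continuity on $[0,\infty)$) does not literally apply; you need the ``integrate from infinity'' form of Hardy, valid for functions locally absolutely continuous on $(0,\infty)$ with compact support, which is exactly the regime $d-1-\beta>p-1$. (The paper avoids this issue because it applies \eqref{eq:Hardy1D:inf} to $\grad\zeta(k\bx)u(\bx)$, which is supported away from the puncture.)

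The genuine gap is in the supercritical regime $d-p<\beta<d$. After subtracting $c_i\tilde\phi_i$, applying \eqref{eq:Hardy1D:0} along rays only gives $\int |w(\bx)-w_0(\bsomega)|^p\gamma(\bx)^{-\beta-p}\,\rmd\bx<\infty$, where $w_0(\bsomega)$ is the radial limit of $w=\phi_iu-c_i\tilde\phi_i$ along the ray in direction $\bsomega$ (this limit exists for a.e.\ $\bsomega$ by H\"older, precisely because $\beta>d-p$). What your argument needs is $w_0(\bsomega)=0$ for a.e.\ $\bsomega$, i.e.\ that the a.e.\ radial limits coincide with the limit $c_i$ of ball averages; since $\beta+p>d$, any positive-measure set of directions with $w_0\neq 0$ makes $\int|w|^p\gamma^{-\beta-p}$ diverge, so this identification is the entire content of the step and cannot be elided by saying the ``limit at $\bx_i$ vanishes.'' It is true, and can be proved by combining the quantitative radial estimate $|w(r\bsomega)-w_0(\bsomega)|\le C r^{(\beta+p-d)/p}\bigl(\int_0^r|\p_t w(t\bsomega)|^pt^{d-1-\beta}\,\rmd t\bigr)^{1/p}$ with the weighted Poincar\'e estimate used in \Cref{thm:Trace} to conclude $\fint_{\bbS^{d-1}}|w_0|^p\,\rmd\sigma=0$; but no such argument appears in your proposal, and you cannot instead cite the paper's Hardy-type theorem for $u-E_\Gamma\circ T_\Gamma u$ at the end of Section 2, because its proof goes through \Cref{thm:TraceChar}, which uses the very density theorem you are proving. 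Two further loose ends: the borderline $\beta=d-p$ is only waved at — ``approximation in $\beta$'' does not transfer density between different weighted norms, while the logarithmic-cutoff route does work but requires first truncating $u$ in height so that the cutoff error is bounded by $\|u\|_\infty^p(\log(1/\veps))^{1-p}$ (the paper gets this case for free from the $A_p$ theory when $d\ge 2$); and for punctures $\bx_i\in\p\Omega^*$ the ray representation can fail (a ray from a Lipschitz boundary point may exit and re-enter $\Omega$), so there you need a flattening or cone argument, as in \Cref{thm:TraceChar}, or the citations to results valid for Jones/cone-condition domains that the paper relies on. Also, for $\beta<0$ (possible when $p>d$) the plain averages defining $c_i$ should be replaced by the weighted averages $(u)_{B_\veps,\beta}$, as in \Cref{thm:Trace}.
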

		
		\begin{proof}
			It suffices to prove the result for the case $\Gamma = \{\bx_0\}$ and $\gamma(\bx) = |\bx-\bx_0|$, since then the general case will follow by a localization and partition of unity argument.
			
			First, if $d(1-p) < \beta < d$, then $|\bx-\bx_0|^{-\beta}$ is a Muckenhoupt $A_p$-weight, see \Cref{lma:ApWeight}.
   Thus, as a special case of a similar density result for $A_p$-weighted Sobolev spaces for functions defined on Jones domains (see for instance \cite[Theorem 6.1]{Chua1992Extension}), there exists $\{ u_n \} \in C^{\infty}(\bbR^d)$, hence in $C^{\infty}(\overline{\Omega})$, such that $\vnorm{ u_n - u }_{W^{1,p}(\Omega;\beta)} \to 0$ as $n \to \infty$.
			
			Next, if $\beta \leq d(1-p)$, then $\beta < 0$, and so the weight is degenerate instead of singular. In the event that $\bx_0 \in \p \Omega^*$, the density result in this setting is proved in \cite[Theorem 7.4]{Kufner1980Weighted}.
			
			Now assume that $\beta \leq d(1-p)$ and that $\bx_0 \in \Omega^*$. By a localization argument (as well as a translation and dilation) we can assume that $\Omega^* = B(0,1)$, that $\bx_ 0 = 0$, and that $\supp u \Subset B(0,1)$.
			Let $\zeta \in C^\infty(\bbR^d)$ be a function satisfying $0 \leq \zeta \leq 1$, $\zeta \equiv 1$ on $B(0,1)$ and $\zeta \equiv 0$ on $\bbR^d \setminus B(0,2)$. Then define $u_k(\bx) := u(\bx)(1-\zeta(k\bx))$. Clearly $\vnorm{u_k-u}_{L^p(\Omega;\beta)} \to 0$ as $k \to \infty$ by continuity of the Lebesgue integral. Now,
			\begin{equation*}
				\begin{split}
					\int_{B(0,1)} \frac{|\grad u_k - \grad u|^p}{|\bx|^\beta} \, \rmd \bx &\leq C \int_{B(0,2/k)} \frac{|\grad u|^p}{|\bx|^\beta} \, \rmd \bx + C k^p \int_{B(0,2/k) \setminus B(0,1/k)} \frac{|\grad \zeta(k\bx) u(\bx)|^p}{|\bx|^\beta} \, \rmd \bx \\
					&\leq C \vnorm{ \grad u }_{L^p(B(0,2/k);\beta)}^p + C \int_{B(0,2/k) \setminus B(0,1/k)} \frac{|\grad \zeta(k\bx) u(\bx)|^p}{|\bx|^{\beta+p}} \, \rmd \bx
				\end{split}
			\end{equation*}
			Thanks to the support of $\zeta$, the function $\grad \zeta(k\bx) u(\bx)$ belongs to $W^{1,p}(B(0,1))$ and is compactly supported on $B(0,1) \setminus \{0\}$, hence the function $v(r) := \grad \zeta(kr \bsomega) u(r\bsomega)$ is absolutely continuous on $[0,1]$ for $\scH^{d-1}$-almost every $\bsomega \in \bbS^{d-1}$. Hence we can apply the Hardy inequality \eqref{eq:Hardy1D:inf} to get
			\begin{equation*}
				\int_{1/k}^{2/k} \frac{|v(r)|^p}{ r^{\beta-d+1+p} } \, \rmd r \leq C(\beta,d,p) \int_{1/k}^{2/k} \frac{|v'(r)|^p}{ r^{\beta-d+1} } \, \rmd r.
			\end{equation*}
			(This can be applied, since by assumption $\beta \leq d(1-p) < d - p$).
			Integrating over $\bsomega \in \bbS^{d-1}$, we obtain that
			\begin{equation*}
				\int_{B(0,2/k) \setminus B(0,1/k)} \frac{|\grad \zeta(k\bx) u(\bx)|^p}{|\bx|^{\beta+p}} \, \rmd \bx \leq C \int_{B(0,2/k)} \frac{|\grad u|^p}{|\bx|^\beta} \, \rmd \bx,
			\end{equation*}
            where we also used that $|\p_r v| \leq |\grad_{\bx} u|$.
			Therefore by continuity of the integral we get that $\vnorm{u_k - u}_{W^{1,p}(\Omega;\beta)} \to 0$ as $k \to \infty$.
			
			Now, for $m \in \bbN$ with $m \gg k$ define $u_{k,m}(\bx) := \int_{\bbR^d} m^d \varphi(m(\bx-\by)) u_k(\by) \, \rmd \by$, where $\varphi$ is a standard mollifier. Then $u_{k,m} \in C^\infty(\overline{B(0,1)})$,  $\supp u_{k,m} \Subset B(0,1) \setminus \{0\}$, and since $\beta < 0$
			\begin{equation*}
			\vnorm{u_{k,m}-u_k}_{L^p(\Omega;\beta)} \leq \vnorm{u_{k,m}-u_k}_{L^p(\Omega)} \to 0 \text{ as } m \to \infty.
			\end{equation*}
			Thus, for any $\veps > 0$ we can choose $k$ large so that $
			\vnorm{u_k-u}_{L^p(\Omega;\beta)} < \veps/2$, and then choose $m \ll k$ so that $\vnorm{u_{k,m} - u_k}_{L^p(\Omega;\beta)} < \veps/2$. 
		\end{proof}
		
		Now, for a certain range of $\beta$, one can show the existence of traces on $\Gamma$ in this space, see discussions in \cite{Calder2020Properly} for the case $p = 2$.
  We state the relevant results in the theorem below, together with a more complete proof than that given in the literature. In particular, we consider the cases when $\beta\geq 0$ and when $\beta<0$, in the event that $p > d$.
		\begin{theorem}\label{thm:Trace}
			Let $d - p < \beta < d$. The operator $T_\Gamma : W^{1,p}(\Omega;\beta) \to \bbR^{|\Gamma|}$ is a bounded linear operator that satisfies $T_\Gamma u = u \big|_\Gamma$ for all $u$ in $C^\infty(\overline{\Omega})$.
			Moreover, the limit
			\begin{equation*}
				\begin{gathered}
				u(\bx_0) := 
				\begin{cases}
					\lim\limits_{\veps \to 0} (u)_{B(\bx_0,\veps) \cap \Omega}, &\quad \beta \geq 0, \\
					\lim\limits_{\veps \to 0} (u)_{B(\bx_0,\veps) \cap \Omega,\beta}, &\quad \beta < 0,
				\end{cases}\\
				\text{where }  (u)_{B(\bx_0,\veps) \cap \Omega,\beta} := \frac{ \displaystyle\int_{B(\bx_0,\veps) \cap \Omega} u(\bx) \gamma(\bx)^{-\beta} \, \rmd \bx }{ \displaystyle\int_{B(\bx_0,\veps) \cap \Omega}  \gamma(\bx)^{-\beta} \, \rmd \bx}, \quad 0 < \veps < R,
				\end{gathered}
			\end{equation*}
			is well-defined for any $\bx_0 \in \Gamma$ and $u \in W^{1,p}(\Omega;\beta)$, with
			\begin{equation*}
				\begin{split}
					|u(\bx_0) - (u)_{B(\bx_0,\veps) \cap \Omega }|^p &\leq C \veps^{\beta-(d-p)} \vnorm{\grad u}_{L^{p}(\Omega;\beta)}^p, \qquad \beta \geq 0, \: 0 < \veps < R \\
					|u(\bx_0) - (u)_{B(\bx_0,\veps) \cap \Omega,\beta}|^p &\leq C \veps^{\beta-(d-p)} \vnorm{\grad u}_{L^{p}(\Omega;\beta)}^p, \qquad \beta < 0, \: 0 < \veps < R.
				\end{split}
			\end{equation*}
		\end{theorem}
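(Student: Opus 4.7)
My plan is to establish the quantitative estimate first for smooth functions via Hardy's inequality in polar coordinates, and then use \Cref{thm:Density} to extend to general $u \in W^{1,p}(\Omega;\beta)$. Since $\Gamma$ is a finite set with the uniform separation property from \eqref{assump:weight}, a partition of unity subordinate to $\{B(\bx_0,R)\}_{\bx_0 \in \Gamma}$ together with the complement reduces the problem to a single labeled point $\bx_0$ where $\gamma(\bx) = |\bx-\bx_0|$; I take $\bx_0 = 0$ without loss of generality.

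For $u \in C^\infty(\overline{\Omega})$ and $\beta \geq 0$, Jensen's inequality gives
\[
|u(0) - (u)_{B(0,\veps)}|^p \leq \frac{1}{|B(0,\veps)|}\int_{B(0,\veps)} |u(\bx)-u(0)|^p \,\rmd\bx.
\]
Switching to polar coordinates, I split the Jacobian as $r^{d-1} = r^{p+\beta} \cdot r^{-(\beta-d+1+p)}$ and bound $r^{p+\beta} \leq \veps^{p+\beta}$. Since $\beta > d-p$, Hardy's inequality \eqref{eq:Hardy1D:0} applied to the radial slice $v(r) := u(r\bsomega)$ yields
\[
\int_0^\veps |u(r\bsomega)-u(0)|^p \, r^{d-1} \, \rmd r \leq C\veps^{p+\beta}\int_0^\veps \frac{|\p_r u|^p}{r^{\beta-d+1}} \, \rmd r .
\]
Integrating in $\bsomega \in \bbS^{d-1}$ reassembles $\vnorm{\grad u}^p_{L^p(B(0,\veps);\beta)}$, and dividing by $|B(0,\veps)| \sim \veps^d$ gives $|u(0) - (u)_{B(0,\veps)}|^p \leq C\veps^{\beta-(d-p)}\vnorm{\grad u}^p_{L^p(\Omega;\beta)}$. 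For $\beta < 0$ (which forces $p>d$) the weight $\gamma^{-\beta}$ is continuous and bounded near the origin with $\int_{B(0,\veps)}\gamma^{-\beta}\,\rmd\bx \sim \veps^{d-\beta}$; I repeat the above using Jensen against the weighted probability measure $\gamma^{-\beta}\rmd\bx / \int \gamma^{-\beta}$, and in polar coordinates the combined radial factor $r^{-\beta+d-1}$ splits as $r^p \cdot r^{-(\beta-d+1+p)}$, so extracting $r^p \leq \veps^p$ and applying \eqref{eq:Hardy1D:0} again produces the same bound.

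Given the smooth-case estimate, evaluating at $\veps = R$ and bounding the (possibly weighted) average over the fixed-size ball $B(0,R)$ by $\vnorm{u}_{L^p(\Omega;\beta)}$ via Hölder -- valid because $\int_{B(0,R)}\gamma^{-\beta}\,\rmd\bx < \infty$ for $\beta < d$ -- gives $|u(0)| \leq C\vnorm{u}_{W^{1,p}(\Omega;\beta)}$ for every smooth $u$. By \Cref{thm:Density}, $T_\Gamma$ therefore extends uniquely to a bounded linear operator on $W^{1,p}(\Omega;\beta)$. For arbitrary $u$, I take smooth $u_n \to u$ in the weighted Sobolev norm; the (weighted) averages on any fixed $B(0,\veps)$ depend continuously on $u$ in $W^{1,p}(\Omega;\beta)$, so passing $n \to \infty$ in the smooth-case estimate for $u_n$ transfers the same estimate to $u$, which simultaneously shows that the defining limit for $u(\bx_0)$ exists and equals $T_\Gamma u(\bx_0)$.

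The main technical issue is the exponent bookkeeping in the two $\beta$-regimes: in each, the Jacobian (and in the $\beta<0$ case the weight) must be factored so that what remains after pulling out the positive power of $\veps$ is precisely the denominator $r^{\beta-d+1+p}$ demanded by \eqref{eq:Hardy1D:0}. Once that lines up, the proof is a routine synthesis of Jensen, Hardy, and density. Using a weighted rather than unweighted average for $\beta<0$ is the principal new ingredient compared to the $p=2$, $\beta\geq 0$ treatment in \cite{Calder2020Properly}.
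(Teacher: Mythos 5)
Your overall route---prove the quantitative Lebesgue-point estimate for smooth functions by Jensen plus the one-dimensional Hardy inequality along radial slices, deduce boundedness of the point evaluation, and then transfer everything to $W^{1,p}(\Omega;\beta)$ via \Cref{thm:Density}---is a legitimate alternative to the paper's argument, which instead runs the weighted Poincar\'e inequality of Fabes--Kenig--Serapioni on the sets $B(\bx_0,2^{-k})\cap\Omega$ and obtains the limit directly for every $W^{1,p}(\Omega;\beta)$ function through a dyadic telescoping (Cauchy sequence) argument, with no appeal to density. Your exponent bookkeeping in both regimes is correct, and your final limiting step (continuity of the weighted averages with respect to the $W^{1,p}(\Omega;\beta)$ norm, combined with the smooth-case estimate for $u_n$) does yield existence of the limit and its identification with the extended trace.

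There is, however, a genuine gap in the geometric reduction. The theorem allows $\bx_0\in\Gamma\subset\overline{\Omega}^*$, so labeled points may lie on $\p\Omega^*$, and even for interior punctures the ball $B(\bx_0,\veps)$ with $\veps<R$ need not be contained in $\Omega^*$. Your polar-coordinate step sets $v(r)=u(r\bsomega)$, applies the fundamental theorem of calculus and Hardy along each ray, and integrates over all $\bsomega\in\bbS^{d-1}$ to ``reassemble'' $\vnorm{\grad u}_{L^p(B(\bx_0,\veps);\beta)}$; this presupposes $B(\bx_0,\veps)\setminus\{\bx_0\}\subset\Omega$, i.e.\ star-shapedness of the relevant set with respect to $\bx_0$. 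When $\bx_0\in\p\Omega^*$, or when the ball meets $\p\Omega^*$, radial rays can exit $\Omega$, the slices $u(r\bsomega)$ are undefined for many directions, and the Jensen normalization must be by $|B(\bx_0,\veps)\cap\Omega|$ (controlled via the interior cone condition), not by $|B(0,\veps)|$. The paper sidesteps all of this by working with the weighted Poincar\'e inequality on $B(\bx_0,\veps)\cap\Omega$; within your approach you would need the flattening device used in the proof of \Cref{thm:TraceChar} (reduce to $B(0,1)$ or $B(0,1)\cap\{x_d>0\}$, which is star-shaped about the origin, and integrate only over the cone of admissible directions), and this step is missing. A minor additional point: \eqref{eq:Hardy1D:0} is stated for compactly supported absolutely continuous $v$ on $[0,\infty)$, whereas you need it on $[0,\veps]$; the localized version does hold with the same constant (extend $|v'|$ by zero beyond $\veps$, or note the boundary term at $r=\veps$ has a favorable sign since $\beta>d-p$), but this should be stated rather than assumed.
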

		
		\begin{proof}
			We proceed in a manner inspired by \cite{Calder2020Properly}, with additional details presented.
			Fix $\bx_0 \in \Gamma$. In this proof, we set the notation $B_r = B(r) = B(\bx_0,r) \cap \Omega$ for any $ r > 0$. If $u \in W^{1,p}(\Omega;\beta)$, then because $d-p < \beta < d$ the following Poincar\'e inequalities with $A_p$-weights hold, i.e.
			\begin{equation}\label{eq:TraceLeb:Poincare}
				\begin{split}
					\int_{B_\veps} |u(\bx) - (u)_{B_\veps}|^p \gamma(\bx)^{-\beta} \, \rmd \bx &\leq C \veps^p \int_{B_\veps} |\grad u(\bx)|^p \, \gamma(\bx)^{-\beta} \rmd \bx, \qquad \forall \veps \in (0,R), \text{ and } \\
					\int_{B_\veps} |u(\bx) - (u)_{B_\veps,\beta}|^p \gamma(\bx)^{-\beta} \, \rmd \bx &\leq C \veps^p \int_{B_\veps} |\grad u(\bx)|^p \, \gamma(\bx)^{-\beta} \rmd \bx, \qquad \forall \veps \in (0,R);
				\end{split}
			\end{equation}
			see e.g. \cite[Theorem 1.5]{Fabes1982local}. 
			
		We first consider the case when $\beta \geq 0$; the first Poincar\'e inequality in \eqref{eq:TraceLeb:Poincare} implies
			\begin{equation}\label{eq:PoincareConsequence}
				\int_{B_\veps} |u(\bx) - (u)_{B_\veps}|^p \, \rmd \bx \leq C \veps^{\beta+p} \int_{B_\veps} |\grad u(\bx)|^p \, \gamma(\bx)^{-\beta} \rmd \bx, \qquad \forall \veps \in (0,R).
			\end{equation}
			
			Now, for $M$, $N \in \bbN$ with $M \leq N$, we write the difference as a telescoping sum and use H\"older's inquality to get
			\begin{equation*}
				\begin{split}
					|(u)_{B(2^{-M})} - (u)_{B(2^{-N})} | 
					&\leq \sum_{k=M+1}^{N} |(u)_{B(2^{-k-1})} - (u)_{B(2^{-k})} | \\
					&\leq \sum_{k=M+1}^{N} \fint_{B(2^{-k-1})} |u(\by) - (u)_{B(2^{-k})}| \, \rmd \by \\
					&\leq C^{1/p} \sum_{k=M+1}^{N} \left( \fint_{B(2^{-k})} |u(\by) - (u)_{B(2^{-k})}|^p \, \rmd \by \right)^{1/p},
				\end{split}
			\end{equation*}
			where $C$ is a number depending only on $d$ and $\Omega$ that satisfies $\frac{|B(\bx_0,2^{-k}) \cap \Omega|}{ |B(\bx_0,2^{-k-1}) \cap \Omega| } \leq C$. Since $\Omega$ is a Lipschitz domain with ``punctures,'' $C$ is independent of $k$ and $\bx_0$.
			Applying \eqref{eq:PoincareConsequence},
			\begin{equation}\label{eq:TraceLebPf1}
				\begin{split}
					|(u)_{B(2^{-M})} - (u)_{B(2^{-N})} |
					&\leq \sum_{k=M+1}^{N} \left( C 2^{-k(p+\beta-d)} \int_{B(2^{-k})} \gamma(\by)^{-\beta} |\grad u(\by)|^p \, \rmd \by \right)^{1/p} \\
					&\leq C [u]_{W^{1,p}(\Omega;\beta)} \sum_{k = M+1}^N (2^{(d-p-\beta)/p})^k.
				\end{split}
			\end{equation}
			
			Note that for any $0 < r'/2 < r \leq r' < R$ we have by \eqref{eq:PoincareConsequence}
			\begin{equation*}
				\begin{split}
					r^d \big| (u)_{B_r} - (u)_{B_{r'}} \big|^p 
					&= C \int_{B_r} \big| (u)_{B_r} - (u)_{B_{r'}} \big|^p \, \rmd \bx \\
					&\leq C \int_{B_r} \big| u(\bx) - (u)_{B_{r'}} \big|^p \, \rmd \bx 
					+ C \int_{B_{r'}} \big| u(\bx) - (u)_{B_r} \big|^p \, \rmd \bx \\
					&\leq C ( r^{p+\beta} + (r')^{p+\beta} ) \int_{B_{r'}} \gamma(\bx)^{-\beta} |\grad u(\bx)|^p \, \rmd \bx,
				\end{split}		
			\end{equation*}
			so that
			\begin{equation}\label{eq:TraceLebPf2}
				\big| (u)_{B_r} - (u)_{B_{r'}} \big| \leq C (r')^{\frac{\beta+p-d}{p}} [u]_{W^{1,p}(\Omega;\beta)}, \qquad \forall r, r' \text{ with } \frac{r}{r'} \in \left( \frac{1}{2}, 1 \right).
			\end{equation}
			
			Now, let $\veps_j \to 0$ as $j \to \infty$ be an arbitrary sequence. We need to show that $\{(u)_{B_{\veps_j} }\}_j$ is Cauchy. For each $j$, $j' \in \bbN$, let $N_j$ and $M_{j'}$ be the two integers satisfying $2^{-N_j-1} \leq \veps_j \leq 2^{-N_j}$ and $2^{-M_{j'}-1} \leq \veps_{j'} \leq 2^{-M_{j'}}$. 
			Therefore, by \eqref{eq:TraceLebPf1} and \eqref{eq:TraceLebPf2}
			\begin{equation*}
				\begin{split}
					\big| (u)_{B_{\veps_j}} - (u)_{B_{\veps_j}} \big|
					&\leq \big| (u)_{B_{\veps_j}} - (u)_{B(2^{-N_j-1})} \big| + \big| (u)_{B(2^{-N_j-1})} - (u)_{B(2^{-M_{j'}})} \big| \\
					&\qquad + \big| (u)_{B(2^{-M_{j'}})} - (u)_{B(\veps_{j'})} \big| \\
					&\leq C [u]_{W^{1,p}(\Omega;\beta)} \sum_{k = M_{j'} }^{ N_{j} } (2^{(d-p-\beta)/p})^k.
				\end{split}
			\end{equation*}
			Since $M_{j'}$, $N_j \to \infty$ as $\min\{j,j'\} \to \infty$, the right-hand side forms the tail of a geometric series, and so we see that the sequence $\{(u)_{B(\veps_j)}\}_j$ is Cauchy. Therefore the sequence converges to a number, which we call $u(\bx_0)$ (consistent with Lebesgue differentiation), and the convergence rate follows from taking $N_j \to \infty$ in the above estimate.
			
 Next, we consider the case when
 $\beta < 0$; the proof proceeds similarly. Since $\Omega$ satisfies the interior cone condition, we get
			\begin{equation}\label{eq:IntConeCond:Conseq}
				C(d) \veps^{d-\beta} \leq \int_{B(\bx_0,\veps) \cap \Omega} |\bx-\bx_0|^{-\beta} \, \rmd \bx \leq C'(d) \veps^{d-\beta}, \qquad \forall \veps > 0.
			\end{equation}
			Therefore we have 
			\begin{equation*}
				\begin{split}
					&|(u)_{B(2^{-k-1}),\beta} - (u)_{B(2^{-k}),\beta}| \\
					\leq& C 2^{-k(\beta-d)} \int_{B(2^{-k-1})} |u(\by) - (u)_{B(2^{-k}),\beta}| \gamma(\bx)^{-\beta} \, \rmd \bx \\
					\leq& C 2^{-k(\beta-d)/p} \left( \int_{B(2^{-k-1})} |u(\by) - (u)_{B(2^{-k}),\beta}|^p \gamma(\bx)^{-\beta} \, \rmd \bx \right)^{1/p},
				\end{split}
			\end{equation*}
			and similarly to \eqref{eq:TraceLebPf1} we use the second Poincar\'e inequality in \eqref{eq:TraceLeb:Poincare} to obtain
			\begin{equation}\label{eq:TraceLebPf1:Case2}
				\begin{split}
					|(u)_{B(2^{-M}),\beta} - (u)_{B(2^{-N}),\beta} |
					&\leq C [u]_{W^{1,p}(\Omega;\beta)} \sum_{k = M+1}^N (2^{(d-p-\beta)/p})^k.
				\end{split}
			\end{equation}
			In the same way as \eqref{eq:TraceLebPf2} we have
			\begin{equation}\label{eq:TraceLebPf4}
				\big| (u)_{B_r,\beta} - (u)_{B_{r'},\beta} \big| \leq C (r')^{\frac{\beta+p-d}{p}} [u]_{W^{1,p}(\Omega;\beta)}, \qquad \forall\,  \frac{r}{r'} \in \left( \frac{1}{2}, 1 \right).
			\end{equation}
			The rest of the proof proceeds identically to the first case; we obtain that the sequence \\$\{(u)_{ B(\veps_j),\beta } \}_j$ is Cauchy. 
			Finally, we note that since $\beta < 0$, we have for $u \in C^\infty_c(\bbR^d)$
			\begin{equation*}
				\begin{split}
				|(u)_{B(\bx_0,\veps),\beta} - u(\bx_0)| 
				&\leq C \int_{B(0,1)} \frac{ |u(\bx_0+\veps \bz) - u(\bx_0)| }{ |\bz|^{\beta} } \, \rmd \bz \\
				&\leq \int_{B(0,1)} |u(\bx_0+\veps \bz) - u(\bx_0)| \, \rmd \bz \to 0 \text{ as } \veps \to 0,
				\end{split}
			\end{equation*}
			hence the limit of $(u)_{ B(\veps_j),\beta }$ can be identified with $u(\bx_0)$.

            Finally, the fact that $T_\Gamma$ is a bounded operator in the case $\beta \geq 0$ follows from the estimate
            \begin{equation*}
                \begin{split}
                    |u(\bx_0)| &\leq C \veps^{ \frac{\beta-d+p}{p} } \vnorm{\grad u}_{L^p(\Omega;\beta)} + |(u)_{B_\veps}| \\
                    &\leq C_\veps (\vnorm{\grad u}_{L^{p}(\Omega;\beta)} + \vnorm{u}_{L^p(\Omega)}) \leq C_\veps (\vnorm{\grad u}_{L^{p}(\Omega;\beta)} + \vnorm{u}_{L^p(\Omega;\beta)}),
                \end{split}
            \end{equation*}
            where Jensen's inequality was applied to $(u)_{B_\veps}$. The same result in the case $\beta < 0$ follows
        similarly, using $(u)_{B_\veps,\beta}$ in place of $(u)_{B_\veps}$.
		\end{proof}
		
		The following theorem on extensions follows easily from the relevant definitions and straightforward calculations.
		
		\begin{theorem}\label{thm:Extension}
			Given a function $g : \Gamma \to \bbR$, let $\psi \in C^{\infty}_c(\bbR^d)$ satisfy $0 \leq \psi \leq 1$, with $\psi(\bx) = 1$ for $|\bx|<1$ and $\psi(\bx) = 0$ for $|\bx| \geq 2$, and define $E_\Gamma g : \bbR^d \to \bbR$ by
			\begin{equation*}
				E_\Gamma g (\bx) := \sum_{\bx_0 \in \Gamma} g(\bx_0) \psi \left( \frac{\bx-\bx_0}{R} \right).
			\end{equation*}
			Then for $\beta < d$, $E_\Gamma g$ is a linear extension operator on $W^{1,p}(\Omega;\beta)$, i.e. $E_\Gamma g = g$ on $\Gamma$ and $\vnorm{E_\Gamma g}_{W^{1,p}(\Omega;\beta)} \leq C(\beta,\Omega) \vnorm{g}_{L^{\infty}(\Gamma)}$.
		\end{theorem}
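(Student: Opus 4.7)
The plan is to verify the two required properties directly from the definition of $E_\Gamma g$, exploiting the pointwise separation of $\Gamma$ from \Cref{sec:assump-wei} and the radial behavior of $\gamma$ near each point of $\Gamma$ dictated by \eqref{assump:weight}. The range $\beta < d$ will enter precisely as the integrability threshold for the singular radial weight $|\bx-\bx_0|^{-\beta}$ near $\bx_0 \in \Gamma$.

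First I would verify $E_\Gamma g(\bx_0) = g(\bx_0)$ for each $\bx_0 \in \Gamma$. By the assumption $B(\bx_0,4R) \cap \Gamma = \{\bx_0\}$, any other $\by_0 \in \Gamma$ satisfies $|\bx_0-\by_0| \geq 4R$, hence $\psi((\bx_0-\by_0)/R) = 0$ since $\supp \psi \subset B(0,2)$. The only surviving summand is the one with $\by_0 = \bx_0$, giving $g(\bx_0)\psi(0) = g(\bx_0)$. The same separation implies that the balls $\{B(\bx_0, 2R)\}_{\bx_0 \in \Gamma}$ are pairwise disjoint, so at every $\bx$ at most one summand of $E_\Gamma g(\bx)$ or $\grad E_\Gamma g(\bx)$ is nonzero.

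For the $L^p(\Omega;\beta)$ bound, since $|\psi|\leq 1$ and the supports are disjoint,
\begin{equation*}
\vnorm{E_\Gamma g}_{L^p(\Omega;\beta)}^p \leq \vnorm{g}_{L^\infty(\Gamma)}^p \sum_{\bx_0 \in \Gamma} \int_{B(\bx_0,2R) \cap \Omega} \gamma(\bx)^{-\beta}\, \rmd \bx.
\end{equation*}
I would split each summand into the core $B(\bx_0, R)$ and the annulus $B(\bx_0, 2R) \setminus B(\bx_0, R)$. On the core, \eqref{assump:weight} gives $\gamma(\bx) = |\bx-\bx_0|$, and a direct calculation in spherical coordinates yields
\begin{equation*}
\int_{B(\bx_0,R)} |\bx-\bx_0|^{-\beta}\, \rmd \bx = \frac{\sigma(\bbS^{d-1}) R^{d-\beta}}{d-\beta},
\end{equation*}
which is finite precisely because $\beta < d$. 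On the annulus, $\gamma$ is continuous, strictly positive, and hence bounded above and away from zero (by continuity on a compact set disjoint from $\Gamma$), making $\gamma^{-\beta}$ bounded regardless of the sign of $\beta$.

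The gradient estimate is analogous and simpler: differentiating yields $\grad E_\Gamma g(\bx) = R^{-1} \sum_{\bx_0 \in \Gamma} g(\bx_0)(\grad \psi)((\bx-\bx_0)/R)$, each summand supported in the annulus $B(\bx_0, 2R) \setminus B(\bx_0, R)$ where $\gamma^{-\beta}$ is already bounded. Summing the two estimates and absorbing the fixed constants $R$, $\psi$, $|\Gamma|$, and $\sup \gamma^{-\beta}$ on the annuli into $C(\beta,\Omega)$ produces the stated norm bound. No serious obstacle arises; the one point worth flagging is the need for a positive lower bound on $\gamma$ on each transition annulus, which follows from the continuity of $\gamma$ on the compact set $\overline{B(\bx_0,2R)}\setminus B(\bx_0, R)$ together with its positivity there, both implicit in the construction described after \eqref{assump:weight}.
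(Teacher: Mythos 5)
Your proof is correct, and since the paper offers no written proof (it states the result "follows easily from the relevant definitions and straightforward calculations"), your argument is precisely the intended computation: the $4R$-separation makes the summands have disjoint supports and gives $E_\Gamma g=g$ on $\Gamma$, the core-ball integral of $|\bx-\bx_0|^{-\beta}$ is finite exactly for $\beta<d$, and the gradient lives only on the annuli where $\gamma^{-\beta}$ is bounded. Your flag about needing $\gamma$ bounded away from zero on the transition annuli is legitimate but harmless, since the comparability $\gamma\approx\dist(\cdot,\Gamma)$ is built into the construction and is used elsewhere in the paper.
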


		\subsection{Homogeneous weighted spaces}

		We now define another Banach space 
		\begin{equation}
			V^{1,p}(\Omega;\beta) := \{ u \in L^p(\Omega;\beta+p) \, :\, \vnorm{u}_{ V^{1,p}(\Omega;\beta) } < \infty \}, \qquad \beta \in \bbR,
		\end{equation}
		with norm defined by 
		\begin{equation*}
			\vnorm{u}_{ V^{1,p}[\delta](\Omega;\beta) }^p := \Vnorm{u}_{L^p(\Omega;\beta+p)}^p + \Vnorm{\grad u}_{L^p(\Omega;\beta)}^p.
		\end{equation*}
		This space will help us further understand $W^{1,p}(\Omega;\beta)$, and also give a clear framework for the variational problems.
		
		\begin{theorem}[\cite{Rakosnik1989embeddings}, Theorem 1]\label{thm:Density:V}
			For any $\beta \in \bbR$, $C^\infty_c(\overline{\Omega} \setminus \Gamma)$ is dense in $V^{1,p}(\Omega;\beta)$.
		\end{theorem}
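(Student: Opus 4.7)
I would prove the density by a two-step approximation: first truncate $u$ near the singular set $\Gamma$ to produce functions supported away from $\Gamma$, then smooth the truncated functions using standard density results on Lipschitz domains, since the weights become bounded once $\gamma$ is bounded from below.

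\textbf{Step 1: Truncation near $\Gamma$.} For each $k \in \bbN$ large enough that $2/k < R$, fix a cutoff $\zeta_k \in C^\infty(\bbR^d)$ with $0 \leq \zeta_k \leq 1$, $\zeta_k \equiv 0$ on $\bigcup_{\bx_0 \in \Gamma} B(\bx_0, 1/k)$, $\zeta_k \equiv 1$ outside $\bigcup_{\bx_0 \in \Gamma} B(\bx_0, 2/k)$, and $|\grad \zeta_k| \leq C k$. Set $u_k := u \zeta_k$. Then $\vnorm{u_k - u}_{L^p(\Omega;\beta+p)} \to 0$ by dominated convergence, and by the Leibniz rule
\begin{equation*}
    \vnorm{\grad u_k - \grad u}_{L^p(\Omega;\beta)} \leq \vnorm{(\zeta_k - 1)\grad u}_{L^p(\Omega;\beta)} + \vnorm{u \grad \zeta_k}_{L^p(\Omega;\beta)}.
\end{equation*}
The first term vanishes as $k \to \infty$ by dominated convergence. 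The crucial estimate is for the cross-term: on $\supp \grad \zeta_k$ one has $\gamma(\bx) = |\bx - \bx_0| \in [1/k, 2/k]$ by \eqref{assump:weight}, so $k^p \leq C \gamma(\bx)^{-p}$, yielding
\begin{equation*}
    \int_{\Omega} |u \grad \zeta_k|^p \gamma^{-\beta} \, \rmd \bx
    \leq C \int_{\bigcup_{\bx_0 \in \Gamma} B(\bx_0, 2/k)} |u|^p \gamma^{-\beta - p} \, \rmd \bx \longrightarrow 0
\end{equation*}
by dominated convergence, since $u \in L^p(\Omega;\beta+p)$.

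\textbf{Step 2: Smoothing away from $\Gamma$.} For each fixed $k$, the support of $u_k$ lies in $\{\bx \in \overline{\Omega^*} : \gamma(\bx) \geq 1/k\}$, on which both weights $\gamma^{-\beta}$ and $\gamma^{-\beta-p}$ are bounded from above. Hence $u_k \in W^{1,p}(\Omega^*)$ in the unweighted sense. Since $\Omega^*$ is Lipschitz, by standard density there exist $v_{k,j} \in C^\infty(\overline{\Omega^*})$ with $v_{k,j} \to u_k$ in $W^{1,p}(\Omega^*)$ as $j \to \infty$. Multiplying by a further cutoff $\psi_k \in C^\infty(\bbR^d)$ that equals $1$ on $\supp u_k$ and vanishes on $\bigcup_{\bx_0 \in \Gamma} B(\bx_0, 1/(2k))$ produces $\psi_k v_{k,j} \in C^\infty_c(\overline{\Omega} \setminus \Gamma)$, and boundedness of the weights on $\supp \psi_k$ gives $\psi_k v_{k,j} \to u_k$ in $V^{1,p}(\Omega;\beta)$. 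A standard diagonal extraction then yields a sequence in $C^\infty_c(\overline{\Omega} \setminus \Gamma)$ converging to $u$.

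\textbf{Main obstacle.} The only delicate point is the cross-term $u \grad \zeta_k$ in Step 1: the factor of $k$ from $\grad \zeta_k$ on an annulus where $\gamma \approx 1/k$ is exactly compensated by the extra factor of $\gamma^{-p}$ built into the $L^p(\Omega;\beta+p)$-part of the $V^{1,p}$-norm. This structural feature of $V^{1,p}(\Omega;\beta)$ is precisely what allows density to hold for every $\beta \in \bbR$, in contrast with \Cref{thm:Density} for $W^{1,p}(\Omega;\beta)$, whose proof required $\beta < d$ and more elaborate Hardy-inequality arguments near $\Gamma$.
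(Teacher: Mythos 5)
Your argument is correct, but note that the paper does not prove this statement at all: it is imported wholesale from R\'akosn\'ik (\cite{Rakosnik1989embeddings}, Theorem 1), a general density theorem for Sobolev spaces with power-type weights. Your proof is an elementary, self-contained substitute that works precisely because of two special features of the setting here: $\Gamma$ is a finite point set with $\gamma(\bx)=|\bx-\bx_0|$ near each $\bx_0$ by \eqref{assump:weight}, and the $V^{1,p}$-norm already carries the stronger weight $\gamma^{-\beta-p}$ on the function itself, so the factor $k^p$ produced by $\grad \zeta_k$ on the annulus where $\gamma\approx 1/k$ is absorbed with no Hardy inequality — exactly the point you isolate at the end, and exactly why the result holds for every $\beta\in\bbR$. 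The same cutoff-annulus device is in fact used by the paper itself in the proof of \Cref{thm:Density} (the case $\beta\le d(1-p)$) and in Step 2 of \Cref{thm:TraceChar}, so your route is very much in the paper's spirit; what the citation buys instead is generality, since R\'akosn\'ik's theorem also covers the higher-dimensional labeled sets of \Cref{sec:GeneralCodim} (where your argument would go through with the generalized distance of \eqref{assump:HigherDimWeight}, but would need to be redone). Two small points to tighten: in Step 2, the implication $u_k\in W^{1,p}(\Omega^*)$ uses that the weights are bounded \emph{below} on $\supp u_k$ (equivalently $\gamma$ bounded above there), whereas the convergence $\psi_k v_{k,j}\to u_k$ in $V^{1,p}(\Omega;\beta)$ uses that they are bounded \emph{above} (i.e.\ $\gamma$ bounded away from $0$ on $\supp\psi_k$, which the construction of $\gamma$ guarantees away from $\Gamma$) — both bounds hold, but your wording invokes only one of them; and you should state explicitly that $u_k$, vanishing in a neighborhood of the punctures $\Gamma\cap\Omega^*$, extends by zero to an element of $W^{1,p}(\Omega^*)$ before invoking the standard (unweighted) density theorem on the Lipschitz domain $\Omega^*$.
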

  
		\begin{theorem}[Embedding]\label{thm:Hardy}
			Let $d - p < \beta$. Then there exists a constant $C = C(d,p,\beta,\Omega)$ such that
			\begin{equation}\label{eq:Hardy:1Pt}
				\int_{\Omega} \frac{|u(\bx)|^p }{ \gamma(\bx)^{\beta+p} } \, \rmd \bx \leq C \int_{\Omega} \frac{|\grad u(\bx)|^p }{ \gamma(\bx)^{\beta} } \, \rmd \bx, \qquad \forall u \in V^{1,p}(\Omega;\beta).
			\end{equation}
		\end{theorem}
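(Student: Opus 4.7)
The approach is to reduce the multi-dimensional Hardy inequality \eqref{eq:Hardy:1Pt} to the one-dimensional Hardy inequality \eqref{eq:Hardy1D:0}, which applies precisely because $\beta > d-p$. By \Cref{thm:Density:V}, it suffices to prove \eqref{eq:Hardy:1Pt} for $u \in C^\infty_c(\overline{\Omega}\setminus\Gamma)$; such $u$ vanish identically in some open neighborhood of each $\bx_0 \in \Gamma$, which delivers the vanishing at $r=0$ needed to invoke \eqref{eq:Hardy1D:0}.

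\textbf{Local Hardy near each $\bx_0$.} Using assumption \eqref{assump:weight}, choose smooth cutoffs $\phi_{\bx_0} \in C^\infty_c(B(\bx_0, R))$ with $\phi_{\bx_0} \equiv 1$ on $B(\bx_0, R/2)$, so that $\gamma(\bx) = |\bx-\bx_0|$ on $\supp \phi_{\bx_0}$. Set $w := u\phi_{\bx_0}$, extended by zero outside its support. In polar coordinates $(r, \bsomega)$ centered at $\bx_0$, let $v_{\bsomega}(r) := w(\bx_0 + r\bsomega)$; this is absolutely continuous, compactly supported in $[0, \infty)$, and vanishes near $r = 0$. Applying \eqref{eq:Hardy1D:0} for each fixed $\bsomega$, then integrating over $\bsomega \in \bbS^{d-1}$ using the polar Jacobian $r^{d-1}$ and $|\p_r u| \leq |\grad u|$, gives
\[
    \int_{\bbR^d} \frac{|w(\bx)|^p}{|\bx-\bx_0|^{\beta+p}} \rmd \bx \leq C(d,p,\beta) \int_{\bbR^d} \frac{|\grad w(\bx)|^p}{|\bx-\bx_0|^\beta} \rmd \bx.
\]
Expanding $\grad w = \phi_{\bx_0} \grad u + u \grad \phi_{\bx_0}$ and using that $\supp \grad \phi_{\bx_0} \subset B(\bx_0, R) \setminus B(\bx_0, R/2)$ (where $\gamma \geq R/2$), this yields the local estimate
\[
    \int_{B(\bx_0, R/2) \cap \Omega} \frac{|u|^p}{\gamma^{\beta+p}} \rmd \bx \leq C \int_\Omega \frac{|\grad u|^p}{\gamma^\beta} \rmd \bx + C_R \int_{\Omega_{\mathrm{far}}} |u|^p \rmd \bx,
\]
where $\Omega_{\mathrm{far}} := \Omega \setminus \bigcup_{\bx_0 \in \Gamma} B(\bx_0, R/2)$.

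\textbf{Assembly and main obstacle.} On $\Omega_{\mathrm{far}}$ the weight $\gamma$ is bounded above and below by positive constants, so the weighted and unweighted $L^p$ norms are comparable there. Summing the local inequality over $\bx_0 \in \Gamma$ reduces \eqref{eq:Hardy:1Pt} to absorbing $\int_{\Omega_{\mathrm{far}}} |u|^p \rmd \bx$ into $\int_\Omega |\grad u|^p/\gamma^\beta \rmd \bx$; this absorption is the essential technical difficulty, since no a priori boundary or mean-zero condition is available on $\Omega_{\mathrm{far}}$ and the vanishing neighborhood of $u$ around $\Gamma$ can shrink under approximation. The cleanest resolution is to apply the one-dimensional Hardy argument not on the small ball $B(\bx_0, R/2)$ but on the full radial extent of $\Omega^*$ from $\bx_0$: an integration-by-parts proof of Hardy on $[0, R(\bsomega)]$ with $v(0) = 0$ shows that the boundary contribution at $r = R(\bsomega)$ has the favorable sign when $\beta > d-p$ and can simply be dropped, so that \eqref{eq:Hardy1D:0} extends to the full interval without requiring $v(R(\bsomega)) = 0$. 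Decomposing the Lipschitz domain $\Omega^*$ into finitely many subdomains, each star-shaped with respect to some $\bx_0 \in \Gamma$, applying this no-boundary-term Hardy on each, and exploiting that $\gamma$ and $|\bx-\bx_0|$ are uniformly comparable on $\Omega^* \setminus B(\bx_0, R)$ (both bounded away from $0$ there), closes the estimate with no residual $L^p$ term.
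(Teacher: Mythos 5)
Your local step (reduction by density via \Cref{thm:Density:V}, cutoff near each $\bx_0$, and the one-dimensional Hardy inequality \eqref{eq:Hardy1D:0} in polar coordinates) follows the same route as the paper, and you correctly isolate the real difficulty: removing the leftover term $\int_{\Omega_{\mathrm{far}}}|u|^p\,\rmd\bx$. Your observation that \eqref{eq:Hardy1D:0} only needs vanishing at $r=0$, the outer boundary term having a favorable sign when $\beta>d-p$, is also correct and is exactly why a full-radial-extent argument works on star-shaped domains. The gap is in the geometric step you use to exploit this. A bounded Lipschitz domain cannot in general be decomposed into finitely many subdomains each star-shaped with respect to a point of $\Gamma$: a union of sets that are all star-shaped with respect to the same center is itself star-shaped with respect to that center, so already for $\Gamma=\{\bx_0\}$ your decomposition exists only if $\Omega^*$ itself is star-shaped about $\bx_0$, which an arbitrary Lipschitz domain (an L-shaped or dumbbell domain, say) is not. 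Without star-shapedness the radial segments from $\bx_0$ leave $\Omega$, the full-interval Hardy argument is unavailable, and the absorption of the far-field term is not achieved; so the proof does not close.

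Two further problems. First, the claim that $\gamma$ and $|\bx-\bx_0|$ are ``uniformly comparable on $\Omega^*\setminus B(\bx_0,R)$, both bounded away from $0$'' is false whenever $|\Gamma|\geq 2$, since $\gamma$ vanishes at the other labeled points. For $\beta\geq 0$ one still has the one-sided bound $|\bx-\bx_0|^{-\beta}\leq C\,\gamma(\bx)^{-\beta}$ needed on the right-hand side, but the theorem allows $\beta<0$ (when $p>d$), and in that range the comparison genuinely fails near the other points of $\Gamma$, so your assembly does not cover the stated range of exponents. Second, $\Gamma\subset\overline{\Omega}^*$ may contain boundary points; there the extension of $w=u\phi_{\bx_0}$ by zero outside $\Omega$ is not legitimate (the radial slices need not be absolutely continuous across $\p\Omega^*$), and this case requires a separate treatment. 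For comparison, the paper proceeds by localizing to a single point, citing Kufner's Theorem 8.15 when $\bx_0\in\p\Omega^*$, and for interior points proving the inequality only for $u\in C^\infty_c(B(\bx_0,R)\setminus\{\bx_0\})$ via \eqref{eq:Hardy1D:0} in polar coordinates; the reduction to that localized statement is precisely the obstacle you flag, and your proposed repair does not resolve it for general Lipschitz $\Omega^*$, general finite $\Gamma$, and all admissible $\beta$.
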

		
		\begin{proof}
			By a localization argument, we can again assume that $\Gamma = \{\bx_0\}$ and by \Cref{thm:Density:V} we can assume that $u \in C^{\infty}_c(\overline{\Omega} \setminus \Gamma)$.
			If $\bx_0 \in \p \Omega^*$, then the result is proved in \cite[Theorem 8.15]{Kufner1980Weighted}. So we assume that $\bx \in \Omega^*$.
			Thus, it suffices to show that
			\begin{equation}\label{eq:Hardy:Localized}
				\int_{B(\bx_0,R)} \frac{|u(\bx)|^p}{|\bx-\bx_0|^{\beta+p}} \, \rmd \bx \leq C \int_{B(\bx_0,R)} \frac{|\grad u(\bx)|^p}{|\bx-\bx_0|^\beta} \, \rmd \bx, \qquad \forall u \in C^\infty_c(B(\bx_0,R) \setminus \{\bx_0\}).
			\end{equation}
			This inequality is invariant under translations and dilations, so we need only show \eqref{eq:Hardy:Localized} for the case $\bx_0 = 0$ and $R = 1$.
			
			Since $\beta > d - p$, we can apply the one-dimensional Hardy inequality \eqref{eq:Hardy1D:0} to the function $v(r) = u(r\bsomega)$ for $\bsomega \in \bbS^{d-1}$, which satisfies $v \in C^1_c((0,\infty)) \cap C^0([0,\infty))$ and  $v(0) = 0$:
			\begin{equation*}
				\int_0^\infty \frac{|v(r)|^p}{ r^{\beta-d+1+p} } \, \rmd r \leq C(\beta,d,p) \int_0^\infty \frac{|v'(r)|^p}{ r^{\beta-d+1} } \, \rmd r.
			\end{equation*}
			Integrating this inequality over $\bsomega \in \bbS^{d-1}$, we obtain the desired inequality \eqref{eq:Hardy:Localized} after reverting from polar coordinates (also using that $|\p_r u| \leq |\grad_{\bx} u|$).
		\end{proof}
		
        Now, we define a homogeneous space using the closure with respect to the $W^{1,p}(\Omega;\beta)$-norm:
		\begin{equation*}
			W^{1,p}_{0,\Gamma}(\Omega;\beta) :=\overline{C^{\infty}_c(\overline{\Omega} \setminus \Gamma)}^{\vnorm{\cdot}_{W^{1,p}(\Omega;\beta)}}, \text{ for } \beta \in \bbR.
		\end{equation*}
        The following theorem relates the space $V^{1,p}(\Omega;\beta)$ to the spaces $W^{1,p}(\Omega;\beta)$ and $W^{1,p}_{0,\Gamma}(\Omega;\beta)$:

		\begin{theorem}\label{thm:V=W}
			For $\beta < d - p$, $W^{1,p}(\Omega;\beta) = V^{1,p}(\Omega;\beta)$. For $d- p < \beta$, $W^{1,p}_{0,\Gamma}(\Omega;\beta) = V^{1,p}(\Omega;\beta)$.
		\end{theorem}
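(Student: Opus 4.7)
Case 2 ($d-p < \beta$) follows almost immediately from the two previously established theorems. For the inclusion $V^{1,p}(\Omega;\beta) \subset W^{1,p}_{0,\Gamma}(\Omega;\beta)$: since $\gamma$ is bounded above on $\Omega$, the pointwise inequality $\gamma^{-\beta} \leq M^p \gamma^{-(\beta+p)}$ gives the continuous embedding $V^{1,p}(\Omega;\beta) \hookrightarrow W^{1,p}(\Omega;\beta)$. Then the approximating sequence in $C^\infty_c(\overline{\Omega}\setminus\Gamma)$ provided by \Cref{thm:Density:V} also converges in $W^{1,p}(\Omega;\beta)$, placing any $u \in V^{1,p}(\Omega;\beta)$ in $W^{1,p}_{0,\Gamma}(\Omega;\beta)$ via its very definition as a $W^{1,p}(\Omega;\beta)$-closure. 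For the reverse inclusion, I take any approximating sequence $\{u_n\} \subset C^\infty_c(\overline{\Omega}\setminus\Gamma)$ for $u \in W^{1,p}_{0,\Gamma}(\Omega;\beta)$ and apply the Hardy inequality \eqref{eq:Hardy:1Pt} of \Cref{thm:Hardy} to the differences $u_n - u_m$. Since the sequence is Cauchy in $W^{1,p}(\Omega;\beta)$, it follows that $\{u_n\}$ is also Cauchy in $L^p(\Omega;\beta+p)$, and combined with the convergence of $\grad u_n$ in $L^p(\Omega;\beta)$ this places $u$ in $V^{1,p}(\Omega;\beta)$, with the limit identified via a pointwise-a.e. subsequence.

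Case 1 ($\beta < d-p$) requires more work, but only in one direction; the embedding $V^{1,p}(\Omega;\beta) \subset W^{1,p}(\Omega;\beta)$ is immediate as in Case 2. The reverse embedding reduces to the Hardy-type estimate
\begin{equation*}
    \int_\Omega \frac{|u(\bx)|^p}{\gamma(\bx)^{\beta+p}}\,\rmd\bx \leq C\left(\int_\Omega \frac{|\grad u(\bx)|^p}{\gamma(\bx)^\beta}\,\rmd\bx + \int_\Omega \frac{|u(\bx)|^p}{\gamma(\bx)^\beta}\,\rmd\bx\right).
\end{equation*}
By \Cref{thm:Density} it suffices to verify this bound for $u \in C^\infty(\overline{\Omega})$ and then to pass to the limit by a standard Cauchy/pointwise-a.e. subsequence argument. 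A partition-of-unity argument further reduces the smooth case to the model setup $\Gamma = \{\bx_0\}$, $\gamma(\bx) = |\bx-\bx_0|$ on $B(\bx_0,R)$. With a cutoff $\varphi \in C^\infty_c(B(\bx_0,2R))$ equal to $1$ on $B(\bx_0,R)$, I would pass to polar coordinates about $\bx_0$ and apply the one-dimensional Hardy inequality \eqref{eq:Hardy1D:inf} to $v(r) := (\varphi u)(\bx_0 + r\bsomega)$ for each fixed $\bsomega$. Integrating over $\bsomega \in \bbS^{d-1}$ bounds the weighted integral of $|u|^p$ over $B(\bx_0,R)$ by that of $|\grad u|^p$ over $B(\bx_0,2R)$ plus a remainder supported in the annulus $B(\bx_0,2R) \setminus B(\bx_0,R)$, where $\gamma$ is bounded away from zero so the remainder is controlled by $\vnorm{u}_{L^p(\Omega;\beta)}^p$.

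The key technical obstacle is that \eqref{eq:Hardy1D:inf} must be applied to $v$ without any vanishing assumption at $r = 0$. This is permissible precisely in the regime $\beta < d-p$: the weight $r^{-(\beta-d+1+p)}$ on the left-hand side of \eqref{eq:Hardy1D:inf} is integrable near $r=0$, so a nonzero value $v(0)$ introduces no divergence. This stands in sharp contrast to the regime $\beta > d-p$, where \eqref{eq:Hardy1D:0} forces $v(0) = 0$, and it is exactly this dichotomy that distinguishes the two cases of the theorem and explains why the trace-free space $W^{1,p}_{0,\Gamma}$, rather than the full $W^{1,p}$, appears in Case 2.
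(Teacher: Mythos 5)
Your overall strategy coincides with the paper's. For $d-p<\beta$ you combine the Hardy inequality of \Cref{thm:Hardy} with the density of $C^\infty_c(\overline{\Omega}\setminus\Gamma)$ (\Cref{thm:Density:V} and the very definition of $W^{1,p}_{0,\Gamma}(\Omega;\beta)$), exactly as the paper does; for $\beta<d-p$ you reduce to the estimate \eqref{eq:V=W:Pf1} for smooth functions and prove it in polar coordinates about $\bx_0$ via the one-dimensional Hardy inequality \eqref{eq:Hardy1D:inf}, which is also the paper's computation. Your observation that \eqref{eq:Hardy1D:inf} requires no vanishing at $r=0$ (only compact support, supplied by the cutoff) is correct, and running the argument for $u\in C^\infty(\overline{\Omega})$ via \Cref{thm:Density} and then passing to the limit is a clean way to organize that step.

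The one genuine gap is the case $\bx_0\in\Gamma\cap\p\Omega^*$, which the statement allows since $\Gamma\subset\overline{\Omega^*}$. Your polar-coordinate argument tacitly assumes that for each direction $\bsomega$ the ray $r\mapsto\bx_0+r\bsomega$ remains in $\Omega$ on an interval starting at $0$, i.e. that $\Omega$ is locally star-shaped about $\bx_0$; a Lipschitz domain need not have this property at a boundary point (a ray can exit and re-enter), so $v(r)=(\varphi u)(\bx_0+r\bsomega)$ need not be defined and absolutely continuous on an interval to which \eqref{eq:Hardy1D:inf} applies, and the conversion between the spherical integral and the integral over $\Omega\cap B(\bx_0,R)$ breaks down. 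The paper sidesteps this by quoting the embedding result of Edmunds--Kufner--R\'akosn\'ik for $\bx_0\in\p\Omega^*$ and running the radial argument only for interior $\bx_0$ (where, since $\Gamma$ is finite, one may shrink the cutoff radius so that $\supp\varphi\subset\Omega^*$, the annulus error being absorbed into $\vnorm{u}_{L^p(\Omega;\beta)}$ as you describe). To close your proof you would need either that citation, a weighted extension across $\p\Omega^*$, or a bi-Lipschitz flattening/cone argument for the boundary points; for interior points of $\Gamma$, and for the whole case $d-p<\beta$ (where \Cref{thm:Hardy} already covers boundary points), your argument is complete.
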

		
		\begin{proof}
			Since $\vnorm{u}_{L^p(\Omega;\beta)} \leq C(p,\Omega) \vnorm{u}_{L^p(\Omega;\beta+p)}$, it suffices to show that
                \begin{align}
                \label{eq:V=W:Pf1}
				\vnorm{u}_{L^p(\Omega;\beta+p)} &\leq C \vnorm{u}_{W^{1,p}(\Omega;\beta)}, \qquad \forall u \in W^{1,p}(\Omega;\beta) \text{ when } \beta < d - p, \text{ and } \\
                \label{eq:V=W:Pf2}
				\vnorm{u}_{L^p(\Omega;\beta+p)} &\leq C \vnorm{u}_{W^{1,p}(\Omega;\beta)}, \qquad \forall u \in W^{1,p}_{0,\Gamma}(\Omega;\beta) \text{ when } \beta > d - p.
                \end{align}
			As before, we may assume that $\Gamma= \{\bx_0\}$. The result is stated and proved in \cite[Theorem 2.3]{Edmunds1985Embeddings} when $\bx_0 \in \p \Omega^*$, so we assume that $\bx_0 \in \Omega^*$. Again using a similar localization argument, and noting that \eqref{eq:V=W:Pf1}-\eqref{eq:V=W:Pf2} are invariant under translation and dilation, we may assume that $\bx_0 = 0$ and that $u \in C^\infty_c(B(0,1) \setminus \{0\})$.
			
			First, assume that $\beta < d - p$. 			
			Then apply the Hardy inequality \eqref{eq:Hardy1D:inf} to the function $v(r) = u(r\bsomega)$ for $\bsomega \in \bbS^{d-1}$:
			\begin{equation*}
				\int_0^\infty \frac{|v(r)|^p}{ r^{\beta-d+1+p} } \, \rmd r \leq C(\beta,d,p) \int_0^\infty \frac{|v'(r)|^p}{ r^{\beta-d+1} } \, \rmd r, \qquad \text{ for } -\infty < \beta - d + 1 < 1-p.
			\end{equation*}
			Integrating over $\bsomega \in \bbS^{d-1}$, we obtain the desired inequality \eqref{eq:V=W:Pf1} after reverting from polar coordinates (also using that $|\p_r u| \leq |\grad_{\bx} u|$).
			
			Now assume that $\beta > d - p$. Then the Hardy inequality \eqref{eq:Hardy:1Pt} 
			actually holds for all $u \in C^\infty(\overline{\Omega}\setminus \Gamma)$. Then \eqref{eq:V=W:Pf2} follows by density of $C^\infty(\overline{\Omega}\setminus \Gamma)$ in $W^{1,p}_{0,\Gamma}(\Omega;\beta)$ as per the definition.
			
		\end{proof}
    
		Although we did not define the space for $\beta \geq d$, this result, in essence, says that functions with finite $W^{1,p}(\Omega;\beta)$ norm must have trace $0$ on $\Gamma$ when $\beta \geq d$. On the other hand, when $\beta < d-p$, this result says that functions in $W^{1,p}(\Omega;\beta)$ do not have traces on $\Gamma$, since as a consequence both $C^\infty(\overline{\Omega})$ and $C^\infty(\overline{\Omega} \setminus \Gamma)$ are dense in $W^{1,p}(\Omega;\beta)$.
        In the third regime, $d-p < \beta < d$, we can characterize the homogeneous space $W^{1,p}_{0,\Gamma}(\Omega;\beta)$ using the trace as follows:

        \begin{theorem}\label{thm:TraceChar}
			For $d - p < \beta < d$, $u \in W^{1,p}(\Omega;\beta)$ belongs to $W^{1,p}_{0,\Gamma}(\Omega;\beta)$ if and only if and $T_{\Gamma} u = 0$.
		\end{theorem}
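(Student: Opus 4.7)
The $(\Rightarrow)$ direction is immediate: every $\varphi \in C^\infty_c(\overline{\Omega}\setminus \Gamma)$ vanishes in a neighborhood of $\Gamma$ and so satisfies $T_\Gamma \varphi = 0$; since $T_\Gamma$ is a bounded linear operator on $W^{1,p}(\Omega;\beta)$ by \Cref{thm:Trace}, it annihilates the whole closure $W^{1,p}_{0,\Gamma}(\Omega;\beta)$.

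For $(\Leftarrow)$, my plan is to show that $u \in V^{1,p}(\Omega;\beta)$ and then appeal to \Cref{thm:V=W}, which identifies $V^{1,p}(\Omega;\beta) = W^{1,p}_{0,\Gamma}(\Omega;\beta)$ in the regime $\beta > d-p$. Let $u \in W^{1,p}(\Omega;\beta)$ with $T_\Gamma u = 0$. By \Cref{thm:Density} choose $u_n \in C^\infty(\overline{\Omega})$ with $u_n \to u$ in $W^{1,p}(\Omega;\beta)$. The functions $u_n$ need not vanish on $\Gamma$, so I correct them: using the extension operator of \Cref{thm:Extension} applied to the trace values $u_n|_\Gamma$, define
\[
    v_n := u_n - E_\Gamma(u_n|_\Gamma) \in C^\infty(\overline{\Omega}), \qquad v_n(\bx_0) = 0 \text{ for every } \bx_0 \in \Gamma.
\]
Since $T_\Gamma$ is continuous and $T_\Gamma u = 0$, we have $u_n|_\Gamma \to 0$ in $\bbR^{|\Gamma|}$; by \Cref{thm:Extension} this forces $\vnorm{E_\Gamma(u_n|_\Gamma)}_{W^{1,p}(\Omega;\beta)} \to 0$, so that $v_n \to u$ in $W^{1,p}(\Omega;\beta)$ as well.

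The main technical step is to prove a Hardy-type inequality for the smooth correctors $v_n$, namely
\begin{equation}\label{eq:CorrectorHardy}
    \int_{\Omega} \frac{|v_n(\bx)|^p}{\gamma(\bx)^{\beta+p}} \, \rmd \bx \leq C\bigl( \vnorm{\grad v_n}_{L^p(\Omega;\beta)}^p + \vnorm{v_n}_{L^p(\Omega)}^p \bigr),
\end{equation}
with $C$ independent of $n$. I plan to obtain \eqref{eq:CorrectorHardy} by the same localization strategy as in \Cref{thm:Hardy}: partition of unity reduces to the case $\Gamma = \{\bx_0\}$ and a ball $B(\bx_0,R) \subset \overline{\Omega}^*$ on which $\gamma(\bx) = |\bx-\bx_0|$. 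For $\bsomega \in \bbS^{d-1}$, the function $r \mapsto g(r) := v_n(\bx_0 + r\bsomega)$ is smooth with $g(0) = 0$; multiplying by a smooth cutoff $\chi$ that equals $1$ on $[0,R/2]$ and vanishes on $[R,\infty)$ makes $g\chi$ admissible in the one-dimensional Hardy inequality \eqref{eq:Hardy1D:0}, since $\beta > d-p$. Expanding $(g\chi)'$ produces an error term supported on $[R/2,R]$ where $r$ is bounded below, which is absorbed into $\vnorm{v_n}_{L^p(\Omega)}^p$. Integrating the resulting 1D estimate over $\bsomega$ and reverting to Cartesian coordinates yields \eqref{eq:CorrectorHardy} on $B(\bx_0,R/2)$; the region outside the union of these half-balls contributes harmlessly since $\gamma$ is bounded below there. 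The delicate point — which is the main obstacle — is that $v_n$ is not compactly supported away from $\Gamma$, so one cannot directly invoke \Cref{thm:Hardy}; the cutoff bookkeeping above is what is new.

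With \eqref{eq:CorrectorHardy} in hand, applied to $v_n - v_m$, I obtain
\[
    \vnorm{v_n - v_m}_{L^p(\Omega;\beta+p)}^p \leq C \vnorm{v_n - v_m}_{W^{1,p}(\Omega;\beta)}^p \xrightarrow{n,m\to \infty} 0,
\]
so $\{v_n\}$ is Cauchy in $L^p(\Omega;\beta+p)$. Since the weight satisfies $\gamma^{-\beta} \leq C\gamma^{-(\beta+p)}$ on $\Omega$, convergence in $L^p(\Omega;\beta+p)$ implies convergence in $L^p(\Omega;\beta)$, and uniqueness of limits forces the $L^p(\Omega;\beta+p)$ limit to coincide with $u$. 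Hence $u \in L^p(\Omega;\beta+p)$ and consequently $u \in V^{1,p}(\Omega;\beta)$. Invoking \Cref{thm:V=W} gives $u \in W^{1,p}_{0,\Gamma}(\Omega;\beta)$, completing the proof.
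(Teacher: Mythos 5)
Your proof is correct, but it takes a genuinely different route from the paper's. The paper works directly with $u$: from $T_\Gamma u = 0$ and smooth approximation it first derives the radial estimate \eqref{eq:TraceChar:Pf1}, then multiplies $u$ by cutoffs $1-\zeta(k\bx)$ supported away from $\Gamma$, uses \eqref{eq:TraceChar:Pf1} to absorb the commutator term $k^p\int_{\Omega\cap(B(0,2/k)\setminus B(0,1/k))}|u|^p\gamma^{-\beta}\,\rmd\bx$, and finally approximates each truncation (which lies in $V^{1,p}(\Omega;\beta)$ because it vanishes near $\Gamma$) by elements of $C^\infty_c(\overline{\Omega}\setminus\Gamma)$ via \Cref{thm:Density:V}. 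You instead correct the smooth approximants by subtracting $E_\Gamma(u_n|_\Gamma)$, prove a Hardy-type inequality for smooth functions vanishing pointwise on $\Gamma$ by the cutoff-plus-\eqref{eq:Hardy1D:0} device, deduce $u\in L^p(\Omega;\beta+p)$ by a Cauchy argument, and then quote \Cref{thm:V=W}. Both arguments ultimately rest on the one-dimensional Hardy inequality and on \Cref{thm:Density:V} (which enters yours through \Cref{thm:V=W}, proved before \Cref{thm:TraceChar}, so there is no circularity); your version has the added benefit of isolating the quantitative bound $\vnorm{u}_{L^p(\Omega;\beta+p)}\le C\vnorm{u}_{W^{1,p}(\Omega;\beta)}$ for zero-trace $u$, which is essentially the corollary the paper records immediately after \Cref{thm:TraceChar}, whereas the paper's truncation argument avoids the extension/correction step and the detour through $L^p(\Omega;\beta+p)$.

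Two details to tighten. First, points of $\Gamma$ may lie on $\p\Omega^*$, in which case $B(\bx_0,R)\not\subset\overline{\Omega}^*$ and the rays $\bx_0+r\bsomega$ can leave $\Omega$; as in the paper's proofs of \Cref{thm:Trace} and \Cref{thm:TraceChar}, flatten the Lipschitz boundary and integrate only over the cone of directions entering $\Omega$, which changes only constants. Second, state the cutoff error in your corrector inequality as a weighted norm of $v_n$ over the region where $\gamma$ is bounded below, rather than as $\vnorm{v_n}_{L^p(\Omega)}$: when $p>d$ the admissible range includes $\beta<0$, for which the unweighted $L^p(\Omega)$ norm is not dominated by $\vnorm{\cdot}_{W^{1,p}(\Omega;\beta)}$ near $\Gamma$. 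Since your derivation in fact localizes the error to the annulus $B(\bx_0,R)\setminus B(\bx_0,R/2)$ and to the set where $\gamma$ is bounded below, the bound by $\vnorm{v_n-v_m}_{W^{1,p}(\Omega;\beta)}$ does hold there, and the Cauchy step then goes through for the whole range $d-p<\beta<d$.
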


        \begin{proof}
            One implication is clear thanks to continuity of the trace; we will provide a proof for the reverse implication. Assume that $T_{\Gamma} u = 0$.
            We assume that $\Gamma = \{\bx_0\}$, that $\gamma(\bx) = |\bx-\bx_0|$, that $R = 1$, and that $\bx_0 = 0$; the general case will follow by a using a localization/partition of unity argument, as well as noting the density condition is invariant under translations and dilations. We may also assume -- flattening $\p \Omega^*$ if necessary -- that either $\Omega = B(0,1)$ or $\Omega = B(0,1) \cap \{ x_d > 0 \}$. If $\Omega^* = B(0,1)$ we can assume that $\supp u \Subset B(0,1)$.

            \underline{Step 1:} By \Cref{thm:Density} there exists a sequence $\{u_n\} \subset C^\infty(\overline{\Omega})$ such that $\lim\limits_{n \to \infty} \vnorm{u_n - u}_{W^{1,p}(\Omega;\beta)} = 0$ and by continuity of the trace we have $\lim\limits_{n \to \infty} |T_\Gamma u_n| = 0$. For $0 < r < 1$ and $\bsomega \in \cC := \{ \frac{\bx}{|\bx|} \, :  \bx \in \Omega \} \subset \bbS^{d-1}$, define $v_n(r,\bsomega) = u_n(r\bsomega)$ and $v(r,\bsomega) = u(r\bsomega)$. Then
            \begin{equation*}
                v_n(r,\bsomega) - u_n(0) = \int_0^r \p_r v_n(\varrho,\bsomega) \, \rmd \varrho = \int_0^r ( \p_r v_n(\varrho,\bsomega) \varrho^{\frac{d-1-\beta}{p}} ) \varrho^{-\frac{d-1-\beta}{p}} \, \rmd \varrho,
            \end{equation*}
            so that by H\"older's inequality
            \begin{equation*}
                \begin{split}
                    \int_{\cC} |v_n(r,\bsomega)|^p \, \rmd \sigma(\bsomega )
                    &\leq C |u_n(0)|^p + C \int_{\cC} \left( \int_0^r \varrho^{ - \frac{d-1-\beta}{p-1} } \, \rmd \varrho \right)^{p-1} \int_0^r \frac{|\p_r v_n(\varrho,\bsomega)|^p}{ \varrho^{\beta-d+1} } \, \rmd \varrho \, \rmd \sigma(\bsomega) \\
                    &\leq C |u_n(0)|^p + C r^{\beta-d+p} \int_0^r \int_{\cC} \frac{|\p_r v_n(\varrho,\bsomega)|^p}{ \varrho^{\beta-d+1} } \, \rmd \sigma(\bsomega) \, \rmd \varrho;
                \end{split}
            \end{equation*}
            the right-hand side is finite since $\beta > d - p$. Converting from polar coordinates, using that $|\p_r v| \leq |\grad_\bx u|$, and letting $n \to \infty$, we get that
            \begin{equation}\label{eq:TraceChar:Pf1}
                \int_{\cC} |v(r,\bsomega)|^p \, \rmd \sigma(\bsomega ) \leq C r^{\beta-d+p} \int_{B(0,r) \cap \Omega } \frac{|\grad u(\bx)|^p}{ |\bx|^\beta } \, \rmd \bx, \qquad \text{ for a.e. } r \in (0,1). 
            \end{equation}

            \underline{Step 2:} The rest of the proof is similar to that of \Cref{thm:Density}. Let $\zeta \in C^\infty(\bbR^d)$ be a function satisfying $0 \leq \zeta \leq 1$, $\zeta \equiv 1$ on $B(0,1)$ and $\zeta \equiv 0$ on $\bbR^d \setminus B(0,2)$. Then define $u_k(\bx) := u(\bx)(1-\zeta(k\bx))$. Then
			\begin{equation*}
				\begin{split}
					\int_{B(0,1) \cap \Omega} \frac{|\grad u_k - \grad u|^p}{|\bx|^\beta} \, \rmd \bx &\leq C \int_{B(0,2/k) \cap \Omega} \frac{|\grad u|^p}{|\bx|^\beta} \, \rmd \bx \\
                    &\quad + C k^p \int_{\Omega \cap (B(0,2/k) \setminus B(0,1/k))} \frac{|u(\bx)|^p}{|\bx|^\beta} \, \rmd \bx.
				\end{split}
			\end{equation*}
                We estimate the second integral by converting to polar coordinates and using \eqref{eq:TraceChar:Pf1}:
                \begin{equation*}
                    \begin{split}
                        &k^p \int_{\Omega \cap (B(0,2/k) \setminus B(0,1/k))} \frac{|u(\bx)|^p}{|\bx|^\beta} \, \rmd \bx \\
                        =& k^p \int_{1/k}^{2/k} \int_{\cC} \frac{|v(r,\bsomega)|^p}{r^{\beta-d+1}} \, \rmd \sigma(\bsomega) \, \rmd r \\
                        \leq& C k^p \int_{1/k}^{2/k} r^{p-1} \, \rmd r \int_{B(0,2/k) \cap \Omega } \frac{|\grad u(\bx)|^p}{ |\bx|^\beta } \, \rmd \bx 
                        \leq C \int_{B(0,2/k) \cap \Omega } \frac{|\grad u(\bx)|^p}{ |\bx|^\beta } \, \rmd \bx.
                    \end{split}
                \end{equation*}
			Therefore by continuity of the integral we get that
            \begin{equation}\label{eq:TraceChar:Pf2}
                \vnorm{u_k - u}_{W^{1,p}(\Omega;\beta)} \leq C \vnorm{u}_{W^{1,p}(\Omega \cap B(0,2/k);\beta)} \to 0 \text{ as } k \to \infty.
            \end{equation}
            We note that $u_k = 0$ on the set $\{ \bx \in \Omega \, : \, \gamma(\bx) < 1/k \}$. Therefore, $\beta + p > d$, but the weight $\gamma^{-\beta-p}$ is nonsingular on $\supp u_k$. Thus $\vnorm{u_k}_{V^{1,p}(\Omega;\beta)} < \infty$, and so by \Cref{thm:Density:V} we can for each $k \in \bbN$ choose a function $w_k \in C^{\infty}_c(\overline{\Omega} \setminus \Gamma)$ such that
            \begin{equation*}
                \vnorm{w_k - u_k}_{V^{1,p}(\Omega;\beta)} < \frac{1}{C' 2^k},
            \end{equation*}
            where $C' = C'(d,\beta,p,\Omega)$ is the constant for which $\vnorm{f}_{W^{1,p}(\Omega;\beta)} \leq C' \vnorm{f}_{V^{1,p}(\Omega;\beta)}$ for all $f \in V^{1,p}(\Omega;\beta)$. Therefore combining this with \eqref{eq:TraceChar:Pf2} gives
            \begin{equation*}
                \vnorm{w_k - u}_{W^{1,p}(\Omega;\beta)} \leq \vnorm{u_k-u}_{W^{1,p}(\Omega;\beta)} + \vnorm{w_k-u_k}_{W^{1,p}(\Omega;\beta)} \to 0 \text{ as } k \to \infty.
            \end{equation*}
        \end{proof}

{
\begin{remark}
    These results are consistent when $\beta=0$, which corresponds to the unweighted classical Sobolev space $W^{1,p}(\Omega)$. When $p < d$, we cannot make sense of Sobolev functions defined at a single point, consistent with the nonexistence of $T_\Gamma$ for $d-p < \beta$. On the other hand, when $p > d$, Sobolev functions are actually H\"older continuous by Morrey's inequality, i.e. $T_\Gamma$ exists for $d-p < \beta$.
\end{remark}
}
		
  The following theorem is a direct consequence by applying the earlier results in the following order: \Cref{thm:Trace}, \Cref{thm:Extension}, \Cref{thm:TraceChar}, \Cref{thm:V=W}, and \Cref{thm:Hardy}:
		
		\begin{theorem}
			For $d-p < \beta < d$ and for any $u \in W^{1,p}(\Omega;\beta)$, the function $\bar{u} := u - E_\Gamma \circ T_\Gamma u$ satisfies $\bar{u} \in V^{1,p}(\Omega;\beta)$. Moreover, for any $\bx_0 \in \Gamma$ the following Hardy inequality holds:
			\begin{equation*}
				\int_{\Omega \cap B(\bx_0,R)} \frac{|u(\bx)-u(\bx_0)|^p}{|\bx-\bx_0|^{\beta+p}} \, \rmd \bx \leq C(d,p,\beta) \int_{\Omega \cap B(\bx_0,R)} \frac{|\grad u(\bx)|^p}{|\bx-\bx_0|^{\beta}} \, \rmd \bx.
			\end{equation*}
		\end{theorem}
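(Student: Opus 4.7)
The plan is to chain the earlier results together in the order suggested, using the extension operator $E_\Gamma$ as a device to reduce a general function with well-defined trace to one with zero trace, at which point the homogeneous/inhomogeneous space characterizations take over.

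First, I would apply \Cref{thm:Trace} to conclude that $T_\Gamma u \in \bbR^{|\Gamma|}$ is well-defined for any $u \in W^{1,p}(\Omega;\beta)$ (this requires exactly the hypothesis $d-p < \beta < d$). Then \Cref{thm:Extension} produces an extension $E_\Gamma(T_\Gamma u) \in W^{1,p}(\Omega;\beta)$. From the explicit formula for $E_\Gamma$ one reads off that $T_\Gamma \circ E_\Gamma = \mathrm{Id}$ on $\bbR^{|\Gamma|}$, because the cutoffs $\psi((\bx - \bx_0)/R)$ are identically $1$ on $B(\bx_0, R)$ and, thanks to the separation hypothesis $B(\bx_0,4R) \cap \Gamma = \{\bx_0\}$, the terms in the sum corresponding to other points of $\Gamma$ vanish near $\bx_0$. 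Linearity of the trace then gives $T_\Gamma \bar u = T_\Gamma u - T_\Gamma u = 0$.

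Next, \Cref{thm:TraceChar} — which applies precisely in the regime $d-p < \beta < d$ — lets me upgrade the vanishing of the trace to the membership $\bar u \in W^{1,p}_{0,\Gamma}(\Omega;\beta)$. Since we are in the regime $\beta > d-p$, \Cref{thm:V=W} identifies $W^{1,p}_{0,\Gamma}(\Omega;\beta)$ with $V^{1,p}(\Omega;\beta)$, yielding $\bar u \in V^{1,p}(\Omega;\beta)$ as desired. This is the cleanest part of the argument: it is essentially a bookkeeping chain among the four structural theorems on local weighted spaces established earlier in the section.

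For the localized Hardy inequality, I would apply \Cref{thm:Hardy} to $\bar u$ and then restrict to $B(\bx_0, R) \cap \Omega$. The key observation here is that on $B(\bx_0, R)$ the extension is constant: $E_\Gamma(T_\Gamma u)(\bx) \equiv u(\bx_0)$, because $\psi((\bx-\bx_0)/R) = 1$ and the other terms in the defining sum for $E_\Gamma$ vanish on that ball by the separation property of $\Gamma$. Consequently $\bar u(\bx) = u(\bx) - u(\bx_0)$ and $\grad \bar u(\bx) = \grad u(\bx)$ on $B(\bx_0, R) \cap \Omega$, and substituting into the Hardy inequality of \Cref{thm:Hardy} (applied with weight $\gamma(\bx) = |\bx-\bx_0|$ near $\bx_0$) produces exactly the stated bound, with the constant depending only on $d$, $p$, $\beta$.

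The only point requiring genuine care — and thus the mildest obstacle — is verifying that $E_\Gamma(T_\Gamma u)$ truly equals the constant $u(\bx_0)$ on $B(\bx_0, R)$, which relies crucially on the separation assumption $B(\bx_0, 4R) \cap \Gamma = \{\bx_0\}$ together with the support of $\psi$. Once this is noted, the rest of the proof is the formal assembly described above.
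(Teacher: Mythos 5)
The first half of your argument — the chain \Cref{thm:Trace}, \Cref{thm:Extension}, the identity $T_\Gamma\circ E_\Gamma=\mathrm{Id}$ (via the separation hypothesis and the support of $\psi$), then \Cref{thm:TraceChar} and \Cref{thm:V=W} to conclude $\bar u\in V^{1,p}(\Omega;\beta)$ — is correct and is exactly the route the paper indicates for this statement, which it records only as a one-line consequence of those results.

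The second half has a genuine gap. Applying \Cref{thm:Hardy} to $\bar u$ gives an inequality whose \emph{right-hand side} is $C\int_\Omega |\grad\bar u|^p\gamma^{-\beta}\,\rmd\bx$, an integral over all of $\Omega$; you may restrict the left-hand side to $\Omega\cap B(\bx_0,R)$, but you cannot restrict the right-hand side of an inequality to a subdomain. Your observation that $\bar u=u-u(\bx_0)$ and $\grad\bar u=\grad u$ on $B(\bx_0,R)\cap\Omega$ is true, but outside that ball $\grad\bar u$ contains the cutoff gradient of $E_\Gamma T_\Gamma u$ (and contributions near the other points of $\Gamma$), so what your argument actually yields is a bound by $C\big(\int_\Omega|\grad u|^p\gamma^{-\beta}\,\rmd\bx+\sum_{\by_0\in\Gamma}|u(\by_0)|^p\big)$ with a constant depending on $\Omega$ and $R$ — not the stated inequality, which is localized on \emph{both} sides with constant $C(d,p,\beta)$ and only $\grad u$ on the right. (Cutting off, i.e. applying \Cref{thm:Hardy} to $(u-u(\bx_0))\chi$, fails for the same reason: the cutoff derivative produces lower-order terms on the annulus.) The genuinely local estimate should instead be proved as in the proof of \Cref{thm:Hardy} and Step 1 of \Cref{thm:TraceChar}: for $u\in C^\infty(\overline\Omega)$ apply the one-dimensional Hardy inequality \eqref{eq:Hardy1D:0} to $r\mapsto u(\bx_0+r\bsomega)-u(\bx_0)$ on $(0,R)$ — note this inequality needs no vanishing at the outer endpoint, only $\beta>d-p$, which makes the outer weight harmless — integrate over $\bsomega$, use $|\p_r u|\le|\grad u|$, and then pass to general $u\in W^{1,p}(\Omega;\beta)$ by the density result \Cref{thm:Density}, continuity of the trace (so $u_n(\bx_0)\to u(\bx_0)$), and Fatou's lemma. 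Note also that the compactly supported version \eqref{eq:Hardy:Localized} cannot be invoked directly either, since $u-u(\bx_0)$ does not vanish near $\p B(\bx_0,R)$.
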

		
		Finally, we conclude the section with an embedding theorem for weighted Sobolev spaces:
		\begin{theorem}\label{thm:CompactEmbed:ZeroDimCase}
			Let $q \in [1,\frac{dp}{d-p}]$, and let $\alpha \in \bbR$. The space $V^{1,p}(\Omega;\beta)$ is continuously embedded in $L^q(\Omega;\alpha)$ if and only if $\alpha$ and $\beta$ satisfy $d \left( \frac{1}{q} - \frac{1}{p} \right) - \frac{\alpha}{q} + \frac{\beta}{p} + 1 \geq 0$.
			
			If $q \in [1,\frac{dp}{d-p})$ and if $d \left( \frac{1}{q} - \frac{1}{p} \right) - \frac{\alpha}{q} + \frac{\beta}{p} + 1 > 0$ then the embedding is compact.
		\end{theorem}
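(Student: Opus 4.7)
The argument breaks into three pieces: necessity of the scaling inequality, sufficiency for continuous embedding, and compactness when the inequality is strict. Necessity follows from testing against the family $u_\sigma(\bx) := \gamma(\bx)^{-\sigma} \chi(\bx)$, with $\chi$ a smooth cutoff supported near some $\bx_0 \in \Gamma$ and equal to $1$ on a smaller neighborhood. A direct computation in polar coordinates shows $u_\sigma \in V^{1,p}(\Omega;\beta)$ iff $\sigma < (d-\beta-p)/p$ and $u_\sigma \in L^q(\Omega;\alpha)$ iff $\sigma < (d-\alpha)/q$; a continuous embedding forces the first range to lie in the second, which rearranges to the stated inequality.

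\textbf{Continuous embedding.} By \Cref{thm:Density:V} it suffices to prove the embedding inequality for $u \in C^\infty_c(\overline{\Omega} \setminus \Gamma)$. Fix a smooth partition of unity $\{\varphi_0\} \cup \{\varphi_{\bx_0}\}_{\bx_0 \in \Gamma}$ with $\varphi_0$ supported in $\{\bx \in \Omega : \gamma(\bx) > R/2\}$ and each $\varphi_{\bx_0}$ supported in $B(\bx_0,R)$. On $\supp \varphi_0$ the weights $\gamma^{-\alpha}, \gamma^{-\beta}, \gamma^{-\beta-p}$ are bounded above and below, so the classical Sobolev embedding $W^{1,p}(\Omega) \hookrightarrow L^q(\Omega)$ for $q \leq dp/(d-p)$ handles $\varphi_0 u$. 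For each $\varphi_{\bx_0} u$, translating so $\bx_0 = 0$, the weights coincide exactly with $|\bx|^{-(\cdot)}$ on the support, reducing to a Caffarelli--Kohn--Nirenberg-type inequality
\begin{equation*}
    \left( \int_{\bbR^d} \frac{|v|^q}{|\bx|^\alpha} \, \rmd \bx \right)^{1/q} \leq C \left( \int_{\bbR^d} \frac{|v|^p}{|\bx|^{\beta+p}} + \frac{|\grad v|^p}{|\bx|^\beta} \, \rmd \bx \right)^{1/p}, \quad v \in C^\infty_c(\bbR^d \setminus \{0\}).
\end{equation*}
I would establish this by interpolation between two endpoints. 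The endpoint $q = p$, $\alpha = \beta + p$ is just the Hardy inequality \Cref{thm:Hardy}. The other endpoint, $q = dp/(d-p)$ with $\alpha$ chosen so the scaling condition holds with equality, is a critical weighted Sobolev inequality, which I would prove via the substitution $v(\bx) = |\bx|^{-\theta} w(\bx)$ with $\theta$ forced by the scaling equality; the classical Sobolev inequality then applies to $w$, while the commutator from differentiating the prefactor is controlled again by \Cref{thm:Hardy}. Log-convex interpolation on the scales $(1/q, \alpha/q)$ then delivers the full range of $(q,\alpha)$ satisfying the stated inequality.

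\textbf{Compactness.} Under the strict scaling inequality with $q < dp/(d-p)$, both endpoints above become subcritical. Using the same partition of unity, the piece $\varphi_0 u$ enjoys compact embedding by the classical Rellich--Kondrachov theorem applied on a bounded Lipschitz domain with bounded weights. For each $\varphi_{\bx_0} u$, applying the CKN-type estimate to the truncation $\chi_\rho \varphi_{\bx_0} u$, where $\chi_\rho$ is a smooth cutoff of $B(\bx_0,\rho)$, produces a quantitative tail bound
\begin{equation*}
    \|u\|_{L^q(B(\bx_0,\rho);\alpha)} \leq C \rho^{s} \|u\|_{V^{1,p}(\Omega;\beta)}, \quad s := d\left(\frac{1}{q}-\frac{1}{p}\right) - \frac{\alpha}{q} + \frac{\beta}{p} + 1 > 0,
\end{equation*}
uniformly over bounded subsets of $V^{1,p}(\Omega;\beta)$. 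On the annulus $B(\bx_0,R) \setminus B(\bx_0,\rho)$ the weight is bounded, so Rellich--Kondrachov again supplies a strongly convergent subsequence there; a diagonal argument as $\rho \to 0$ then produces strong $L^q(\Omega;\alpha)$-convergence.

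\textbf{Main obstacle.} The crux is the critical weighted Sobolev inequality at $q = dp/(d-p)$. The substitution $v = |\bx|^{-\theta} w$ produces a zeroth-order commutator term whose control is precisely where the Hardy inequality \Cref{thm:Hardy} is invoked, and the correct $\theta$ is rigidly determined by the scaling equality; tracking this algebraic bookkeeping through the interpolation step is what ties the full subcritical range directly to the stated condition. For compactness, isolating the positive power of $\rho$ in the tail bound uses the strict inequality in essentially the same algebraic computation as the necessity argument, applied dually to truncations rather than to explicit power-type test functions.
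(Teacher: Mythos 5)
Your necessity argument via the test functions $\gamma^{-\sigma}\chi$ is correct, and is in fact more explicit than the paper, which only remarks that its sufficiency argument can be "run in reverse." Your compactness scheme (a quantitative tail bound near $\Gamma$ from the strict inequality, Rellich--Kondrachov on the region where the weights are bounded, then a diagonal argument) is structurally the same as the paper's. The genuine gap is in the sufficiency step. The displayed CKN-type inequality cannot hold for all $v\in C^\infty_c(\bbR^d\setminus\{0\})$ unless the scaling condition holds with \emph{equality}: under $v\mapsto v(\lambda\,\cdot\,)$ the right-hand side is homogeneous of degree $1+(\beta-d)/p$ (both terms, since the zeroth-order weight is $\beta+p$), while the left-hand side has degree $(\alpha-d)/q$; if $\tau:=d(\tfrac1q-\tfrac1p)-\tfrac{\alpha}{q}+\tfrac{\beta}{p}+1>0$, sending $\lambda\to0$ forces the left-hand side to vanish, so no constant exists. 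Consistently, your interpolation cannot reach such cases either: $\tau$ is affine in the interpolation variables $(1/q,\alpha/q)$ and vanishes at both of your endpoints, so log-convex interpolation never leaves the critical line $\tau=0$, and it only covers $q\in[p,\tfrac{dp}{d-p}]$ there. What is missing is the step that uses the bounded support: since $\varphi_{\bx_0}u$ is supported in $B(\bx_0,R)$, one replaces $|\bx|^{-\alpha}$ by $C_R\,|\bx|^{-\alpha_c}$, where $\alpha_c$ is the critical weight for the given $q$ (note $\tau=(\alpha_c-\alpha)/q$), and deduces the cases $\tau>0$ from the critical ones; a separate Hölder-type argument on the bounded domain is then still needed for $q<p$. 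Your endpoint computation itself (the substitution $v=|\bx|^{-\theta}w$ with the commutator absorbed by \Cref{thm:Hardy}, then the classical Sobolev inequality for $w$) is fine, but as written the key lemma is false for $\tau>0$ and the claim that interpolation "delivers the full range" does not hold. A further caution: the borderline $\tau=0$ with $q<p$ is genuinely delicate and is not reachable by your method or by Hölder; logarithmic examples such as $u=|\bx|^{-(d-\beta-p)/p}\bigl(\log(1/|\bx|)\bigr)^{-1}\chi$ (e.g.\ $d=3$, $p=2$, $\beta=0$, $q=1$, $\alpha=5/2$) lie in $V^{1,p}(\Omega;\beta)$ but not in $L^q(\Omega;\alpha)$, so your "full range" claim must at least be restricted there.

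For comparison, the paper proceeds differently: it covers a neighborhood of $\Gamma$ by Besicovitch balls $B(\bx_k,\tfrac1{30}\gamma(\bx_k))$ on which the weights are comparable to constants, applies the rescaled classical Sobolev inequality on each ball, and sums, obtaining directly the quantitative estimate $\vnorm{u}_{L^q(\Omega_n;\alpha)}\le C n^{-\tau}\vnorm{u}_{V^{1,p}(\Omega;\beta)}$ on the sets $\{\gamma<1/n\}$; this single estimate serves both as the continuity bound and as the equi-smallness used for compactness, and it covers the higher-dimensional $\Gamma$ of \Cref{sec:GeneralCodim} with no change. Your route through a weighted Sobolev/CKN inequality at each point of $\Gamma$ is a legitimate alternative for the zero-dimensional case, but it needs the bounded-support weight comparison spelled out above (and an argument for $q<p$) before it proves the stated theorem.
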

		
		The proof of \Cref{thm:CompactEmbed:ZeroDimCase} is postponed to \Cref{sec:GeneralCodim}, in which more general sets $\Gamma$ are simultaneously treated.

\section{Nonlocal weighted function spaces}\label{sec:WeightedNonlocalSpaces}

We introduce the nonlocal Banach space with
        a finer scale of weights
		\begin{equation}
			\mathfrak{V}^{p}[\delta](\Omega;\beta) := \{ u \in L^p(\Omega;\beta+p) \, :\, \vnorm{u}_{ \mathfrak{V}^{p}[\delta](\Omega;\beta) } < \infty \},  \qquad \beta \in \bbR,
		\end{equation}
		equipped with a norm defined by 
		\begin{equation*}
			\vnorm{u}_{ \mathfrak{V}^{p}[\delta](\Omega;\beta) }^p := \Vnorm{u}_{L^p(\Omega;\beta+p)}^p + [u]_{\mathfrak{W}^{p}[\delta](\Omega;\beta)}^p.
		\end{equation*}
        we also define the space that is homogeneous with respect to $\Gamma$
		\begin{equation*}
			\mathfrak{W}^{p}_{0,\Gamma}[\delta](\Omega;\beta) :=\overline{C^{\infty}_c(\overline{\Omega} \setminus \Gamma)}^{\vnorm{\cdot}_{\mathfrak{W}^{p}[\delta](\Omega;\beta)}}, \qquad  \beta \in \bbR.
		\end{equation*}

        Throughout this section, we take the assumptions $p \in (1,\infty)$, 
 \eqref{assump:weight}, \eqref{Assump:Kernel}, \eqref{assump:Horizon}, and $\beta \in \bbR$ unless noted otherwise.
				
		\subsection{Boundary-localized convolutions}\label{sec:LocalizedConvolution}

        In order to analyze the nonlocal variational problems, we seek an understanding of the nonlocal function spaces, analogous to the study of the local weighted Sobolev spaces in \Cref{sec:WeightedLocalSpaces}. To do this, we use a strategy similar to that of \cite{Scott2023Nonlocal,Scott2023Nonlocala}, and establish the desired properties (density of smooth functions, traces, embeddings, etc.) of the nonlocal function spaces by leveraging estimates for 
        the following convolution-type operator
\begin{equation}\label{eq:ConvolutionOperator}
			K_{\delta}u (\bx) := \int_{\Omega^0} \frac{1}{(\lambda_\delta(\bx))^d} \psi \left( \frac{|\by-\bx|}{ \lambda_\delta(\bx) } \right) u(\by) \,\rmd \by , \;\; \bx \in \Omega.
		\end{equation}
		Here, $\psi:\bbR \to [0,\infty)$ is a standard mollifier satisfying
		\begin{equation}\label{Assump:Kernel}
			\begin{gathered}
				\psi \in C^{k}(\bbR) \text{ for some } k \in \bbN_0 \cup \{\infty\},
				\; \psi(x) \geq 0\, \text{ and } \, \psi(-x) = \psi(x),\;\forall x\in\bbR,\\
				[-c_\psi,c_{\psi}] \subset \supp \psi \Subset (-1,1) \text{ for fixed } c_{\psi} > 0, \; \text{ and }
				\int_{\bbR^d} \psi(|\bx|) \, \rmd \bx = 1.
			\end{gathered}
			\tag{\ensuremath{\rmA_{\psi}}}
		\end{equation}
		
        The function governing the dilation in the mollifier is defined using the notation of \eqref{eq:localizationfunction}, that is, $\lambda_\delta(\bx) = \delta \lambda(\bx)$ where $\lambda :\overline{\Omega} \to [0,\infty)$ is a given function.
        This generalized distance -- or generalized heterogeneous localization -- function $\lambda$  satisfies the following:
		\begin{equation}\label{assump:Localization}
			\begin{aligned}
				i) & \, \text{there exists a constant } \kappa_0 \geq 1 \text{ such that } \\
				&\quad \frac{1}{\kappa_0} \dist(\bx,\p \Omega) \leq \lambda(\bx) \leq \kappa_0 \dist(\bx,\p \Omega),\; \forall  \bx \in \overline{\Omega}\,;\\
				ii) & \, \text{there exists a constant } \kappa_1 > 0 \text{ such that } \\
				&\quad |\lambda(\bx) - \lambda(\by)| \leq \kappa_1 |\bx-\by|, \; \forall \bx,\by \in \overline{\Omega}; \\
				iii) & \, \text{there exists } k_\lambda \in \bbN_0 \cup \{\infty\} \text{ such that the following holds: } \\
                & \quad \lambda \in C^0(\overline{\Omega}) \cap C^{k_\lambda}(\Omega) 
                \text{ and } 
                \text{for each multi-index } \alpha \in \bbN^d_0
                \text{ with } |\alpha| \leq k_\lambda, 
                \\
				&\quad \exists \kappa_{\alpha} > 0 \text{ such that } |D^\alpha \lambda(\bx)| \leq \kappa_{\alpha} |\dist(\bx,\p \Omega)|^{1-|\alpha|}, \;  \forall \bx \in \Omega.
			\end{aligned} \tag{\ensuremath{\rmA_{\lambda}}}
		\end{equation}
		For any domain $\Omega$, a generalized distance function $\lambda$ with 
        $k_\lambda = \infty$ and 
        with all $\kappa_\alpha$ depending only on $d$ is guaranteed to exist \cite[Chapter VI, Theorem 2]{Stein1970Singular}.
        We also note that \eqref{assump:weight} and \eqref{assump:Localization} have the quantities $\kappa_\alpha$ in common.
		
		As in \cite{Scott2023Nonlocal, Scott2023Nonlocala}, we refer to $K_{\delta}$ as a \textit{boundary-localized convolution} operator. This operator has all of the smoothing properties of classical convolution operators, and additionally recovers the boundary values of a function. To be precise, for all functions $u \in C^0(\overline{\Omega})$, $T K_\delta u = T u$, where $T u = u|_{\p \Omega}$ denotes the trace operator. This property of the boundary-localized convolution is preserved when the operator $T$ is extended to more general Sobolev and nonlocal function spaces.

		We now further specify notation. For any function $\psi: [0,\infty) \to \bbR$, we
		define
		\begin{equation}\label{eq:OperatorKernelDef}
			\begin{split}
				\psi_{\delta}(\bx,\by) &:= \frac{1}{\lambda_\delta(\bx)^{d}} {\psi} \left( \frac{|\by-\bx|}{\lambda_\delta(\bx)} \right) .
			\end{split}
		\end{equation}
		In particular, $\psi_{\delta}$
		defines a \textit{boundary-localizing} mollifier
		corresponding to a standard mollifier $\psi$ described in 
		\eqref{Assump:Kernel}. 
		Note that $\int_{\Omega} \psi_\delta(\bx,\by) \, \rmd \by = 1$ for all $\bx \in \Omega$ and for all $\delta < \underline{\delta}_0$. This is not the case when the arguments are reversed, and so we define the function
		\begin{equation}\label{eq:KernelIntegral}
			\Psi_{\delta}(\bx):= \int_{\Omega} \psi_{\delta}(\by,\bx) \, \rmd \by.
		\end{equation}

		In the event that $\Gamma = \emptyset$, such boundary-localized convolutions have been analyzed previously in \cite{Scott2023Nonlocal,Scott2023Nonlocala}. The next theorems, which hold for general $\Gamma$, are established by following similar lines of reasoning to those in the proofs of its analogue in \cite{Scott2023Nonlocal}. The adaptations to the present case $\Omega = \Omega^* \setminus \Gamma$, where $\Omega^*$ is a Lipschitz domain, are straightforward, and we provide them for completeness.
		
		\begin{theorem}\label{thm:KnownConvResults}
		  The following hold:
			\begin{enumerate}[1)]
				\item If $u \in L^1_{loc}(\Omega)$, then $K_{\delta} u \in C^{\infty}(\Omega)$.
				
				\item If $u \in C^0(\overline{\Omega})$,  then $K_{\delta} u \in  C^0(\overline{\Omega})$. Moreover, $K_{\delta} u(\bx) = u (\bx)$ for all $\bx \in \p \Omega$, and $K_{\delta} u \to u$ uniformly on $\overline{\Omega}$ as $\delta \to 0$.
				
				\item There exists a constant $C_0 = C_0(d,p,\beta,\psi,\kappa_1) > 0$ such that
				\begin{equation}\label{eq:ConvEst:Lp}
					\Vnorm{K_{\delta} u}_{L^p(\Omega;\beta)} \leq C_0 \Vnorm{u}_{L^p(\Omega;\beta)}, \quad \forall u \in L^p(\Omega;\beta),
				\end{equation}
				and in fact
				\begin{equation}\label{eq:ConvergenceOfConv}
					\lim\limits_{\delta \to 0} \Vnorm{K_{\delta} u - u}_{L^p(\Omega;\beta)} = 0, \quad \forall u \in L^p(\Omega;\beta).
				\end{equation}
				
				\item Let $\wt{K}_\delta$ be the operator defined for any $\bv : \Omega \to \bbR^d$ and $\bx \in \Omega$ as
				\begin{equation}\label{eq:AuxOperatorDefn}
					\wt{K}_{\delta} \bv(\bx) := \int_{\Omega}   \psi_{\delta}(\bx,\by) \left[ \bI - \frac{(\bx-\by) \otimes \grad \eta_\delta(\bx)}{\lambda_\delta(\bx)} \right] \bv(\by) \, \rmd \by.
				\end{equation}
				Then there exists a constant $C_1= C_1(d,p,\beta,\psi,\kappa_1) > 0$ such that
				\begin{align}
					\label{eq:ConvEst1:W1p:Pf1}
					&\grad K_\delta u(\bx) = \wt{K}_\delta[\grad u](\bx), \quad  \forall \, u \in V^{1,p}(\Omega;\beta), \, \bx \in \Omega, \\
					\label{eq:ConvEst:W1p}
					&\Vnorm{\grad K_{\delta} u}_{L^p(\Omega;\beta)} \leq C_1 \Vnorm{\grad u}_{L^{p}(\Omega;\beta)}, \;\; \forall u \in V^{1,p}(\Omega;\beta), 
					\\
					\label{eq:ConvergenceOfConv:W1p}	  &\lim\limits_{\delta \to 0} \Vnorm{K_{\delta} u - u}_{V^{1,p}(\Omega)} = 0, \quad \forall u \in V^{1,p}(\Omega).
				\end{align}
				Additionally, \eqref{eq:ConvEst1:W1p:Pf1}-\eqref{eq:ConvEst:W1p}-\eqref{eq:ConvergenceOfConv:W1p} also hold with $V^{1,p}(\Omega;\beta)$ replaced with $W^{1,p}(\Omega;\beta)$ and $\beta < d$.

				\item For $d-p < \beta < d$, the trace operator $T_\Gamma : W^{1,p}(\Omega;\beta) \to \bbR^{|\Gamma|}$ satisfies $T_\Gamma K_{\delta} u = T_\Gamma u$ for all $u \in W^{1,p}(\Omega;\beta)$.

			\end{enumerate}
		\end{theorem}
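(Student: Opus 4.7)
The strategy is to follow the template established in \cite{Scott2023Nonlocal,Scott2023Nonlocala} for the case $\Gamma = \emptyset$, with the main bookkeeping item being the comparison of the singular/degenerate weight $\gamma$ at interacting points. The key preliminary observation is that for $\bx \in \Omega$ and $\by$ with $|\by-\bx| < \lambda_\delta(\bx)$, one has $\gamma(\by) \approx \gamma(\bx)$ with constants depending only on $\kappa_0$ and $\kappa_1$; this follows from the Lipschitz bound in \eqref{assump:weight}, together with the relation $\lambda_\delta(\bx) \leq \kappa_0 \eta_\delta(\bx) \leq \kappa_0 \gamma(\bx)$ valid near $\Gamma$ (since $\Gamma \subset \p \Omega$ makes $\eta(\bx) \leq \gamma(\bx)$ there), and the horizon assumption \eqref{assump:Horizon}. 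This estimate lets one freely swap $\gamma(\bx)^{-\beta}$ and $\gamma(\by)^{-\beta}$ on the kernel support at the cost of harmless multiplicative constants.

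For (1), I would differentiate under the integral sign. Since $\lambda_\delta(\bx) > 0$ on $\Omega$ and both $\psi$ and $\lambda$ are smooth there, every derivative of $\psi_\delta(\bx,\by)$ in $\bx$ exists and is locally integrable in $\by$ uniformly on compact subsets of $\Omega$, giving $K_\delta u \in C^\infty(\Omega)$. For (2), continuity up to $\p \Omega$ and the identity $K_\delta u = u$ on $\p \Omega$ come from $\lambda_\delta(\bx) \to 0$ as $\bx \to \p \Omega$ together with continuity of $u$ and $\int \psi_\delta(\bx,\by)\,\rmd\by = 1$. Uniform convergence on $\overline{\Omega}$ as $\delta \to 0$ follows from uniform continuity of $u$ and a split according to whether $\lambda_\delta(\bx)$ is small or not.

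For (3), I would start with Jensen's inequality applied to the probability kernel $\psi_\delta(\bx,\cdot)$ on the inner integral, then apply Tonelli to put the $\by$-integral on the outside. The resulting integrand requires bounding $\int_\Omega \psi_\delta(\bx,\by)\gamma(\bx)^{-\beta}\,\rmd\bx$ by a constant multiple of $\gamma(\by)^{-\beta} \Psi_\delta(\by)$; this is exactly where the weight comparison from the first paragraph enters, and $\Psi_\delta$ is bounded using the Lipschitz property of $\lambda$ and a change of variables (cf. \cite[Scott2023Nonlocal]{}), giving \eqref{eq:ConvEst:Lp}. Then \eqref{eq:ConvergenceOfConv} follows by the standard three-epsilon argument, approximating $u \in L^p(\Omega;\beta)$ by compactly supported continuous functions (possible since $\gamma^{-\beta}$ is locally integrable for $\beta < d$ in the cases relevant here) and invoking part (2) for the uniform convergence on the approximants.

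For (4), the gradient formula \eqref{eq:ConvEst1:W1p:Pf1} is obtained by differentiating $\psi_\delta(\bx,\by)$ in $\bx$, integrating by parts in $\by$ (legitimate because $\psi_\delta(\bx,\cdot)$ has compact support inside $\Omega$ for every $\bx \in \Omega$), and reassembling the terms; this recovers precisely the kernel in $\widetilde K_\delta$. The bound \eqref{eq:ConvEst:W1p} then follows by the same Jensen + Tonelli + weight-comparison scheme as in (3), with the matrix factor in brackets controlled in operator norm by $1 + \kappa_1$. Convergence \eqref{eq:ConvergenceOfConv:W1p} is a density argument: use \Cref{thm:Density:V} (and \Cref{thm:Density}) to approximate by smooth functions, for which the convergence is classical, and conclude via the uniform operator bound just established. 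The extension to $W^{1,p}(\Omega;\beta)$ with $\beta < d$ is verbatim since the same density and weight-comparison inputs apply. For (5), given $u \in W^{1,p}(\Omega;\beta)$, I would use the characterization of $T_\Gamma u$ from \Cref{thm:Trace} as the limit of averages over $B(\bx_0,\veps) \cap \Omega$; since $K_\delta u$ is continuous on $\overline{\Omega} \setminus \Gamma$, and its averages over shrinking balls around $\bx_0 \in \Gamma$ can be rewritten via Fubini as averages of $u$ against a kernel whose mass concentrates at $\bx_0$ in the same way, the limits agree.

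The main technical obstacle is (4): correctly justifying the differentiation under the integral sign and integration by parts for a merely $V^{1,p}$ function rather than a smooth one. This is handled by a regularization-and-pass-to-the-limit argument using the density theorems of \Cref{sec:WeightedLocalSpaces}, but one must be careful that the approximants can be taken smooth up to $\p \Omega$ while respecting the weight, so that the identity \eqref{eq:ConvEst1:W1p:Pf1} first proved for smooth functions can be transferred to the limit via the operator bounds already in hand.
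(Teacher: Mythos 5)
Your items 1)--3) track the paper's proof almost verbatim: Jensen/H\"older plus Tonelli, the two-sided comparison $(1-\kappa\delta)\gamma(\bx)\le\gamma(\by)\le(1+\kappa\delta)\gamma(\bx)$ on the support of $\psi_\delta(\bx,\cdot)$, and the uniform bound on $\Psi_\delta$ give \eqref{eq:ConvEst:Lp}, and \eqref{eq:ConvergenceOfConv} is then a density-plus-uniform-bound argument exactly as in the paper (which reduces to $u\in C^1(\overline{\Omega})$; your reduction to compactly supported continuous functions is equally good, though the reason it works for every $\beta\in\bbR$ is that $K_\delta v-v$ is then supported where $\gamma$ is bounded below, not local integrability of $\gamma^{-\beta}$). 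On item 4) you take a genuinely different, and arguably cleaner, route to \eqref{eq:ConvEst1:W1p:Pf1}: expressing $\grad_\bx\psi_\delta(\bx,\by)$ through $\grad_\by\psi_\delta(\bx,\by)$ and $\div_\by[(\by-\bx)\psi_\delta(\bx,\by)]$ and integrating by parts in $\by$ works directly for $u\in V^{1,p}(\Omega;\beta)$, since pairing a weak derivative with the compactly supported test function $\psi_\delta(\bx,\cdot)$ is just the definition of weak derivative; this makes the ``main technical obstacle'' you flag at the end largely moot. The paper instead writes $K_\delta u(\bx)=\int_{B(0,1)}\psi(|\bz|)\,u(\bx+\lambda_\delta(\bx)\bz)\,\rmd\bz$, differentiates, and reverses the substitution; both give the kernel in \eqref{eq:AuxOperatorDefn}, and \eqref{eq:ConvEst:W1p}, \eqref{eq:ConvergenceOfConv:W1p} follow as you describe (the paper bounds $|\wt{K}_\delta[\grad u]|\le(1+\delta\kappa_1)K_\delta[|\grad u|]$ and invokes \eqref{eq:ConvEst:Lp}).

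Item 5) is the one place where your plan has a real gap as written. You rewrite the averages of $K_\delta u$ over $B(\bx_0,\veps)\cap\Omega$ via Fubini as averages of $u$ against a kernel concentrating at $\bx_0$ and conclude ``the limits agree''; but convergence of the plain (or, for $\beta<0$, $\gamma^{-\beta}$-weighted) averages defining $T_\Gamma u(\bx_0)$ does not by itself transfer to averages against an arbitrary concentrating kernel. To close the argument you need two quantitative inputs: the uniform bound $\Psi_\delta\le C$ (already implicit in your proof of item 3)), which shows the Fubini kernel $k_\veps(\by)=\fint_{B(\bx_0,\veps)\cap\Omega}\psi_\delta(\bx,\by)\,\rmd\bx$ satisfies $k_\veps\le C\veps^{-d}$ and is supported in $B(\bx_0,(1+\kappa_0\delta)\veps)$; and an $L^1$ Lebesgue-point property $\fint_{B(\bx_0,C\veps)\cap\Omega}|u-T_\Gamma u(\bx_0)|\,\rmd\by\to0$, which follows from the weighted Poincar\'e inequality \eqref{eq:TraceLeb:Poincare} combined with \Cref{thm:Trace} (with the weighted-average variant when $\beta<0$). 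With these the computation goes through. The paper avoids all of this with a soft argument whose ingredients you already have: $T_\Gamma K_\delta v=T_\Gamma v$ for $v\in C^\infty(\overline{\Omega})$ by item 2), such $v$ are dense in $W^{1,p}(\Omega;\beta)$ by \Cref{thm:Density}, $T_\Gamma$ is bounded by \Cref{thm:Trace}, and $K_\delta$ is bounded on $W^{1,p}(\Omega;\beta)$ by \eqref{eq:ConvEst:Lp} and \eqref{eq:ConvEst:W1p}, so the identity extends by continuity. Either fill in the two quantitative inputs above or switch to this density argument.
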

		
		\begin{proof}
            Item 1) and Item 2) are straightforward to verify in a direct way.

            Now we show \eqref{eq:ConvEst:Lp}. 
            By H\"older's inequality and Tonelli's theorem
			\begin{equation*}
				\vnorm{K_\delta u}_{L^p(\Omega;\beta)}^p \leq  \int_\Omega \int_{\Omega} \psi_\delta(\bx,\by) \gamma(\bx)^{-\beta} \, \rmd \bx \; |u(\by)|^p \, \rmd \by.
			\end{equation*}
                Now, we have that $\lambda(\bx) \leq \kappa_0 d_{\p \Omega}(\bx) \leq  \kappa_0 d_{\Gamma}(\bx) \leq \kappa_0^2 \gamma(\bx)$, and so
			\begin{equation*}
				|\gamma(\by) - \gamma(\bx)| \leq \kappa_1 |\bx-\by| \leq \kappa_1 \lambda_\delta(\bx) \leq \delta \kappa_0^2 \kappa_1 \gamma(\bx), \quad \text{ for } |\bx-\by| \leq \lambda_\delta(\bx).
			\end{equation*}
			Thus, since $\kappa = \kappa_0^2 \kappa_1$ we have
			\begin{equation}\label{eq:Comp1}
				(1-\kappa \delta)\gamma(\bx) 
				\leq \gamma(\by)
				\leq (1+\kappa\delta) \gamma(\bx), \text{ for } |\bx-\by| \leq \lambda_\delta(\bx).
			\end{equation}
			Therefore thanks to the support of $\psi_\delta$ we have for any $\beta \in \bbR$
			\begin{equation*}
				\int_{\Omega} \psi_\delta(\bx,\by) \gamma(\bx)^{-\beta} \, \rmd \bx \leq C \Psi_\delta(\by) \gamma(\by)^{-\beta} \leq C \gamma(\by)^{-\beta},
			\end{equation*}
			which is \eqref{eq:ConvEst:Lp}.

            Thanks to the $L^p$-continuity of the operator $K_\delta$ established in \eqref{eq:ConvEst:Lp}, it suffices to show \eqref{eq:ConvergenceOfConv} for $u \in C^1(\overline{\Omega})$. But this in turn follows from the uniform convergence in item 2).

            The identity \eqref{eq:ConvEst1:W1p:Pf1} can be established by first differentiating the convolution with the variables changed, $K_\delta u(\bx) = \int_{B(0,1)} \psi(|\bz|) u(\bx + \lambda_\delta(\bx) \bz) \, \rmd \bz$, and then reversing the change of variables. Then \eqref{eq:ConvEst:W1p} follows from estimating $|\wt{K}_\delta [\grad u](\bx)| \leq (1+\delta \kappa_1) K_\delta [|\grad u|] (\bx)$ and then applying \eqref{eq:ConvEst:Lp}.
            The convergence \eqref{eq:ConvergenceOfConv:W1p} is proved analogously to \eqref{eq:ConvergenceOfConv}.

            To see item 5): by item 2), $K_\delta v = v$ on $\Gamma$ for all $v \in C^\infty(\overline{\Omega})$. Let $\veps > 0$; choosing $v \in C^{\infty}(\overline{\Omega})$ with $\vnorm{u-v}_{W^{1,p}(\Omega;\beta)} < \veps$ (which is possible by \Cref{thm:Density}) and then applying \Cref{thm:Trace}, \eqref{eq:ConvEst:Lp}, and \eqref{eq:ConvEst:W1p},
            \begin{equation*}
                \begin{split}
                    \vnorm{T_\Gamma K_\delta u - T_\Gamma u}_{L^\infty(\Gamma)} 
                    &\leq \vnorm{T_\Gamma K_\delta u - T_\Gamma K_\delta v}_{L^\infty(\Gamma)} 
                    + \vnorm{T_\Gamma K_\delta v - T_\Gamma v}_{L^\infty(\Gamma)} \\
                    &\qquad + \vnorm{T_\Gamma v - T_\Gamma u}_{L^\infty(\Gamma)} \\
                    &\leq C \vnorm{u-v}_{W^{1,p}(\Omega;\beta)} < C \veps.
                \end{split}
            \end{equation*}
		\end{proof}

With these results in hand, we turn to the relationship of the boundary-localized convolution with the nonlocal function spaces $\mathfrak{W}^p[\delta](\Omega;\beta)$ and $\mathfrak{V}^p[\delta](\Omega;\beta)$. The following results, proved in \Cref{sec:Apdx:Conv}, are valuable tools that will be used to establish properties of the nonlocal spaces essential for our analysis.

\begin{theorem}\label{thm:KnownConvResults:Nonlocal}
    The following hold:
    \begin{enumerate}[1)]
        \item There exists
		a constant $C_2
		= C_2(d,p,\beta,\psi,\kappa_0,\kappa_1)$
		such that 
		\begin{align}
		      \Vnorm{ K_{\delta} u - u }_{L^{p}(\Omega;\beta+p)}
				\leq C_2 \Vnorm{ (K_{\delta} u - u) \eta^{-1} }_{L^{p}(\Omega;\beta)}
				&\leq C_2 \delta [u]_{\mathfrak{W}^{p}[\delta](\Omega;\beta)},
                \label{eq:Intro:ConvEst} \\
                \text{ and } \Vnorm{ \grad K_{\delta} u }_{L^{p}(\Omega;\beta)} 
		      &\leq C_2 [u]_{\mathfrak{V}^{p}[\delta](\Omega;\beta)},
            \label{eq:Intro:ConvEst:Deriv}
		\end{align}
        for all $u \in \mathfrak{W}^{p}[\delta](\Omega;\beta)$.
		The same result holds for $\mathfrak{V}^p[\delta](\Omega;\beta)$ replaced with $\mathfrak{W}^p[\delta](\Omega;\beta)$ when $\beta < d$.

        \item There exist continuous functions $\theta : [0,\underline{\delta}_0) \to (0,1]$ and $\phi : [0,\underline{\delta}_0) \to [1,\infty)$ with $\theta(0) = \phi(0) = 1$ determined only from $d$, $p$, $\beta$, $\kappa_0$ and $\kappa_1$ such that the following holds: 
                For all $\veps \in (0,\underline{\delta}_0)$, and all $r \in (0,\diam(\Omega))$
                \begin{equation}\label{eq:Comp:ConvolutionEstimate}
                    \begin{split}
                    &\int_{\Omega 
                    \cap \{ \eta(\bx) < \theta(\veps) r  \} 
                    } \int_{\Omega} \frac{1}{\gamma(\bx)^\beta}  \rho \left( \frac{|\bx-\by|}{ \eta_{\theta(\veps)\delta}(\bx)} \right) \frac{ |K_\veps u(\bx) - K_\veps u(\by)|^p }{ \eta_{\theta(\veps)\delta}(\bx)^{d+p} } \, \rmd \by \, \rmd \bx \\
                    \leq& \phi(\veps) \int_{\Omega 
                    \cap \{ \eta(\bx) <  r \} 
                    } \int_{\Omega} \frac{1}{\gamma(\bx)^\beta} \rho \left( \frac{|\bx-\by|}{ \eta_{\delta}(\bx)} \right) \frac{ |u(\bx) - u(\by)|^p }{ \eta_{\delta}(\bx)^{d+p} } \, \rmd \by \, \rmd \bx,
                    \end{split}
                \end{equation}
                for all $u \in \mathfrak{V}^p[\delta](\Omega;\beta)$ (and for all $u \in \mathfrak{W}^p[\delta](\Omega;\beta)$ for $\beta < d$).

        \item $\lim\limits_{\veps \to 0} \vnorm{K_\veps u - u}_{\mathfrak{W}^p[\delta](\Omega;\beta)} = 0$, for all $u \in \mathfrak{V}^p[\delta](\Omega;\beta)$ and for all $u \in \mathfrak{W}^p[\delta](\Omega;\beta)$ with $\beta < d$.
        
    \end{enumerate}
\end{theorem}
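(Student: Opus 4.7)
My plan is to treat the three items of \Cref{thm:KnownConvResults:Nonlocal} in sequence, adapting the boundary-localized convolution technique of \cite{Scott2023Nonlocal,Scott2023Nonlocala} to the present weighted setting. The common thread is to use $\int_\Omega \psi_\delta(\bx,\by)\,\rmd\by = 1$ together with Jensen's inequality to convert pointwise control of $K_\delta u - u$ and $\grad K_\delta u$ into nonlocal integrals of $|u(\by)-u(\bx)|^p$.

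For item 1), the first inequality in \eqref{eq:Intro:ConvEst} reduces to the pointwise comparison $\gamma(\bx)^{-\beta-p} \leq C \eta(\bx)^{-p}\gamma(\bx)^{-\beta}$, which follows from $\Gamma \subset \p \Omega$ (giving $\eta(\bx) \leq \dist(\bx,\Gamma)$) combined with \eqref{assump:weight}. For the second inequality, write
\begin{equation*}
    K_\delta u(\bx) - u(\bx) = \int_\Omega \psi_\delta(\bx,\by)\bigl(u(\by)-u(\bx)\bigr)\,\rmd\by,
\end{equation*}
apply Jensen in $\by$ and Fubini, and use the support and $L^\infty$ bound on $\psi$ together with $\lambda \leq \kappa_0 \eta$ to estimate $\psi_\delta(\bx,\by)/\eta(\bx)^p$ by a multiple of $\delta^p \eta_\delta(\bx)^{-d-p}$ on its support. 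This bounds the $L^p$ norm by $C\delta^p [u]_{\mathfrak{W}^p[\kappa_0\delta](\Omega;\beta)}^p$, which is absorbed into $[u]_{\mathfrak{W}^p[\delta](\Omega;\beta)}^p$ up to a constant depending on $\kappa_0$ after handling the horizon rescaling. Estimate \eqref{eq:Intro:ConvEst:Deriv} follows analogously from
\begin{equation*}
    \grad K_\delta u(\bx) = \int_\Omega \grad_\bx \psi_\delta(\bx,\by)\bigl(u(\by)-u(\bx)\bigr)\,\rmd\by,
\end{equation*}
justified because $\grad_\bx\!\int_\Omega \psi_\delta(\bx,\cdot)\,\rmd\by = 0$, together with the bound $|\grad_\bx \psi_\delta(\bx,\by)| \lesssim \lambda_\delta(\bx)^{-d-1}$.

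For item 2), I would use the change-of-variables representation
\begin{equation*}
    K_\veps u(\bx) - K_\veps u(\by) = \int_{B(0,1)} \psi(|\bw|)\bigl[u(\Phi_{\veps,\bw}(\bx)) - u(\Phi_{\veps,\bw}(\by))\bigr]\,\rmd\bw,
\end{equation*}
where $\Phi_{\veps,\bw}(\bx) := \bx + \lambda_\veps(\bx)\bw$. By \eqref{assump:Localization}, $\Phi_{\veps,\bw}$ is bi-Lipschitz on $\Omega$ with constants $1 \pm \veps\kappa_1$ and Jacobian determinant tending to $1$ as $\veps \to 0$. After applying Jensen in $\bw$ and Fubini, I would change variables $\bz = \Phi_{\veps,\bw}(\bx)$ and $\bz' = \Phi_{\veps,\bw}(\by)$ in the resulting double integral; the monotonicity of $\rho$ absorbs the kernel rescaling $\lambda_\veps(\bx)/\lambda_\veps(\bz)$, \eqref{eq:Comp1} controls the weight ratio $\gamma(\bx)/\gamma(\bz)$, and $\theta(\veps)$ is chosen small enough that the transported outer point $\bz$ still satisfies $\eta(\bz) < r$. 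Collecting the Jacobian, the weight modulus, and the kernel-scaling modulus into a single multiplier defines $\phi(\veps)$, with $\theta(\veps), \phi(\veps) \to 1$ as $\veps \to 0$ by continuity.

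Item 3) combines items 1) and 2) with a cutoff argument. The $L^p$-part of $\Vnorm{K_\veps u - u}_{\mathfrak{W}^p[\delta](\Omega;\beta)}$ vanishes by \Cref{thm:KnownConvResults} item 3, so I focus on the seminorm. Fixing $r > 0$, I split the outer integral into $\{\eta(\bx) < r\}$ and its complement; on the complement, both weights are uniformly nondegenerate and classical mollification yields convergence. On $\{\eta(\bx) < r\}$, apply the triangle inequality $|(K_\veps u-u)(\bx) - (K_\veps u-u)(\by)|^p \leq 2^{p-1}(|K_\veps u(\bx)-K_\veps u(\by)|^p + |u(\bx)-u(\by)|^p)$ and bound the first term via item 2) by $\phi(\veps)$ times the restriction of $[u]_{\mathfrak{W}^p[\delta](\Omega;\beta)}^p$ to $\{\eta < r\}$, which vanishes as $r \to 0$ by absolute continuity of the integral. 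A standard $\veps$--$r$ diagonalization closes the argument. The main technical obstacle lies in item 2): one must simultaneously manage the Jacobian of $\Phi_{\veps,\bw}$, the kernel-scale ratio inside $\rho$, and the weight ratio $\gamma(\bx)/\gamma(\bz)$, and package them into moduli $\theta, \phi$ uniform in $u$ and $\delta$; the constraint $\delta < \underline{\delta}_0$ and \eqref{eq:Comp1} are precisely what permit this uniformity.
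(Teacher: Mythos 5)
Your proposal is correct and follows essentially the same route as the paper: Jensen's inequality plus the kernel/horizon equivalence (\Cref{thm:EnergySpaceIndepOfKernel}, \Cref{thm:InvariantHorizon}) for item 1), the transport map $\bx \mapsto \bx + \lambda_\veps(\bx)\bz$ with a common $\bz$, its bi-Lipschitz, Jacobian, and weight estimates packaged into $\theta(\veps)=\frac{1-\kappa\veps}{1+\kappa\veps}$ and $\phi(\veps)$ for item 2), and the near-boundary/bulk splitting with item 2), absolute continuity of the integral, and the $L^p$-convergence of $K_\veps u$ for item 3). The only differences are bookkeeping ones the paper also handles (e.g.\ invoking \Cref{thm:InvariantHorizon} to pass between the horizons $\delta$ and $\theta(\veps)\delta$ in item 3)), which your ``$\veps$--$r$ diagonalization'' remark subsumes.
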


  \subsection{Properties of weighted nonlocal function spaces}\label{subsec:NonlocalSpaces}

        \begin{theorem}\label{thm:InvariantHorizon}

        For constants $0 < \delta_1 \leq \delta_2 < \underline{\delta}_0$,
				\begin{equation*}
					\begin{aligned}
						\left( \frac{1-\delta_2}{2(1+\delta_2)} \right)^{\frac{d+p}{p}} \left( \frac{1-\kappa_0 \kappa_1 \delta_2}{1+\kappa_0 \kappa_1 \delta_2} \right)^{ \frac{|\beta|}{p}} [u]_{\mathfrak{W}^{p}[\delta_2](\Omega;\beta)} 
						&\leq [u]_{\mathfrak{W}^{p}[\delta_1](\Omega;\beta)} \\
						&\leq \left( \frac{\delta_2}{\delta_1} \right)^{\frac{d+p}{p}} [u]_{\mathfrak{W}^{p}[\delta_2](\Omega;\beta)},
					\end{aligned}
				\end{equation*}
        $\forall u \in \mathfrak{V}^{p}[\delta_2](\Omega;\beta)$ and $\forall u \in \mathfrak{W}^{p}[\delta_2](\Omega;\beta)$ with $\beta < d$.
        \end{theorem}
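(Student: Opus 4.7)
\emph{Upper bound.} Since $\delta_1 \leq \delta_2$, we have the pointwise inclusion $B(\bx,\delta_1\eta(\bx)) \subset B(\bx,\delta_2\eta(\bx))$, and the identity $(\delta_1\eta(\bx))^{-(d+p)} = (\delta_2/\delta_1)^{d+p}(\delta_2\eta(\bx))^{-(d+p)}$ gives
\[
[u]_{\mathfrak{W}^p[\delta_1](\Omega;\beta)}^p \leq \bigl(\delta_2/\delta_1\bigr)^{d+p}\,[u]_{\mathfrak{W}^p[\delta_2](\Omega;\beta)}^p,
\]
and taking $p$-th roots yields the claim.

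\emph{Lower bound.} The plan is to carry out a midpoint change of variables. For each pair $(\bx,\by)$ with $\by \in B(\bx,\delta_2\eta(\bx))$, set $\bz := (\bx+\by)/2$, so that $\by = 2\bz-\bx$, $d\by = 2^d\,d\bz$, and $\bz$ ranges over $B(\bx,\delta_2\eta(\bx)/2)$. The triangle inequality
\[
|u(\by)-u(\bx)|^p \leq 2^{p-1}\bigl(|u(\bz)-u(\bx)|^p + |u(2\bz-\bx)-u(\bz)|^p\bigr)
\]
then splits $[u]_{\mathfrak{W}^p[\delta_2]}^p$ into two pieces $I_1,I_2$. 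The first is compared directly to $[u]_{\mathfrak{W}^p[\delta_2/2]}^p$ via the rescaling $(\delta_2\eta)^{d+p} = 2^{d+p}(\delta_2\eta/2)^{d+p}$. For the second, a further substitution $\bx \mapsto 2\bz-\bw$ (with $\bz$ fixed) recasts the integrand in $(\bz,\bw)$-form over the self-referential region $|\bw-\bz| < \delta_2\eta(2\bz-\bw)/2$. The $1$-Lipschitz property of $\eta = \dist(\cdot,\partial\Omega)$ produces $\eta(2\bz-\bw) \in [(1-\delta_2)\eta(\bz),(1+\delta_2)\eta(\bz)]$, while \eqref{assump:weight} combined with \eqref{assump:Horizon} yields the analogous two-sided bound $\gamma(2\bz-\bw) \in [(1-\kappa_0\kappa_1\delta_2)\gamma(\bz),(1+\kappa_0\kappa_1\delta_2)\gamma(\bz)]$. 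After absorbing these comparisons, $I_2$ is also bounded by a multiple of $[u]_{\mathfrak{W}^p[\delta_2/2]}^p$. The Lipschitz ratios and Jacobian combine to give the factors $(1-\delta_2)/(1+\delta_2)$ and $(1-\kappa_0\kappa_1\delta_2)/(1+\kappa_0\kappa_1\delta_2)$ appearing in the stated constant, while the $1/2$ (raised to $(d+p)/p$ after taking the $p$-th root) arises from the Jacobian $2^d$ together with the rescaling $(\delta_2)^{d+p} = 2^{d+p}(\delta_2/2)^{d+p}$, the exponent $|\beta|/p$ on the $\gamma$-ratio matching the single power of $\gamma^{-\beta}$ in the integrand.

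This single midpoint step yields an inequality of the form $[u]_{\mathfrak{W}^p[\delta_2]}^p \leq \Phi(\delta_2)\,[u]_{\mathfrak{W}^p[\delta_2/2]}^p$ with $\Phi(\delta) = 1 + O(\delta)$. To close the argument for general $\delta_1 \leq \delta_2$, one iterates this inequality (whose cumulative product remains bounded as $\delta \to 0$), and then invokes the monotonicity $\delta \mapsto \delta^{d+p}[u]_{\mathfrak{W}^p[\delta]}^p \nearrow$ implicit in the upper-bound argument for the final step from $[u]_{\mathfrak{W}^p[\delta_2/2^N]}^p$ down to $[u]_{\mathfrak{W}^p[\delta_1]}^p$, where $N$ is chosen so that $\delta_2/2^N$ lies in $[\delta_1,2\delta_1]$. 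The principal technical obstacle is executing the change of variables in $I_2$ rigorously -- in particular, resolving the self-referential constraint on the integration region via the Lipschitz estimate for $\eta$ -- and verifying that the iterated Lipschitz corrections and the final monotonicity step together assemble into precisely the stated multiplicative form, rather than a cruder equivalent bound with the same qualitative dependence on $\delta_2$.
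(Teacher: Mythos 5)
Your upper bound coincides with the paper's (it is the trivial direction). The lower bound, however, contains a genuine gap, and it is precisely at the step you flag as delicate. First, the closing ``monotonicity'' step is in the wrong direction: the trivial bound says $\delta\mapsto\delta^{d+p}[u]_{\mathfrak{W}^p[\delta](\Omega;\beta)}^p$ is nondecreasing, i.e.\ it controls the \emph{smaller}-horizon seminorm by the \emph{larger}-horizon one; but with $\delta_2/2^N\in[\delta_1,2\delta_1]$ you need $[u]_{\mathfrak{W}^p[\delta_2/2^N]}\le C\,[u]_{\mathfrak{W}^p[\delta_1]}$, which is exactly the hard direction being proved. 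The fix is to iterate one more step so the dyadic horizon lands in $[\delta_1/2,\delta_1]$ and then apply the trivial comparison upward to $\delta_1$ (this is where a factor $2^{(d+p)/p}$ legitimately enters). Second, even with that repair, your scheme accumulates a Lipschitz correction $\bigl(1+O(\kappa_0\kappa_1\delta_2 2^{-j})\bigr)$ at every dyadic scale (and the $I_2$ region, after resolving the self-referential constraint, has effective horizon $\delta/(2-\delta)$ rather than $\delta/2$, so the recursion must be re-indexed), so the outcome is a product over scales, not the single factors $\bigl(\tfrac{1+\delta_2}{1-\delta_2}\bigr)^{(d+p)/p}\bigl(\tfrac{1+\kappa_0\kappa_1\delta_2}{1-\kappa_0\kappa_1\delta_2}\bigr)^{|\beta|/p}$ in the statement; you concede this point, but producing that exact constant is the content of the theorem as stated, so the proposal does not yet prove it.

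For comparison, the paper avoids both issues with a one-shot telescoping rather than dyadic bisection: for $|\bs|<\delta_2\eta(\bx)$ write $u(\bx+\bs)-u(\bx)$ as a sum of $n$ equal increments along the segment, apply H\"older (cost $n^{p-1}$), compare $\eta$ and $\gamma$ at the intermediate points $\bx_i=\bx+\tfrac{i-1}{n}\bs$ with their values at $\bx$ \emph{only once} (the displacement is at most $\delta_2\eta(\bx)$, so the corrections are $1\pm O(\delta_2)$ uniformly in $i$ and are not compounded), translate $\by=\bx_i$ (Jacobian $1$), rescale $\bz=\bs/n$, and finally choose $n$ with $\tfrac{\delta_2}{\delta_1(1-\delta_2)}<n<\tfrac{2\delta_2}{\delta_1(1-\delta_2)}$ so the resulting ball is contained in $B(0,\delta_1\eta(\by))$. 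This lands directly on the horizon $\delta_1$ and assembles exactly the constant $\bigl(\tfrac{2(1+\delta_2)}{1-\delta_2}\bigr)^{(d+p)/p}\bigl(\tfrac{1+\kappa_0\kappa_1\delta_2}{1-\kappa_0\kappa_1\delta_2}\bigr)^{|\beta|/p}$. If you want to keep your bisection idea, you would either need to track the product of per-scale corrections and show it is dominated by this single factor, or settle for an equivalent bound with a different (unspecified) constant, which is weaker than the stated result.
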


        \begin{theorem}\label{thm:EnergySpaceIndepOfKernel}
            For $\rho$ satisfying \eqref{assump:VarProb:Kernel} and $\lambda$ satisfying \eqref{assump:Localization}, define the seminorm
				\begin{equation*}
					[u]_{\wt{\mathfrak{W}}^{p}[\delta](\Omega;\beta)}^p := \int_{\Omega} \int_{\Omega} \frac{1}{\gamma(\bx)^\beta} \rho \left( \frac{  |\bx-\by| }{ \lambda_\delta(\bx)  } \right) \frac{ |u(\by)-u(\bx)|^p }{ \lambda_\delta(\bx)^{d+p} }\, \rmd \by \, \rmd \bx.
				\end{equation*}
				Then there exist positive constants $c$ and $C$ depending only on $d$, $p$, $\beta$, $\rho$, and $\kappa_0$ such that for any $u \in \mathfrak{W}^{p}[\delta](\Omega;\beta)$,
    \begin{equation}\label{eq:EnergySpaceIndepOfKernel:General}
					c [u]_{\mathfrak{W}^{p}[\delta](\Omega;\beta)} \leq
					[u]_{\wt{\mathfrak{W}}^{p}[\delta](\Omega;\beta)} \leq C  [u]_{\mathfrak{W}^{p}[\delta](\Omega;\beta)}.
				\end{equation}
                In particular,
                \begin{equation}\label{eq:EnergySpaceIndepOfKernel}
					c [u]_{\mathfrak{W}^{p}[\delta](\Omega;\beta)} \leq
					\cE_\delta(u) \leq C  [u]_{\mathfrak{W}^{p}[\delta](\Omega;\beta)}.
				\end{equation}
        \end{theorem}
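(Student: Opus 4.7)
The plan is to establish the general bound \eqref{eq:EnergySpaceIndepOfKernel:General} by sandwiching the kernel $\rho$ between two indicator functions on balls, then using the pointwise comparability of $\lambda$ with $\eta$ from \eqref{assump:Localization} together with the horizon-invariance result of \Cref{thm:InvariantHorizon}. The companion estimate \eqref{eq:EnergySpaceIndepOfKernel} for $\cE_\delta$ is an immediate consequence of \eqref{eq:EnergySpaceIndepOfKernel:General}: setting $\lambda = \eta$, which satisfies \eqref{assump:Localization} with $\kappa_0 = \kappa_1 = 1$, reduces $[u]_{\wt{\mathfrak{W}}^p[\delta](\Omega;\beta)}^p$ to precisely $p\,\cE_\delta(u)$.

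For the upper bound $[u]_{\wt{\mathfrak{W}}^p[\delta]}^p \leq C[u]_{\mathfrak{W}^p[\delta]}^p$, I would first use $\rho \leq \Vnorm{\rho}_{L^\infty}\mathbf{1}_{[-1,1]}$ to obtain an indicator-kernel seminorm supported on $B(\bx,\lambda_\delta(\bx))$. Applying $\lambda_\delta(\bx) \leq \kappa_0\delta\eta(\bx)$ to enlarge the support, and $\lambda_\delta(\bx) \geq \delta\eta(\bx)/\kappa_0$ to control $\lambda_\delta(\bx)^{-(d+p)}$ by $\kappa_0^{d+p}(\delta\eta(\bx))^{-(d+p)}$, one recognizes the resulting expression (after renormalization) as a constant multiple of $[u]_{\mathfrak{W}^p[\kappa_0\delta]}^p$. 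The matching lower bound follows by an analogous argument using $\rho(s) \geq \rho(c_\rho)\mathbf{1}_{[-c_\rho,c_\rho]}(s) > 0$, where $\rho(c_\rho) > 0$ by the monotonicity of $\rho$ on $[0,\infty)$ and the support condition $[-c_\rho,c_\rho] \subset \supp\rho$ in \eqref{assump:VarProb:Kernel}. This yields a lower bound proportional to $[u]_{\mathfrak{W}^p[c_\rho\delta/\kappa_0]}^p$; since $c_\rho\delta/\kappa_0 \leq \delta < \underline{\delta}_0$, \Cref{thm:InvariantHorizon} applies directly to convert this to $[u]_{\mathfrak{W}^p[\delta]}^p$.

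The main technical obstacle lies in the upper-bound step: the rescaled horizon $\kappa_0\delta$ may exceed $\underline{\delta}_0 = 1/(3\kappa_0^2\kappa_1)$ when $\kappa_0 > 1$, so \Cref{thm:InvariantHorizon} cannot be applied directly to compare $[u]_{\mathfrak{W}^p[\kappa_0\delta]}$ with $[u]_{\mathfrak{W}^p[\delta]}$. This can be circumvented by decomposing the enlarged ball $B(\bx,\kappa_0\delta\eta(\bx))$ into finitely many overlapping sub-balls of radius comparable to $\delta\eta(\bx)$ (the number of such balls being bounded dimensionally in terms of $\kappa_0$), and applying the invariance argument on each sub-ball, where the horizon remains in the admissible range. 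With this handled, all constants are traced back to $d, p, \beta, \rho$, and $\kappa_0$, as claimed in the theorem.
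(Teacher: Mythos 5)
Your proof is correct and follows essentially the same route as the paper: sandwich $\rho$ between indicator kernels, use $\eta(\bx)/\kappa_0 \leq \lambda(\bx) \leq \kappa_0\,\eta(\bx)$ to pass to indicator seminorms with horizons $c_\rho\delta/\kappa_0$ and $\kappa_0\delta$, and conclude with the horizon comparison of \Cref{thm:InvariantHorizon}; identifying $\cE_\delta$ with the special case $\lambda=\eta$ is likewise how \eqref{eq:EnergySpaceIndepOfKernel} follows from \eqref{eq:EnergySpaceIndepOfKernel:General}. The only point where you deviate is the handling of $\kappa_0\delta$ possibly exceeding $\underline{\delta}_0$: the sub-ball covering you sketch is under-specified (for $\by$ far from $\bx$ one must still chain $u(\by)-u(\bx)$ through intermediate points and control $\gamma$ and $\eta$ along the chain, i.e.\ redo the telescoping argument of \Cref{thm:InvariantHorizon}), and the simpler resolution — implicit in the paper's ``as in \Cref{thm:InvariantHorizon}'' — is that the proof of that theorem applies verbatim with $\delta_2=\kappa_0\delta$, since $\kappa_0\delta < \kappa_0\underline{\delta}_0 = \tfrac{1}{3\kappa_0\kappa_1} \leq \tfrac13$ (because $\kappa_0\kappa_1\geq 1$, as $\lambda$ vanishes on $\p\Omega$), so all the Lipschitz-comparison factors there stay uniformly bounded.
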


        The proofs of \Cref{thm:InvariantHorizon} and \Cref{thm:EnergySpaceIndepOfKernel} are in the appendix.

        \begin{theorem}[An embedding result]\label{thm:Embedding:Nonlocal}
            Let $u \in V^{1,p}(\Omega;\beta)$, or let $u \in W^{1,p}(\Omega;\beta)$ when $\beta < d$. Then 
            \begin{equation}\label{eq:Embedding}
		      [u]_{\mathfrak{W}^{p}[\delta](\Omega;\beta)} \leq \frac{1}{(1-\delta)^{1/p}} \left(\frac{1+\kappa \delta}{1-\kappa \delta} \right)^{|\beta|/p} \vnorm{\grad u}_{L^p(\Omega;\beta)}.
		  \end{equation}
        \end{theorem}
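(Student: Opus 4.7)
The plan is to first establish \eqref{eq:Embedding} for smooth test functions and then extend to the full spaces via density. By \Cref{thm:Density} (for $W^{1,p}(\Omega;\beta)$ with $\beta<d$) and \Cref{thm:Density:V} (for $V^{1,p}(\Omega;\beta)$), it suffices to verify \eqref{eq:Embedding} for $u \in C^\infty(\overline{\Omega})$ or $u \in C^\infty_c(\overline{\Omega}\setminus\Gamma)$, respectively. The extension to general $u$ then follows from Fatou's lemma applied to the nonnegative integrand of $[u]^p_{\mathfrak{W}^p[\delta](\Omega;\beta)}$ along an a.e.-convergent subsequence of approximants, combined with convergence of $\grad u_n$ in $L^p(\Omega;\beta)$ on the right-hand side.

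For smooth $u$, I would begin with the fundamental theorem of calculus along the segment from $\bx$ to $\by$ followed by Jensen's inequality, to obtain
\begin{equation*}
    |u(\by)-u(\bx)|^p \leq \int_0^1 |\grad u(\bx + s(\by-\bx)) \cdot (\by-\bx)|^p\, ds.
\end{equation*}
Substituting into \eqref{eq:Intro:NonlocalSeminorm} and rescaling $\by = \bx + \eta_\delta(\bx)\bz$ for $\bz \in B(0,1)$ cancels the factor $(\eta_\delta(\bx))^{-(d+p)}$ and gives
\begin{equation*}
    [u]^p_{\mathfrak{W}^p[\delta](\Omega;\beta)} \leq \frac{\overline{C}_{d,p}(d+p)}{\sigma(\bbS^{d-1})} \int_\Omega \frac{1}{\gamma(\bx)^\beta} \int_{B(0,1)} \int_0^1 |\grad u(\bx + s\eta_\delta(\bx)\bz)\cdot\bz|^p\, ds\, d\bz\, d\bx.
\end{equation*}

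The decisive step is to exchange the order of integration (placing $\bx$ innermost) and, for each fixed $(s,\bz)$, perform the change of variables $\bv = \bx + s\eta_\delta(\bx)\bz$. The Jacobian matrix is $\mathbf{I} + s\delta\,\bz \otimes \grad\eta(\bx)$ with determinant $1+s\delta\,\bz\cdot\grad\eta(\bx) \in [1-\delta,1+\delta]$, since $\eta = \dist(\cdot,\p\Omega)$ is $1$-Lipschitz and $|\bz|\leq 1$. For $\delta < \underline{\delta}_0$ the map is a diffeomorphism onto its image, which is contained in $\Omega$ because $|\bv-\bx| \leq \eta_\delta(\bx) < d_{\p\Omega}(\bx) \leq d_\Gamma(\bx)$. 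Combining the reciprocal Jacobian bound $|\det(\partial\bv/\partial\bx)|^{-1} \leq 1/(1-\delta)$ with the weight comparison
\begin{equation*}
    \frac{1}{\gamma(\bx)^\beta} \leq \left(\frac{1+\kappa\delta}{1-\kappa\delta}\right)^{|\beta|} \frac{1}{\gamma(\bv)^\beta},
\end{equation*}
obtained by applying \eqref{eq:Comp1} separately in the two cases $\beta\geq 0$ and $\beta<0$, yields the pointwise-in-$(s,\bz)$ estimate
\begin{equation*}
    \int_\Omega \frac{|\grad u(\bx + s\eta_\delta(\bx)\bz)\cdot\bz|^p}{\gamma(\bx)^\beta}\, d\bx \leq \frac{1}{1-\delta}\left(\frac{1+\kappa\delta}{1-\kappa\delta}\right)^{|\beta|} \int_\Omega \frac{|\grad u(\bv)\cdot\bz|^p}{\gamma(\bv)^\beta}\, d\bv.
\end{equation*}

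Since the right-hand side is independent of $s$, the $s$-integral is trivial, and the $\bz$-integral over $B(0,1)$, evaluated in polar coordinates via the spherical-average identity associated with the normalization in \eqref{assump:VarProb:Kernel}, produces a factor $\sigma(\bbS^{d-1})/((d+p)\overline{C}_{d,p}) \cdot |\grad u(\bv)|^p$. The resulting prefactor cancels exactly with $\overline{C}_{d,p}(d+p)/\sigma(\bbS^{d-1})$, and taking $p$-th roots delivers precisely \eqref{eq:Embedding}. The main obstacle lies in the change-of-variables step: verifying injectivity of $\bx \mapsto \bv$, computing the Jacobian determinant via the Lipschitz character of $\eta$, and simultaneously balancing the volume distortion with the weight comparison so that the product of distortion factors produces the sharp coefficient stated in \eqref{eq:Embedding}, rather than a larger one.
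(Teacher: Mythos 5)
Your proposal is correct and follows essentially the same route as the paper's proof: fundamental theorem of calculus plus Jensen, the rescaling $\by=\bx+\eta_\delta(\bx)\bz$, the change of variables $\bw=\bx+s\eta_\delta(\bx)\bz$ with Jacobian bound $1-\delta$ from \eqref{eq:Det:DistanceFxn}, the weight comparison giving the factor $\bigl(\frac{1+\kappa\delta}{1-\kappa\delta}\bigr)^{|\beta|}$, and density plus Fatou for general $u$. Your explicit injectivity check for the map $\bx\mapsto\bx+s\eta_\delta(\bx)\bz$ is a small refinement the paper leaves implicit, but the argument is otherwise the same.
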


        \begin{proof}
            First assume that $u \in C^{\infty}_c(\overline{\Omega} \setminus \Gamma)$ (respectively, $u \in C^{\infty}(\overline{\Omega})$). Then by changing coordinates $\bz = \frac{\by-\bx}{\eta_\delta(\bx)}$ and applying the fundamental theorem of calculus to the difference quotient of $u$
            \begin{equation*}
                \begin{split}
                    [u]_{\mathfrak{W}^{p}[\delta](\Omega;\beta)}^p 
                    &= \frac{(d+p)\overline{C}_{d,p}}{ \sigma(\bbS^{d-1}) } \int_{\Omega} \int_{B(0,1)} \frac{ |u(\bx + \eta_\delta(\bx) \bz) - u(\bx)|^p }{ \gamma(\bx)^\beta \eta_\delta(\bx)^p } \, \rmd \bz \, \rmd \bx \\
                    &\leq \frac{(d+p)\overline{C}_{d,p}}{ \sigma(\bbS^{d-1}) } \int_{\Omega} \int_{B(0,1)} \frac{ 1 }{ \gamma(\bx)^\beta } \int_0^1 |\grad u(\bx + t \eta_\delta(\bx) \bz) \cdot \bz|^p \, \rmd t \, \rmd \bz \, \rmd \bx.
                \end{split}
            \end{equation*}
            Next, define $\bszeta_{t\bz}^\delta(\bx) = \bx + t \eta_\delta(\bx)\bz$, c.f.\ \Cref{lma:CoordChange2}. Then use \eqref{eq:CoordChange2:Conseq} to estimate $\gamma(\bx)^{-\beta}$:
            \begin{equation*}
                [u]_{\mathfrak{W}^{p}[\delta](\Omega;\beta)}^p 
                \leq \left(\frac{1+\kappa \delta}{1-\kappa \delta} \right)^{|\beta|} 
                \int_{B(0,1)} \frac{(d+p)\overline{C}_{d,p}}{ \sigma(\bbS^{d-1}) } \int_0^1 \int_{\Omega}  \frac{|\grad u(\bszeta_{t\bz}^\delta(\bx)) \cdot \bz|^p}{ \gamma(\bszeta_{t\bz}^\delta(\bx))^\beta } \, \rmd \bx \, \rmd t \, \rmd \bz.
            \end{equation*}
            
            Now apply the change of coordinates $\bw = \bszeta_{t\bz}^\delta(\bx)$ in the $\bx$-integral. Then $\bw \in \Omega$ whenever $\bx \in \Omega$, and \eqref{eq:Det:DistanceFxn} holds. Therefore, 
            \begin{equation*}
                \begin{split}
                    [u]_{\mathfrak{W}^{p}[\delta](\Omega;\beta)}^p 
                    &\leq \left(\frac{1+\kappa \delta}{1-\kappa \delta} \right)^{|\beta|} \frac{1}{1-\delta}
                \int_{B(0,1)} \frac{(d+p)\overline{C}_{d,p}}{ \sigma(\bbS^{d-1}) } \int_0^1 \int_{\Omega}  \frac{|\grad u(\bw) \cdot \bz|^p}{ \gamma(\bw)^\beta } \, \rmd \bw \, \rmd t \, \rmd \bz \\
                    &= \left(\frac{1+\kappa \delta}{1-\kappa \delta} \right)^{|\beta|} \frac{1}{1-\delta} \vnorm{\grad u}_{L^p(\Omega;\beta)}^p.
                \end{split}
            \end{equation*}

            Now, for general $u \in V^{1,p}(\Omega;\beta)$ (resp. $u \in W^{1,p}(\Omega;\beta)$) use the density result \Cref{thm:Density:V} (resp. \Cref{thm:Density}) to obtain a sequence $\{u_n\}$ of smooth functions converging to $u$ both in norm and almost everywhere. Each $u_n$ satisfies \eqref{eq:Embedding}, hence
            \begin{equation*}
                \liminf_{n \to \infty} [u_n]_{\mathfrak{W}^p[\delta](\Omega;\beta)}^p \leq \left(\frac{1+\kappa \delta}{1-\kappa \delta} \right)^{|\beta|} \frac{1}{1-\delta} \vnorm{\grad u}_{L^p(\Omega;\beta)}^p.
            \end{equation*}
            The estimate \eqref{eq:Embedding} then follows for general $u$ by applying Fatou's lemma to the integrand in the double integral defining $[u_n]_{\mathfrak{W}^p[\delta](\Omega;\beta)}^p$.
        \end{proof}

        \begin{theorem}\label{thm:Density:Nonlocal}
        $C^\infty(\overline{\Omega})$ is dense in $\mathfrak{W}^p[\delta](\Omega;\beta)$ for all $\beta \in (-\infty,d)$. $C^\infty_c(\overline{\Omega} \setminus \Gamma)$ is dense in $\mathfrak{V}^p[\delta](\Omega;\beta)$ for all $\beta \in \bbR$.
        \end{theorem}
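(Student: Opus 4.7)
My plan is a two-step approximation: first approximate $u$ by the boundary-localized convolution $K_\veps u$ in the appropriate nonlocal norm, then approximate $K_\veps u$ by a function in $C^\infty(\overline{\Omega})$ (respectively $C^\infty_c(\overline{\Omega}\setminus\Gamma)$) using the corresponding local density result from \Cref{sec:WeightedLocalSpaces}, and finally transfer this local approximation back to the nonlocal norm via the embedding of \Cref{thm:Embedding:Nonlocal}. The first step will be supplied by item 3) of \Cref{thm:KnownConvResults:Nonlocal}; the fact that $K_\veps u$ lies in the corresponding local weighted Sobolev space will follow from combining item 1) of \Cref{thm:KnownConvResults} with the $L^p$-boundedness \eqref{eq:ConvEst:Lp} and the gradient bound \eqref{eq:Intro:ConvEst:Deriv}.

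For the $\mathfrak{W}^p[\delta](\Omega;\beta)$ case with $\beta<d$, given $\tau>0$, I will choose $\veps$ with $\vnorm{K_\veps u - u}_{\mathfrak{W}^p[\delta](\Omega;\beta)}<\tau/2$ and observe $K_\veps u \in C^\infty(\Omega) \cap W^{1,p}(\Omega;\beta)$, using the $\beta<d$ variant of \eqref{eq:Intro:ConvEst:Deriv} applied with $\veps$ in place of $\delta$. \Cref{thm:Density} then yields $v \in C^\infty(\overline{\Omega})$ arbitrarily close to $K_\veps u$ in $W^{1,p}(\Omega;\beta)$, and \Cref{thm:Embedding:Nonlocal} makes $\vnorm{v - K_\veps u}_{\mathfrak{W}^p[\delta]}<\tau/2$, so the triangle inequality finishes the argument. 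The $\mathfrak{V}^p[\delta](\Omega;\beta)$ case for arbitrary $\beta$ proceeds analogously, with \Cref{thm:Density:V} in place of \Cref{thm:Density}, provided one first establishes $K_\veps u \to u$ in $\mathfrak{V}^p[\delta]$.

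The main obstacle will be proving $\vnorm{K_\veps u - u}_{L^p(\Omega;\beta+p)} \to 0$ in the $\mathfrak{V}^p$ case. A direct application of \eqref{eq:Intro:ConvEst} with $\delta$ replaced by $\veps$ is insufficient, since the resulting bound $C_2 \veps [u]_{\mathfrak{W}^p[\veps](\Omega;\beta)}$ is not known to vanish as $\veps \to 0$: the $\veps$-rescaled seminorm can blow up, as quantified by \Cref{thm:InvariantHorizon}. Instead, I will first show that $K_\veps$ is uniformly bounded on $L^p(\Omega;\beta+p)$ by the same Jensen/Tonelli calculation that proves \eqref{eq:ConvEst:Lp}, noting that the key comparison \eqref{eq:Comp1} is valid for any exponent including $\beta+p$. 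The desired $L^p(\Omega;\beta+p)$ convergence will then follow by a density-plus-uniform-bound argument: item 2) of \Cref{thm:KnownConvResults} gives uniform convergence of $K_\veps$ on continuous test functions, and $C_c(\overline{\Omega}\setminus\Gamma)$ is dense in $L^p(\Omega;\beta+p)$ by standard results.
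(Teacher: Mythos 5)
Your proposal is correct and follows essentially the same route as the paper's proof: mollify with the boundary-localized convolution and invoke item 3) of \Cref{thm:KnownConvResults:Nonlocal}, place $K_\veps u$ in the local weighted space via \eqref{eq:ConvEst:Lp}, \eqref{eq:Intro:ConvEst:Deriv} and \Cref{thm:InvariantHorizon}, apply \Cref{thm:Density} (resp.\ \Cref{thm:Density:V}), and transfer back through \Cref{thm:Embedding:Nonlocal}. The one point where you diverge is the convergence $\vnorm{K_\veps u - u}_{L^p(\Omega;\beta+p)} \to 0$ needed for the $\mathfrak{V}^p$ case: your observation that \eqref{eq:Intro:ConvEst} with $\veps$ in place of $\delta$ is insufficient is accurate, and your uniform-bound-plus-density fix is sound, but it amounts to reproving \eqref{eq:ConvergenceOfConv} with the exponent $\beta+p$; since \Cref{thm:KnownConvResults} is stated for arbitrary $\beta \in \bbR$ and $\mathfrak{V}^p[\delta](\Omega;\beta) \subset L^p(\Omega;\beta+p)$, you can simply cite \eqref{eq:ConvergenceOfConv} with $\beta$ replaced by $\beta+p$, which is the shortcut the paper's ``the $\mathfrak{V}^p$ case is similar'' implicitly relies on.
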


        \begin{proof}
            Let $u \in \mathfrak{W}^p[\delta](\Omega;\beta)$. By \Cref{thm:KnownConvResults:Nonlocal} item 3), $\vnorm{K_\veps u - u}_{\mathfrak{W}^p[\delta](\Omega;\beta)} \to 0$ as $\veps \to 0$, where $K_\veps u$ is the boundary-localized convolution defined in \eqref{eq:ConvolutionOperator}.
            Then, the estimate \eqref{eq:Intro:ConvEst:Deriv} shows that $K_\veps u \in W^{1,p}(\Omega;\beta)$ for any $\veps > 0$, since any value of the bulk horizon results in an equivalent norm according to \Cref{thm:InvariantHorizon}. Therefore by \Cref{thm:Density} we can find for each $\veps > 0$ a function $v_\veps \in C^\infty(\overline{\Omega})$ such that $\vnorm{v_\veps - K_\veps u}_{W^{1,p}(\Omega;\beta)} < \frac{\veps}{2}$. Hence by \Cref{thm:Embedding:Nonlocal}
        \begin{equation*}
            \begin{split}
                \vnorm{v_{\veps} - u}_{\mathfrak{W}^p[\delta](\Omega;\beta)} &\leq \vnorm{K_\veps u - u}_{\mathfrak{W}^p[\delta](\Omega;\beta)} + C(\underline{\delta}_0) \vnorm{v_\veps - K_\veps u}_{W^{1,p}(\Omega;\beta)} \\
                &\leq \vnorm{K_\veps u - u}_{\mathfrak{W}^p[\delta](\Omega;\beta)} + \frac{\veps}{2}. 
            \end{split}
        \end{equation*}
        Let $\veps = \veps_j$ be a sequence converging to $0$ as $j \to \infty$; it follows from the first part of the proof that $\{v_\veps\}_\veps$ is the desired sequence.

        The density result for $u \in \mathfrak{V}^p[\delta](\Omega;\beta)$ is similar, with \Cref{thm:Density:V} used in place of \Cref{thm:Density}.
        \end{proof}

\begin{theorem}\label{thm:Trace:Nonlocal}
Let $d-p < \beta < d$. The operator $T_\Gamma : \mathfrak{W}^{p}[\delta](\Omega;\beta) \to \bbR^{|\Gamma|}$ is a bounded linear operator that satisfies $T_\Gamma u = u|_\Gamma$ for all $u$ in $C^\infty(\overline{\Omega})$. 
Moreover, the limit
	\begin{equation*}
		  \begin{gathered}
		  u(\bx_0) := 
		  \begin{cases}
		  \lim\limits_{\veps \to 0} (u)_{B(\bx_0,\veps) \cap \Omega}, &\quad \beta \geq 0, \\
		  \lim\limits_{\veps \to 0} (u)_{B(\bx_0,\veps) \cap \Omega,\beta}, &\quad \beta < 0,
						\end{cases}
					\end{gathered}
				\end{equation*}
				is well-defined for any $\bx_0 \in \Gamma$ and $u \in \mathfrak{W}^{p}[\delta](\Omega;\beta)$, with
				\begin{align}
				\label{eq:LebPtProp:Nonlocal:pos}
                |u(\bx_0) - (u)_{B(\bx_0,\veps) \cap \Omega }|^p &\leq C \veps^{\beta-(d-p)} [u]_{\mathfrak{W}^{p}[\delta](\Omega;\beta)}^p, \qquad \beta \geq 0, 0 < \veps < R,\\
                \label{eq:LebPtProp:Nonlocal:neg}
				|u(\bx_0) - (u)_{B(\bx_0,\veps) \cap \Omega,\beta}|^p &\leq C \veps^{\beta-(d-p)} [u]_{\mathfrak{W}^{p}[\delta](\Omega;\beta)}^p, \qquad \beta < 0, 0 < \veps < R.
				\end{align}
\end{theorem}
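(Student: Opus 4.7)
The plan is to reduce to the local trace theorem \Cref{thm:Trace} via the boundary-localized convolution $K_\veps$ from \eqref{eq:ConvolutionOperator}. For $u \in \mathfrak{W}^p[\delta](\Omega;\beta)$ and $\veps \in (0,\underline{\delta}_0)$, the gradient estimate \eqref{eq:Intro:ConvEst:Deriv} combined with the horizon-invariance \Cref{thm:InvariantHorizon} (which lets me swap the bulk horizon $\delta$ in the seminorm for any convenient value, up to an $\veps$-independent constant) gives $K_\veps u \in W^{1,p}(\Omega;\beta)$ with
\[
\Vnorm{\grad K_\veps u}_{L^p(\Omega;\beta)} \leq C\,[u]_{\mathfrak{W}^p[\delta](\Omega;\beta)},
\]
where $C$ is independent of $\veps$. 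Since $K_\veps u \in C^0(\overline{\Omega})$ by \Cref{thm:KnownConvResults} item 2), its trace on $\Gamma$ is its pointwise value, and applying \Cref{thm:Trace} yields, for all $r \in (0,R)$,
\[
|K_\veps u(\bx_0) - (K_\veps u)_{B(\bx_0,r) \cap \Omega}|^p \leq C\, r^{\beta-d+p}\,[u]_{\mathfrak{W}^p[\delta](\Omega;\beta)}^p \qquad (\beta \geq 0),
\]
with the analogous inequality for the weighted average $(K_\veps u)_{B(\bx_0,r) \cap \Omega,\beta}$ when $\beta < 0$.

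Next I would show that $\{K_\veps u(\bx_0)\}_\veps$ is Cauchy as $\veps \to 0$. For $r \in (0,R)$ and $\veps_1,\veps_2 > 0$, the triangle inequality gives
\begin{equation*}
|K_{\veps_1} u(\bx_0) - K_{\veps_2} u(\bx_0)| \leq \sum_{i=1}^{2} |K_{\veps_i} u(\bx_0) - (K_{\veps_i} u)_{B(\bx_0,r) \cap \Omega}| + |(K_{\veps_1} u)_{B(\bx_0,r) \cap \Omega} - (K_{\veps_2} u)_{B(\bx_0,r) \cap \Omega}|.
\end{equation*}
For any $\eta > 0$, I first choose $r$ small enough that the first two terms are below $\eta/3$ via the preceding display, and then, holding $r$ fixed, use $K_\veps u \to u$ in $L^p(\Omega;\beta)$ from \Cref{thm:KnownConvResults} item 3) (which implies $L^1(B(\bx_0,r) \cap \Omega)$-convergence by H\"older since $\gamma$ is bounded above) to make the middle term below $\eta/3$ for $\veps_1,\veps_2$ small. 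This defines $u(\bx_0) := \lim_{\veps \to 0} K_\veps u(\bx_0)$.

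To identify $u(\bx_0)$ with the stated limit of averages and to obtain the quantitative estimates \eqref{eq:LebPtProp:Nonlocal:pos}--\eqref{eq:LebPtProp:Nonlocal:neg}, I would apply the same triangle-inequality trick to $|(u)_{B(\bx_0,r) \cap \Omega} - u(\bx_0)|$: the difference $|(u)_{B(\bx_0,r) \cap \Omega} - (K_\veps u)_{B(\bx_0,r) \cap \Omega}|$ vanishes as $\veps \to 0$ by the same $L^1$-argument, while $|(K_\veps u)_{B(\bx_0,r) \cap \Omega} - K_\veps u(\bx_0)|$ is uniformly bounded by $C r^{(\beta-d+p)/p}[u]_{\mathfrak{W}^p[\delta](\Omega;\beta)}$; sending $r \to 0$ then also shows the limit of averages exists and equals $u(\bx_0)$. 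The $\beta < 0$ case is handled identically using weighted averages, the key point being that H\"older transfers $L^p(\Omega;\beta)$-convergence to convergence of the $\gamma^{-\beta}$-weighted averages on $B(\bx_0,r) \cap \Omega$. Boundedness of $T_\Gamma$ then follows by fixing $r$ (say $r = R/2$) in the previous display and bounding the remaining average by $\Vnorm{u}_{L^p(\Omega;\beta)}$ via Jensen/H\"older, as in the last paragraph of the proof of \Cref{thm:Trace}.

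The principal technical hurdle is the uniformity in $\veps$ of the bound on $\Vnorm{\grad K_\veps u}_{L^p(\Omega;\beta)}$; this is exactly what \Cref{thm:InvariantHorizon} delivers, since it allows the horizon of the nonlocal seminorm to be decoupled from the horizon $\veps$ of the convolution. With that equivalence in hand, the argument reduces to applying the already-established local trace theorem to the smooth approximations $K_\veps u$ and extracting a Cauchy limit.
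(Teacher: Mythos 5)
Your reduction to the local trace theorem is the right idea, but the step on which everything hinges is unsupported and, in fact, false: you claim that \eqref{eq:Intro:ConvEst:Deriv} together with \Cref{thm:InvariantHorizon} gives $\vnorm{\grad K_\veps u}_{L^p(\Omega;\beta)} \leq C\,[u]_{\mathfrak{W}^p[\delta](\Omega;\beta)}$ with $C$ independent of $\veps$. The gradient estimate \eqref{eq:Intro:ConvEst:Deriv} with convolution parameter $\veps$ naturally produces $[u]_{\mathfrak{W}^p[\veps](\Omega;\beta)}$ on the right-hand side, and \Cref{thm:InvariantHorizon} only converts horizons ``one way'' with uniform constants: the small-horizon seminorm dominates the large-horizon one, while the direction you need carries the factor $(\delta/\veps)^{(d+p)/p}$, which blows up as $\veps \to 0$. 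No uniform bound can hold: the space $\mathfrak{W}^p[\delta](\Omega;\beta)$ with fixed $\delta$ contains functions that are not in $W^{1,p}(\Omega;\beta)$ (e.g.\ functions with an interior jump), and if your bound were true, the weak-compactness argument used in the proof of \Cref{thm:LocalizationOfEnergy} would place every such $u$ in $W^{1,p}(\Omega;\beta)$. Consequently your Cauchy argument for $K_\veps u(\bx_0)$ (choosing $r$ before $\veps_1,\veps_2$) and your final quantitative bound $|(K_\veps u)_{B_r}-K_\veps u(\bx_0)| \leq C r^{(\beta-d+p)/p}[u]_{\mathfrak{W}^p[\delta](\Omega;\beta)}$ both collapse, since their constants depend on $\veps$.

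The paper avoids this by never sending the convolution parameter to zero in this proof: it works with the single operator $K_\delta u$, with the same $\delta$ as the space. Boundedness of $T_\Gamma$ follows from $T_\Gamma K_\delta u_n = T_\Gamma u_n$ for a $C^\infty(\overline\Omega)$ approximating sequence (density, \Cref{thm:Density:Nonlocal}) plus the local trace bound applied to $K_\delta u$. For the Lebesgue-point estimates, instead of comparing $u$ with $K_\veps u$ uniformly in $\veps$, one compares the averages directly: by Jensen's inequality with the mollifier $\psi_\delta$, and using that $\gamma(\bx)\leq\veps$ and $\eta(\bx)\leq\veps$ on $B(\bx_0,\veps)\cap\Omega$ together with the kernel equivalence of \Cref{thm:EnergySpaceIndepOfKernel}, one gets $|(u)_{B_\veps}-(K_\delta u)_{B_\veps}|^p \leq C\,\delta^p\,\veps^{\beta-d+p}[u]_{\mathfrak{W}^p[\delta](\Omega;\beta)}^p$ (and the analogous weighted-average estimate for $\beta<0$). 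Applying \Cref{thm:Trace} to $K_\delta u \in W^{1,p}(\Omega;\beta)$ then yields existence of $\lim_{\veps\to 0}(K_\delta u)_{B_\veps}$ with the rate $\veps^{(\beta-d+p)/p}$, and the triangle inequality transfers both the existence of the limit and the rate to $(u)_{B_\veps}$. If you replace your $\veps\to 0$ family $K_\veps u$ by this fixed-$\delta$ comparison, the rest of your outline (triangle inequality, identification of the limit, boundedness of $T_\Gamma$ via Jensen/H\"older at a fixed radius) goes through.
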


\begin{proof}
Since the boundary-localized convolution $K_\delta u $ belongs to $W^{1,p}(\Omega;\beta)$, we have by \Cref{thm:Trace} and \eqref{eq:Intro:ConvEst:Deriv}
	\begin{equation*}
		\Vnorm{T_\Gamma K_\delta u}_{L^\infty(\Gamma)} \leq C \Vnorm{K_\delta u}_{W^{1,p}(\Omega;\beta)} \leq C \Vnorm{u}_{\mathfrak{W}^{p}[\delta](\Omega;\beta)}.
	\end{equation*}
	We now use \Cref{thm:Density:Nonlocal}. Let $\{ u_n \} \subset C^{\infty}(\overline{\Omega})$ be a sequence converging to $u$ in $\mathfrak{W}^{p}[\delta](\Omega;\beta)$. Then since $T_\Gamma K_\delta u_n = T_\Gamma u_n$ for all $n$ by \Cref{thm:KnownConvResults} item 2),
	\begin{equation*}
		\begin{split}
			\Vnorm{ T u_n - T u_m }_{L^{\infty}(\Gamma)} 
			&= \Vnorm{ T_\Gamma K_\delta u_n - T_\Gamma K_\delta u_m }_{L^{\infty}(\Gamma)} \\
			&\leq C \Vnorm{ u_n - u_m }_{\mathfrak{W}^{p}[\delta](\Omega;\beta)}.
		\end{split}
	\end{equation*}
	Therefore the bounded linear operator
 $T_\Gamma : \mathfrak{W}^{p}[\delta](\Omega;\beta) \to L^\infty(\Gamma)$ is well-defined. 

Now we prove the Lebesgue point property. Fix $\bx_0 \in \Gamma$, and denote $B_r = B(\bx_0,r)$ for $r < 0$.
    First we assume that $\beta \geq 0$. By Jensen's inequality
    \begin{equation*}
        \begin{split}
            |(u)_{B_\veps} - (K_\delta u)_{B_\veps}|^p \leq C \veps^{-d} \int_{B_\veps \cap \Omega} \int_{\Omega} \psi_\delta(\bx,\by) |u(\bx)-u(\by)|^p \, \rmd \by \, \rmd \bx.
        \end{split}
    \end{equation*}
    Since $\gamma(\bx)^\beta \leq \veps^\beta$ and $\eta(\bx) \leq \veps$ on $B_\veps$, we get
    \begin{equation*}
        |(u)_{B_\veps} - (K_\delta u)_{B_\veps}|^p \leq C \delta^p \veps^{-d+\beta+p} \int_{B_\veps \cap \Omega} \int_{\Omega} \psi_\delta(\bx,\by) \frac{|u(\bx)-u(\by)|^p}{ \gamma(\bx)^\beta \eta_\delta(\bx)^p  } \, \rmd \by \, \rmd \bx.
    \end{equation*}
    We apply the equivalence of kernel functions in the nonlocal seminorm in \Cref{thm:EnergySpaceIndepOfKernel}, and obtain
    \begin{equation}\label{eq:LebPtProp:Nonlocal:Pf1}
        |(u)_{B_\veps} - (K_\delta u)_{B_\veps}|^p \leq C \delta^p \veps^{\beta-d+p} [u]_{\mathfrak{W}^p[\delta](\Omega;\beta)}^p.
    \end{equation}
    Now, since $K_\delta u$ satisfies \eqref{eq:Intro:ConvEst} and \eqref{eq:Intro:ConvEst:Deriv}, we can apply \Cref{thm:Trace} to get that $K_\delta u(\bx_0) = \lim_{\veps \to 0} (K_\delta u)_{B_\veps}$ exists. This combined with \eqref{eq:LebPtProp:Nonlocal:Pf1} shows that $\lim\limits_{\veps \to 0} (u)_{B_\veps} := u(\bx_0)$ exists, and is equal to $K_\delta u(\bx_0)$. With \Cref{thm:Trace}, \eqref{eq:LebPtProp:Nonlocal:Pf1} and \eqref{eq:Intro:ConvEst:Deriv}, we arrive at \eqref{eq:LebPtProp:Nonlocal:pos}:
    \begin{equation*}
        \begin{split}
            |u(\bx_0)-(u)_{B_\veps}| = |K_\delta u(\bx_0) - (u)_{B_\veps}| &\leq |K_\delta u(\bx_0) - (K_\delta u)_{B_\veps}| + |(K_\delta u)_{B_\veps} - (u)_{B_\veps}| \\
            &\leq C \veps^{\frac{\beta-d+p}{p}} \vnorm{\grad K_\delta u}_{L^p(\Omega;\beta)} + \delta \veps^{\frac{\beta-d+p}{p}} [u]_{\mathfrak{W}^p[\delta](\Omega;\beta)} \\
            &\leq C \veps^{\frac{\beta-d+p}{p}}[u]_{\mathfrak{W}^p[\delta](\Omega;\beta)}.
        \end{split}
    \end{equation*}

    Now assume that $\beta < 0$; the proof is similar.
    By Jensen's inequality
    \begin{equation*}
        \begin{split}
            |(u)_{B_\veps,\beta} - (K_\delta u)_{B_\veps,\beta}|^p \leq C \frac{1}{\int_{B_\veps \cap \Omega} \gamma(\bx)^{-\beta} \, \rmd \bx } \int_{B_\veps \cap \Omega} \int_{\Omega} \psi_\delta(\bx,\by) \frac{|u(\bx)-u(\by)|^p}{\gamma(\bx)^\beta} \, \rmd \by \, \rmd \bx.
        \end{split}
    \end{equation*}
    Since $\eta(\bx) \leq \veps$ on $B_\veps$ and since \eqref{eq:IntConeCond:Conseq} holds, we get
    \begin{equation*}
        |(u)_{B_\veps,\beta} - (K_\delta u)_{B_\veps,\beta}|^p \leq C \delta^p \veps^{\beta-d+p} \int_{B_\veps \cap \Omega} \int_{\Omega} \psi_\delta(\bx,\by) \frac{|u(\bx)-u(\by)|^p}{ \gamma(\bx)^\beta \eta_\delta(\bx)^p  } \, \rmd \by \, \rmd \bx.
    \end{equation*}
    We apply the equivalence of kernel functions in the nonlocal seminorm in \Cref{thm:EnergySpaceIndepOfKernel}, and obtain
    \begin{equation*}
        |(u)_{B_\veps,\beta} - (K_\delta u)_{B_\veps,\beta}|^p \leq C \delta^p \veps^{\beta-d+p} [u]_{\mathfrak{W}^p[\delta](\Omega;\beta)}^p.
    \end{equation*}
    The rest of the proof is similar to the $\beta \geq 0$ case.
\end{proof}

The following corollary is a consequence of the previous proof; compare to \Cref{thm:KnownConvResults} item 5).

\begin{corollary}\label{cor:ConvTraceEqual:Nonlocal}
    For $d - p < \beta < d$, the trace operator $T_\Gamma : \mathfrak{W}^{p}[\delta](\Omega;\beta) \to \bbR^{|\Gamma|}$ satisfies $T_\Gamma K_\delta u = T_\Gamma u$ for all $u \in \mathfrak{W}^{p}[\delta](\Omega;\beta)$.
\end{corollary}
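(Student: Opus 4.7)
The plan is to observe that the conclusion is essentially a byproduct of the proof of \Cref{thm:Trace:Nonlocal}, extracted and repackaged as an identity between trace operators. The key tools are already available: the boundary-localized convolution $K_\delta u$ belongs to $W^{1,p}(\Omega;\beta)$ (via \eqref{eq:Intro:ConvEst:Deriv} together with the equivalence of nonlocal seminorms for different horizons in \Cref{thm:InvariantHorizon}), so the local trace theorem \Cref{thm:Trace} applies to $K_\delta u$ and identifies $T_\Gamma K_\delta u(\bx_0)$ as the Lebesgue-type limit of averages over shrinking balls about $\bx_0 \in \Gamma$. Meanwhile, \Cref{thm:Trace:Nonlocal} identifies $T_\Gamma u(\bx_0)$ as the same type of limit. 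It therefore suffices to show these two limits coincide.

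First, I would fix $\bx_0 \in \Gamma$ and treat the case $\beta \geq 0$. In the course of the proof of \Cref{thm:Trace:Nonlocal} the estimate
\begin{equation*}
    |(u)_{B_\veps} - (K_\delta u)_{B_\veps}|^p \leq C \delta^p \veps^{\beta-d+p} [u]_{\mathfrak{W}^p[\delta](\Omega;\beta)}^p
\end{equation*}
was obtained by a Jensen-type bound and the kernel equivalence of \Cref{thm:EnergySpaceIndepOfKernel}. Since $\beta - (d-p) > 0$, the right-hand side vanishes as $\veps \to 0$. Because both $(u)_{B_\veps} \to u(\bx_0) = T_\Gamma u(\bx_0)$ (by \Cref{thm:Trace:Nonlocal}) and $(K_\delta u)_{B_\veps} \to K_\delta u(\bx_0) = T_\Gamma K_\delta u(\bx_0)$ (by \Cref{thm:Trace} applied to $K_\delta u \in W^{1,p}(\Omega;\beta)$), passing to the limit yields $T_\Gamma u(\bx_0) = T_\Gamma K_\delta u(\bx_0)$.

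For the case $\beta < 0$, I would repeat the argument using the weighted averages $(\cdot)_{B_\veps,\beta}$ in place of $(\cdot)_{B_\veps}$, invoking the second Poincaré-type inequality in \eqref{eq:TraceLeb:Poincare} and the bound \eqref{eq:IntConeCond:Conseq}, exactly as in the corresponding case of the proof of \Cref{thm:Trace:Nonlocal}. The same Jensen-plus-\Cref{thm:EnergySpaceIndepOfKernel} computation gives the analogous vanishing estimate, and the identification of limits with the traces of $u$ and $K_\delta u$ proceeds identically.

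I do not anticipate any real obstacle: the entire content of the corollary is already implicit in \Cref{thm:Trace:Nonlocal}, and the only task is to present it cleanly as a comparison between two Lebesgue-point-type characterizations of the trace. The only care needed is to be consistent about which averaging procedure (unweighted or $\gamma^{-\beta}$-weighted) is used in each of the two regimes of $\beta$.
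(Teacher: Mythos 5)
Your proposal is correct and is essentially the paper's own argument: the paper derives this corollary directly from the proof of \Cref{thm:Trace:Nonlocal}, where the estimate $|(u)_{B_\veps} - (K_\delta u)_{B_\veps}|^p \leq C\delta^p \veps^{\beta-d+p}[u]_{\mathfrak{W}^p[\delta](\Omega;\beta)}^p$ (and its weighted analogue for $\beta<0$) is combined with \Cref{thm:Trace} applied to $K_\delta u \in W^{1,p}(\Omega;\beta)$ to identify the two Lebesgue-point limits. No changes needed.
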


With these facts for traces established, we can now characterize the homogeneous nonlocal space in terms of the trace:

\begin{theorem}\label{thm:TraceChar:Nonlocal}
    For $d-p<\beta<d$, $u \in \mathfrak{W}^{p}[\delta](\Omega;\beta)$ belongs to $\mathfrak{W}^{p}_{0,\Gamma}[\delta](\Omega;\beta)$ if and only if  and $T_\Gamma u = 0$.
\end{theorem}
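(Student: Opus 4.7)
The forward implication is straightforward and comes from the continuity of the trace. If $u \in \mathfrak{W}^{p}_{0,\Gamma}[\delta](\Omega;\beta)$, then by the definition of that space there exists a sequence $\{u_n\} \subset C^{\infty}_c(\overline{\Omega} \setminus \Gamma)$ with $u_n \to u$ in $\mathfrak{W}^{p}[\delta](\Omega;\beta)$. Each $u_n$ is supported away from $\Gamma$, so $T_\Gamma u_n = 0$, and the boundedness of $T_\Gamma$ from \Cref{thm:Trace:Nonlocal} forces $T_\Gamma u = 0$.

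For the reverse implication, assuming $T_\Gamma u = 0$, my plan is a two-stage approximation that transfers the problem from the nonlocal space to the local weighted Sobolev space, where \Cref{thm:TraceChar} is already available. In the first stage I use the boundary-localized convolution $K_\veps u$ as an intermediary. By \Cref{thm:KnownConvResults:Nonlocal}(3) we have $K_\veps u \to u$ in $\mathfrak{W}^{p}[\delta](\Omega;\beta)$ as $\veps \to 0$, and by estimate \eqref{eq:Intro:ConvEst:Deriv} together with \Cref{thm:InvariantHorizon} each $K_\veps u$ belongs to $W^{1,p}(\Omega;\beta)$. Moreover, \Cref{cor:ConvTraceEqual:Nonlocal} gives $T_\Gamma K_\veps u = T_\Gamma u = 0$.

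In the second stage, because $K_\veps u \in W^{1,p}(\Omega;\beta)$ and has vanishing trace on $\Gamma$, the local characterization \Cref{thm:TraceChar} yields $K_\veps u \in W^{1,p}_{0,\Gamma}(\Omega;\beta)$. Thus for each $\veps > 0$ I can select $w_\veps \in C^{\infty}_c(\overline{\Omega} \setminus \Gamma)$ with
\begin{equation*}
    \Vnorm{w_\veps - K_\veps u}_{W^{1,p}(\Omega;\beta)} < \veps.
\end{equation*}
Invoking the embedding \Cref{thm:Embedding:Nonlocal} applied to $w_\veps - K_\veps u \in W^{1,p}(\Omega;\beta)$, I obtain
\begin{equation*}
    \Vnorm{w_\veps - K_\veps u}_{\mathfrak{W}^{p}[\delta](\Omega;\beta)} \leq C(\underline{\delta}_0,\beta,p) \Vnorm{w_\veps - K_\veps u}_{W^{1,p}(\Omega;\beta)} < C \veps.
\end{equation*}
A triangle inequality then combines with $\Vnorm{K_\veps u - u}_{\mathfrak{W}^{p}[\delta](\Omega;\beta)} \to 0$ to give $\Vnorm{w_\veps - u}_{\mathfrak{W}^{p}[\delta](\Omega;\beta)} \to 0$ along a subsequence $\veps_j \to 0$, showing $u \in \mathfrak{W}^{p}_{0,\Gamma}[\delta](\Omega;\beta)$.

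The only subtlety I anticipate is ensuring that the embedding from \Cref{thm:Embedding:Nonlocal} is applied to $w_\veps - K_\veps u$ rather than to $u$ itself (since $u$ need not have a weak gradient in $L^p(\Omega;\beta)$), and that the trace-preservation property \Cref{cor:ConvTraceEqual:Nonlocal} is indeed valid for general $u$ in the nonlocal space and not merely for $C^\infty$ functions. Both points are settled by results already established earlier in the section, so no essentially new technical work is required.
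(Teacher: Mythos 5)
Your argument is correct and follows essentially the same route as the paper's proof: regularize with the boundary-localized convolution $K_\veps u$, note that \Cref{cor:ConvTraceEqual:Nonlocal}, \eqref{eq:Intro:ConvEst:Deriv}, and \Cref{thm:InvariantHorizon} place $K_\veps u$ in $W^{1,p}_{0,\Gamma}(\Omega;\beta)$, approximate it by $C^\infty_c(\overline{\Omega}\setminus\Gamma)$ functions via \Cref{thm:TraceChar}, and transfer the error to the nonlocal norm with \Cref{thm:Embedding:Nonlocal} and the triangle inequality. The subtlety you flag (applying the embedding to $w_\veps - K_\veps u$ rather than to $u$) is handled exactly the same way in the paper, so nothing further is needed.
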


\begin{proof}
The forward implication is clear from the continuity of the trace, so we need to prove the reverse implication.
Let $u \in \mathfrak{W}^p[\delta](\Omega;\beta)$ with $T_\Gamma u = 0$. 
Then, for each $n \in \bbN$ there exists $\veps_n > 0$ such that the boundary localized convolution $K_{\veps_n} u$ satisfies $\vnorm{K_{\veps_n} u - u}_{\mathfrak{W}^p[\delta](\Omega;\beta)} < \frac{1}{n}$.
Moreover, for each $n$ $K_{\veps_n} u$ satisfies $T_\Gamma K_{\veps_n} u = 0$ by \Cref{cor:ConvTraceEqual:Nonlocal}. Hence $K_\veps u \in W^{1,p}_{0,\Gamma}(\Omega)$ by \eqref{eq:Intro:ConvEst:Deriv}, \Cref{thm:InvariantHorizon}, and \Cref{thm:TraceChar}. Additionally by \Cref{thm:TraceChar}, for each $n \in \bbN$ there exists $v_n \in C^\infty_c(\overline{\Omega} \setminus \Gamma)$ such that $\vnorm{ v_n - K_{\veps_n} u }_{W^{1,p}(\Omega;\beta)} < \frac{1}{n}$. The sequence $\{v_n\}_n$ is the desired sequence; indeed, by \Cref{thm:Embedding:Nonlocal}
\begin{equation*}
    \vnorm{v_n-u}_{\mathfrak{W}^p[\delta](\Omega;\beta)} \leq \vnorm{K_{\veps_n} u - u}_{\mathfrak{W}^p[\delta](\Omega;\beta)} + C \vnorm{v_n-K_{\veps_n} u}_{W^{1,p}(\Omega;\beta)} \leq \frac{1+C}{n}.
\end{equation*}
\end{proof}

The following Hardy-type inequality is essential to our analysis of the variational problem.

\begin{theorem}\label{thm:Hardy:Nonlocal}
    For $d-p < \beta$, there exists a constant $C = C(d,p,\beta,\Omega) > 0$ such that
				\begin{equation*}
					\int_{\Omega} \frac{|u(\bx)|^p}{ \gamma(\bx)^{\beta+p} } \, \rmd \bx \leq C [u]_{ \mathfrak{W}^p[\delta](\Omega;\beta) }^p, \qquad \forall u \in \mathfrak{V}^p[\delta](\Omega;\beta).
				\end{equation*}
\end{theorem}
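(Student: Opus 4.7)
The plan is to use the boundary-localized convolution $K_\delta u$ as a bridge: split $u$ into $K_\delta u$ and the remainder $u - K_\delta u$, then handle the remainder with the nonlocal convolution estimate \eqref{eq:Intro:ConvEst}, and handle the smoothed part $K_\delta u$ by reducing to the local Hardy inequality \Cref{thm:Hardy}. Precisely, I would begin by writing, for any $u \in \mathfrak{V}^p[\delta](\Omega;\beta)$,
\[
\Vnorm{u}_{L^p(\Omega;\beta+p)} \leq \Vnorm{u - K_\delta u}_{L^p(\Omega;\beta+p)} + \Vnorm{K_\delta u}_{L^p(\Omega;\beta+p)}.
\]

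For the first term, \eqref{eq:Intro:ConvEst} from \Cref{thm:KnownConvResults:Nonlocal} gives immediately
\[
\Vnorm{u - K_\delta u}_{L^p(\Omega;\beta+p)} \leq C_2 \delta [u]_{\mathfrak{W}^p[\delta](\Omega;\beta)}.
\]
For the second term I would first verify that $K_\delta u \in V^{1,p}(\Omega;\beta)$: the triangle inequality just applied, together with $u \in L^p(\Omega;\beta+p)$, shows $K_\delta u \in L^p(\Omega;\beta+p)$, and \eqref{eq:Intro:ConvEst:Deriv} shows $\nabla K_\delta u \in L^p(\Omega;\beta)$ with
\[
\Vnorm{\grad K_\delta u}_{L^p(\Omega;\beta)} \leq C_2 [u]_{\mathfrak{W}^p[\delta](\Omega;\beta)}.
\]
Since $\beta > d-p$, the local Hardy inequality of \Cref{thm:Hardy} applies to $K_\delta u$, yielding
\[
\Vnorm{K_\delta u}_{L^p(\Omega;\beta+p)} \leq C\Vnorm{\grad K_\delta u}_{L^p(\Omega;\beta)} \leq C C_2 [u]_{\mathfrak{W}^p[\delta](\Omega;\beta)}.
\]
Combining the two bounds gives $\Vnorm{u}_{L^p(\Omega;\beta+p)} \leq C(d,p,\beta,\Omega) [u]_{\mathfrak{W}^p[\delta](\Omega;\beta)}$, which is precisely the claim.

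The main subtlety I expect to encounter is ensuring that \eqref{eq:Intro:ConvEst:Deriv} genuinely bounds $\Vnorm{\grad K_\delta u}_{L^p(\Omega;\beta)}$ by the nonlocal \emph{seminorm} and not the full $\mathfrak{V}^p$-norm: otherwise the argument would become circular, since we would need to absorb an $\Vnorm{u}_{L^p(\Omega;\beta+p)}$ term on the right-hand side without any small prefactor available. Given that the estimate mirrors its local counterpart \eqref{eq:ConvEst:W1p}, which bounds $\Vnorm{\grad K_\delta u}_{L^p(\Omega;\beta)}$ solely by $\Vnorm{\grad u}_{L^p(\Omega;\beta)}$, this seminorm interpretation is the natural one and the argument goes through without any fixed-point-type absorption step. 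Beyond that, the proof is a short chain of triangle inequality, convolution estimates, and the local Hardy inequality — no density reduction is strictly needed because all the invoked estimates hold directly on $\mathfrak{V}^p[\delta](\Omega;\beta)$.
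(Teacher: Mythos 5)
Your argument is exactly the paper's proof: the triangle inequality splitting $u$ into $K_\delta u$ and $u-K_\delta u$, the estimate \eqref{eq:Intro:ConvEst} for the remainder, and the local Hardy inequality \Cref{thm:Hardy} applied to $K_\delta u$ together with \eqref{eq:Intro:ConvEst:Deriv}. Your reading of \eqref{eq:Intro:ConvEst:Deriv} as a bound by the seminorm $[u]_{\mathfrak{W}^p[\delta](\Omega;\beta)}$ alone is also correct (its appendix proof only ever involves differences $u(\by)-u(\bx)$ and the kernel equivalence of \Cref{thm:EnergySpaceIndepOfKernel}), so there is no circularity and no absorption step is needed.
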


\begin{proof}
    By \eqref{eq:Intro:ConvEst}, \Cref{thm:Hardy}, and \eqref{eq:Intro:ConvEst:Deriv} all applied to $K_\delta u$,
    \begin{equation*}
        \begin{split}
            \vnorm{u}_{L^p(\Omega;\beta+p)} 
            &\leq \vnorm{u - K_\delta u}_{L^p(\Omega;\beta+p)} + \vnorm{K_\delta u}_{L^p(\Omega;\beta+p)} \\
            &\leq C \delta [u]_{\mathfrak{W}^p[\delta](\Omega;\beta)} + C \vnorm{\grad K_\delta u }_{L^p(\Omega;\beta)} \leq C [u]_{\mathfrak{W}^p[\delta](\Omega;\beta)}. 
        \end{split}
    \end{equation*}
\end{proof}

\begin{theorem}\label{thm:V=W:Nonlocal}
    For $\beta < d - p$, $\mathfrak{W}^p[\delta](\Omega;\beta) = \mathfrak{V}^p[\delta](\Omega;\beta)$. For $d-p < \beta$, $\mathfrak{W}^p_{0,\Gamma}[\delta](\Omega;\beta) = \mathfrak{V}^p[\delta](\Omega;\beta)$.
\end{theorem}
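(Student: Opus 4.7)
The plan is to combine three tools established earlier in the section: the trivial continuous inclusion $L^p(\Omega;\beta+p) \hookrightarrow L^p(\Omega;\beta)$ (valid because $\gamma$ is bounded on $\Omega$, so $\gamma^{-\beta} \leq C \gamma^{-\beta-p}$), the boundary-localized convolution $K_\delta$ together with the local identification \Cref{thm:V=W}, and the nonlocal Hardy inequality \Cref{thm:Hardy:Nonlocal}. The continuous inclusion immediately yields $\mathfrak{V}^p[\delta](\Omega;\beta) \subset \mathfrak{W}^p[\delta](\Omega;\beta)$ in the first regime; in the second regime $d-p<\beta$ it also yields $\mathfrak{V}^p[\delta](\Omega;\beta) \subset \mathfrak{W}^p_{0,\Gamma}[\delta](\Omega;\beta)$, since the smooth compactly-supported approximating sequence produced by \Cref{thm:Density:Nonlocal} in $\mathfrak{V}^p[\delta]$ automatically approximates in the weaker $\mathfrak{W}^p[\delta]$ norm.

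For the nontrivial inclusion in the first case ($\beta < d-p$), I would take $u \in \mathfrak{W}^p[\delta](\Omega;\beta)$ and use the decomposition $u = K_\delta u + (u - K_\delta u)$. The tail estimate \eqref{eq:Intro:ConvEst} immediately places $u - K_\delta u$ in $L^p(\Omega;\beta+p)$. For the smoothed part, \eqref{eq:ConvEst:Lp} together with the $\mathfrak{W}^p$-version of \eqref{eq:Intro:ConvEst:Deriv} (applicable since $\beta < d$) give $K_\delta u \in W^{1,p}(\Omega;\beta)$. Because $\beta < d-p$, \Cref{thm:V=W} identifies $W^{1,p}(\Omega;\beta) = V^{1,p}(\Omega;\beta) \hookrightarrow L^p(\Omega;\beta+p)$, so $K_\delta u \in L^p(\Omega;\beta+p)$. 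Adding the two contributions gives $u \in L^p(\Omega;\beta+p)$, hence $u \in \mathfrak{V}^p[\delta](\Omega;\beta)$.

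For the nontrivial inclusion in the second case ($d-p < \beta$), I would take $u \in \mathfrak{W}^p_{0,\Gamma}[\delta](\Omega;\beta)$ and select $\{v_n\} \subset C^\infty_c(\overline{\Omega}\setminus\Gamma)$ with $v_n \to u$ in $\mathfrak{W}^p[\delta]$. Each difference $v_n - v_m$ is smooth with compact support away from $\Gamma$, so $v_n - v_m \in \mathfrak{V}^p[\delta](\Omega;\beta)$, and \Cref{thm:Hardy:Nonlocal} yields
\[
\vnorm{v_n - v_m}_{L^p(\Omega;\beta+p)} \leq C [v_n - v_m]_{\mathfrak{W}^p[\delta](\Omega;\beta)} \leq C \vnorm{v_n - v_m}_{\mathfrak{W}^p[\delta](\Omega;\beta)} \to 0.
\]
By completeness, $v_n \to w$ in $L^p(\Omega;\beta+p)$; combined with $v_n \to u$ in $L^p(\Omega;\beta)$ and uniqueness of limits in $L^1_{loc}(\Omega)$, this forces $w = u$ a.e., so $u \in L^p(\Omega;\beta+p)$ and therefore $u \in \mathfrak{V}^p[\delta](\Omega;\beta)$. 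The main conceptual step in both cases is the transfer between the $\mathfrak{W}^p$ and $\mathfrak{V}^p$ weight scales; this is achieved in the first case by smoothing via $K_\delta$ and invoking the known local theory, and in the second case by applying the nonlocal Hardy inequality to smooth approximants and then matching limits in two different weighted $L^p$ topologies.
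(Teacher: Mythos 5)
Your proposal is correct, and for the first regime it is essentially the paper's own argument: split $u$ as $K_\delta u + (u-K_\delta u)$, control the tail with \eqref{eq:Intro:ConvEst}, place $K_\delta u$ in $W^{1,p}(\Omega;\beta)$ via \eqref{eq:ConvEst:Lp} and \eqref{eq:Intro:ConvEst:Deriv}, and invoke the local identification (inequality \eqref{eq:V=W:Pf1} of \Cref{thm:V=W}). Where you diverge is the regime $d-p<\beta$: the paper again works through the convolution, using \Cref{cor:ConvTraceEqual:Nonlocal} to see that $T_\Gamma K_\delta u = T_\Gamma u = 0$, hence $K_\delta u \in W^{1,p}_{0,\Gamma}(\Omega;\beta)$ by \Cref{thm:TraceChar}, and then applies the local inequality \eqref{eq:V=W:Pf2}; you instead apply the nonlocal Hardy inequality \Cref{thm:Hardy:Nonlocal} directly to the smooth approximants $v_n$ (or their differences), which lie in $\mathfrak{V}^p[\delta](\Omega;\beta)$ by inspection, and recover $u \in L^p(\Omega;\beta+p)$ by completeness and a.e.\ identification of the two limits. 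This is legitimate and non-circular, since \Cref{thm:Hardy:Nonlocal} is proved before and independently of the statement at hand; in effect you let the Hardy inequality absorb the trace bookkeeping that the paper handles via \Cref{cor:ConvTraceEqual:Nonlocal} and the local homogeneous-space characterization. The paper's route yields the quantitative bound \eqref{eq:V=W:Nonlocal:Pf2} for every $u$ in the homogeneous space in one stroke, whereas your route trades that for a limiting argument (you could streamline it by applying \Cref{thm:Hardy:Nonlocal} to $v_n$ itself and using Fatou, which also gives the norm inequality in the limit). A minor plus on your side: you spell out the easy inclusions, in particular why $\mathfrak{V}^p[\delta](\Omega;\beta)\subset \mathfrak{W}^p_{0,\Gamma}[\delta](\Omega;\beta)$ via \Cref{thm:Density:Nonlocal} and the continuity of the embedding $L^p(\Omega;\beta+p)\hookrightarrow L^p(\Omega;\beta)$, a point the paper leaves implicit when it reduces the claim to the two norm inequalities.
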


\begin{proof}
    Since $\vnorm{u}_{L^p(\Omega;\beta)} \leq C(p,\Omega) \vnorm{u}_{L^p(\Omega;\beta+p)}$, it suffices to show that
    \begin{align}
        \label{eq:V=W:Nonlocal:Pf1}
				\vnorm{u}_{L^p(\Omega;\beta+p)} &\leq C \vnorm{u}_{\mathfrak{W}^p[\delta](\Omega;\beta)}, \qquad \forall u \in \mathfrak{W}^p[\delta](\Omega;\beta) \text{ when } \beta < d - p, \text{ and } \\
        \label{eq:V=W:Nonlocal:Pf2}
				\vnorm{u}_{L^p(\Omega;\beta+p)} &\leq C \vnorm{u}_{\mathfrak{W}^p[\delta](\Omega;\beta)}, \qquad \forall u \in \mathfrak{W}^p_{0,\Gamma}[\delta](\Omega;\beta) \text{ when } \beta > d - p.
    \end{align}

   First assume that $\beta < d - p$. Then by \eqref{eq:Intro:ConvEst} 
    \begin{equation}\label{eq:V=W:Nonlocal:Pf3}
        \vnorm{u}_{L^p(\Omega;\beta+p)} \leq \vnorm{K_\delta u}_{L^p(\Omega;\beta+p)} + C \delta [u]_{ \mathfrak{W}^p[\delta](\Omega;\beta)}.
    \end{equation}
    In the case $\beta < d-p$, \eqref{eq:ConvEst:Lp} and \eqref{eq:Intro:ConvEst:Deriv} imply that $K_\delta u \in W^{1,p}(\Omega;\beta)$ if $u \in \mathfrak{W}^p[\delta](\Omega;\beta)$. Therefore, we can apply \eqref{eq:V=W:Pf1} to $K_\delta u$, and obtain 
    \begin{equation}\label{eq:V=W:Nonlocal:Pf4}
        \vnorm{u}_{L^p(\Omega;\beta+p)} \leq \vnorm{K_\delta u}_{W^{1,p}(\Omega;\beta)} + C \delta [u]_{ \mathfrak{W}^p[\delta](\Omega;\beta)}.
    \end{equation}
    Then \eqref{eq:V=W:Nonlocal:Pf1} follows from applying \eqref{eq:ConvEst:Lp} and \eqref{eq:Intro:ConvEst:Deriv} in this estimate.
    
    In the case $\beta > d-p$, the same estimate \eqref{eq:V=W:Nonlocal:Pf3} holds for all $u \in \mathfrak{W}^p_{0,\Gamma}[\delta](\Omega;\beta)$. Then \eqref{eq:ConvEst:Lp}, \eqref{eq:Intro:ConvEst:Deriv} and \Cref{cor:ConvTraceEqual:Nonlocal} imply that $K_\delta u \in W^{1,p}_{0,\Gamma}(\Omega;\beta)$. Therefore we can apply \eqref{eq:V=W:Pf2} to $K_\delta u$ to obtain \eqref{eq:V=W:Nonlocal:Pf4}, and \eqref{eq:V=W:Nonlocal:Pf2} follows in the same way.
\end{proof}

Finally, we note the following as a corollary of the results in this section, applied in the following order: \Cref{thm:Trace:Nonlocal}, \Cref{thm:Extension}, \Cref{thm:TraceChar:Nonlocal}, \Cref{thm:V=W:Nonlocal}, and \Cref{thm:Hardy:Nonlocal}:

\begin{theorem}\label{thm:Combined:Nonlocal}
    For $d-p < \beta < d$ and for any $u \in \mathfrak{W}^{p}[\delta](\Omega;\beta)$, the function $\bar{u} := u - E_\Gamma \circ T_\Gamma u$ satisfies $\bar{u} \in \mathfrak{V}^{p}[\delta](\Omega;\beta)$. Moreover, 
    the following Hardy-type inequality holds:
				\begin{equation*}
					\int_{\Omega} \frac{|\bar{u}(\bx)|^p}{\gamma(\bx)^{\beta+p}} \, \rmd \bx \leq C [u]_{ \mathfrak{W}^p[\delta](\Omega;\beta) }^p.
				\end{equation*}
\end{theorem}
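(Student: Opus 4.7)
The plan is to chain together the five results listed in the sentence preceding the theorem, treating $\bar{u}$ as the natural homogenization of $u$ obtained by subtracting off its trace.

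First, since $d-p < \beta < d$, Theorem \ref{thm:Trace:Nonlocal} guarantees that the trace $T_\Gamma u \in \bbR^{|\Gamma|}$ is well-defined and satisfies the bound $\vnorm{T_\Gamma u}_{L^\infty(\Gamma)} \leq C \vnorm{u}_{\mathfrak{W}^p[\delta](\Omega;\beta)}$. Next, Theorem \ref{thm:Extension} provides the explicit extension $E_\Gamma (T_\Gamma u) \in C^\infty_c(\bbR^d) \subset W^{1,p}(\Omega;\beta)$ with norm controlled by $\vnorm{T_\Gamma u}_{L^\infty(\Gamma)}$. By the embedding inequality \eqref{eq:Embedding} of Theorem \ref{thm:Embedding:Nonlocal}, this extension also lies in $\mathfrak{W}^p[\delta](\Omega;\beta)$, so $\bar{u} = u - E_\Gamma \circ T_\Gamma u$ is a well-defined element of $\mathfrak{W}^p[\delta](\Omega;\beta)$.

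Second, by construction $E_\Gamma(T_\Gamma u)(\bx_0) = T_\Gamma u(\bx_0)$ for every $\bx_0 \in \Gamma$, and since the identification $T_\Gamma$ agrees with pointwise values of smooth functions (again by Theorem \ref{thm:Trace:Nonlocal}), linearity of the trace gives $T_\Gamma \bar{u} = 0$. Therefore Theorem \ref{thm:TraceChar:Nonlocal} applies and yields $\bar{u} \in \mathfrak{W}^p_{0,\Gamma}[\delta](\Omega;\beta)$. Invoking the homogeneous identification in Theorem \ref{thm:V=W:Nonlocal} (the case $d-p < \beta$), we obtain $\bar{u} \in \mathfrak{V}^p[\delta](\Omega;\beta)$, which establishes the first claim.

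For the Hardy-type estimate, we apply Theorem \ref{thm:Hardy:Nonlocal} directly to $\bar{u}$:
\begin{equation*}
    \int_\Omega \frac{|\bar{u}(\bx)|^p}{\gamma(\bx)^{\beta+p}} \, \rmd \bx \leq C [\bar{u}]_{\mathfrak{W}^p[\delta](\Omega;\beta)}^p.
\end{equation*}
It remains to bound $[\bar{u}]_{\mathfrak{W}^p[\delta](\Omega;\beta)}$ by $[u]_{\mathfrak{W}^p[\delta](\Omega;\beta)}$. By the triangle inequality this reduces to controlling the seminorm of $E_\Gamma \circ T_\Gamma u$; using the embedding \eqref{eq:Embedding} followed by the extension estimate of Theorem \ref{thm:Extension} and the trace continuity of Theorem \ref{thm:Trace:Nonlocal},
\begin{equation*}
    [E_\Gamma \circ T_\Gamma u]_{\mathfrak{W}^p[\delta](\Omega;\beta)} \leq C \vnorm{E_\Gamma \circ T_\Gamma u}_{W^{1,p}(\Omega;\beta)} \leq C \vnorm{T_\Gamma u}_{L^\infty(\Gamma)} \leq C [u]_{\mathfrak{W}^p[\delta](\Omega;\beta)},
\end{equation*}
where the final bound uses that the trace constant can be expressed purely in terms of the seminorm via the estimates \eqref{eq:LebPtProp:Nonlocal:pos}--\eqref{eq:LebPtProp:Nonlocal:neg}. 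Combining these gives the stated inequality.

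The proof is essentially bookkeeping; there is no significant obstacle because each implication has already been carried out in an earlier theorem. The only mildly delicate point is ensuring that the trace-continuity estimate provides control by the nonlocal \emph{seminorm} rather than the full norm on the right-hand side; this is handled by the Lebesgue point bounds \eqref{eq:LebPtProp:Nonlocal:pos}--\eqref{eq:LebPtProp:Nonlocal:neg} in Theorem \ref{thm:Trace:Nonlocal}, which express $|u(\bx_0)|$ up to an average in terms of the seminorm alone.
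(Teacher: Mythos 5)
Your route is the same one the paper takes: its proof of \Cref{thm:Combined:Nonlocal} consists precisely of chaining \Cref{thm:Trace:Nonlocal}, \Cref{thm:Extension}, \Cref{thm:TraceChar:Nonlocal}, \Cref{thm:V=W:Nonlocal}, and \Cref{thm:Hardy:Nonlocal}, and your verification that $T_\Gamma \bar{u} = 0$, hence $\bar{u} \in \mathfrak{W}^p_{0,\Gamma}[\delta](\Omega;\beta) = \mathfrak{V}^p[\delta](\Omega;\beta)$, followed by the nonlocal Hardy inequality applied to $\bar{u}$, is exactly that argument. The membership claim $\bar{u} \in \mathfrak{V}^p[\delta](\Omega;\beta)$ is therefore in order.

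The gap is your final inequality $\vnorm{T_\Gamma u}_{L^\infty(\Gamma)} \leq C\, [u]_{\mathfrak{W}^p[\delta](\Omega;\beta)}$. This is false: for $u \equiv 1$ the seminorm vanishes while the trace equals $1$. The Lebesgue-point bounds \eqref{eq:LebPtProp:Nonlocal:pos}--\eqref{eq:LebPtProp:Nonlocal:neg} do not deliver what you assert; they control only the deviation $|u(\bx_0) - (u)_{B(\bx_0,\veps)\cap\Omega}|$ (resp. the $\beta$-weighted average) by the seminorm, and recovering $|u(\bx_0)|$ itself requires adding back the average, which Jensen's inequality bounds by $\vnorm{u}_{L^p(\Omega;\beta)}$ -- this is exactly why \Cref{thm:Trace:Nonlocal} asserts boundedness of $T_\Gamma$ with respect to the \emph{full} norm. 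Consequently what your argument (and, for that matter, the paper's chain of citations) genuinely yields is the Hardy estimate with $\Vnorm{u}_{\mathfrak{W}^p[\delta](\Omega;\beta)}^p$ on the right-hand side, which is also how the estimate is later invoked in \eqref{eq:NonlocalTrunc:BdII4}. Indeed, the seminorm-only global form cannot be rescued by any argument: for constant $u$ one has $\bar{u} = 1 - \sum_{\bx_0 \in \Gamma} \psi((\cdot-\bx_0)/R)$, which is nonzero (with $\gamma \equiv 1$) away from $\Gamma$, so the left-hand side is positive while $[u]_{\mathfrak{W}^p[\delta](\Omega;\beta)} = 0$. A seminorm-only bound survives only in localized form near each $\bx_0 \in \Gamma$, where $\bar{u} = u - u(\bx_0)$ and $\gamma(\bx) = |\bx-\bx_0|$, i.e. an estimate of the type $\int_{\Omega \cap B(\bx_0,R)} |u(\bx)-u(\bx_0)|^p\, |\bx-\bx_0|^{-\beta-p}\, \rmd\bx \leq C\,[u]_{\mathfrak{W}^p[\delta](\Omega;\beta)}^p$, mirroring the unlabeled local analogue proved after \Cref{thm:TraceChar}; so the issue lies as much with the statement's right-hand side as with your last step, and you should either weaken the conclusion to the full-norm version or localize it.
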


\section{The nonlocal variational problem}\label{sec:VarProb}
For the functional
$\cE_\delta(u)$ given in \eqref{eq:Intro:MinProb}, and 
a given general $g : \Gamma \to \bbR$,
        the variational problem
        \begin{equation*}
            \text{Minimize } \cE_\delta(u) \text{ subject to } u = g \text{ on } \Gamma
        \end{equation*}
         is a well-posed problem in the natural energy space $\mathfrak{W}^p[\delta](\Omega;\beta)$ only for certain ranges of $\beta$, due to the function space properties explored in the previous section.
        If $\beta < d - p$, then the function values on $\Gamma$ are not well-defined, and if $\beta \geq d$, then functions necessarily have trace zero on $\Gamma$; see \Cref{thm:V=W:Nonlocal}.

		Assume that $d- p < \beta < d$.
		Given a function $\wt{g} : \Gamma \to \bbR$, define its extension $g := E_\Gamma \wt{g}$ to $\Omega$ as in \Cref{thm:Extension}.
		We seek a function $u \in \mathfrak{W}^{p}[\delta](\Omega;\beta)$ such that
		\begin{equation}\label{eq:MinProb:Case1}
			u = \text{argmin} \left\{ \cE_\delta(v) \, : \, v - g \in \mathfrak{V}^{p}[\delta](\Omega;\beta) \right\}.
		\end{equation}
		
        The following coercivity estimate -- which follows from the Hardy-type inequality in \Cref{thm:Hardy:Nonlocal}, \Cref{thm:EnergySpaceIndepOfKernel}, \Cref{thm:Embedding:Nonlocal}, and \Cref{thm:Extension} -- will be used repeatedly in the next theorems:
        \begin{equation}\label{eq:CoercivityEstimate1}
            \begin{split}
                \vnorm{u - g}_{\mathfrak{V}^{p}[\delta](\Omega;\beta)}
                &\leq C [u-g]_{\mathfrak{W}^{p}[\delta](\Omega;\beta)}  \\
                &\leq C \cE_\delta(u)^{1/p} + C [g]_{ W^{1,p}(\Omega;\beta) } \\
                &\leq C \big( \cE_\delta(u)^{1/p} + \vnorm{\wt{g}}_{L^\infty(\Gamma)} \big),
            \end{split}
             \qquad 
             \begin{gathered}
                 \forall u \in \mathfrak{W}^{p}[\delta](\Omega;\beta) \text{ with } \\
                 u - g \in \mathfrak{V}^{p}[\delta](\Omega;\beta),
             \end{gathered}
        \end{equation}
        where $C$ is independent of $\wt{g}$ and $\delta$. Consequently (again applying \Cref{thm:Embedding:Nonlocal} and \Cref{thm:Extension}),
        \begin{equation}\label{eq:CoercivityEstimate2}
            \begin{split}
                \vnorm{u}_{\mathfrak{W}^p[\delta](\Omega;\beta)}
                &\leq C \vnorm{u - g}_{\mathfrak{V}^p[\delta](\Omega;\beta)} + \vnorm{g}_{\mathfrak{W}^p[\delta](\Omega;\beta)} \\
                &\leq C \big( \cE_\delta(u)^{1/p} + \vnorm{\wt{g}}_{L^\infty(\Gamma)} \big),
            \end{split}
            \qquad
            \begin{gathered}
                 \forall u \in \mathfrak{W}^{p}[\delta](\Omega;\beta) \text{ with } \\
                 u - g \in \mathfrak{V}^{p}[\delta](\Omega;\beta).
             \end{gathered}
        \end{equation}

		\begin{theorem}\label{thm:equi:wellposed}
			There exists a unique solution $u \in \mathfrak{W}^{p}[\delta](\Omega;\beta)$ of \eqref{eq:MinProb:Case1}. Moreover, $T_\Gamma u = \wt{g}$. Equivalently, \Cref{thm:Intro:WellPosed} holds.
		\end{theorem}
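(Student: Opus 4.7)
I will establish existence and uniqueness by the direct method of the calculus of variations, with the function space theory developed in \Cref{sec:WeightedNonlocalSpaces} doing all the heavy lifting.

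\textbf{Admissible class and infimum.} Let $\cA := \{ v \in \mathfrak{W}^p[\delta](\Omega;\beta) \,:\, v - g \in \mathfrak{V}^p[\delta](\Omega;\beta)\}$. Since $g \in C^\infty_c(\bbR^d) \subset \mathfrak{W}^p[\delta](\Omega;\beta)$ has $\cE_\delta(g) < \infty$ by \Cref{thm:EnergySpaceIndepOfKernel} and \Cref{thm:Embedding:Nonlocal}, we have $g \in \cA$ and $m := \inf_{v \in \cA} \cE_\delta(v) \in [0,\infty)$. Let $\{u_n\} \subset \cA$ be a minimizing sequence, i.e.\ $\cE_\delta(u_n) \to m$.

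\textbf{Compactness.} The coercivity estimate \eqref{eq:CoercivityEstimate2} applied to each $u_n$ gives a uniform bound $\vnorm{u_n}_{\mathfrak{W}^p[\delta](\Omega;\beta)} \leq C(1 + \vnorm{\wt g}_{L^\infty(\Gamma)})$. Since $\mathfrak{W}^p[\delta](\Omega;\beta)$ is a reflexive Banach space (as stated in \Cref{subsec:Intro:VarProb}), we extract a subsequence (not relabeled) and $u \in \mathfrak{W}^p[\delta](\Omega;\beta)$ such that $u_n \rightharpoonup u$ weakly. Moreover, thanks to the compact embedding from \Cref{thm:CompactEmbed:ZeroDimCase} (which, combined with \Cref{thm:V=W:Nonlocal} and \Cref{thm:Hardy:Nonlocal}, gives compactness of $\mathfrak{V}^p[\delta](\Omega;\beta) \hookrightarrow L^p(\Omega;\beta)$ applied to $u_n - g$), we may assume $u_n \to u$ strongly in $L^p(\Omega;\beta)$ and hence pointwise a.e.\ on $\Omega$ along a further subsequence.

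\textbf{Lower semicontinuity and boundary condition.} Write the integrand of $\cE_\delta$ as $F_n(\bx,\by) := \gamma(\bx)^{-\beta} \rho\bigl( \tfrac{|\by-\bx|}{\eta_\delta(\bx)}\bigr) \tfrac{|u_n(\by)-u_n(\bx)|^p}{\eta_\delta(\bx)^{d+p}} \geq 0$; the pointwise a.e.\ convergence $u_n \to u$ gives $F_n \to F$ a.e.\ on $\Omega \times \Omega$, and Fatou's lemma yields $\cE_\delta(u) \leq \liminf_n \cE_\delta(u_n) = m$. To confirm $u \in \cA$, note that $\cV := \mathfrak{V}^p[\delta](\Omega;\beta)$ is a closed subspace of $\mathfrak{W}^p[\delta](\Omega;\beta)$ (by \Cref{thm:V=W:Nonlocal}, it coincides with the closed subspace $\mathfrak{W}^p_{0,\Gamma}[\delta](\Omega;\beta)$); since $u_n - g \in \cV$ and $u_n - g \rightharpoonup u - g$ weakly, Mazur's lemma gives $u - g \in \cV$. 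Thus $u \in \cA$ and $\cE_\delta(u) = m$, so $u$ is a minimizer.

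\textbf{Uniqueness and trace identification.} Strict convexity of $t \mapsto |t|^p$ for $p \in (1,\infty)$, together with linearity of the map $u \mapsto u(\by) - u(\bx)$, shows $\cE_\delta$ is strictly convex on the convex set $\cA$, giving uniqueness by the standard argument: if $u_1, u_2 \in \cA$ were distinct minimizers then $\tfrac12(u_1+u_2) \in \cA$ would satisfy $\cE_\delta(\tfrac12(u_1+u_2)) < m$. Finally, by \Cref{thm:TraceChar:Nonlocal} the condition $u - g \in \mathfrak{V}^p[\delta](\Omega;\beta) = \mathfrak{W}^p_{0,\Gamma}[\delta](\Omega;\beta)$ is equivalent to $T_\Gamma(u-g) = 0$, i.e.\ $T_\Gamma u = T_\Gamma g = \wt g$, as $E_\Gamma \wt g = \wt g$ on $\Gamma$ by \Cref{thm:Extension}. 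The only delicate step is verifying $u - g \in \cV$ in the weak limit; but because $\cV$ is a closed linear subspace (hence weakly closed), this follows immediately without any quantitative trace estimate being invoked at the limit.
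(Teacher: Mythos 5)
Your overall skeleton (direct method, coercivity via \eqref{eq:CoercivityEstimate1}--\eqref{eq:CoercivityEstimate2}, weak compactness in the reflexive space, weak closedness of the affine constraint set, strict convexity for uniqueness, and trace identification via \Cref{thm:TraceChar:Nonlocal} and \Cref{thm:Extension}) matches the paper's proof. However, your lower semicontinuity step contains a genuine gap. You claim that \Cref{thm:CompactEmbed:ZeroDimCase}, together with \Cref{thm:V=W:Nonlocal} and \Cref{thm:Hardy:Nonlocal}, yields a compact embedding $\mathfrak{V}^p[\delta](\Omega;\beta) \hookrightarrow L^p(\Omega;\beta)$, and you use the resulting strong/a.e.\ convergence to run Fatou's lemma. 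But \Cref{thm:CompactEmbed:ZeroDimCase} concerns the \emph{local} space $V^{1,p}(\Omega;\beta)$, and the inclusion goes the wrong way: by \Cref{thm:Embedding:Nonlocal} one has $V^{1,p}(\Omega;\beta) \hookrightarrow \mathfrak{V}^p[\delta](\Omega;\beta)$, so compactness of the smaller local space says nothing about the larger nonlocal space; neither \Cref{thm:V=W:Nonlocal} nor \Cref{thm:Hardy:Nonlocal} converts nonlocal regularity into membership in $V^{1,p}$. In fact, for \emph{fixed} $\delta>0$ the embedding of $\mathfrak{V}^p[\delta](\Omega;\beta)$ into $L^p(\Omega;\beta)$ is not compact: take $u_n(\bx)=\sin(n x_1)\chi(\bx)$ with $\chi$ a fixed cutoff supported in a compact subset of $\Omega$ away from $\Gamma$; there the kernel is bounded and $\eta_\delta$ is bounded below, so $[u_n]_{\mathfrak{W}^p[\delta](\Omega;\beta)}$ and $\vnorm{u_n}_{L^p(\Omega;\beta+p)}$ are uniformly bounded, yet no subsequence converges strongly in $L^p$. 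This is precisely why the paper's compactness result (\Cref{thm:Compactness}) is formulated only along sequences $\delta_n \to 0$, using the boundary-localized convolution. So your Fatou argument, which needs a.e.\ convergence of the minimizing sequence itself, is unjustified as written.

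The gap is easily repaired without changing anything else: since $\cE_\delta(\cdot)^{1/p}$ is comparable to the seminorm $[\,\cdot\,]_{\mathfrak{W}^p[\delta](\Omega;\beta)}$ (\Cref{thm:EnergySpaceIndepOfKernel}), the functional $\cE_\delta$ is convex and norm-continuous on $\mathfrak{W}^p[\delta](\Omega;\beta)$, hence weakly lower semicontinuous; this is exactly the route the paper takes. Alternatively, apply Mazur's lemma to produce convex combinations of $u_n$ converging strongly (even a.e.\ along a subsequence) and conclude by convexity of $\cE_\delta$. With that substitution your argument is correct and essentially identical to the paper's; the remaining steps, including the identification $T_\Gamma u = \wt g$ through $\mathfrak{V}^p[\delta](\Omega;\beta)=\mathfrak{W}^p_{0,\Gamma}[\delta](\Omega;\beta)$, are sound.
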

		
		\begin{proof}
			We use the direct method of the calculus of variations. Let $\{u_k\}_k \subset \mathfrak{W}^{p}[\delta](\Omega;\beta)$ be a minimizing sequence.
			Applying \eqref{eq:CoercivityEstimate1} to this sequence,
            we see that the sequence $\{ u_k - g \}_k \subset \mathfrak{V}^{p}[\delta](\Omega;\beta)$ is uniformly bounded, and so converges weakly in $\mathfrak{V}^{p}[\delta](\Omega;\beta)$ to a function $v$. 
			Moreover,
			\eqref{eq:CoercivityEstimate2} implies that $\{u_k\}_k$ is uniformly bounded as a sequence in $\mathfrak{W}^{p}[\delta](\Omega;\beta)$, and so converges weakly in $\mathfrak{W}^{p}[\delta](\Omega;\beta)$ to a function $u$.
                Since $\mathfrak{V}^p[\delta](\Omega;\beta) \subset \mathfrak{W}^p[\delta](\Omega;\beta)$, and since weak limits are unique, 
			it follows 
                that $u - v = g$, i.e. $u - g \in \mathfrak{V}^{p}[\delta](\Omega;\beta)$.
			Since $\cE_\delta$ defines a seminorm on $\mathfrak{W}^{p}[\delta](\Omega;\beta)$, it is weakly lower semicontinuous, so $\cE_\delta(u)$ gives a minimum value in the desired function space. The uniqueness follows from the strict convexity of $\cE_\delta$. Moreover, \Cref{thm:V=W:Nonlocal} implies that $u - g \in \mathfrak{V}^{p}[\delta](\Omega;\beta) = \mathfrak{W}^{p}_{0,\Gamma}[\delta](\Omega;\beta)$, and by \Cref{thm:TraceChar:Nonlocal} we have $T_\Gamma(u-g) = 0$, i.e. $T_\Gamma u = \wt{g}$.
		\end{proof}
		
        Finally, we consider the case $\beta \geq d$.
        In this case the only possible function trace on $\Gamma$ is $\wt{g} = 0$, i.e. the only well-posed problem in this setting is the following:
		\begin{equation}\label{eq:MinProb:Case2}
			\text{Find } u \in \mathfrak{W}^{p}_{0,\Gamma}[\delta](\Omega;\beta) \text{ such that } u = \text{argmin} \{ \cE_\delta(v) \}.
		\end{equation}
        The unique minimizer is $u \equiv 0$; clearly this is a solution, and the uniqueness follows from the strict convexity of $\cE_\delta$.

\section{Variational convergence in the localization limit}\label{sec:Conv}

In this section, we show the following:

		\begin{theorem}\label{thm:LocLimit:Dirichlet}
		  Let $d-p < \beta < d$. Let $\wt{g} : \Gamma \to \bbR$, and let $g = E_\Gamma \wt{g}$. Let $\{ u_\delta \in \mathfrak{W}^p[\delta](\Omega;\beta) \}$ be the unique minimizers of \eqref{eq:MinProb:Case2}. Then $\{u_\delta\}_\delta$ converges strongly in $L^p(\Omega;\beta)$ to the function $u \in W^{1,p}(\Omega;\beta)$ that is the unique function satisfying
			\begin{equation}\label{eq:MinProb:Limit}
				\cE_0(u) = \mathrm{argmin} \{ \cE_0(v) \, : \, v - g \in V^{1,p}(\Omega;\beta) \}, \quad \text{ where } \cE_0(v) = \frac{1}{p} \int_{\Omega} \frac{|\grad v(\bx)|^p}{\gamma(\bx)^\beta} \, \rmd \bx. 
			\end{equation}
        That is, \Cref{thm:Intro:LocLimit:Dirichlet} holds.
		\end{theorem}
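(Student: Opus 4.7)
My plan is to combine the direct method of the calculus of variations with a $\Gamma$-convergence-type argument, in four ingredients: uniform energy bounds, $L^p$-compactness via the boundary-localized convolution $K_\delta$, trace matching, and matching liminf/limsup inequalities. To begin, I would test the minimality of $u_\delta$ against the extension $g = E_\Gamma \wt g$: by \Cref{thm:EnergySpaceIndepOfKernel} and \Cref{thm:Embedding:Nonlocal}, $\cE_\delta(g) \leq C \vnorm{\grad g}_{L^p(\Omega;\beta)}^p$ with $C$ depending continuously on $\delta \in [0,\underline{\delta}_0)$; combined with \Cref{thm:Extension} this yields $\sup_\delta \cE_\delta(u_\delta) < \infty$, and \eqref{eq:CoercivityEstimate2} then gives $\sup_\delta \Vnorm{u_\delta}_{\mathfrak{W}^p[\delta](\Omega;\beta)} < \infty$.

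For compactness I would pass through $K_\delta u_\delta$: by \eqref{eq:ConvEst:Lp} and \eqref{eq:Intro:ConvEst:Deriv} this sequence is uniformly bounded in $W^{1,p}(\Omega;\beta)$, so by the compact embedding \Cref{thm:CompactEmbed:ZeroDimCase} (taking $q = p$, $\alpha = \beta$) a subsequence satisfies $K_\delta u_\delta \to u^\ast$ strongly in $L^p(\Omega;\beta)$ and $K_\delta u_\delta \rightharpoonup u^\ast$ weakly in $W^{1,p}(\Omega;\beta)$. The estimate \eqref{eq:Intro:ConvEst} gives $\Vnorm{u_\delta - K_\delta u_\delta}_{L^p(\Omega;\beta+p)} \leq C\delta [u_\delta]_{\mathfrak{W}^p[\delta](\Omega;\beta)} \to 0$, and since $\gamma$ is bounded above we obtain $L^p(\Omega;\beta+p) \hookrightarrow L^p(\Omega;\beta)$, so $u_\delta \to u^\ast$ strongly in $L^p(\Omega;\beta)$. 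Trace matching is then immediate: \Cref{cor:ConvTraceEqual:Nonlocal} gives $T_\Gamma K_\delta u_\delta = \wt g$, and continuity of $T_\Gamma$ on $W^{1,p}(\Omega;\beta)$ (\Cref{thm:Trace}) under weak convergence yields $T_\Gamma u^\ast = \wt g$; by \Cref{thm:V=W} and \Cref{thm:TraceChar} we conclude $u^\ast - g \in V^{1,p}(\Omega;\beta)$, making $u^\ast$ a valid competitor for \eqref{eq:MinProb:Limit}.

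To identify $u^\ast$ with the unique minimizer $u$ of \eqref{eq:MinProb:Limit}, I would prove matching limsup and liminf inequalities. For the limsup I test against $u$ itself: the BBM-type convergence $\cE_\delta(u) \to \cE_0(u)$ follows for $v \in C^\infty(\overline{\Omega})$ from Taylor expansion after the change of variables $\by = \bx + \delta \eta(\bx) \bz$, using the normalizations of $\rho$ in \eqref{assump:VarProb:Kernel}, and extends to general $u \in W^{1,p}(\Omega;\beta)$ by density (\Cref{thm:Density}) together with the uniform upper bound of \Cref{thm:Embedding:Nonlocal}. Minimality then gives $\limsup_{\delta\to 0} \cE_\delta(u_\delta) \leq \cE_0(u)$. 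For the liminf I apply weak lower semicontinuity of $v \mapsto \int_\Omega |\grad v|^p \gamma^{-\beta}$ along $K_\delta u_\delta \rightharpoonup u^\ast$ in $W^{1,p}(\Omega;\beta)$, combined with a sharpened form of \eqref{eq:Intro:ConvEst:Deriv},
\begin{equation*}
    \Vnorm{\grad K_\delta u_\delta}_{L^p(\Omega;\beta)}^p \leq \bigl(1 + o(1)\bigr)\, p\, \cE_\delta(u_\delta) \qquad \text{as } \delta \to 0,
\end{equation*}
extracted by inserting the representation \eqref{eq:ConvEst1:W1p:Pf1} and applying Jensen's inequality in the convolution integral while tracking the constants from \eqref{eq:Comp1} and \Cref{thm:EnergySpaceIndepOfKernel}. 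Chaining the bounds yields $\cE_0(u^\ast) \leq \liminf_{\delta\to 0} \cE_\delta(u_\delta) \leq \limsup_{\delta\to 0} \cE_\delta(u_\delta) \leq \cE_0(u)$; uniqueness of the minimizer forces $u^\ast = u$, and the usual subsequence argument upgrades the subsequential convergence to the full sequence.

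The main obstacle is the sharp liminf step, since the universal constant $C_2$ in \eqref{eq:Intro:ConvEst:Deriv} does not approach $1$ as $\delta \to 0$, and the equivalences in \Cref{thm:InvariantHorizon} and \Cref{thm:EnergySpaceIndepOfKernel} are likewise non-sharp. A robust alternative is to localize: on a subdomain $\Omega_r := \{\bx \in \Omega \, : \, \eta(\bx) > r\}$ bounded away from $\p\Omega$, one has $\delta\eta(\bx) \geq c_r \delta$, so the energy restricted to $\Omega_r$ reduces to a constant-horizon nonlocal Dirichlet energy to which the classical Bourgain--Brezis--Mironescu/Ponce liminf applies via Fatou and a.e.\ pointwise convergence along a subsequence; sending $r \to 0$ by monotone convergence recovers the global liminf inequality. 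Either route, together with the preceding steps, completes the proof.
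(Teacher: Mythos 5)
Your overall architecture --- uniform energy bound by testing with $g$, $L^p$-compactness through the boundary-localized convolution, trace matching, and matching liminf/limsup inequalities --- is essentially the paper's own proof, which packages the same ingredients as equicoercivity (\Cref{thm:Compactness} applied to $u_\delta - g$, using that the constant in \eqref{eq:CoercivityEstimate1} is $\delta$-independent) plus the $\Gamma$-limit of \Cref{prop:GammaLimit:Dirichlet}. Your limsup step (constant recovery sequence, smooth-function Taylor expansion extended by density and the $\delta$-uniform bound of \Cref{thm:Embedding:Nonlocal}) is exactly \Cref{thm:LocalizationOfEnergy} and is fine.

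The genuine problem is your primary liminf route. The ``sharpened form'' $\Vnorm{\grad K_\delta u_\delta}_{L^p(\Omega;\beta)}^p \leq (1+o(1))\, p\, \cE_\delta(u_\delta)$ cannot be extracted by Jensen's inequality, and in fact fails as a uniform-in-$u$ statement: the smoothing scale of $K_\delta$ and the interaction horizon of $\cE_\delta$ are both comparable to $\delta\eta(\bx)$, so in the interior (where $\eta$ and $\gamma$ are locally constant) the optimal constant in such an inequality is, by rescaling, independent of $\delta$; testing with oscillations of wavelength comparable to the horizon (e.g.\ in $d=1$, $p=2$, with $\supp \psi$ much smaller than $\supp \rho$) shows this optimal constant is strictly larger than $1$, so no $(1+o(1))$ bound holds and no amount of constant-tracking in \eqref{eq:Intro:ConvEst:Deriv} will produce it. Asymptotic sharpness is only recovered when the smoothing scale is decoupled from the energy horizon, which is precisely what the paper does: it fixes a convolution parameter $\veps$, uses the Jensen-based comparison \eqref{eq:Comp:ConvolutionEstimate} whose factor $\phi(\veps) \to 1$, sends $\delta \to 0$ via \Cref{thm:LocalizationOfEnergy} applied to the smooth functions $K_\veps u_\delta$ on interior sets $\Omega^\veps$, and only then lets $\veps \to 0$. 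Your fallback route (restrict to $\Omega_r$, where $\gamma^{-\beta}$ is bounded and the horizon is bounded below by $\delta r$, apply a BBM/Ponce liminf, then send $r \to 0$) is viable and close in spirit to this, but it is not literally a constant-horizon energy: $\delta\eta(\bx)$ still varies on $\Omega_r$, so you need a freezing-of-coefficients step (cover $\Omega_r$ by small balls, use that $\rho$ is nonincreasing to compare with constant-horizon energies, with comparison factors tending to $1$ as the balls shrink) before quoting the classical result; done carefully this closes the liminf.

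Two smaller gaps. First, \Cref{thm:CompactEmbed:ZeroDimCase} is stated for $V^{1,p}(\Omega;\beta)$, not $W^{1,p}(\Omega;\beta)$; apply it to $K_\delta u_\delta - g$, which lies in $V^{1,p}(\Omega;\beta)$ because $T_\Gamma K_\delta u_\delta = \wt{g}$ (\Cref{cor:ConvTraceEqual:Nonlocal}, \Cref{thm:TraceChar}, \Cref{thm:V=W}), or simply invoke \Cref{thm:Compactness} for $u_\delta - g$. Second, existence and uniqueness of the local minimizer $u$ of \eqref{eq:MinProb:Limit} is part of the statement and must be established (the paper does this by the direct method with the local coercivity estimates \eqref{eq:CoercivityEstimate1:Local}--\eqref{eq:CoercivityEstimate2:Local}); your argument presupposes it, although your sandwich inequality would also show that the limit point $u^\ast$ is a minimizer once admissibility is checked.
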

  
        The following theorem is central in calculating the local limit as the bulk horizon parameter $\delta$ approaches $0$.

\begin{theorem}\label{thm:LocalizationOfEnergy}
    $\lim\limits_{\delta \to 0} \cE_\delta(u) = \cE_0(u)$ for all $u \in W^{1,p}(\Omega;\beta)$.
    Moreover, if a sequence $\{u_\delta\}_\delta$ converges to $u$ in $C^2(\overline{V})$ for any $V \Subset \Omega$ as $\delta \to 0$, then
    \begin{equation*}
    \begin{split}
        \lim\limits_{\delta \to 0} &\int_{V} \int_{V} \rho \left( \frac{|\bx-\by|}{\eta_\delta(\bx)} \right) \frac{ |u_\delta(\bx)-u_\delta(\by)|^p }{ \gamma(\bx)^\beta \eta_\delta(\bx)^{d+p} } 
        \, \rmd \by \, \rmd \bx
        = \int_{V} \frac{ |\grad u(\bx)|^p }{ \gamma(\bx)^\beta } \, \rmd \bx.
    \end{split}
    \end{equation*}
    If $u \in L^p(\Omega;\beta) \setminus W^{1,p}(\Omega;\beta)$, then $\lim\limits_{\delta \to 0}  \cE_\delta(u) = + \infty$.
\end{theorem}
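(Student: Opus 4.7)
My plan is to establish the three claims in sequence: the first two rely on Taylor expansion after a change of variables, and the third on a contradiction routed through the boundary-localized convolution.

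For the first claim, I reduce via \Cref{thm:Density} to the case $u\in C^1(\overline{\Omega})$. Because $\delta<\underline{\delta}_0<1$ ensures $B(\bx,\eta_\delta(\bx))\subset\Omega$, the change of variables $\bz=(\by-\bx)/\eta_\delta(\bx)$ recasts the energy as
\begin{equation*}
\cE_\delta(u) = \frac{1}{p}\int_\Omega \frac{1}{\gamma(\bx)^\beta}\int_{B(0,1)} \rho(|\bz|)\left|\frac{u(\bx+\eta_\delta(\bx)\bz)-u(\bx)}{\eta_\delta(\bx)}\right|^p \rmd\bz\, \rmd\bx.
\end{equation*}
The inner quotient converges pointwise to $\nabla u(\bx)\cdot\bz$, while the Lipschitz bound on $u$ produces the dominating integrand $\vnorm{\nabla u}_{L^\infty}^p \rho(|\bz|)|\bz|^p\gamma(\bx)^{-\beta}$, whose integral over $(\bz,\bx)$ is finite precisely because $\beta<d$. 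Dominated convergence together with the normalization of $\rho$ in \eqref{assump:VarProb:Kernel} (which via polar decomposition yields $\int_{B(0,1)}\rho(|\bz|)|\bv\cdot\bz|^p\, \rmd\bz = |\bv|^p$) then gives $\cE_\delta(u)\to\cE_0(u)$. For general $u\in W^{1,p}(\Omega;\beta)$, I run a three-$\varepsilon$ argument with smooth approximants $u_n\to u$, using \Cref{thm:Embedding:Nonlocal} to bound $[u-u_n]_{\mathfrak{W}^p[\delta]}\leq C\vnorm{\nabla(u-u_n)}_{L^p(\Omega;\beta)}$ with $C$ independent of $\delta$, and \Cref{thm:EnergySpaceIndepOfKernel} to translate this back to energies.

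The second claim amounts to the same calculation on $V\Subset\Omega$, where $\gamma^{-\beta}$ is bounded. The uniform-in-$\delta$ bound $\vnorm{u_\delta}_{C^2(\overline{V})}\leq M$ lets Taylor's theorem give
\begin{equation*}
\frac{u_\delta(\bx+\eta_\delta(\bx)\bz)-u_\delta(\bx)}{\eta_\delta(\bx)} = \nabla u_\delta(\bx)\cdot\bz + O(\eta_\delta(\bx))
\end{equation*}
with remainder uniform over $\bx\in V$ and $|\bz|<1$; combined with the uniform convergence $\nabla u_\delta\to\nabla u$ on $V$, DCT delivers the claimed limit (the set of $\bx\in V$ for which $B(\bx,\eta_\delta(\bx))$ exits $V$ shrinks as $\delta\to 0$ and contributes negligibly). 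For the third claim I argue by contradiction: if $\liminf_{\delta\to 0}\cE_\delta(u)<\infty$, then along a subsequence $\delta_k\to 0$ the seminorm $[u]_{\mathfrak{W}^p[\delta_k](\Omega;\beta)}$ is uniformly bounded by \Cref{thm:EnergySpaceIndepOfKernel}. The convolution gradient estimate \eqref{eq:Intro:ConvEst:Deriv} (applicable in $\mathfrak{W}^p[\delta]$ since $\beta<d$) together with \eqref{eq:ConvEst:Lp} renders $\{K_{\delta_k}u\}$ bounded in the reflexive space $W^{1,p}(\Omega;\beta)$, so a further subsequence converges weakly to some $v$; strong $L^p(\Omega;\beta)$-convergence $K_{\delta_k}u\to u$ from \eqref{eq:ConvergenceOfConv} forces $v=u$, placing $u$ in $W^{1,p}(\Omega;\beta)$ and contradicting the hypothesis.

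The main obstacle I anticipate is keeping constants uniform in $\delta$ throughout the density step of the first claim, so that the three-$\varepsilon$ chain actually closes; this is exactly the purpose of \Cref{thm:Embedding:Nonlocal} and \Cref{thm:EnergySpaceIndepOfKernel}. The third claim is conceptually the most delicate, pivoting from direct integral analysis to a weak-compactness argument in $W^{1,p}(\Omega;\beta)$ via the boundary-localized convolution, and it crucially needs the refined convolution bounds of \Cref{thm:KnownConvResults:Nonlocal} that are unavailable in classical mollification near $\Gamma$ and $\p\Omega^*$.
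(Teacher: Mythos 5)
Your proposal is correct and follows essentially the same route as the paper: for the first two claims the paper simply invokes \cite[Proposition 4.1, Remarks 4.1 and 4.2]{ponce2004new}, whose change-of-variables/Taylor/density argument is exactly what you spell out (with the weighted, heterogeneous-horizon adaptations controlled uniformly in $\delta$ through \Cref{thm:Embedding:Nonlocal} and \Cref{thm:EnergySpaceIndepOfKernel}). Your contrapositive proof of the third claim --- uniform $W^{1,p}(\Omega;\beta)$ bounds on $K_\delta u$ via \eqref{eq:ConvEst:Lp} and \eqref{eq:Intro:ConvEst:Deriv}, weak compactness, and identification of the limit with $u$ --- is precisely the paper's argument, differing only in that you identify the weak limit through the strong convergence \eqref{eq:ConvergenceOfConv} rather than through weak derivatives in $L^p_{loc}(\Omega)$.
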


\begin{proof} 
    The proof of the first two statements follow exactly the same steps as \cite[Proposition 4.1, Remarks 4.1 and 4.2]{ponce2004new}.
    To prove the last statement, we show that for any $u \in L^p(\Omega;\beta)$
    \begin{equation}
        \liminf_{\delta \to 0} \cE_\delta(u) < \infty \qquad \Rightarrow \qquad u \in W^{1,p}(\Omega;\beta).
    \end{equation}
    To this end, set $M := \liminf_{\delta \to 0} \cE_\delta(u)$; we may assume by taking a subsequence that $\lim\limits_{\delta \to 0} \cE_{\delta}(u) = M$. 
    By \eqref{eq:ConvEst:Lp}, \eqref{eq:Intro:ConvEst} and \eqref{eq:Intro:ConvEst:Deriv}, we get that the sequence $\{K_\delta u\}_\delta$ is uniformly bounded in $W^{1,p}(\Omega;\beta)$ and that $K_\delta u \to u$ strongly in $L^p(\Omega;\beta+p)$ as $\delta \to 0$. Therefore $K_\delta u$ converges weakly in $W^{1,p}(\Omega;\beta)$ to a function $v$ as $\delta \to 0$. Since $K_\delta u \to u$ strongly in $L^p_{loc}(\Omega)$, it follows from the definition of weak derivative that $\grad v$ is the gradient vector consisting of the weak derivatives of $u$. Thus $u \in W^{1,p}(\Omega;\beta)$.
\end{proof}

\subsection{Compactness}

Next we state and prove a compactness result, which will be instrumental in showing the convergence of minimizers in \Cref{thm:LocLimit:Dirichlet}:

\begin{theorem}\label{thm:Compactness}
    Let $\delta = \{\delta_n\}$ be a sequence converging to $0$. Suppose that $\{u_\delta \}_\delta \subset \mathfrak{V}^{p}[\delta](\Omega;\beta)$ is a sequence such that $\sup_{\delta > 0} \vnorm{u_\delta}_{L^p(\Omega;\beta+p)} = B_1 < \infty$ and $\sup_{\delta > 0} [u_\delta]_{\mathfrak{W}^{p}[\delta](\Omega;\beta)} := B_2 < \infty$. Then $\{ u_\delta \}$ is precompact in $L^p(\Omega;\alpha)$ for any $\alpha < \beta + p$. Moreover, any limit point $u$ satisfies $u \in V^{1,p}(\Omega;\beta)$, with $\vnorm{u}_{L^p(\Omega;\beta+p)} \leq B_1$ and $\vnorm{\grad u}_{L^p(\Omega;\beta)} \leq B_2$.
                
\end{theorem}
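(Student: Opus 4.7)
The plan is to use the boundary-localized convolution $K_\delta u_\delta$ as a smoothed intermediary between the nonlocal sequence and the local weighted setting, invoke the compact embedding in \Cref{thm:CompactEmbed:ZeroDimCase}, and then transfer the compactness back to $\{u_\delta\}$ using the quantitative closeness $K_\delta u_\delta \approx u_\delta$ from \Cref{thm:KnownConvResults:Nonlocal}.

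I will first show that $\{K_\delta u_\delta\}_\delta$ is uniformly bounded in $V^{1,p}(\Omega;\beta)$. Indeed, \eqref{eq:Intro:ConvEst} gives
\[
\Vnorm{K_\delta u_\delta}_{L^p(\Omega;\beta+p)} \leq \Vnorm{u_\delta}_{L^p(\Omega;\beta+p)} + C_2 \delta [u_\delta]_{\mathfrak{W}^p[\delta](\Omega;\beta)} \leq B_1 + C_2 \underline{\delta}_0 B_2,
\]
while \eqref{eq:Intro:ConvEst:Deriv} gives
\[
\Vnorm{\grad K_\delta u_\delta}_{L^p(\Omega;\beta)} \leq C_2 [u_\delta]_{\mathfrak{W}^p[\delta](\Omega;\beta)} \leq C_2 B_2.
\]
By \Cref{thm:CompactEmbed:ZeroDimCase}, the embedding $V^{1,p}(\Omega;\beta) \hookrightarrow L^p(\Omega;\alpha)$ is compact for $\alpha < \beta+p$ (the admissibility condition reduces to $\alpha < \beta+p$ when $q=p$). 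Therefore along a subsequence $K_{\delta_k} u_{\delta_k} \to u$ strongly in $L^p(\Omega;\alpha)$. Since $\gamma$ is bounded above on $\Omega$, for any $\alpha \leq \beta+p$ we have the continuous embedding $L^p(\Omega;\beta+p) \hookrightarrow L^p(\Omega;\alpha)$, so using \eqref{eq:Intro:ConvEst} once more,
\[
\Vnorm{K_{\delta_k} u_{\delta_k} - u_{\delta_k}}_{L^p(\Omega;\alpha)} \leq C \Vnorm{K_{\delta_k} u_{\delta_k} - u_{\delta_k}}_{L^p(\Omega;\beta+p)} \leq C \delta_k B_2 \longrightarrow 0.
\]
This yields $u_{\delta_k} \to u$ strongly in $L^p(\Omega;\alpha)$, proving the precompactness claim.

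To identify the limit, I would use that $V^{1,p}(\Omega;\beta)$ is reflexive to extract a further subsequence along which $K_{\delta_k} u_{\delta_k} \rightharpoonup v$ weakly in $V^{1,p}(\Omega;\beta)$; by uniqueness of the $L^p(\Omega;\alpha)$ limit, $u = v \in V^{1,p}(\Omega;\beta)$. The bound $\Vnorm{u}_{L^p(\Omega;\beta+p)} \leq B_1$ then follows by passing to a further a.e.\ convergent subsequence and applying Fatou's lemma to the integrand $\gamma(\bx)^{-(\beta+p)}|u_{\delta_k}(\bx)|^p$.

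The sharp bound $\Vnorm{\grad u}_{L^p(\Omega;\beta)} \leq B_2$ is the main technical obstacle, as a naive weak lower semicontinuity on $\grad K_{\delta_k} u_{\delta_k}$ only gives a factor of $C_2 B_2$. To obtain the sharp constant I would invoke a weighted Bourgain--Brezis--Mironescu--Ponce style lower semicontinuity: the normalization constants in \eqref{eq:Intro:NonlocalSeminorm} are chosen precisely so that $[u]_{\mathfrak{W}^p[\delta](\Omega;\beta)}^p \to \Vnorm{\grad u}_{L^p(\Omega;\beta)}^p$ as $\delta \to 0$ for $u \in W^{1,p}(\Omega;\beta)$ (consistent with \Cref{thm:LocalizationOfEnergy}). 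For any test field $\bsvarphi \in C^\infty_c(\Omega)^d$, I would write
\[
- \int_\Omega u \operatorname{div}\bsvarphi \, \rmd \bx = - \lim_{k \to \infty} \int_\Omega u_{\delta_k} \operatorname{div} \bsvarphi \, \rmd \bx,
\]
recast the right-hand side as the limit of a symmetrized nonlocal double integral of difference quotients of $u_{\delta_k}$ weighted by $\gamma(\bx)^{-\beta}$ and paired against a suitable nonlocal representative of $\bsvarphi$, and then apply H\"older's inequality in the form $[u_{\delta_k}]_{\mathfrak{W}^p[\delta_k](\Omega;\beta)} \cdot \Vnorm{\bsvarphi}_{L^{p'}(\Omega;\beta(1-p'))}$, with the dominated convergence of the paired smooth test piece supplying the factor $1$ in the limit. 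Taking the supremum over $\bsvarphi$ with $\Vnorm{\bsvarphi}_{L^{p'}(\Omega;\beta(1-p'))} \leq 1$ then identifies $\grad u$ via duality and yields $\Vnorm{\grad u}_{L^p(\Omega;\beta)} \leq \liminf_k [u_{\delta_k}]_{\mathfrak{W}^p[\delta_k](\Omega;\beta)} \leq B_2$.
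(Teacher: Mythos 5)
The first half of your argument — uniform $V^{1,p}(\Omega;\beta)$ bounds on $K_\delta u_\delta$ via \eqref{eq:Intro:ConvEst}--\eqref{eq:Intro:ConvEst:Deriv}, compactness from \Cref{thm:CompactEmbed:ZeroDimCase}, transfer back to $u_\delta$ using $\vnorm{K_\delta u_\delta - u_\delta}_{L^p(\Omega;\beta+p)} \le C\delta B_2$, and Fatou for $\vnorm{u}_{L^p(\Omega;\beta+p)} \le B_1$ — is correct and is essentially the paper's proof. You also correctly diagnose that naive weak lower semicontinuity of $\grad K_{\delta_k}u_{\delta_k}$ only yields $C_2B_2$, so the sharp bound $\vnorm{\grad u}_{L^p(\Omega;\beta)} \le B_2$ needs a separate mechanism.

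That last step is where there is a genuine gap. Your duality sketch hinges on the assertion that, after recasting $-\int_\Omega u_{\delta_k}\div\bsvarphi\,\rmd\bx$ as a nonlocal double integral and applying H\"older, ``the paired smooth test piece supplies the factor $1$ in the limit.'' This is exactly the nontrivial point, and it is not automatic. With the natural linear pairing $G_\delta(\bx,\by)\sim \bsvarphi(\bx)\cdot\frac{\by-\bx}{|\by-\bx|}$, the H\"older-dual factor converges to a quantity involving the $p'$-th angular moment $\int_{\bbS^{d-1}}|\bsomega\cdot\be|^{p'}\,\rmd\sigma$, whereas the normalization $\overline{C}_{d,p}$ in \eqref{eq:Intro:NonlocalSeminorm} is tied to the $p$-th moment; for $p\neq 2$ these do not cancel and you only recover a non-sharp constant. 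Repairing this forces a pairing that is nonlinear in the direction $\bsomega$ (so that $|G|^{p'}$ reproduces the $p$-th moment), and then identifying the limit of the pairing term requires a genuine nonlocal integration by parts in which the $\bx$-dependence of the horizon $\eta_\delta(\bx)$ and of the weight $\gamma(\bx)^{-\beta}$ produces extra terms (compare the flux terms $BF_{p,\delta}$ in \Cref{sec:EL}) that must be shown to vanish — none of which is supplied. Note also that a directional version of the estimate ($\vnorm{\p_{\be}u}_{L^p(\Omega;\beta)}\le B_2$ for each fixed $\be$) would not imply the full-gradient bound. The paper avoids all of this: it applies \eqref{eq:Comp:ConvolutionEstimate} on the interior sets $\Omega^\veps=\{\eta>\veps\}$ with the fixed smoothing parameter $\veps$, uses that $K_\veps u_{\delta_n}\to K_\veps u$ in $C^2(\overline{\Omega^\veps})$ together with the second statement of \Cref{thm:LocalizationOfEnergy} to pass $\delta_n\to 0$ with constant $\phi(\veps)$, and then sends $\veps\to 0$ using $\phi(\veps)\to 1$. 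Either carry out your duality computation in full (verifying the constant matches the normalization and controlling the heterogeneous-horizon correction terms), or replace that step by the $K_\veps$/$\phi(\veps)$ argument.
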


\begin{proof}
    Let $\alpha < \beta + p$; It suffices to show that a subsequence of $\{u_\delta\}$ is Cauchy in $L^p(\Omega;\alpha)$. First we use \eqref{eq:Intro:ConvEst} to get
    \begin{equation*}
        \Vnorm{ u_\delta -  K_{\delta} u_\delta }_{L^{p}(\Omega;\alpha)} \leq C(p,\alpha,\beta,\Omega) \vnorm{u_\delta - K_\delta u_\delta}_{L^p(\Omega;\beta+p)} \leq C\delta [u_{\delta}]_{\mathfrak{W}^{p}[\delta](\Omega;\beta)} \leq C B_2 \delta.
    \end{equation*}
    Next, by \eqref{eq:ConvEst:Lp} and \eqref{eq:Intro:ConvEst:Deriv}
    \begin{equation*}
        \Vnorm{ K_{\delta} u_\delta }_{L^{p}(\Omega;\beta+p)} \leq C \Vnorm{ u_\delta }_{L^{p}(\Omega;\beta+p)} \leq C B_1 \text{ and } \vnorm{\grad  K_{\delta} u_\delta }_{L^{p}(\Omega;\beta)} \leq C [ u_\delta ]_{\mathfrak{W}^{p}[\delta](\Omega;\beta)}.
    \end{equation*}
    Therefore the sequence $\{ K_{\delta_n} u_{\delta_n} \}_{n \in \bbN}$ is bounded in $V^{1,p}(\Omega;\beta)$, hence by \Cref{thm:CompactEmbed:ZeroDimCase} is precompact in the strong topology of $L^p(\Omega;\alpha)$ since $\alpha < \beta+p$.
    So for a convergent subsequence $\{ K_{\delta_n} u_{\delta_n} \}_n$ (not relabeled), we have for $n$, $m \in \bbN$
    \begin{equation*}
    \begin{split}
        \Vnorm{ u_{\delta_n} - u_{\delta_m} }_{L^p(\Omega;\alpha)} 
        &\leq \Vnorm{ K_{\delta_n} u_{\delta_n} - u_{\delta_n} }_{L^p(\Omega;\alpha)} + \Vnorm{ K_{\delta_m} u_{\delta_m} - u_{\delta_m} }_{L^p(\Omega;\alpha)} \\
            &\qquad + \Vnorm{ K_{\delta_n} u_{\delta_n} - K_{\delta_m} u_{\delta_m} }_{L^p(\Omega;\alpha)} \\
        &\leq CB_2( \delta_m + \delta_n) + \Vnorm{ K_{\delta_n} u_{\delta_n} - K_{\delta_m} u_{\delta_m} }_{L^p(\Omega;\alpha)} 
    \end{split}
    \end{equation*}
which approaches $0$ as $\min \{m,n\} \to \infty$. Thus $ \{u_{\delta_n} \}_n$ is also convergent.

Now we show that any limit point $u$ belongs to $V^{1,p}(\Omega;\beta)$. First, since a sequence $u_{\delta_n}$ converges to $u$ in $L^p(\Omega;\alpha)$, we use the almost-everywhere convergence of a subsequence (not relabeled) $\{u_{\delta_n}\}_n$ and Fatou's lemma to obtain the $L^p(\Omega;\beta+p)$-finiteness of $u$:
\begin{equation*}
    \vnorm{u}_{L^p(\Omega;\beta+p)} \leq \liminf_{n \to \infty} \vnorm{u_\delta}_{L^p(\Omega;\beta+p)} \leq B_1.
\end{equation*}
Second, we use \eqref{eq:Comp:ConvolutionEstimate}; specifically, we can estimate for any $\veps \in (0,\underline{\delta}_0)$
\begin{equation*}
    \frac{ \overline{C}_{d,p} (d+p) }{ \sigma(\bbS^{d-1}) } \int_{\Omega^\veps} \int_{\Omega^\veps \cap 
    B(\bx,\eta_{\theta(\veps)\delta}(\bx))
    } \frac{ |K_\veps u_{\delta_n}(\bx) - K_\veps u_{\delta_n}(\by)|^p }{ \gamma(\bx)^\beta \eta_{\theta(\veps)\delta}(\bx)^{d+p} } \, \rmd \by \, \rmd \bx \leq \phi(\veps) [u_{\delta_n}]_{ \mathfrak{W}^p[\delta](\Omega;\beta) }^p,
\end{equation*}
where $\Omega^\veps := \{ \bx \in \Omega \, : \, \eta(\bx) > \veps \}$.
    Now for any fixed $\veps > 0$, the sequence $\{ K_\veps u_{\delta_n} \}_n$ converges to $K_{\veps} u$ in $C^2(\overline{\Omega^{ \veps}})$ as $\delta_n \to 0$, since $\Omega^{\veps} \Subset \Omega$.
    Therefore we can use \Cref{thm:LocalizationOfEnergy} (with $\rho$ specified as the normalized characteristic function) when taking $\delta_n \to 0$ in the previous inequality to get
    \begin{equation*}
        \int_{\Omega^{\veps}} \frac{|\grad K_\veps u(\bx)|^p}{\gamma(\bx)^\beta} \, \rmd \bx \leq \phi(\veps) B_2^p.
    \end{equation*}
    This inequality holds uniformly in $\veps$, so the inequality $\vnorm{\grad u}_{L^p(\Omega;\beta)} \leq B_2$ follows by taking $\veps \to 0$.
\end{proof}

\subsection{Gamma-limit}

\begin{proposition}\label{prop:GammaLimit:Dirichlet}
    With all the above assumptions,
    we extend the functional 
$\cE_\delta$ to all of $L^p(\Omega;\beta)$ by setting
\begin{equation}\label{eq:Fxnal:Ext:Dirichlet}
    \overline{\cE}_\delta(u) :=
    \begin{cases}
        \cE_\delta(u), & \text{ for } u \in \mathfrak{W}^{p}[\delta](\Omega;\beta) \text{ with } u - g \in \mathfrak{V}^p[\delta](\Omega;\beta), \\
        + \infty, & \text{ for } u \in L^p(\Omega;\beta) \setminus (\mathfrak{W}^{p}[\delta](\Omega;\beta) \text{ with } u - g \in \mathfrak{V}^p[\delta](\Omega;\beta) ).
    \end{cases}
\end{equation}
    Moreover, we
    define
    \begin{equation}
        \overline{\cE}_0(u) :=
        \begin{cases}
        \cE_0(u), & \text{ for } u \in W^{1,p}(\Omega;\beta) \text{ with } u - g \in V^{1,p}(\Omega;\beta), \\
        + \infty, & \text{ for } u \in L^p(\Omega) \setminus ( W^{1,p}(\Omega;\beta) \text{ with } u - g \in V^{1,p}(\Omega;\beta) ).
    \end{cases}
    \end{equation}
    Then we have
    \begin{equation}
        \overline{\cE}_0(u) = \Gammalim_{\delta \to 0} \overline{\cE}_\delta(u),
    \end{equation}
    where the $\Gamma$-limit is
    with respect to the topology of strong convergence on $L^p(\Omega;\beta)$.
\end{proposition}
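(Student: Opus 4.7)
The plan is to verify the two halves of the $\Gamma$-convergence definition separately: a recovery sequence ($\limsup$) inequality and a $\liminf$ inequality with respect to the strong $L^p(\Omega;\beta)$ topology.

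For the recovery sequence, given $u \in L^p(\Omega;\beta)$ I take the constant sequence $u_\delta \equiv u$. If $\overline{\cE}_0(u) = +\infty$ the inequality is vacuous. Otherwise $u \in W^{1,p}(\Omega;\beta)$ with $u - g \in V^{1,p}(\Omega;\beta)$; applying \Cref{thm:Embedding:Nonlocal} to $u$ places $u$ in $\mathfrak{W}^p[\delta](\Omega;\beta)$, and the same theorem applied to $u - g$ together with $V^{1,p}(\Omega;\beta) \subset L^p(\Omega;\beta+p)$ places $u - g$ in $\mathfrak{V}^p[\delta](\Omega;\beta)$. Hence $\overline{\cE}_\delta(u) = \cE_\delta(u)$, and \Cref{thm:LocalizationOfEnergy} gives $\cE_\delta(u) \to \cE_0(u) = \overline{\cE}_0(u)$.

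For the $\liminf$ inequality, let $u_\delta \to u$ strongly in $L^p(\Omega;\beta)$, and set $L := \liminf_{\delta \to 0} \overline{\cE}_\delta(u_\delta)$, which I assume finite. After passing to a subsequence, $u_\delta$ is admissible in the definition of $\overline{\cE}_\delta$ and $\cE_\delta(u_\delta) \to L$. The coercivity estimate \eqref{eq:CoercivityEstimate1} bounds $\{u_\delta - g\}$ uniformly in $\mathfrak{V}^p[\delta](\Omega;\beta)$, so \Cref{thm:Compactness} (taking $\alpha = \beta < \beta + p$) yields, along a further subsequence, $u_\delta - g \to v$ strongly in $L^p(\Omega;\beta)$ with $v \in V^{1,p}(\Omega;\beta)$. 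The hypothesis forces $v = u - g$ by uniqueness of limits, so $u \in W^{1,p}(\Omega;\beta)$ with $u - g \in V^{1,p}(\Omega;\beta)$ and $\overline{\cE}_0(u) = \cE_0(u)$. To capture the sharp constants I then introduce the boundary-localized convolution $K_\veps$ for fixed $\veps \in (0, \underline{\delta}_0)$. Fix $V \Subset \Omega$ and choose $r \in (\sup_V \eta, \diam(\Omega))$; for $\veps$ small, continuity of $\theta$ with $\theta(0) = 1$ gives $V \subset \{\eta < \theta(\veps) r\}$, so \eqref{eq:Comp:ConvolutionEstimate} (after restricting the inner integral from $\Omega$ to $V$) provides
\begin{equation*}
    \int_V \int_V \frac{1}{\gamma(\bx)^\beta} \rho\!\left(\tfrac{|\bx-\by|}{\eta_{\theta(\veps)\delta}(\bx)}\right) \frac{|K_\veps u_\delta(\bx) - K_\veps u_\delta(\by)|^p}{\eta_{\theta(\veps)\delta}(\bx)^{d+p}} \, \rmd \by \, \rmd \bx \;\leq\; p\,\phi(\veps)\,\cE_\delta(u_\delta).
\end{equation*}
Smoothness of the mollifier kernel in $\bx$ and boundedness of $\lambda$ away from zero on $V$ upgrade $L^p$-convergence to $C^2(\overline{V})$-convergence $K_\veps u_\delta \to K_\veps u$, so taking $\delta \to 0$ and invoking the second part of \Cref{thm:LocalizationOfEnergy} (with bulk horizon $\theta(\veps)\delta \to 0$) gives $\int_V |\grad K_\veps u|^p/\gamma^\beta \leq p\,\phi(\veps)\,L$. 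Letting $\veps \to 0$ with \Cref{thm:KnownConvResults} (item 4), which provides $K_\veps u \to u$ in $W^{1,p}(\Omega;\beta)$, together with $\phi(\veps) \to 1$, yields $\int_V |\grad u|^p / \gamma^\beta \leq pL$; monotone convergence along an exhaustion $V_n \uparrow \Omega$ produces $\cE_0(u) \leq L$.

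The main obstacle is the nested triple limit ($\delta \to 0$, then $\veps \to 0$, then $V \uparrow \Omega$) and in particular ensuring the constants $\phi(\veps)$ in \eqref{eq:Comp:ConvolutionEstimate} tend to $1$ sharply, so that one recovers $\cE_0(u) \leq L$ rather than the strictly weaker $\cE_0(u) \leq C L$ that would follow from the cruder kernel-equivalence \Cref{thm:EnergySpaceIndepOfKernel}. The bookkeeping with subsequences (first for the compactness step, then to identify the limit as $u - g$) also needs care so that the identification of the limit in $V^{1,p}(\Omega;\beta)$ is consistent with the given $L^p(\Omega;\beta)$-convergence.
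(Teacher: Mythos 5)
Your proposal is correct and follows essentially the same route as the paper: the constant sequence together with \Cref{thm:LocalizationOfEnergy} for the recovery/limsup part, and coercivity \eqref{eq:CoercivityEstimate1} plus \Cref{thm:Compactness} to identify $u-g\in V^{1,p}(\Omega;\beta)$, followed by the boundary-localized convolution estimate \eqref{eq:Comp:ConvolutionEstimate} with $\phi(\veps)\to 1$ for the sharp liminf bound. The only differences are cosmetic (you work on a generic $V\Subset\Omega$ with a final exhaustion and cite \Cref{thm:KnownConvResults} item 4 explicitly, where the paper uses the sets $\Omega^\veps$ and lets $\veps\to 0$ directly), and your explicit admissibility check for the recovery sequence is a welcome detail the paper leaves implicit.
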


\begin{proof}
    We proceed in two steps. First, we prove that
    \begin{equation}\label{eq:GammaLiminf:Dirichlet}
        \overline{\cE}_0(u) \leq \liminf_{\delta \to 0} \overline{\cE}_\delta(u_\delta), 
    \end{equation}
    for any sequence $\{ u_\delta \}_\delta \subset L^p(\Omega;\beta)$ that converges strongly in $L^p(\Omega;\beta)$ to $u$.
    If the right-hand side is $\infty$ then there is nothing to show, so assume that $\liminf_{\delta \to 0} \overline{\cE}_\delta(u_\delta) < \infty$.
    If this is the case, then applying \eqref{eq:CoercivityEstimate1} to the sequence $\{u_\delta\}_\delta$ shows that the sequence $\{ u_\delta - g\}_\delta$ is uniformly bounded in $\mathfrak{V}^p[\delta](\Omega;\beta)$. 
    Thus, $\{u_\delta -g \}_\delta$ is bounded as a sequence in $\mathfrak{V}^p[\delta](\Omega;\beta)$, and so by \Cref{thm:Compactness} there exists $v \in V^{1,p}(\Omega;\beta)$ such that a subsequence $\{u_\delta' - g \}_{\delta'}$ converges to $v$ in $L^p(\Omega;\beta)$. However, $u_\delta \to u$ in $L^p(\Omega;\beta)$, so therefore $v = u - g$. Thus $u - g \in V^{1,p}(\Omega;\beta)$, and \Cref{thm:Extension} implies
    \begin{equation*}
        [u]_{W^{1,p}(\Omega;\beta)} \leq [u-g]_{W^{1,p}(\Omega;\beta)} + C \vnorm{\wt{g}}_{L^\infty(\Gamma)} < \infty.
    \end{equation*}
    Therefore 
    $\overline{\cE}_0(u) < \infty$, and we just need to show that
    \begin{equation}\label{eq:GammaLiminf:Finite:Dirichlet}
        \cE_0(u) \leq \liminf_{\delta \to 0} \cE_\delta(u_\delta).
    \end{equation}
    
    To this end, \Cref{thm:LocalizationOfEnergy} and \eqref{eq:Comp:ConvolutionEstimate}  imply that
    \begin{equation*}
        \begin{split}
            \frac{1}{p} \int_{\Omega^\veps} \frac{|\grad K_\veps u|^p}{\gamma(\bx)^\beta} \, \rmd \bx
            &= 
            \lim\limits_{\delta \to 0} 
            \frac{1}{p} \int_{\Omega^\veps} \int_{\Omega^\veps}  \rho \left( \frac{|\bx-\by|}{ \eta_{\theta(\veps)\delta}(\bx) } \right) \frac{ |K_\veps u_\delta(\bx) - K_\veps u_\delta(\by)|^p }{ \gamma(\bx)^\beta \eta_{\theta(\veps)\delta}(\bx)^{d+p} } \, \rmd \by \, \rmd \bx  \\
            &\leq \phi(\veps) \liminf_{\delta \to 0} \cE_\delta(u_\delta),
        \end{split}
    \end{equation*}
    and so \eqref{eq:GammaLiminf:Finite:Dirichlet} follows by taking $\veps \to 0$.

    Second, we note that the constant sequence $\{u_\delta\}_\delta = u \in L^p(\Omega;\beta)$ serves as a recovery sequence:
    \begin{equation}\label{eq:GammaLimsup:Dirichlet}
        \overline{\cE}_0(u) = \lim\limits_{\delta \to 0} \overline{\cE}_\delta(u).
    \end{equation}
    This follows from 
    \Cref{thm:LocalizationOfEnergy}.
    
    Together \eqref{eq:GammaLiminf:Dirichlet} and \eqref{eq:GammaLimsup:Dirichlet} conclude the proof.    
\end{proof}

\begin{proof}[proof of \Cref{thm:LocLimit:Dirichlet}]
    We first establish the well-posedness of \eqref{eq:MinProb:Limit}. The existence and uniqueness of this problem posed on a slightly different weighted Sobolev space was established for $p = 2$ in \cite{Calder2020Properly}, but we write a complete proof here for general $p$ and for the specific spaces we are using. 
    We proceed in a manner similar to \Cref{thm:equi:wellposed}.
    We first note two coercivity estimates analogous to \eqref{eq:CoercivityEstimate1}-\eqref{eq:CoercivityEstimate2}. 
    By the Hardy inequality in \Cref{thm:Hardy} and \Cref{thm:Extension}
    \begin{equation}\label{eq:CoercivityEstimate1:Local}
        \vnorm{u-g}_{V^{1,p}(\Omega;\beta)} \leq C (\cE_0(u)^{1/p} + \vnorm{\wt{g}}_{L^\infty(\Gamma)}), \quad \forall u \in W^{1,p}(\Omega;\beta) \text{ with } u - g \in V^{1,p}(\Omega;\beta).
    \end{equation}
    Consequently (again applying \Cref{thm:Extension}),
        \begin{equation}\label{eq:CoercivityEstimate2:Local}
            \begin{split}
                \vnorm{u}_{W^{1,p}(\Omega;\beta)}
                &\leq C \vnorm{u - g}_{V^{1,p}(\Omega;\beta)} + \vnorm{g}_{W^{1,p}(\Omega;\beta)} \\
                &\leq C \big( \cE_0(u)^{1/p} + \vnorm{\wt{g}}_{L^\infty(\Gamma)} \big),
            \end{split}
            \qquad
            \begin{gathered}
                 \forall u \in W^{1,p}(\Omega;\beta) \text{ with } \\
                 u - g \in V^{1,p}(\Omega;\beta).
             \end{gathered}
        \end{equation}
    The well-posedness of \eqref{eq:MinProb:Limit} can then be proved exactly the same way as \Cref{thm:equi:wellposed}, using \eqref{eq:CoercivityEstimate1:Local}-\eqref{eq:CoercivityEstimate2:Local} in place of \eqref{eq:CoercivityEstimate1}-\eqref{eq:CoercivityEstimate2}.

    The convergence result follows from the framework described in \cite[Theorem 1.21]{braides2002gamma}. By the $\Gamma$-limit computation in \Cref{prop:GammaLimit:Dirichlet}, it suffices to show that $\{\cE_\delta(u_\delta)\}_\delta$ is equi-coercive in the strong $L^p(\Omega;\beta)$ topology, i.e. that $\{u_\delta\}_\delta$ or equivalently $\{u_\delta - g\}_\delta$ is precompact in the strong $L^p(\Omega;\beta)$ topology. But, this follows by noting that the constant $C$ appearing in \eqref{eq:CoercivityEstimate1} 
    is independent of $\delta$, 
    permitting us to apply the compactness result \Cref{thm:Compactness} to the sequence $\{u_\delta - g\}_\delta$.
        
\end{proof}

        \begin{remark}
            In the case that $\beta \geq d$, the proof of \Cref{thm:LocLimit:Dirichlet} is trivial, since in this case the unique minimizer $u_\delta$ is zero for all $\delta$.
        \end{remark}

		\subsection{The Euler-Lagrange equation}
		\label{sec:EL}

        Here, we provide a formal description of the variational limit result in terms of Euler-Lagrange equations.
        A minimizer of \eqref{eq:MinProb:Limit} is a weak solution of the boundary-value problem
		\begin{equation}\label{eq:BVP:Limit}
			\begin{split}
				-\div(\gamma(\bx)^{-\beta} |\grad u|^{p-2} \grad u) &= 0 \text{ on } \Omega, \\
				u &= g \text{ on } \Gamma, \\
				|\grad u|^{p-2} \frac{\p u}{\p \bsnu} &= 0, \text{ on } \p \Omega \setminus \Gamma.
			\end{split}
		\end{equation}
        To identify a boundary-value problem associated to \eqref{eq:MinProb:Case1}, one can apply a nonlocal Green's identity (see \cite{Scott2023Nonlocala} for the case $\beta = 0$) to a minimizer $u$ of \eqref{eq:MinProb:Case1}; for test functions $v \in C^\infty_c(\overline{\Omega} \setminus \Gamma)$, we get that the first variation of $\cE_\delta$ satisfies
		\begin{equation*}
			\begin{split}
				d \cE_\delta(u)[v] = \int_{\Omega} \cL_{p,\delta} u v \, \rmd \bx + \int_{\p \Omega \setminus \Gamma} \gamma^{-\beta} BF_{p,\delta}(\grad u, \bsnu) v \, \rmd \sigma,
			\end{split}
		\end{equation*}
        where the operator $\cL_\delta$ is defined as
        \begin{equation*}
            \cL_{p,\delta}u(\bx) := \int_{\Omega} \left( \frac{\rho(\frac{|\bx-\by|}{\eta_\delta(\bx)}) }{\eta(\bx)^{d+p} \gamma(\bx)^\beta} + \frac{\rho(\frac{|\bx-\by|}{\eta_\delta(\by)}) }{\eta(\by)^{d+p} \gamma(\by)^\beta} \right) |u(\bx) - u(\by)|^{p-2} (u(\bx) - u(\by)) \, \rmd \by,
        \end{equation*}
        and where the function $BF_{p,\delta}(\grad u, \bsnu)$ is defined as
        \begin{equation*}
            BF_{p,\delta}(\grad u,\bsnu) := \int_{B(0,1)} \ln \left( \frac{1+\delta (\bsnu \cdot \bz)}{1-\delta (\bsnu \cdot \bz)} \right)
            \frac{\rho(|\bz|)}{2 \delta} |\grad u(\bx) \cdot \bz |^{p-2}  (\grad u(\bx) \cdot \bz) \, \rmd \bz,
        \end{equation*}
        which is consistent with the local weighted $p$-Laplacian flux as $\delta \to 0$.
        
        Note that the punctures are excluded in $\p \Omega \setminus \Gamma$, so the normal derivative is well-defined in the boundary integral.
		
		We can then re-state the results of the previous two sections referencing the variational forms.
		
		\begin{proposition}
            Let $d-p < \beta < d$. Then, for a given $g : \Gamma \to \bbR$, there exists a unique weak solution $u_\delta \in \mathfrak{W}^p[\delta](\Omega;\beta)$ to the boundary-value problem
			\begin{equation*}
				\begin{split}
					\cL_{p,\delta} u_\delta &= 0, \quad \text{ on } \Omega, \\
					u_\delta &= g, \quad \text{ on } \Gamma, \\
					BF_{p,\delta}(\grad u,\bsnu) &= 0, \quad \text{ on } \p \Omega \setminus \Gamma.
				\end{split}
			\end{equation*}
			Moreover, the solution sequence $\{u_\delta\}_\delta$ converges strongly in $L^p(\Omega;\beta)$ as $\delta \to 0$ to the unique weak solution $u \in W^{1,p}(\Omega;\beta)$ of \eqref{eq:BVP:Limit}. 
		\end{proposition}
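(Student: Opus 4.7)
The proposition is essentially a reformulation of \Cref{thm:equi:wellposed} and \Cref{thm:LocLimit:Dirichlet} in terms of Euler--Lagrange equations, so the plan is to identify weak solutions of the stated boundary-value problems with minimizers of the corresponding energy functionals and then quote the established variational results.

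\textbf{Step 1: Weak formulation.} First I would formalize the notion of a weak solution. Given the first-variation computation already displayed in \Cref{sec:EL}, a function $u_\delta \in \mathfrak{W}^p[\delta](\Omega;\beta)$ with $T_\Gamma u_\delta = g$ is a weak solution of the nonlocal boundary-value problem precisely when
\begin{equation*}
    d\cE_\delta(u_\delta)[v] = 0 \qquad \forall\, v \in C^\infty_c(\overline{\Omega}\setminus\Gamma),
\end{equation*}
and, by density (\Cref{thm:Density:Nonlocal} applied to $\mathfrak{V}^p[\delta](\Omega;\beta)$), for all $v \in \mathfrak{V}^p[\delta](\Omega;\beta)$. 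An analogous statement holds for \eqref{eq:BVP:Limit} with $\cE_0$ and test functions in $V^{1,p}(\Omega;\beta)$. Since $\cE_\delta$ and $\cE_0$ are convex and of class $C^1$ on the affine space $g + \mathfrak{V}^p[\delta](\Omega;\beta)$ (respectively $g + V^{1,p}(\Omega;\beta)$), critical points and minimizers coincide; strict convexity furthermore ensures uniqueness of the critical point.

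\textbf{Step 2: Existence and uniqueness.} For the nonlocal problem, I would invoke \Cref{thm:equi:wellposed}: it produces a unique minimizer $u_\delta \in \mathfrak{W}^p[\delta](\Omega;\beta)$ of \eqref{eq:MinProb:Case1} with trace $g$ on $\Gamma$. By Step 1 this is the unique weak solution. For the local problem, the well-posedness of \eqref{eq:MinProb:Limit} is established inside the proof of \Cref{thm:LocLimit:Dirichlet} (using the coercivity estimates \eqref{eq:CoercivityEstimate1:Local}--\eqref{eq:CoercivityEstimate2:Local} and the direct method), which yields the unique weak solution $u \in W^{1,p}(\Omega;\beta)$ of \eqref{eq:BVP:Limit}.

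\textbf{Step 3: Convergence as $\delta \to 0$.} The strong $L^p(\Omega;\beta)$ convergence $u_\delta \to u$ is the content of \Cref{thm:LocLimit:Dirichlet}, whose proof uses the $\Gamma$-convergence of the extended functionals $\overline{\cE}_\delta \to \overline{\cE}_0$ (\Cref{prop:GammaLimit:Dirichlet}) together with the equi-coercivity provided by \eqref{eq:CoercivityEstimate1} and \Cref{thm:Compactness}. Since Step 1 identifies the unique weak solutions with the unique minimizers, the convergence statement for minimizers transfers directly to the convergence statement for weak solutions.

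\textbf{Main obstacle.} There is no real analytic obstacle here -- all the heavy lifting has already been done. The only point requiring care is the precise specification of the test-function class in the weak formulation, so that the equivalence ``weak solution $\Leftrightarrow$ minimizer'' is valid; this is why a density argument via \Cref{thm:Density:Nonlocal} (applied to $\mathfrak{V}^p[\delta](\Omega;\beta)$) is invoked in Step 1, ensuring that the nonlocal flux term vanishes for all admissible variations and not just for those with compact support away from $\Gamma$.
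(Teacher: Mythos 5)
Your proposal is correct and matches the paper's treatment: the paper offers no separate argument for this proposition, regarding it as a restatement of \Cref{thm:equi:wellposed} and \Cref{thm:LocLimit:Dirichlet} once weak solutions are identified with minimizers through the first-variation identity of \Cref{sec:EL}. Your Step 1 simply makes explicit (via convexity and the density result \Cref{thm:Density:Nonlocal} for $\mathfrak{V}^p[\delta](\Omega;\beta)$) the minimizer--critical-point equivalence that the paper leaves at the formal level.
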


		\section{The case of general dimension}\label{sec:GeneralCodim}
        
		Let $\Omega$ be a bounded Lipschitz domain, and let $\Gamma \subset \p \Omega$ be a closed set. We now consider the case of $\Gamma$ possessing general 
        dimension, denoted by $\ell$. Denote the open d-dimensional unit cube by $Q^d := (0,1)^d$.

        \begin{definition}
            We say that $(\Omega,\Gamma) \in \cA(d,\ell)$ for a fixed $\ell \in \{1,2,\ldots,d-1\}$ if the following holds: There exists an open cover of $\overline{\Omega}$ such that, whenever an element $U$ of the open cover satisfies $U \cap \Gamma \neq \emptyset$, there exists a bi-Lipschitz function $\varphi : U \cap \Omega \to Q$ such that $\varphi(\Gamma) = \overline{Q}^\ell \times \{ 0 \}^{d-\ell} \subset \p Q$.
        \end{definition}

        It follows that if $(\Omega,\Gamma) \in \cA(d,\ell)$ then $\scH^{\ell}(\Gamma) \in (0,\infty)$. Clearly $(\Omega,\p \Omega) \in \cA(d,d-1)$.
        In this section, define $\gamma(\bx)$ to be a generalized distance function for the set $\Gamma$, i.e.
        \begin{equation}\label{assump:HigherDimWeight}
			\begin{aligned}
				i) & \, \text{there exists a constant } \kappa_0 \geq 1 \text{ such that } \\
				&\quad \frac{1}{\kappa_0} \dist(\bx,\Gamma) \leq \gamma(\bx) \leq \kappa_0 \dist(\bx,\Gamma),\; \forall  \bx \in \bbR^d\,;\\
				ii) & \, \gamma \in C^0(\bbR^d) \cap C^{\infty}(\bbR^d \setminus \Gamma) 
                \text{ and for each multi-index } \alpha \in \bbN^d_0\\
				&\quad \exists \kappa_{\alpha} > 0 \text{ such that } |D^\alpha \gamma(\bx)| \leq \kappa_{\alpha} |\dist(\bx,\Gamma)|^{1-|\alpha|}, \;  \forall \bx \in \bbR^d.
			\end{aligned} \tag{\ensuremath{\rmA_{\gamma,\ell}}}
		\end{equation}
In the case $\ell = 0$, the distance to the finite set of points $\Gamma$ is described locally by $|\bx-\bx_0|$. No such straightforward characterization is available for higher-dimensional sets, hence we adopt different, though analogous, assumptions for general $\ell$.

        \subsection{Local weighted Sobolev spaces}\label{subsec:localspaces:HigherCodim}
		
		Define the Sobolev space $W^{1,p}(\Omega;\beta)$ for $\beta < d- \ell$, and the homogeneous spaces $V^{1,p}(\Omega;\beta)$ and $W^{1,p}_{0,\Gamma}(\Omega;\beta)$ for $\beta \in \bbR$, in a way similarly to those in the case $\ell = 0$.
        Then, just as in that setting, we have the following set of results:

        \begin{theorem}\label{thm:ClassicalFxnSpace:GeneralCodim}
            Suppose that $\beta \in \bbR$. Let $\ell \in \{1,\ldots,d-1\}$, and let $(\Omega,\Gamma) \in \cA(d,\ell)$. The following hold:
            \begin{enumerate}[1)]
                \item For all $\beta \in (-\infty,d-\ell)$, the class $C^\infty(\overline{\Omega})$ is dense in $W^{1,p}(\Omega;\beta)$.

                \item Let $d - \ell - p < \beta < d - \ell$. The operator $T_\Gamma : W^{1,p}(\Omega;\beta) \to  W^{ 1 - \frac{d-\ell-\beta}{p}, p }(\Gamma)$ is a bounded linear operator that satisfies $T_\Gamma u = u \big|_\Gamma$ for all $u$ in $C^\infty(\overline{\Omega})$.
                Moreover, define $(d-p-\beta)_+ = \max \{ d-p-\beta,0\}$, and for $\alpha \geq 0$ denote $\alpha$-dimensional Hausdorff measure by $\scH^{\alpha}$.
				Then for $\scH^{(d-p-\beta)_+}$-almost every $\bx \in \Gamma$ 
				\begin{equation*}
					\lim\limits_{\veps \to 0} \fint_{B(\bx,\veps) \cap \Omega} u(\by) \, \rmd \by \text{ is well-defined, } \qquad \text{ for all } 
					u \in W^{1,p}(\Omega;\beta),
				\end{equation*}
				and for $\scH^{\ell}$-almost every $\bx \in \Gamma$
				\begin{equation*}
					\lim\limits_{\veps \to 0} \fint_{B(\bx,\veps) \cap \Omega} |u(\by) - Tu(\bx)|^p \, \rmd \by = 0.
				\end{equation*}
				(Note that $\ell - p < d - p -\beta < \ell$.)

                \item Assume that $d-\ell-p < \beta < d - \ell$. Given a function $g \in W^{ 1 - \frac{d-\ell-\beta}{p}, p }(\Gamma)$, there exists a linear extension operator $E_\Gamma : W^{ 1 - \frac{d-\ell-\beta}{p}, p }(\Gamma) \to W^{1,p}(\Omega;\beta)$ with $E_\Gamma g = g$ on $\Gamma$.

                \item $C^\infty_c(\overline{\Omega} \setminus \Gamma)$ is dense in $V^{1,p}(\Omega;\beta)$.

                \item Hardy's inequality: for $d- \ell - p < \beta$, there exists a constant $C = C(d,p,\beta,\ell,\Omega)$ such that
				\begin{equation*}
					\int_{\Omega} \frac{|u(\bx)|^p }{ \gamma(\bx)^{\beta+p} } \, \rmd \bx \leq C \int_{\Omega} \frac{|\grad u(\bx)|^p }{ \gamma(\bx)^{\beta} } \, \rmd \bx, \qquad \forall u \in V^{1,p}(\Omega;\beta).
				\end{equation*}

                \item For $\beta < d - \ell - p$, $W^{1,p}(\Omega;\beta) = V^{1,p}(\Omega;\beta)$. For $d-\ell-p < \beta$, $W^{1,p}_{0,\Gamma}(\Omega;\beta) = V^{1,p}(\Omega;\beta)$.

                \item For $d - \ell - p < \beta < d-\ell$, $u \in W^{1,p}(\Omega;\beta)$ belongs to $W^{1,p}_{0,\Gamma}(\Omega;\beta)$ if and only if $T_{\Gamma} u = 0$.

                \item For $d-\ell-p < \beta < d-\ell$ and for any $u \in W^{1,p}(\Omega;\beta)$, the function $\bar{u} := u - E_\Gamma \circ T_\Gamma u$ satisfies $\bar{u} \in V^{1,p}(\Omega;\beta)$, and the following Hardy inequality holds:
				\begin{equation*}
					\int_{\Omega} \frac{|u(\bx)-E_\Gamma \circ T_\Gamma u(\bx)|^p}{\gamma(\bx)^{\beta+p}} \, \rmd \bx \leq C(d,p,\beta) \int_{\Omega} \frac{|\grad u(\bx)|^p}{\gamma(\bx)^{\beta}} \, \rmd \bx.
				\end{equation*}

            \end{enumerate}
        \end{theorem}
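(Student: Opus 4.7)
The plan is to reduce all eight items to a model case via the bi-Lipschitz charts guaranteed by $(\Omega,\Gamma) \in \cA(d,\ell)$, and then adapt the proofs from \Cref{sec:WeightedLocalSpaces} by exploiting a product structure. Using a partition of unity subordinate to the open cover, I would write any $u$ as a sum of functions supported in chart neighborhoods; on a chart $\varphi : U \cap \Omega \to Q^d$, the problem transforms to one where $\Gamma$ is locally $\overline{Q}^\ell \times \{0\}^{d-\ell}$ and, by \eqref{assump:HigherDimWeight}, the weight $\gamma(\bx)$ is comparable to $|\bx''|$ for $\bx = (\bx',\bx'') \in \bbR^\ell \times \bbR^{d-\ell}$. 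Since bi-Lipschitz changes of variables preserve all weighted $L^p$ and $W^{1,p}$ norms appearing here up to constants (thanks to the boundedness of the Jacobian and its inverse), it suffices to work in the flat model. There, Fubini's theorem reduces most integral inequalities to the one-dimensional Hardy inequalities \eqref{eq:Hardy1D:inf}--\eqref{eq:Hardy1D:0}, applied radially in $\bx'' \in \bbR^{d-\ell}$ and integrated over $\bx' \in Q^\ell$.

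For item 1, the weight $|\bx''|^{-\beta}$ is a Muckenhoupt $A_p$-weight on $\bbR^d$ precisely when $\beta \in ((d-\ell)(1-p),\,d-\ell)$, and in this range density of $C^\infty(\overline{\Omega})$ follows from the $A_p$-weighted density result used to prove \Cref{thm:Density}. For the remaining degenerate range $\beta \leq (d-\ell)(1-p)$ (necessarily $\beta<0$), I would mimic the truncation argument of \Cref{thm:Density}, defining $u_k(\bx) := u(\bx)(1-\zeta(k\bx''))$ and applying \eqref{eq:Hardy1D:inf} in polar coordinates on $\bbR^{d-\ell}$, then mollifying.

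For item 2, the membership of the trace in $W^{1-(d-\ell-\beta)/p,\,p}(\Gamma)$ is obtained by the classical Nikol'skii-type trace argument: one estimates $\int_\Gamma \int_\Gamma |Tu(\bx')-Tu(\by')|^p/|\bx'-\by'|^{\ell+p-(d-\ell-\beta)} \,\rmd \sigma \,\rmd \sigma$ by transferring to a difference quotient of $u$ on a slab over $\Gamma$ and applying the weighted Hardy inequality in the normal direction. The Lebesgue-point statements follow the telescoping-sum strategy of \Cref{thm:Trace}, using the Poincar\'e inequality with $A_p$ weights from \cite{Fabes1982local} to bound $|(u)_{B(\bx,2^{-k-1})} - (u)_{B(\bx,2^{-k})}|$ by a geometric series with ratio $2^{(d-p-\beta)/p}$. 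The dichotomy between $\scH^{(d-p-\beta)_+}$-a.e.\ convergence of averages and $\scH^\ell$-a.e.\ convergence in $L^p$ arises because the former requires only that a single difference $|u-(u)_{B_\veps}|$ tend to zero along the sequence of balls (controlled by a weighted Hardy--type capacity argument), while the latter is a full Lebesgue-differentiation statement on $\Gamma$ handled by a maximal-function bound in the tangential variable $\bx'$.

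Items 3--8 follow the blueprint of \Cref{thm:Extension} through \Cref{thm:TraceChar}: the extension operator in item 3 is obtained by gluing the standard half-space extension from $W^{1-(d-\ell-\beta)/p,p}(\Gamma)$ into $W^{1,p}(\Omega;\beta)$ through the partition of unity; density of $C^\infty_c(\overline{\Omega}\setminus\Gamma)$ in $V^{1,p}$ (item 4) is precisely the higher-codimension extension of \Cref{thm:Density:V} and can be proved by a truncation in $|\bx''|$ followed by mollification; the Hardy inequality (item 5) reduces, after flattening, to \eqref{eq:Hardy1D:0} applied in the $\bx''$ direction and integrated in $\bx'$; the identification $V^{1,p} = W^{1,p}$ or $V^{1,p} = W^{1,p}_{0,\Gamma}$ (item 6) proceeds identically to \Cref{thm:V=W}, invoking \eqref{eq:Hardy1D:inf} or \eqref{eq:Hardy1D:0} in the normal direction; the trace characterization (item 7) adapts \Cref{thm:TraceChar}, performing the polar decomposition only in $\bx''$ so that \eqref{eq:TraceChar:Pf1} becomes a slab estimate integrated over $\Gamma$; and item 8 combines all of the preceding. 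I expect the main obstacle to be item 2: obtaining the \emph{sharp} fractional trace space, rather than just $L^p(\Gamma)$, requires carefully balancing the tangential smoothness across $\Gamma$ against the weighted normal control of $\grad u$, a coupling that does not appear in the $\ell = 0$ analysis; similarly, the Lebesgue-point assertion with exceptional set of dimension $(d-p-\beta)_+$ requires a weighted capacity estimate not used in \Cref{sec:WeightedLocalSpaces}.
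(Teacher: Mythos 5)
Your overall strategy differs from the paper's. The paper proves this theorem almost entirely by citation: item 1) via the $A_p$-weighted density theorem of Chua in the range $(d-\ell)(1-p)<\beta<d-\ell$ and via Kufner's density results (valid for all $\beta\le 0$ on Lipschitz domains with a uniform exterior cone condition on $\Gamma$) in the degenerate range; items 2) and 3) via Nekvinda's characterization of traces of weighted Sobolev spaces; item 4) via R\'akosn\'ik; item 6) via Edmunds--Kufner--R\'akosn\'ik; only item 5) is argued directly, by flattening and applying the one-dimensional Hardy inequality \eqref{eq:Hardy1D:0} in the $\bx''$-variable exactly as you propose, and items 7)--8) are handled as you say (adapt \Cref{thm:TraceChar}; combine the rest). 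Your plan instead reconstructs most of these cited results from scratch through bi-Lipschitz flattening, Fubini, and the 1D Hardy inequalities; for items 1, 3--8 this is a legitimate and more self-contained route, essentially re-deriving what the references contain, and it matches the paper verbatim on items 5, 7, 8. One small caveat in item 1: your truncation argument relies on \eqref{eq:Hardy1D:inf} with $d-\ell$ in place of $d$, which needs $\beta<d-\ell-p$ strictly; when $d-\ell=1$ the borderline exponent $\beta=(d-\ell)(1-p)=d-\ell-p$ is covered by neither your $A_p$ case nor your Hardy case, whereas the paper's appeal to Kufner covers all $\beta\le 0$.

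The genuine gap is item 2), and you have flagged it yourself. Membership of the trace in the sharp space $W^{1-\frac{d-\ell-\beta}{p},p}(\Gamma)$ is only sketched (a Nikol'skii-type slab estimate ``balancing tangential smoothness against weighted normal control''), and the two Lebesgue-point statements are not proved at all: the assertion that $\lim_{\veps\to 0}\fint_{B(\bx,\veps)\cap\Omega}u$ exists off an exceptional set of Hausdorff dimension $(d-p-\beta)_+$ requires a capacity-type estimate that you name but do not construct, and the $\scH^{\ell}$-a.e.\ $L^p$-differentiation statement is attributed to ``a maximal-function bound in the tangential variable'' without specifying which weighted maximal inequality applies or how the telescoping argument of \Cref{thm:Trace} (which in the paper is run around a single point $\bx_0\in\Gamma$ with the Fabes--Kenig--Serapioni Poincar\'e inequality) is to be made uniform over $\scH^{\ell}$-a.e.\ point of an $\ell$-dimensional set. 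These are precisely the assertions the paper outsources to Nekvinda's trace characterization (together with the density of smooth functions from item 1), so as written your proposal does not yet yield a complete proof of item 2); it would need either the capacity/maximal-function machinery carried out in detail or an explicit appeal to such a reference.
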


        \begin{proof}
            For item 1), in the case $(d-\ell)(1-p) < \beta < d - \ell$, the function $\gamma(\bx)$ is a Muckenhoupt $A_p$-weight, and so the result follows from a special case of \cite[Theorem 6.1]{Chua1992Extension}. In the case $\beta < (d-\ell)(1-p)$, 
            the result follows from \cite[Remark 7.5 and Proposition 7.6]{Kufner1980Weighted}, which proves a density result for all $\beta \leq 0$ and for Lipschitz domains satisfying a uniform exterior cone condition on all of $\Gamma$.

           As for Item 2), it is proved for smooth functions in \cite{Nekvinda1993Characterization}, and the result for general Sobolev functions follows from item 1). 
            Item 3) is also established in \cite{Nekvinda1993Characterization}. Meanwhile,
            item 4) is a special case of \cite[Theorem 1]{Rakosnik1989embeddings}.

            For item 5), by using localization arguments and transforming to the flat case, as well as the conclusion of Item 4), it suffices to show that
            \begin{equation*}
                \int_{Q^\ell} \int_{Q^{d-\ell}} \frac{|u(\bx',\bx'')|^p}{ |\bx''|^{\beta+p} } \, \rmd \bx'' \, \rmd \bx' \leq C \int_{Q^\ell} \int_{Q^{d-\ell}} \frac{|\grad u(\bx',\bx'')|^p}{ |\bx''|^{\beta} } \, \rmd \bx'' \, \rmd \bx',
            \end{equation*}
            for all $u \in C^1_c( [0,1]^{\ell} \times (0,1]^{d-\ell})$.
            Converting to spherical coordinates $\bx'' = r \bsomega''$ and writing $v(\bx',r,\bsomega'') = u(\bx',r\bsomega'')$, we apply the one-dimensional Hardy inequality \eqref{eq:Hardy1D:0} to the function $r \mapsto v(\bx',r,\bsomega'')$; in the context of \eqref{eq:Hardy1D:0} we use $d-\ell$ in place of $d$. The result then follows from a procedure similar to the proof of \Cref{thm:Hardy}.
            
         To complete the proof, we note that item 6) is a special case of \cite[Corollary 3.2]{Edmunds1985Embeddings};
item 7) can be proved similarly to \Cref{thm:TraceChar}; and
item 8) follows from the previously-proved items. 
        \end{proof}

		The next two theorems establish compactness for a general $\Gamma$; the statements are independent of the dimension $\ell$ of the set $\Gamma$. The results are inspired by \cite{Gurka1988Continuous}, but the exact results we need are not stated there, so we provide the proof.
		
		\begin{theorem}
			Let $q \in [1,\frac{dp}{d-p}]$, and let $\alpha \in \bbR$. Let $\Omega^* \subset \bbR^d$ be a bounded Lipschitz domain. Let the two sets $\Omega \subset \Omega^*$ and $\Gamma \subset \overline{\Omega}^*$, and the function $\gamma$, be defined as in one of the following two scenarios:
			\begin{enumerate}
				\item[1)] $\Gamma \subset \overline{\Omega}^*$ is a finite set of points, with $\Omega = \Omega^* \setminus \Gamma$, and $\gamma$ satisfies \eqref{assump:weight}.
				\item[2)] $\Omega = \Omega^*$ and $\Gamma \subset \p \Omega$ is a closed set, and $(\Omega,\Gamma) \in \cA(d,\ell)$ for some $\ell \in \{1,2,\ldots,d-1\}$, and $\gamma$ satisfies \eqref{assump:HigherDimWeight}.
			\end{enumerate}
			Then the space $V^{1,p}(\Omega;\beta)$ is continuously embedded in $L^q(\Omega;\alpha)$ if $\alpha$ and $\beta$ satisfy $d \left( \frac{1}{q} - \frac{1}{p} \right) - \frac{\alpha}{q} + \frac{\beta}{p} + 1 \geq 0$.
		\end{theorem}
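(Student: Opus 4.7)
The plan is to prove $\vnorm{u}_{L^q(\Omega;\alpha)}\le C\vnorm{u}_{V^{1,p}(\Omega;\beta)}$ in three stages: a density reduction, two endpoint inequalities, and an interpolation argument to cover the full subcritical range.

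\textbf{Density reduction.} By item~4 of \Cref{thm:ClassicalFxnSpace:GeneralCodim} (or \Cref{thm:Density:V} in scenario~1), $C^\infty_c(\overline{\Omega}\setminus\Gamma)$ is dense in $V^{1,p}(\Omega;\beta)$, so it suffices to prove the inequality for such $u$. Every weight $\gamma^{-\sigma}$ is then smooth and bounded on $\supp u$, making all integrals classically meaningful.

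\textbf{Sobolev--CKN endpoint ($q=p^*:=dp/(d-p)$).} I introduce the substitution $v:=\gamma^{-\beta/p}u$. Since $u$ vanishes near $\Gamma$, the function $v$ belongs to $C^\infty_c(\overline{\Omega^*})$. Using the pointwise control $|\grad\gamma|\le\kappa_1$ from \eqref{assump:HigherDimWeight} (resp.\ \eqref{assump:weight}) in the identity
\begin{equation*}
\grad v=\gamma^{-\beta/p}\grad u-(\beta/p)\gamma^{-\beta/p-1}\,u\,\grad\gamma,
\end{equation*}
together with the boundedness of $\gamma$ on $\Omega$ (which gives $\gamma^{-\beta}\le C\gamma^{-\beta-p}$), one obtains $\vnorm{v}_{W^{1,p}(\Omega^*)}\le C\vnorm{u}_{V^{1,p}(\Omega;\beta)}$. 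The classical unweighted Sobolev embedding $W^{1,p}(\Omega^*)\hookrightarrow L^{p^*}(\Omega^*)$ for the bounded Lipschitz domain $\Omega^*$ then gives $\vnorm{v}_{L^{p^*}(\Omega^*)}\le C\vnorm{v}_{W^{1,p}(\Omega^*)}$. Translating back through the substitution yields
\begin{equation*}
\vnorm{u}_{L^{p^*}(\Omega;\beta p^*/p)}\le C\vnorm{u}_{V^{1,p}(\Omega;\beta)}.
\end{equation*}

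\textbf{Hardy endpoint ($q=p$).} The inequality $\vnorm{u}_{L^p(\Omega;\beta+p)}\le\vnorm{u}_{V^{1,p}(\Omega;\beta)}$ is built into the definition of the $V^{1,p}(\Omega;\beta)$ norm.

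\textbf{Interpolation.} Let $\alpha_c(q):=q[d(1/q-1/p)+\beta/p+1]$. For $q\in[p,p^*]$ write $1/q=\theta/p+(1-\theta)/p^*$ with $\theta\in[0,1]$; a direct computation shows
\begin{equation*}
\alpha_c(q)/q=\theta(1+\beta/p)+(1-\theta)(\beta/p),
\end{equation*}
i.e., the weight exponent is the convex combination of the two endpoint weight exponents. Using the factorization
\begin{equation*}
|u|^q\gamma^{-\alpha_c(q)}=\bigl(|u|^p\gamma^{-(\beta+p)}\bigr)^{\theta q/p}\bigl(|u|^{p^*}\gamma^{-\beta p^*/p}\bigr)^{(1-\theta)q/p^*},
\end{equation*}
H\"older's inequality with conjugate exponents $p/(\theta q)$ and $p^*/((1-\theta)q)$ combines the two endpoints and yields $\vnorm{u}_{L^q(\Omega;\alpha_c(q))}\le C\vnorm{u}_{V^{1,p}(\Omega;\beta)}$ for $q\in[p,p^*]$. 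For $\alpha<\alpha_c(q)$ the inequality follows by monotonicity, since $\gamma\le\mathrm{diam}(\Omega)$ on $\Omega$ gives $\gamma^{-\alpha}\le\mathrm{diam}(\Omega)^{\alpha_c(q)-\alpha}\gamma^{-\alpha_c(q)}$. The remaining range $q\in[1,p)$ is handled similarly by combining either endpoint with H\"older and the integrability $\int_\Omega\gamma^{-\sigma}\,\rmd\bx<\infty$ for $\sigma<d-\ell$.

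\textbf{Main obstacle.} The delicate step is the Sobolev--CKN endpoint: the verification that $v=\gamma^{-\beta/p}u$ lies in $W^{1,p}(\Omega^*)$ with the correct quantitative bound. This rests on the derivative estimate $|\grad\gamma|\le\kappa_1$ together with the compact support of $u$ away from $\Gamma$, which jointly ensure that the two terms $\gamma^{-\beta/p}\grad u$ and $\gamma^{-\beta/p-1}u\,\grad\gamma$ appearing in $\grad v$ are controlled by $\vnorm{\grad u}_{L^p(\Omega;\beta)}$ and $\vnorm{u}_{L^p(\Omega;\beta+p)}$, respectively.
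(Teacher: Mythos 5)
Your treatment of the range $q\in[p,p^*]$, $p^*=dp/(d-p)$, is correct, and it is a genuinely different route from the paper's. The paper splits $\Omega$ into the region where $\gamma>1/n$ (classical Sobolev embedding, bounded weights) and the region near $\Gamma$, which it covers by Besicovitch balls $B(\bx_k,\tfrac1{30}\gamma(\bx_k))$ on which $\gamma$ is essentially constant, applies the rescaled Sobolev inequality ball by ball, and sums using bounded overlap; the exponent $\tau$ appears as the power of $\gamma(\bx_k)$. You instead flatten the weight by the substitution $v=\gamma^{-\beta/p}u$, which by $|\nabla\gamma|\le\kappa_1$ and the boundedness of $\gamma$ obeys $\|v\|_{W^{1,p}(\Omega^*)}\le C\|u\|_{V^{1,p}(\Omega;\beta)}$, invoke the unweighted Sobolev embedding on the Lipschitz domain $\Omega^*$, and then interpolate with the trivial endpoint $q=p$, $\alpha=\beta+p$ by H\"older; your exponent bookkeeping (both the identification $\alpha_c(p^*)=\beta p^*/p$ and the convexity identity for $\alpha_c(q)/q$) checks out. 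This is shorter, avoids covering lemmas, and reaches the critical line $\tau=0$ in that range; its cost is that it is anchored at the two endpoints and therefore only covers $q\ge p$ directly.

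The genuine gap is the final sentence about $q\in[1,p)$. There the ``H\"older plus integrability of $\gamma^{-\sigma}$'' device cannot reach the critical line: writing $|u|^q\gamma^{-\alpha_c(q)}$ as a product of powers of $|u|^p\gamma^{-(\beta+p)}$, $|u|^{p^*}\gamma^{-\beta p^*/p}$ and a leftover $\gamma^{-\mu}$, with weights $\theta_1+\theta_2=1$ on the two endpoints, one finds $1/r_3=\frac{p-q}{p}+\frac{q\theta_2}{d}$ and $\mu=d\frac{p-q}{p}+q\theta_2$, so $\mu r_3=d$ identically; since $\int_\Omega\gamma^{-d}\,\rmd\bx=\infty$ for every $\ell\ge0$, no choice of $\theta_2$ works at $\tau=0$, and for $\ell\ge1$ the same computation only yields the strictly smaller range $\alpha<\alpha_c(q)-\ell\,\frac{p-q}{p}$. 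Moreover this is not a defect you can repair by a cleverer splitting: for $q<p$ the critical case is in fact false. Take $d=3$, $p=2$, $q=1$, $\beta=0$, $\Gamma=\{0\}$, so $\alpha_c(1)=5/2$, and $u_N=\sum_{j\le N}2^{j/2}f_j$ with $f_j$ unit bumps on the dyadic annuli $\{|\bx|\approx 2^{-j}\}$; then $\|u_N\|_{V^{1,2}(\Omega;0)}\approx N^{1/2}$ while $\int|u_N|\,|\bx|^{-5/2}\,\rmd\bx\approx N$, so the embedding constant blows up at $\tau=0$. (The paper's own proof is delicate at exactly this point: its last summation bounds $\sum_k\|u\|_{V^{1,p}(B_k;\beta)}$ by $N^{1/p}\|u\|_{V^{1,p}(\Omega;\beta)}$, an $\ell^1$-versus-$\ell^p$ step over the cover that is only justified, after raising to the $q$-th power, when $q\ge p$.) So you should either restrict your statement to $q\in[p,p^*]$ — which covers every use made of the embedding later, in particular the compactness argument with $q=p$, $\alpha<\beta+p$ — or, for $q<p$, state and prove the subcritical result that your H\"older argument actually delivers, noting explicitly that it does not give the condition $\tau\ge0$ as written.
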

		
		\begin{proof}
			Set $\tau := d \left( \frac{1}{q} - \frac{1}{p} \right) - \frac{\alpha}{q} + \frac{\beta}{p} + 1$.
			For $n \in \bbN$, define 
			\begin{equation*}
	   			\Omega_n := \{ \bx \in \Omega \, : \, \gamma(\bx) < 1/n \}, \qquad \Omega^n := \{ \bx \in \Omega \, : \, \gamma(\bx) > 1/n \}\,;
			\end{equation*}
			note that in both scenarios presented in the theorem,
            the set $\Omega^n$ satisfies the uniform interior-exterior cone condition, and thus is still a Lipschitz domain, for any $n \geq n_1$, where $n_1 \in \bbN$ is sufficiently small and depending only on $\Omega$. For each such $n$ we can write
			\begin{equation*}
				\vnorm{u}_{L^q(\Omega;\alpha)}^q = \vnorm{u}_{L^q(\Omega^n;\alpha)}^q + \vnorm{u}_{L^q(\Omega_n;\alpha)}^q.
			\end{equation*}
			First, by the classical Sobolev embedding theorem, for each $n$ there exists a constant $C_n$ depending only on $d$, $p$, $q$, $\alpha$, $\beta$, $\Omega$ and $n$ such that
			\begin{equation}\label{eq:Compactness:Pf0}
				\vnorm{u}_{L^q(\Omega^n;\alpha)} \leq C_n \vnorm{u}_{V^{1,p}(\Omega^n;\beta)},
			\end{equation}
		  since the weights are bounded on $\Omega^n$. 
		  Next, fix $n_0 \in \bbN$ with $n_0 > \max \{ n_1, 1/\text{diam}(\Omega) \}$; 
            we will show that there exists a constant $C > 0$ such that
            \begin{equation}\label{eq:Compactness:Pf1}
                \vnorm{u}_{L^q(\Omega_n;\alpha)} \leq \frac{C}{n^{\tau}} \vnorm{u}_{V^{1,p}(\Omega;\beta)} \qquad \forall n \geq n_0 \text{ and } \forall u \in V^{1,p}(\Omega;\beta).
            \end{equation}

            To this end, the Besikovitch covering lemma implies that there exists a countable collection $\{\bx_k\}_{k=1}^\infty \subset \Omega_{n_0}$ such that the Euclidean balls $ B_k := B(\bx_k,\frac{1}{30}\gamma(\bx_k))$ satisfy $\Omega_{n_0} \subset \sup_{k=1}^\infty B_k$ and $\sum_{k=1}^\infty \mathds{1}_{B_k}(\bx) \leq N$ for all $\bx \in \Omega_{n_0}$, where $N$ is a number depending only on $d$. Then for each $k$, $\gamma(\bx) \approx \gamma(\bx_k)$ for all $\bx \in B_k$ with constants of comparison independent of $k$, and along with the Sobolev embedding theorem on $B_k$ we have
			\begin{equation*}
				\begin{split}
					\vnorm{u}_{L^q(B_k;\alpha)} &\leq \gamma(\bx_k)^{-\alpha/q} \vnorm{u}_{L^q(B_k)} \\
					&\leq C \gamma(\bx_k)^{-\alpha/q+d/q-d/p+1} \left( \frac{1}{\gamma(\bx_k)^p} \int_{B_k} |u(\bx)|^p \, \rmd \bx + \int_{B_k} |\grad u(\bx)|^p \, \rmd \bx  \right)^{1/p} \\
					&\leq C \gamma(\bx_k)^{-\alpha/q+d/q-d/p+1+\beta/p} \left( \int_{B_k} \frac{ |u(\bx)|^p }{\gamma(\bx)^{\beta+p} } \, \rmd \bx + \int_{B_k} \frac{ |\grad u(\bx)|^p }{ \gamma(\bx)^\beta }\, \rmd \bx  \right)^{1/p},
				\end{split}
			\end{equation*}
			where $C > 0$ depends only on $d$, $p$, $q$, $\alpha$ and $\beta$. Defining $I_n := \{k \in \bbN \, : \, \Omega_n \cap B_k \neq \emptyset \}$, we therefore have
			\begin{equation*}
				\begin{split}
					\vnorm{u}_{L^q(\Omega_n;\alpha)} &\leq \sum_{k \in I_n} \vnorm{u}_{L^q(B_k;\alpha)}
					\leq \sum_{k \in I_n} C \frac{1}{n^\tau} \vnorm{u}_{V^{1,p}(B_k;\beta)} 
					\leq C N^{1/p} \frac{1}{n^\tau} \vnorm{u}_{V^{1,p}(\Omega;\beta)},
				\end{split}
			\end{equation*}
			and \eqref{eq:Compactness:Pf1}
            is proved. Together \eqref{eq:Compactness:Pf0}-\eqref{eq:Compactness:Pf1} establish the result, since $\tau \geq 0$.
		\end{proof}
		
		\begin{theorem}
			In the setting of the previous theorem, the space $V^{1,p}(\Omega;\beta)$ is compactly embedded in $L^q(\Omega;\alpha)$ if $\alpha$ and $\beta$ satisfy $d \left( \frac{1}{q} - \frac{1}{p} \right) - \frac{\alpha}{q} + \frac{\beta}{p} + 1 > 0$.
		\end{theorem}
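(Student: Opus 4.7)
The plan is to bootstrap the continuous embedding proved in the previous theorem into a compact embedding via the standard splitting $\Omega = \Omega^n \cup \Omega_n$ (with $\Omega^n$ and $\Omega_n$ as defined in that proof), combined with classical Rellich--Kondrachov compactness on each $\Omega^n$ and the quantitative tail estimate $\vnorm{u}_{L^q(\Omega_n;\alpha)} \leq Cn^{-\tau} \vnorm{u}_{V^{1,p}(\Omega;\beta)}$ established in the previous proof, where $\tau := d(\frac{1}{q} - \frac{1}{p}) - \frac{\alpha}{q} + \frac{\beta}{p} + 1 > 0$ under our strict hypothesis.

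Concretely, let $\{u_k\}$ be a bounded sequence in $V^{1,p}(\Omega;\beta)$, say $\vnorm{u_k}_{V^{1,p}(\Omega;\beta)} \leq B$ for all $k$. First I fix any $n \geq n_0$; since $\Omega^n$ is a bounded Lipschitz domain on which the weight $\gamma^{-\beta}$ is bounded above and below away from zero, the sequence $\{u_k\}$ is bounded in the unweighted $W^{1,p}(\Omega^n)$. Because $q < \frac{dp}{d-p}$ (and hence $q$ is strictly subcritical for the Sobolev embedding on $\Omega^n$), the classical Rellich--Kondrachov theorem yields a subsequence converging strongly in $L^q(\Omega^n)$, and equivalently in $L^q(\Omega^n;\alpha)$ by boundedness of the weight on $\Omega^n$. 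A standard diagonal argument across $n = n_0, n_0+1, \ldots$ then extracts a single subsequence $\{u_{k_j}\}$ converging in $L^q(\Omega^n;\alpha)$ for every $n \geq n_0$.

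To upgrade this to convergence on all of $\Omega$, I use the tail bound. Given $\veps > 0$, choose $n$ large enough that $2CB n^{-\tau} < \veps/2$, which is possible since $\tau > 0$. Then for all $j, j'$ sufficiently large,
\begin{equation*}
\vnorm{u_{k_j} - u_{k_{j'}}}_{L^q(\Omega;\alpha)}
\leq \vnorm{u_{k_j} - u_{k_{j'}}}_{L^q(\Omega^n;\alpha)} + \vnorm{u_{k_j} - u_{k_{j'}}}_{L^q(\Omega_n;\alpha)}
< \frac{\veps}{2} + \frac{\veps}{2},
\end{equation*}
the first term by the diagonalization and the second by applying the estimate \eqref{eq:Compactness:Pf1} (from the preceding proof) to $u_{k_j} - u_{k_{j'}}$. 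Hence $\{u_{k_j}\}$ is Cauchy in $L^q(\Omega;\alpha)$, which gives the desired compactness.

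The only delicate point is ensuring that $q < \frac{dp}{d-p}$ (when $p < d$) so that the Rellich--Kondrachov step produces strong $L^q$-compactness on $\Omega^n$; this is exactly the strict subcriticality in the statement. When $p \geq d$ the classical compact embedding into $L^q$ for any $q \in [1,\infty)$ is even more straightforward, and the rest of the argument is unchanged. No further obstacle arises: the two scenarios (discrete $\Gamma$ with $\gamma$ satisfying \eqref{assump:weight}, or $\Gamma \subset \p \Omega$ with $(\Omega,\Gamma) \in \cA(d,\ell)$ and $\gamma$ satisfying \eqref{assump:HigherDimWeight}) enter only through the verification that $\Omega^n$ is Lipschitz and through the tail estimate \eqref{eq:Compactness:Pf1}, both of which were already handled in the previous theorem.
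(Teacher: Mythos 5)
Your proposal is correct and follows essentially the same route as the paper: split $\Omega=\Omega^n\cup\Omega_n$, use the classical (unweighted) Rellich--Kondrachov theorem on $\Omega^n$ where $\gamma^{-\beta}$ is nonsingular, and control the tail $\Omega_n$ with the estimate \eqref{eq:Compactness:Pf1}, which is exactly what the paper does. Your explicit diagonal extraction of a single subsequence is a minor (and slightly cleaner) refinement of the paper's argument, which instead fixes $n$ after choosing $\veps$ and concludes via the arbitrariness of $\veps$; likewise your remark that the strict subcriticality $q<\tfrac{dp}{d-p}$ is needed matches the paper's implicit use of it.
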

		
		\begin{proof}
		  We adopt the notation of the previous proof; we note that $\tau > 0$ by assumption.
            Let $\veps > 0$. Then by \eqref{eq:Compactness:Pf1}, there exists $n_2 \in \bbN$ such that
			\begin{equation}\label{eq:Compactness:Pf2}
				\vnorm{v}_{L^q(\Omega_n;\alpha)} \leq \veps \vnorm{v}_{V^{1,p}(\Omega;\beta)} \qquad \forall v \in V^{1,p}(\Omega;\beta), \: \forall n \geq n_2.
			\end{equation}
			Let $\{ u_j \}_{j=1}^\infty \subset V^{1,p}(\Omega;\beta)$ be a bounded sequence, with $M := \sup_{j \in \bbN } \vnorm{u_j}_{V^{1,p}(\Omega;\beta)}$. Fix $n \geq n_2$; the weights in the norm are nonsingular on $\Omega^{n}$, and since $q < \frac{dp}{d-p}$ the classical Sobolev embedding theorem implies the existence of a subsequence (not relabeled) of $\{u_j\}$ convergent in $L^q(\Omega^n;\alpha)$.
			Therefore,
			\begin{equation*}
                \begin{split}
				\vnorm{ u_j - u_k }_{L^q(\Omega;\alpha)} 
                &\leq \vnorm{ u_j - u_k }_{L^q(\Omega^n;\alpha)} + \vnorm{ u_j }_{L^q(\Omega_n;\alpha)} + \vnorm{ u_k }_{L^q(\Omega_n;\alpha)} \\
                &\leq \vnorm{ u_j - u_k }_{L^q(\Omega^n;\alpha)} + 2 M \veps,
                \end{split}
			\end{equation*}
			hence 
            \begin{equation*}
                \limsup\limits_{ \min{j,k} \to \infty } \vnorm{ u_j - u_k }_{L^q(\Omega;\alpha)} \leq 2M \veps.
            \end{equation*}
            Since $\veps > 0$ is arbitrary, the subsequence converges in $L^q(\Omega;\alpha)$.
		\end{proof}

        \begin{remark}
			The assumptions of the previous two theorems are actually necessary conditions for continuous (compact) embeddings. To see this, one can essentially run this argument in reverse to show that the embedding actually implies inequality \eqref{eq:Compactness:Pf1} (inequality \eqref{eq:Compactness:Pf2}), which in turn implies the conditions on $\alpha$, $\beta$, $p$ and $q$.
		\end{remark}
		
		\subsection{Nonlocal weighted function spaces}

        \begin{theorem}\label{thm:KnownConvResults:HighereCodim}
            The results of \Cref{thm:KnownConvResults} and \Cref{thm:KnownConvResults:Nonlocal} remain true under the assumptions $(\Omega;\Gamma) \in \cA(d,\ell)$, \eqref{assump:HigherDimWeight}, with the spaces $L^p(\Omega;\beta)$, $W^{1,p}(\Omega;\beta)$, $V^{1,p}(\Omega;\beta)$, $\mathfrak{W}^p[\delta](\Omega;\beta)$, and $\mathfrak{V}^p[\delta](\Omega;\beta)$ defined accordingly.
        \end{theorem}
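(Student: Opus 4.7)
The plan is to transport each conclusion of \Cref{thm:KnownConvResults} and \Cref{thm:KnownConvResults:Nonlocal} verbatim, verifying at each step that the only structural features of the weight $\gamma$ and the set $\Gamma$ that the arguments actually exploit continue to hold under \eqref{assump:HigherDimWeight} and $(\Omega,\Gamma)\in\cA(d,\ell)$. The proofs in those two theorems rely on (a) the Lipschitz bound $|\gamma(\bx)-\gamma(\by)|\leq \kappa_1|\bx-\by|$, (b) the comparability $\lambda(\bx)\leq \kappa_0 d_{\p\Omega}(\bx)\leq \kappa_0 d_\Gamma(\bx)\leq \kappa_0^2\gamma(\bx)$, which gives the central inequality $(1-\kappa\delta)\gamma(\bx)\leq \gamma(\by)\leq (1+\kappa\delta)\gamma(\bx)$ whenever $|\bx-\by|\leq\lambda_\delta(\bx)$, and (c) the density, trace, extension and Hardy results of \Cref{sec:WeightedLocalSpaces}. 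Item (a) is assumption \eqref{assump:HigherDimWeight}(ii) in the multi-index $|\alpha|=1$ case, item (b) follows immediately because $\Gamma\subset\p\Omega=\p\Omega^*$ implies $d_\Gamma\geq d_{\p\Omega}$, and item (c) is now replaced by the corresponding items in \Cref{thm:ClassicalFxnSpace:GeneralCodim}.

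With these ingredients, I would copy the proofs in sequence. The identities and smoothing properties (items 1) and 2) of \Cref{thm:KnownConvResults}) are purely pointwise and use only (a)--(b), so they transfer unchanged. The $L^p$-continuity \eqref{eq:ConvEst:Lp} and its $L^p$-convergence corollary \eqref{eq:ConvergenceOfConv} go through exactly as written once \eqref{eq:Comp1} is in hand; the gradient identity \eqref{eq:ConvEst1:W1p:Pf1} and bound \eqref{eq:ConvEst:W1p} follow by differentiation under the integral sign just as before, and the $W^{1,p}$-approximation \eqref{eq:ConvergenceOfConv:W1p} follows by density of $C^\infty(\overline\Omega)$ in $W^{1,p}(\Omega;\beta)$, now invoking \Cref{thm:ClassicalFxnSpace:GeneralCodim} item 1) in place of \Cref{thm:Density}. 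The three nonlocal estimates of \Cref{thm:KnownConvResults:Nonlocal} — \eqref{eq:Intro:ConvEst}, \eqref{eq:Intro:ConvEst:Deriv}, and \eqref{eq:Comp:ConvolutionEstimate} — are proved in \Cref{sec:Apdx:Conv} by splitting the integrals according to the relative sizes of $\eta_\delta(\bx)$, changes of variables, and the comparability \eqref{eq:Comp1}; none of these manipulations use the codimension of $\Gamma$, so they carry over with the same constants.

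The only item that requires genuine adjustment is the trace-preservation identity $T_\Gamma K_\delta u=T_\Gamma u$ from item 5) of \Cref{thm:KnownConvResults}: in the current setting the trace takes values in $W^{1-\frac{d-\ell-\beta}{p},p}(\Gamma)$ rather than $\bbR^{|\Gamma|}$. I would argue by density, approximating $u\in W^{1,p}(\Omega;\beta)$ by $\{v_n\}\subset C^\infty(\overline\Omega)$ via \Cref{thm:ClassicalFxnSpace:GeneralCodim} item 1), noting that $T_\Gamma K_\delta v_n=T_\Gamma v_n$ pointwise on $\Gamma$ by item 2), and then using the continuity of $T_\Gamma$ from \Cref{thm:ClassicalFxnSpace:GeneralCodim} item 2) together with the $W^{1,p}(\Omega;\beta)$-continuity of $K_\delta$ already established to pass to the limit in the fractional trace norm. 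I expect this trace-identification step to be the main (and essentially only) nontrivial obstacle, since it requires a continuity-of-trace estimate into a Besov-type space rather than pointwise evaluation; beyond this, every remaining assertion of \Cref{thm:KnownConvResults:Nonlocal} (the density \Cref{thm:Density:Nonlocal}, trace \Cref{thm:Trace:Nonlocal}, homogeneous-space characterization \Cref{thm:TraceChar:Nonlocal}, nonlocal Hardy \Cref{thm:Hardy:Nonlocal}, and the identification $\mathfrak{W}^p_{0,\Gamma}=\mathfrak{V}^p$ in \Cref{thm:V=W:Nonlocal}) follows formally by substituting the codimension-$\ell$ local results of \Cref{thm:ClassicalFxnSpace:GeneralCodim} into the arguments given in \Cref{sec:WeightedNonlocalSpaces} without modification.
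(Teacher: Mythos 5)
Your proposal is correct and follows essentially the same route as the paper, which simply observes that the proofs of \Cref{thm:KnownConvResults} and \Cref{thm:KnownConvResults:Nonlocal} carry over verbatim once the derivative/comparability estimates of \eqref{assump:HigherDimWeight} replace those of \eqref{assump:weight} (giving \eqref{eq:Comp1} via $\lambda(\bx)\leq\kappa_0\dist(\bx,\p\Omega)\leq\kappa_0\dist(\bx,\Gamma)\leq\kappa_0^2\gamma(\bx)$) and \eqref{assump:Localization} is noted to hold for the Lipschitz domain $\Omega$. Your extra care with the trace-preservation step, using the boundedness of $T_\Gamma$ into $W^{1-\frac{d-\ell-\beta}{p},p}(\Gamma)$ from \Cref{thm:ClassicalFxnSpace:GeneralCodim} in the density argument, is exactly the right substitution and is consistent with (indeed more explicit than) the paper's treatment.
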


        The proof follows exactly the same lines of reasoning as those of \Cref{thm:KnownConvResults} and \Cref{thm:KnownConvResults:Nonlocal}; the derivative estimates of \eqref{assump:HigherDimWeight} are used analogously to those of \eqref{assump:weight}. 
        For instance, note that \eqref{assump:Localization} holds for the Lipschitz domain $\Omega$.

        The following properties of the weighted nonlocal function spaces can also be established. The main tools used in the proofs -- omitted due to their similarity to the proofs in \Cref{subsec:NonlocalSpaces} -- 
          are the results for the boundary-localized convolutions described in \Cref{thm:KnownConvResults:HighereCodim}, and the results in \Cref{subsec:localspaces:HigherCodim} for the local spaces:

    \begin{theorem}[Properties of the nonlocal function spaces]\label{thm:FxnSpaceProp}
			Under the assumptions $(\Omega;\Gamma) \in \cA(d,\ell)$, $p \in (1,\infty)$, $\beta \in \bbR$, \eqref{assump:Horizon}, and \eqref{assump:HigherDimWeight}, the following hold:
			\begin{enumerate}[1)]

                \item \Cref{thm:InvariantHorizon}, \Cref{thm:EnergySpaceIndepOfKernel}, and \Cref{thm:Embedding:Nonlocal} remain true, with the same inequalities.

				\item $C^\infty(\overline{\Omega})$ is dense in $\mathfrak{W}^p[\delta](\Omega;\beta)$ for all $\beta \in (-\infty,d-\ell)$. $C^\infty_c(\overline{\Omega} \setminus \Gamma)$ is dense in $\mathfrak{V}^p[\delta](\Omega;\beta)$ for all $\beta \in \bbR$.

				\item Let $d-\ell -p < \beta < d-\ell$. The operator $T_\Gamma : \mathfrak{W}^{p}[\delta](\Omega;\beta) \to W^{1-\frac{d-\ell-\beta}{p},p}(\Gamma)$ is a bounded linear operator that satisfies $T_\Gamma u = u|_\Gamma$ for all $u$ in $C^\infty(\overline{\Omega})$.

				\item For $d-\ell-p<\beta<d-\ell$, $u \in \mathfrak{W}^{p}[\delta](\Omega;\beta)$ belongs to $\mathfrak{W}^{p}_{0,\Gamma}[\delta](\Omega;\beta)$ if and only if  and $T_\Gamma u = 0$.

				\item For $d-\ell-p < \beta$, there exists a constant $C = C(d,p,\beta,\Omega) > 0$ such that
				\begin{equation*}
					\int_{\Omega} \frac{|u(\bx)|^p}{ \gamma(\bx)^{\beta+p} } \, \rmd \bx \leq C [u]_{ \mathfrak{W}^p[\delta](\Omega;\beta) }^p, \qquad \forall u \in \mathfrak{V}^p[\delta](\Omega;\beta).
				\end{equation*}
				
				\item For $\beta < d -\ell-p$, $\mathfrak{W}^p[\delta](\Omega;\beta) = \mathfrak{V}^p[\delta](\Omega;\beta)$. For $d-\ell-p < \beta$, $\mathfrak{W}^p_{0,\Gamma}[\delta](\Omega;\beta) = \mathfrak{V}^p[\delta](\Omega;\beta)$.

				\item For $d-\ell-p < \beta < d-\ell$ and for any $u \in \mathfrak{W}^{p}[\delta](\Omega;\beta)$, the function $\bar{u} := u - E_\Gamma \circ T_\Gamma u$ satisfies $\bar{u} \in \mathfrak{V}^{p}[\delta](\Omega;\beta)$. Moreover, the following Hardy inequality holds:
				\begin{equation*}
					\int_{\Omega} \frac{|\bar{u}(\bx)|^p}{\gamma(\bx)^{\beta+p}} \, \rmd \bx \leq C [u]_{ \mathfrak{W}^p[\delta](\Omega;\beta) }^p.
				\end{equation*}

				\item Let $\delta = \{\delta_n\}$ be a sequence converging to $0$. Suppose that $\{u_\delta \}_\delta \subset \mathfrak{V}^{p}[\delta](\Omega;\beta)$ is a sequence such that $\sup_{\delta > 0} \vnorm{u_\delta}_{L^p(\Omega;\beta+p)} = B_1 < \infty$
				and $\sup_{\delta > 0} [u_\delta]_{\mathfrak{W}^{p}[\delta](\Omega;\beta)} := B_2 < \infty$. Then $\{ u_\delta \}$ is precompact in $L^p(\Omega;\alpha)$ for any $\alpha < \beta + p$. Moreover, any limit point $u$ satisfies $u \in V^{1,p}(\Omega;\beta)$, with $\vnorm{u}_{L^p(\Omega;\beta+p)} \leq B_1$ and $[u]_{W^{1,p}(\Omega;\beta)} \leq B_2$.

			\end{enumerate}
		\end{theorem}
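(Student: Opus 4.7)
The plan is to follow the blueprint of \Cref{sec:WeightedNonlocalSpaces} item by item (Theorems \ref{thm:InvariantHorizon} through \ref{thm:Combined:Nonlocal}, and \ref{thm:Compactness}), replacing each ingredient with its higher-dimensional counterpart. The two principal inputs are: (a) \Cref{thm:KnownConvResults:HighereCodim}, which yields the $L^p$-continuity of $K_\delta$, the smoothing bound $\vnorm{\grad K_\delta u}_{L^p(\Omega;\beta)} \leq C_2 [u]_{\mathfrak{W}^p[\delta](\Omega;\beta)}$, and the norm convergence $K_\veps u \to u$ in $\mathfrak{W}^p[\delta](\Omega;\beta)$; and (b) \Cref{thm:ClassicalFxnSpace:GeneralCodim}, which provides density of $C^\infty(\overline{\Omega})$, the trace valued in $W^{1-\frac{d-\ell-\beta}{p},p}(\Gamma)$, Hardy's inequality, and the identifications $W^{1,p}=V^{1,p}$ or $W^{1,p}_{0,\Gamma}=V^{1,p}$---all with the threshold exponent shifted to $d-\ell-p$.

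For item 1), the proofs of \Cref{thm:InvariantHorizon}, \Cref{thm:EnergySpaceIndepOfKernel}, and \Cref{thm:Embedding:Nonlocal} are purely local: they use only the Lipschitz comparability of $\gamma(\bx)$ with $\gamma(\by)$ whenever $|\bx-\by|\le \eta_\delta(\bx)$ (which follows from \eqref{assump:Horizon} together with parts (i)--(ii) of \eqref{assump:HigherDimWeight}) and the change-of-variable computations involving $\bszeta_{t\bz}^\delta$. These transcribe verbatim. For item 2), given $u \in \mathfrak{W}^p[\delta](\Omega;\beta)$ in the admissible range of $\beta$, one uses $K_\veps u \to u$ in the nonlocal norm; each $K_\veps u$ lies in $W^{1,p}(\Omega;\beta)$ by the derivative estimate combined with the (just-proved) version of \Cref{thm:InvariantHorizon}; an approximation of $K_\veps u$ by $C^\infty(\overline{\Omega})$ functions in $W^{1,p}(\Omega;\beta)$ via \Cref{thm:ClassicalFxnSpace:GeneralCodim} item 1), together with the new nonlocal embedding established in item 1) of this theorem, closes the argument as in \Cref{thm:Density:Nonlocal}. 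The homogeneous variant uses \Cref{thm:ClassicalFxnSpace:GeneralCodim} item 4) in place of item 1).

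Items 3) through 7) are direct transcriptions of \Cref{thm:Trace:Nonlocal}, \Cref{thm:TraceChar:Nonlocal}, \Cref{thm:Hardy:Nonlocal}, \Cref{thm:V=W:Nonlocal}, and \Cref{thm:Combined:Nonlocal}. The only substantive change is that the Lebesgue-point construction for the trace is replaced by the Sobolev-trace operator of \Cref{thm:ClassicalFxnSpace:GeneralCodim} item 2), which already packages well-definedness, boundedness into $W^{1-\frac{d-\ell-\beta}{p},p}(\Gamma)$, and agreement with $u|_\Gamma$ for smooth $u$. The identity $T_\Gamma K_\delta u = T_\Gamma u$ (analogue of \Cref{cor:ConvTraceEqual:Nonlocal}) is obtained by the same density-plus-continuity argument once \Cref{thm:KnownConvResults:HighereCodim} item 5) is in hand; the density-based arguments for items 4) and 6) are unchanged modulo the exponent shift from $d-p$ to $d-\ell-p$; and items 5) and 7) are immediate consequences obtained by passing the local Hardy inequality through $K_\delta u$ exactly as in \Cref{thm:Hardy:Nonlocal} and \Cref{thm:Combined:Nonlocal}.

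Finally, item 8) follows the proof of \Cref{thm:Compactness} line by line, with one substitution: the compact embedding $V^{1,p}(\Omega;\beta) \hookrightarrow L^p(\Omega;\alpha)$ for $\alpha<\beta+p$ is now supplied by the general-$\Gamma$ compactness theorem proved earlier in \Cref{sec:GeneralCodim}, rather than by \Cref{thm:CompactEmbed:ZeroDimCase}; the convolution estimates and the Fatou/lower-semicontinuity argument identifying the limit $u \in V^{1,p}(\Omega;\beta)$ carry over without change. The only genuine obstacle in this program is confirming that every appeal to the classical Sobolev embedding, trace, or Hardy inequality used in the $\ell=0$ theory has a correct analogue on the new geometry $(\Omega,\Gamma)\in\cA(d,\ell)$ with the threshold exponent $d-\ell-p$; this is precisely what \Cref{thm:ClassicalFxnSpace:GeneralCodim} isolates, after which the nonlocal arguments require no modification beyond bookkeeping.
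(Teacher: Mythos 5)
Your proposal is correct and follows exactly the route the paper takes: the paper itself omits the detailed proof, stating that it is obtained by transcribing the arguments of \Cref{subsec:NonlocalSpaces} with the boundary-localized convolution results of \Cref{thm:KnownConvResults:HighereCodim} and the local-space results of \Cref{thm:ClassicalFxnSpace:GeneralCodim} substituted for their $\ell=0$ counterparts, with the threshold exponent shifted to $d-\ell-p$. Your item-by-item outline, including the use of the general-$\Gamma$ compact embedding in place of \Cref{thm:CompactEmbed:ZeroDimCase} for item 8), matches this program and in fact supplies more detail than the paper records.
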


        \subsection{The variational problem}
        Although it is not immediately obvious, one can repeat the program of the previous sections for the case $(\Omega;\Gamma) \in \cA(d,\ell)$, with the function spaces like $\mathfrak{W}^p[\delta](\Omega;\beta)$, the energy $\cE_\delta(u)$, etc. all defined in a similar way; the proofs are not significantly different.
        That is, the results for the boundary-localized convolutions still hold, which allows all the analogous theorems for the nonlocal function spaces to be obtained. Then one can establish the well-posedness of the mixed boundary-value problems, as well as their variational convergence to the local problem. To illustrate the main difference, which is the range of $\beta$, we state the following theorem:

        \begin{theorem}\label{thm:LocLimit:Dirichlet:GeneralCodim}
        Under assumptions \eqref{assump:HigherDimWeight}, 
        \eqref{assump:VarProb:Kernel}, and \eqref{assump:Horizon},
		let $d-\ell - p < \beta < d-\ell$. Let $\wt{g} \in W^{1-\frac{d-\ell-\beta}{p},p}(\Gamma)$, and let $g = E_\Gamma \wt{g}$ be its $W^{1,p}(\Omega;\beta)$-extension given as in \Cref{thm:ClassicalFxnSpace:GeneralCodim} item 3). Then the problem  \eqref{eq:MinProb:Case1} is well-posed; let $\{ u_\delta \in \mathfrak{W}^p[\delta](\Omega;\beta) \}$ be the unique minimizers corresponding to each $\delta$. Moreover, $\{u_\delta\}_\delta$ converges strongly in $L^p(\Omega;\beta)$ to the function $u \in W^{1,p}(\Omega;\beta)$ that is the unique function that satisfies \eqref{eq:MinProb:Limit}.
		\end{theorem}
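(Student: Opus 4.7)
The plan is to mirror the proof of \Cref{thm:LocLimit:Dirichlet} step by step, with the function space machinery of \Cref{thm:FxnSpaceProp} and \Cref{thm:ClassicalFxnSpace:GeneralCodim} serving as drop-in replacements for the codimension-zero results used earlier. The range $d-\ell-p<\beta<d-\ell$ is precisely the range for which trace, Hardy, and compactness results are available in the general-codimension setting, so all the ingredients are in place.

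First, for well-posedness of \eqref{eq:MinProb:Case1}, I would reproduce the argument of \Cref{thm:equi:wellposed}. The key coercivity estimates
\begin{equation*}
    \vnorm{u-g}_{\mathfrak{V}^{p}[\delta](\Omega;\beta)} \leq C\bigl(\cE_\delta(u)^{1/p}+\vnorm{\wt{g}}_{W^{1-\frac{d-\ell-\beta}{p},p}(\Gamma)}\bigr)
\end{equation*}
and the corresponding bound on $\vnorm{u}_{\mathfrak{W}^p[\delta](\Omega;\beta)}$ follow from the Hardy-type inequality (\Cref{thm:FxnSpaceProp} item 5), the energy–seminorm equivalence (item 1), the embedding estimate \Cref{thm:Embedding:Nonlocal} via item 1, and the boundedness of the extension operator from \Cref{thm:ClassicalFxnSpace:GeneralCodim} item 3. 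Applied to a minimizing sequence, these give weak convergence in $\mathfrak{V}^p[\delta](\Omega;\beta)$ and in $\mathfrak{W}^p[\delta](\Omega;\beta)$; weak lower semicontinuity of the seminorm-based energy and strict convexity then deliver a unique minimizer, and item 4 of \Cref{thm:FxnSpaceProp} identifies its trace as $\wt{g}$. Well-posedness of the local limit problem \eqref{eq:MinProb:Limit} is proved in the same fashion using the local analogues in \Cref{thm:ClassicalFxnSpace:GeneralCodim}.

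Second, for the $\Gamma$-limit, I would establish the analogue of \Cref{prop:GammaLimit:Dirichlet} with respect to strong $L^p(\Omega;\beta)$-convergence. Given $u_\delta \to u$ with $\liminf_\delta \cE_\delta(u_\delta)<\infty$, the coercivity estimate gives a uniform bound on $\vnorm{u_\delta-g}_{\mathfrak{V}^p[\delta](\Omega;\beta)}$; the compactness result \Cref{thm:FxnSpaceProp} item 8 then places the limit $u-g$ in $V^{1,p}(\Omega;\beta)$, so $u\in W^{1,p}(\Omega;\beta)$ via the extension. The pointwise liminf inequality $\cE_0(u)\leq\liminf_\delta\cE_\delta(u_\delta)$ is then obtained exactly as in \eqref{eq:GammaLiminf:Finite:Dirichlet}: apply the boundary-localized convolution $K_\veps u_\delta$, use the convolution estimate \eqref{eq:Comp:ConvolutionEstimate} (valid on any Lipschitz $\Omega$, hence unchanged for general $\ell$), invoke \Cref{thm:LocalizationOfEnergy} on the compactly contained set $\Omega^\veps$ where the weight $\gamma^{-\beta}$ is bounded, and let $\veps\to 0$. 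The constant sequence $u_\delta\equiv u$ again furnishes a recovery sequence by \Cref{thm:LocalizationOfEnergy}.

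Third, convergence of minimizers is concluded via the standard equi-coercivity + $\Gamma$-convergence framework \cite{braides2002gamma}. The uniform bound from the coercivity estimate (whose constant is independent of $\delta$) together with \Cref{thm:FxnSpaceProp} item 8 provides strong $L^p(\Omega;\beta)$-precompactness of $\{u_\delta-g\}_\delta$; combined with the $\Gamma$-limit this forces $u_\delta\to u$ strongly in $L^p(\Omega;\beta)$, where $u$ is the unique minimizer of \eqref{eq:MinProb:Limit}.

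The main obstacle, and really the only nontrivial verification, is confirming that \Cref{thm:LocalizationOfEnergy} and the convolution estimate \eqref{eq:Comp:ConvolutionEstimate} transfer to the general-codimension setting. Both statements depend only on local Taylor expansions on compactly contained subsets of $\Omega$ plus the heterogeneous-localization estimates governed by \eqref{assump:Localization}, and not on the geometry of $\Gamma$; since \eqref{assump:Localization} holds for any Lipschitz $\Omega$ and \eqref{assump:HigherDimWeight} reduces to \eqref{assump:weight} in controlling $\gamma$ away from $\Gamma$, these extensions are straightforward. The trace identification $T_\Gamma u=\wt{g}$ in the limit is also automatic once one knows $u-g\in V^{1,p}(\Omega;\beta)=W^{1,p}_{0,\Gamma}(\Omega;\beta)$ by \Cref{thm:ClassicalFxnSpace:GeneralCodim} items 6–7.
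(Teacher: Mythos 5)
Your proposal is correct and follows exactly the route the paper intends: the paper gives no separate proof for this theorem, stating only that one repeats the program of Sections 4--5 with the general-codimension function-space results (\Cref{thm:ClassicalFxnSpace:GeneralCodim}, \Cref{thm:KnownConvResults:HighereCodim}, \Cref{thm:FxnSpaceProp}) substituted for their $\ell=0$ counterparts, which is precisely your step-by-step translation, including the correct replacement of $\vnorm{\wt{g}}_{L^\infty(\Gamma)}$ by the $W^{1-\frac{d-\ell-\beta}{p},p}(\Gamma)$-norm in the coercivity estimates. Your verification that \eqref{eq:Comp:ConvolutionEstimate} and \Cref{thm:LocalizationOfEnergy} are insensitive to the geometry of $\Gamma$ is also consistent with the paper, which covers the former through \Cref{thm:KnownConvResults:HighereCodim}.
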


\section{Discrete-to-continuum convergence of the graph energy}\label{sec:Graph}

In this section we prove \Cref{thm:GraphToNonlocalConv}.

Recalling the definition of transportation map in the introduction, we note that the assumption on $c_n$ in the \Cref{thm:GraphToNonlocalConv}
is feasible, as demonstrated by the asymptotics of $\vnorm{T_n-Id}_{L^\infty(\Omega)}$ presented in the following theorem. 

\begin{theorem}[\cite{Trillos2015rate}]\label{thm:TransportMapConvergence}
With the above setup, let $\mu_n$ be the empirical measure on $\cX_n$. Then there exists a constant $C > 0$ such that almost surely there exists a sequence of transportation maps $T_n : \Omega^* \to \Omega^*$ from $\mu$ to the empirical measure $\mu_n$ on $\cX_n$ such that
    \begin{equation*}
        \frac{\ell_n}{C} \leq 
        \esssup_{\bx \in \Omega}  |T_n(\bx) - \bx| \leq C \ell_n
    \end{equation*}
for all $n \in \bbN$, where
\begin{equation*}
    \ell_n = \frac{\ln(n)^{3/4}}{n^{1/2}} \text{ when } d = 2, \qquad \text{ while } \qquad 
    \ell_n = \left( \frac{\ln(n)}{n} \right)^{1/d} \text{ when } d \geq 3.
\end{equation*}
\end{theorem}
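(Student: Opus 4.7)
The result is a quoted optimal-transport estimate, so the plan is to sketch the two classical ingredients behind it: sharp rates for the $\infty$-Wasserstein distance $W_\infty(\mu,\mu_n)$, and the upgrade from an optimal coupling to a measurable map.

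The first step is to establish the \emph{upper bound} via a multiscale matching. Cover $\Omega^*$ with a dyadic family of cubes whose finest scale has side length $\sim n^{-1/d}$, so that each fine cube has $\mu$-mass $\sim 1/n$. For $d\ge 3$, Bernstein-type concentration for Binomial counts shows that, with probability at least $1-n^{-c}$, every such cube contains between one and $O(\log n)$ sample points. A greedy within-cube assignment then produces a transport plan whose $L^\infty$ cost is controlled by the cube diameter plus an overflow term handled at the parent scale; a careful accounting yields the bound $C(\log n/n)^{1/d}$. The case $d=2$ is genuinely more delicate and requires the $L^\infty$ version of the Ajtai--Koml\'os--Tusn\'ady / Leighton--Shor multiscale argument, in which fluctuations at each dyadic scale are balanced against one another; this is the source of the corrected rate $(\log n)^{3/4}n^{-1/2}$.

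The second step is the matching \emph{lower bound}, via a coupon-collector argument. Cover $\Omega^*$ with $\sim \ell_n^{-d}$ cubes of side length $c\ell_n$ for small $c>0$. The probability that a fixed cube is missed by all $n$ samples is $(1-c^d\ell_n^d)^n$, and for $c$ small enough a union bound shows that with positive probability at least one cube is empty. Any admissible transport map then has to move the mass in that cube a distance at least $c\ell_n$, which yields $\esssup_{\bx\in\Omega}|T_n(\bx)-\bx|\ge c\ell_n$. A Borel--Cantelli argument then converts these in-probability bounds into the almost-sure statement.

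The third step is to convert the optimal coupling into a Borel map. Because $\mu$ is absolutely continuous with density bounded away from zero and infinity, one can invoke Pratelli's theorem (or the Champion--De Pascale analysis of $\infty$-optimal transport) to produce a measurable $T_n$ with $T_{n\#}\mu=\mu_n$ and $\vnorm{T_n-\Id}_{L^\infty(\Omega)}$ equal to $W_\infty(\mu,\mu_n)$ up to an arbitrarily small loss, which is absorbed into the constant $C$.

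The main obstacle is the planar case: the exponent $3/4$ on $\log n$ in $d=2$ is not accessible by the elementary occupancy/concentration argument that suffices in $d\ge 3$, and requires the full $L^\infty$ Ajtai--Koml\'os--Tusn\'ady hierarchical matching. The step from coupling to measurable map, though technical, is essentially soft and follows from absolute continuity of $\mu$.
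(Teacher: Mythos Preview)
The paper does not prove this theorem at all: it is quoted verbatim as a result of Garc\'ia Trillos and Slep\v{c}ev \cite{Trillos2015rate} and used as a black box. Your sketch is a reasonable outline of the argument in that reference (multiscale matching for the upper bound, with the Leighton--Shor / AKT refinement in $d=2$; an empty-box argument for the lower bound; and absolute continuity to pass from plans to maps), but there is nothing in the present paper to compare it against.
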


\begin{remark}
    Although in the definitions of 
    $\eta_\delta^\tau$ and $\gamma^\tau$ we chose to truncate the weight $\gamma$ and localization function $\eta$ by redefining the bandwidth at a critical value $\tau$, other policies to avoid degeneracy can be used. For instance, \Cref{thm:GraphToNonlocalConv} and all the results in this section remain true if
    \begin{equation*}
        \wt{\eta}_\delta^\tau(\bx) = \delta ( \eta(\bx) + \tau), \qquad
    \wt{\gamma}^\tau(\bx) = \gamma(\bx) +  \tau,
    \end{equation*}
    are used in place of $\eta_\delta^\tau$ and $\gamma^\tau$.
\end{remark}

\subsection{Nonlocal and nonlocal truncated energies}

In order to compare the discrete and nonlocal energies, we introduce an intermediate, nonlocal truncated energy
\begin{equation*}
    \cE_{\delta,\tau}(u) := \int_{\Omega} \int_{\Omega} \rho \left( \frac{|\bx-\by|}{ \eta_\delta^\tau(\bx) } \right) \frac{|u(\bx)-u(\by)|^p}{ (\gamma^\tau(\bx))^{\max\{\beta,0\}} (\gamma(\bx))^{\min\{\beta,0\}}  (\eta_\delta^\tau(\bx))^{d+p} } \, \rmd \by \, \rmd \bx.
\end{equation*}
This energy is defined for general $\beta \in \bbR$, but we will only consider the case $\beta \geq 0$.

\begin{theorem}\label{thm:NonlocalTruncToNonlocal}
    Let $d-p < \beta < d$ with $\beta \geq 0$.
    Let $\delta \in (0,\underline{\delta}_0)$, and let $\{ \delta_n \}_n \subset (0,\underline{\delta}_0)$ be a sequence converging to $\delta$. Let $\{ \tau_n \} \subset (0,\diam(\Omega)/4)$ be a sequence converging to $0$. Then there exists $n_0 \in \bbN$ depending on $\delta$ and $R$ such that for all $n \geq n_0$
    \begin{equation}\label{eq:TruncNonlocal:Finite}
        \cE_{\delta_n,\tau_n}(u) \leq \frac {C(d,p,\beta,\rho,\kappa,\Omega)}{ \delta_n^p }
        \vnorm{u}_{\mathfrak{W}^p[\delta](\Omega;\beta)}^p,
        \qquad \forall u \in \mathfrak{W}^p[\delta](\Omega;\beta),
    \end{equation}
    and moreover
    \begin{equation}\label{eq:TruncNonlocal:Limit}
        \lim\limits_{n \to \infty} \cE_{\delta_n,\tau_n}(u) = \cE_\delta(u), \qquad \forall u \in \mathfrak{W}^p[\delta](\Omega;\beta).
    \end{equation}
\end{theorem}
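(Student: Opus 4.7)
The plan is to establish the uniform bound \eqref{eq:TruncNonlocal:Finite} first, and then deduce the convergence \eqref{eq:TruncNonlocal:Limit} via a density argument that uses the bound to control the non-smooth part of the energy.

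For \eqref{eq:TruncNonlocal:Finite}, I would decompose the $\bx$-integration domain as $\Omega = A_n \cup B_n$, where $A_n := \{\bx \in \Omega : \eta(\bx) > \tau_n\}$ is the bulk and $B_n := \Omega \setminus A_n$ is a thin boundary layer of width at most $\tau_n$. Since $\Gamma \subset \p\Omega$ gives $\eta(\bx) \leq \gamma(\bx)$ everywhere in $\Omega$, both truncations are inactive on $A_n$: $\eta_{\delta_n}^{\tau_n}(\bx) = \eta_{\delta_n}(\bx)$ and $\gamma^{\tau_n}(\bx) = \gamma(\bx)$ there. The integrand over $A_n$ thus coincides with that of $p \cdot \cE_{\delta_n}(u)$, and \Cref{thm:EnergySpaceIndepOfKernel} combined with \Cref{thm:InvariantHorizon} bounds this contribution by $C[u]_{\mathfrak{W}^p[\delta](\Omega;\beta)}^p$ uniformly once $n_0$ is chosen so that $\delta_n$ lies in a fixed compact neighborhood of $\delta$. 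For the $B_n$-contribution, I would use the triangle inequality $|u(\bx)-u(\by)|^p \le 2^{p-1}(|u(\bx)|^p + |u(\by)|^p)$, integrate the nonsingular variable to extract the factor $(\delta_n\tau_n)^d$ from the kernel, and then absorb the remaining $(\delta_n\tau_n)^{-p}$ singularity: the hypothesis $\beta \geq 0$ is used via $(\gamma^{\tau_n})^{-\beta} \leq \gamma^{-\beta}$, while the threatening $\tau_n^{-p}$ factor must be cancelled against the singular $\gamma^{-\beta}$ weight using the Hardy-type estimate of \Cref{thm:Combined:Nonlocal} applied to $\bar u := u - E_\Gamma T_\Gamma u$ (the extension piece $E_\Gamma T_\Gamma u$ contributes only $\vnorm{T_\Gamma u}_{L^\infty(\Gamma)}$, itself controlled by $\vnorm{u}_{\mathfrak{W}^p[\delta](\Omega;\beta)}$ through \Cref{thm:Trace:Nonlocal} and \Cref{thm:Extension}).

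For \eqref{eq:TruncNonlocal:Limit}, I would first prove convergence for smooth test functions $v \in C^\infty(\overline{\Omega})$ by dominated convergence. After the change of variable $\bz := (\by-\bx)/\eta_{\delta_n}^{\tau_n}(\bx)$, the integrand of $\cE_{\delta_n,\tau_n}(v)$ is pointwise bounded by $C\vnorm{\grad v}_{L^\infty(\Omega)}^p \rho(|\bz|) |\bz|^p \gamma(\bx)^{-\beta}$, which is integrable on $\Omega \times B(0,1)$ since $\beta < d$; pointwise a.e.\ convergence of the integrand follows from $\eta_{\delta_n}^{\tau_n}(\bx) \to \eta_\delta(\bx)$ and $\gamma^{\tau_n}(\bx) \to \gamma(\bx)$ almost everywhere in $\Omega$. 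To extend this to general $u \in \mathfrak{W}^p[\delta](\Omega;\beta)$, I invoke the density of $C^\infty(\overline{\Omega})$ from \Cref{thm:Density:Nonlocal}: given $\veps > 0$, pick $v_\veps \in C^\infty(\overline{\Omega})$ with $\vnorm{u - v_\veps}_{\mathfrak{W}^p[\delta](\Omega;\beta)} < \veps$; since both $\cE_{\delta_n,\tau_n}(\cdot)^{1/p}$ and $\cE_\delta(\cdot)^{1/p}$ are seminorms (by Minkowski's inequality in the appropriate weighted $L^p(\Omega \times \Omega)$), a standard $\veps/3$ argument combining the uniform bound \eqref{eq:TruncNonlocal:Finite} applied to $u - v_\veps$ with the already-established smooth convergence for $v_\veps$ yields the result.

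The main technical obstacle is the $B_n$-contribution in \eqref{eq:TruncNonlocal:Finite}: the boundary layer has measure only of order $\tau_n$ while the kernel weight has size $(\delta_n\tau_n)^{-(d+p)}$, so a naive triangle-inequality bound produces a divergent $\tau_n^{-p}$ factor that must be carefully compensated by the singular $\gamma^{-\beta}$ weight via the nonlocal Hardy-type machinery of \Cref{sec:WeightedNonlocalSpaces}; this is precisely where the assumption $\beta \geq 0$ (which keeps $\gamma^{-\beta}$ bounded on $\{\gamma \geq 1\}$ and forces the singularity to concentrate exactly where it can be absorbed) enters.
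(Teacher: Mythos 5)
Your treatment of the bulk region $A_n=\{\eta>\tau_n\}$ and your density/seminorm argument for the limit \eqref{eq:TruncNonlocal:Limit} are fine (the latter is a legitimate alternative to the paper, which instead proves the limit directly for every $u\in\mathfrak{W}^p[\delta](\Omega;\beta)$ by dominated convergence on each region), but both hinge on the uniform bound \eqref{eq:TruncNonlocal:Finite}, and your plan for the boundary layer $B_n=\{\eta\le\tau_n\}$ has a genuine gap. The layer splits into two very different zones: near $\Gamma$ (where $\gamma\lesssim\tau_n$) and near $\p\Omega^*\setminus\Gamma$ (where $\gamma$ is of order one but $\eta<\tau_n$). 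Your absorption mechanism --- crude split $|u(\bx)-u(\by)|^p\le 2^{p-1}(|u(\bx)|^p+|u(\by)|^p)$, then cancel $(\delta_n\tau_n)^{-p}$ against $\gamma^{-\beta}$ via the Hardy inequality of \Cref{thm:Combined:Nonlocal} --- only works in the first zone, where $\gamma^{-\beta-p}\gtrsim\tau_n^{-\beta-p}$. In the second zone $\gamma^{-\beta-p}\approx 1$, so $\int|\bar u|^p\gamma^{-\beta-p}$ gives no $\tau_n^{-p}$ gain, and the crude split is fatal: already for $u\equiv c$ (constant) one has $\bar u\equiv c$ near $\p\Omega^*\setminus\Gamma$, and the resulting term is of order $|c|^p\tau_n^{1-p}\delta_n^{-p}\to\infty$, while the left-hand side $\cE_{\delta_n,\tau_n}(u)=0$. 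So no weighted Hardy inequality in $\gamma$ can close the estimate there; the cancellation you threw away with the triangle inequality is exactly what is needed.

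What the paper does in that zone (its term $II$) is to keep the difference structure and subtract the trace on the \emph{outer} boundary as well: writing $u_1=u-E_\Gamma T_\Gamma u$, $u_2=u_1/\gamma^{\beta/p}\in\mathfrak{W}^p[\delta](\Omega;0)$ (\Cref{lma:NonlocalTrunc:Remainder2}), and $u_3=u_2-E_{\p\Omega^*}T_{\p\Omega^*}u_2$, it controls $u_3$ by the Hardy inequality relative to $\eta$ (this is the sole source of the $\delta_n^{-p}$ factor), controls the boundary extensions $E_{\p\Omega^*}T_{\p\Omega^*}u_2$ and $E_\Gamma T_\Gamma u$ through their gradients via the extension operators and \Cref{lma:NonlocalTrunc:Remainder1} (not through an $L^\infty$ bound with the crude split), and handles the commutator-type term coming from $\gamma^{-\beta/p}(\bx)-\gamma^{-\beta/p}(\by)$ with the $\gamma$-weighted Hardy inequality. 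Near $\Gamma$ (the paper's term $III$) your idea is essentially right, except that there too one must subtract $u(\bx_0)$ rather than split $|u(\bx)|^p+|u(\by)|^p$, since only $\int|u-u(\bx_0)|^p|\bx-\bx_0|^{-\beta-p}$ is controlled by the seminorm. Without the $\p\Omega^*$-trace subtraction and the gradient-based estimates for the extension pieces, the bound \eqref{eq:TruncNonlocal:Finite} cannot be obtained by your route.
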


We will prove this result at the end of the section after establishing some preliminary lemmas. In the first lemma, we estimate a ``remainder'' term using a local weighted Sobolev norm of a function extended to all of $\bbR^d$.
This in turn requires some new notation. We first note that because the weight $\gamma$ was constructed via cutoff and mollification, we can regard it as a function belonging to $C^\infty_c(\bbR^d \setminus \Gamma) \cap C^0_c(\bbR^d)$. Next we define the Banach space
\begin{equation*}
    \begin{split}
        &W^{1,p}(\bbR^d,\gamma) \\
        :=& \left\{ u : \bbR^d \to \bbR \, : \, \vnorm{u}_{W^{1,p}(\bbR^d,\gamma)}^p := \int_{\bbR^d} \frac{|u(\bx)|^p}{ \gamma(\bx)^\beta } \, \rmd \bx + \int_{\bbR^d} \frac{|\grad u(\bx)|^p}{ \gamma(\bx)^\beta } \, \rmd \bx < \infty \right\},
    \end{split}
\end{equation*}
which serves as an extension space for functions in $W^{1,p}(\Omega;\beta)$.
We restate an existing extension result for weighted Sobolev spaces, adapted for our purposes:

\begin{theorem}[\cite{Chua1992Extension}, Theorem 1.1]\label{thm:ApExtension}
    Let $d(1-p) < \beta < d$. Then there exists a bounded linear extension operator $\bar{E} : W^{1,p}(\Omega;\beta) \to W^{1,p}(\bbR^d,\gamma)$, i.e. $\bar{E}v |_{\Omega} = v$ and $\vnorm{\bar{E}v}_{W^{1,p}(\bbR^d,\gamma)} \leq C(d,p,\beta,\Omega) \vnorm{v}_{W^{1,p}(\Omega;\beta)}$
    for all $v \in W^{1,p}(\Omega;\beta)$.
\end{theorem}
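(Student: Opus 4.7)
The plan is to reduce the claim to Chua's extension theorem for Muckenhoupt-weighted Sobolev spaces on Jones $(\varepsilon,\delta)$-domains, so the work consists of verifying the two hypotheses for our weight and our domain.

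First, I would check that $w := \gamma^{-\beta}$ belongs to the Muckenhoupt class $A_p(\bbR^d)$ for the range $d(1-p) < \beta < d$. For each $\bx_0 \in \Gamma$, the model weight $|\bx-\bx_0|^{-\beta}$ lies in $A_p(\bbR^d)$ exactly when $-d < -\beta < d(p-1)$, by the classical computation of the $A_p$-constant on concentric balls centered at $\bx_0$; this is the fact already invoked in the excerpt (see \Cref{lma:ApWeight}). Using \eqref{assump:weight}, which asserts $\gamma(\bx) = |\bx-\bx_0|$ on $B(\bx_0,R)$, $\gamma \equiv 1$ away from $\cup_{\bx_0 \in \Gamma} B(\bx_0, 2R)$, and $\gamma \approx \dist(\bx,\Gamma)$ everywhere, together with the fact that the $A_p$-class is closed under taking finite maxima with strictly positive constants and under multiplication by $C^\infty$-functions bounded above and below, one concludes $w \in A_p(\bbR^d)$ with $A_p$-constant depending only on $d$, $p$, $\beta$, and $\Gamma$.

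Second, I would verify that $\Omega = \Omega^* \setminus \Gamma$ is a Jones $(\varepsilon,\delta)$-domain. The bounded Lipschitz domain $\Omega^*$ is already Jones, and for $d \geq 2$ removing a finite interior set preserves the property: given $\bx,\by$ close to some puncture $\bx_0$, the segment through $\bx_0$ can be replaced by a short detour whose distance to $\Gamma \cup \p\Omega^*$ stays comparable to $\min(|\bx-\bx_0|,|\by-\bx_0|)$. The case $\bx_0 \in \p\Omega^*$ is handled similarly, using the Lipschitz character of $\p\Omega^*$ to route paths transversally to the puncture. With both hypotheses in place, Chua's Theorem 1.1 produces the desired operator $\bar{E}$ by a standard Whitney-decomposition reflection construction on $\bbR^d \setminus \Omega$: on each Whitney cube $Q$ outside $\Omega$ one replaces $u$ by a weighted polynomial approximant on a paired Whitney cube $Q^* \subset \Omega$ of comparable side length and distance, with norm depending only on $d$, $p$, $[w]_{A_p}$, and the Jones constants of $\Omega$.

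The main obstacle, should one try to prove rather than cite the estimate, is controlling the derivative of $\bar{E}u$ in the $w$-weighted norm. The $A_p$-property (equivalently, the doubling and reverse-H\"older inequalities for $w$) is exactly what makes $w$ roughly constant on $Q \cup Q^*$ and what supplies a weighted Poincar\'e inequality on $Q^*$; these together turn local oscillations of $u$ on $Q^*$ into $w$-weighted derivative bounds for $\bar{E}u$ on $Q$, after which one sums over Whitney cubes using their finite overlap — a purely geometric step using only the Jones property. I would note that the statement as invoked in \Cref{sec:Graph} is needed only in the subrange $d-p < \beta < d$, which is safely inside the $A_p$-range $d(1-p) < \beta < d$, so no separate degenerate case arises.
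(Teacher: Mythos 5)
Your proposal matches the paper's treatment: this statement is not proved in the paper but is quoted from Chua's Theorem 1.1, with the hypothesis $d(1-p)<\beta<d$ serving exactly to guarantee (via \Cref{lma:ApWeight} and the localization implicit in \eqref{assump:weight}) that $\gamma^{-\beta}$ is a Muckenhoupt $A_p$-weight on a Jones domain, which is precisely what you verify before invoking the same theorem. Your additional remarks on the Jones property of the punctured domain and on the Whitney-decomposition mechanism behind Chua's proof are consistent with, and slightly more detailed than, what the paper records.
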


The assumption $d(1-p)<\beta<d$ ensures that $\gamma$ is a Muckenhoupt $A_p$-weight; see \Cref{lma:ApWeight}. We are ready to state the lemma in full.

\begin{lemma}\label{lma:NonlocalTrunc:Remainder1}
    Let $d(1-p) < \beta < d$, and let $v \in W^{1,p}(\Omega;\beta)$.
    Then
    \begin{equation*}
        \begin{split}
            &\int_{\Omega} \int_{\Omega} \mathds{1}_{ \{ \eta(\bx) < \tau \} } \mathds{1}_{ \{ \gamma(\bx) \geq \tau \} } \rho \left( \frac{|\bx-\by|}{ \delta \tau } \right) \frac{|v(\bx)-v(\by)|^p}{ (\gamma(\bx))^\beta (\delta \tau)^{d+p} } \, \rmd \by \, \rmd \bx \\
            \leq & C(\beta,\kappa) \int_{ \{ \bx \, : \, \eta(\bx) < (1+\kappa_1 \delta) \tau \} \cap \{ \bx \, : \, \gamma(\bx) > (1-\kappa_1 \delta) \tau \} }  \frac{|\grad \bar{E}v(\bx)|^p}{\gamma(\bx)^\beta} \, \rmd \bx,
        \end{split}
    \end{equation*}
    for any $\delta \in (0,\underline{\delta}_0)$ and any $\tau \in (0,\diam(\Omega)/4)$.
\end{lemma}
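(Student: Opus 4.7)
The plan is to use the fundamental theorem of calculus on the line segment joining $\bx$ and $\by$ (applied to the extension $\bar{E}v$), followed by a change of variables that converts the double integral into a single weighted integral of $|\grad \bar{E}v|^p$ over a slightly enlarged set. By the $A_p$-weight density statement invoked in \Cref{thm:ApExtension} and Chua's extension theorem, it suffices to establish the inequality first for $v \in C^\infty(\overline{\Omega})$ (so that $\bar{E}v$ is locally Lipschitz, justifying the pointwise FTC), and then pass to the limit on both sides using Fatou's lemma.

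First, I would exploit that $\supp \rho \Subset (-1,1)$: the integrand vanishes unless $|\by-\bx| < \delta \tau$. For such pairs, the FTC and Jensen's inequality yield
\begin{equation*}
|v(\bx) - v(\by)|^p \;=\; |\bar{E}v(\bx) - \bar{E}v(\by)|^p \;\leq\; |\bx-\by|^p \int_0^1 |\grad \bar{E}v(\bx + t(\by-\bx))|^p \,\rmd t.
\end{equation*}
Combining with $|\bx-\by|^p \leq (\delta\tau)^p$ cancels the factor $(\delta\tau)^p$ from the denominator. Then I would change variables $\bz = (\by-\bx)/(\delta\tau) \in B(0,1)$, so that $\rmd \by = (\delta\tau)^d \rmd \bz$ absorbs the remaining $(\delta\tau)^d$. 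The left-hand side is bounded by
\begin{equation*}
\int_0^1 \int_{B(0,1)} \rho(|\bz|) \int_\Omega \mathds{1}_{\{\eta(\bx)<\tau\}} \mathds{1}_{\{\gamma(\bx) \geq \tau\}} \frac{|\grad \bar{E}v(\bx + t\delta\tau \bz)|^p}{\gamma(\bx)^\beta} \,\rmd \bx \,\rmd \bz \,\rmd t,
\end{equation*}
after applying Fubini.

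The key step is then the translation $\bw = \bx + t\delta\tau\bz$ (Jacobian $1$) in the innermost integral, keeping $t \in [0,1]$ and $\bz \in B(0,1)$ fixed. Since $|\bw - \bx| \leq \delta\tau$, the $1$-Lipschitz property of $\eta$ and the $\kappa_1$-Lipschitz estimate for $\gamma$ (from \eqref{assump:weight} with $|\alpha|=1$, using $\kappa_1 \geq 1$ without loss of generality) transform the constraints as
\begin{equation*}
\{\eta(\bx) < \tau\} \subset \{\eta(\bw) < (1+\kappa_1\delta)\tau\}, \qquad \{\gamma(\bx) \geq \tau\} \subset \{\gamma(\bw) > (1-\kappa_1\delta)\tau\},
\end{equation*}
in the sense that the indicator on $\bx$ is pointwise bounded by the indicator on $\bw$. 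For the weight factor, using $\gamma(\bx) \geq \tau$ and the Lipschitz estimate, one obtains $(1-\kappa_1\delta)\gamma(\bx) \leq \gamma(\bw) \leq (1+\kappa_1\delta)\gamma(\bx)$; since $\delta < \underline{\delta}_0$ implies $\kappa_1 \delta < 1/3$, both ratios are bounded, hence $\gamma(\bx)^{-\beta} \leq C(\beta,\kappa)\gamma(\bw)^{-\beta}$.

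After these substitutions, the innermost integral is bounded by $C(\beta,\kappa)\int_{A_\delta^\tau} \frac{|\grad\bar{E}v(\bw)|^p}{\gamma(\bw)^\beta}\,\rmd\bw$, independent of $t$ and $\bz$, where $A_\delta^\tau$ denotes the set on the right-hand side of the lemma. Finally, $\int_0^1 \rmd t \int_{B(0,1)} \rho(|\bz|)\,\rmd\bz$ contributes a finite constant (absorbed into $C$), delivering the stated bound. The main obstacle is simply the careful bookkeeping of the indicator sets and weight comparisons through the translation; everything else is routine once the FTC-plus-Jensen step is in place.
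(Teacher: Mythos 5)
Your computational core matches the paper's: the support of $\rho$ restricts to $|\bx-\by|<\delta\tau$, the FTC--Jensen bound along the segment, the change of variables $\bw=\bx+t\delta\tau\bz$ (Jacobian $1$), the comparisons $|\gamma(\bw)-\gamma(\bx)|\le\kappa_1\delta\tau\le\kappa_1\delta\gamma(\bx)$ and $\eta(\bw)\le\eta(\bx)+\kappa_1\delta\tau$, and the resulting enlargement of the indicator sets are exactly Step 1 of the paper's proof, with the same constants up to harmless factors.

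The gap is in your reduction to smooth functions. You apply the pointwise FTC to $\bar{E}v$ along segments joining $\bx,\by\in\Omega$ (which may leave $\Omega$, so the extension is genuinely needed), and you justify this by asserting that $\bar{E}v$ is locally Lipschitz whenever $v\in C^\infty(\overline{\Omega})$. That property of Chua's operator is neither stated in \Cref{thm:ApExtension} nor obvious: $\bar{E}$ is only known to be bounded from $W^{1,p}(\Omega;\beta)$ into $W^{1,p}(\bbR^d,\gamma)$, and smoothness of $v$ up to $\overline{\Omega}$ gives no pointwise control of $\bar{E}v$ off $\Omega$; so as written the FTC step is unjustified. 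The paper instead approximates $\bar{E}v$ itself: because $d(1-p)<\beta<d$ makes $\gamma^{-\beta}$ an $A_p$ weight (\Cref{lma:ApWeight}), $C^\infty_c(\bbR^d)$ is dense in $W^{1,p}(\bbR^d,\gamma)$ by \cite{Chua1992Extension}, one proves the estimate for such genuinely smooth functions over $\bbR^d\times\bbR^d$, and then passes to the limit (Fatou on the left, norm convergence of the gradients on the right). This is also where the hypothesis $d(1-p)<\beta$ actually enters; you cite the $A_p$ density in passing but your reduction to $v\in C^\infty(\overline{\Omega})$ does not use it where it is needed. Your argument is repairable either by adopting that density step, or by noting that $\bar{E}v\in W^{1,p}(\bbR^d,\gamma)\subset W^{1,1}_{loc}(\bbR^d)$ (local integrability of $\gamma^{\beta/(p-1)}$ is part of the $A_p$ condition), so the FTC inequality holds along almost every segment, which suffices after integration; with either fix, your limiting argument (Fatou on the left-hand side, boundedness of $\bar{E}$ on the right-hand side) goes through.
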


\begin{proof}
The argument is similar to that of \Cref{thm:Embedding:Nonlocal}. First, we prove that 
    \begin{equation}\label{eq:NonlocalTrunc:Remainder1:Pf1}
        \begin{split}
            &\int_{\bbR^d} \int_{\bbR^d} \mathds{1}_{ \{ \eta(\bx) < \tau \} } \mathds{1}_{ \{ \gamma(\bx) \geq \tau \} } \rho \left( \frac{|\bx-\by|}{ \delta \tau } \right) \frac{|u(\bx)-u(\by)|^p}{ (\gamma(\bx))^\beta (\delta \tau)^{d+p} } \, \rmd \by \, \rmd \bx \\
            \leq & C(\beta,\kappa) \int_{ \{ \bx \, : \, \eta(\bx) < (1+\kappa_1 \delta) \tau \} \cap \{ \bx \, : \, \gamma(\bx) > (1-\kappa_1 \delta) \tau \} }  \frac{|\grad u(\bx)|^p}{\gamma(\bx)^\beta} \, \rmd \bx,
        \end{split}
    \end{equation}
for any $u \in C^\infty_c(\bbR^d)$. By a change of variables, and applying the fundamental theorem of calculus, 
    \begin{equation*}
        \begin{split}
            &\int_{\bbR^d} \int_{\bbR^d} \mathds{1}_{ \{ \eta(\bx) < \tau \} } \mathds{1}_{ \{ \gamma(\bx) \geq \tau \} } \rho \left( \frac{|\bx-\by|}{ \delta \tau } \right) \frac{|u(\bx)-u(\by)|^p}{ (\gamma(\bx))^\beta (\delta \tau)^{d+p} } \, \rmd \by \, \rmd \bx \\
            \leq& \int_{\bbR^d} \int_{B(0,1)} \mathds{1}_{ \{ \eta(\bx) < \tau \} } \mathds{1}_{ \{ \gamma(\bx) \geq \tau \} } \rho \left( |\bz| \right) \int_0^1 \frac{|\grad u(\bx+t\delta \tau \bz) \cdot \bz|^p}{ (\gamma(\bx))^\beta } \, \rmd t \, \rmd \bz \, \rmd \bx.
        \end{split}
    \end{equation*}
Next, since in the integrand $\gamma(\bx) \geq \tau$, it follows that $|\gamma(\bx+t\tau \delta \bz) -  \gamma(\bx)| \leq \kappa_1 \delta \tau \leq \kappa_1 \delta \gamma(\bx)$. We also have $\eta(\bx+t\tau\delta \bz) \leq \eta(\bx) + \kappa_1 \delta \tau \leq (1+\kappa_1 \delta) \tau$. Therefore along with \eqref{assump:VarProb:Kernel}
    \begin{equation*}
        \begin{split}
            &\int_{\bbR^d} \int_{\bbR^d} \mathds{1}_{ \{ \eta(\bx) < \tau \} } \mathds{1}_{ \{ \gamma(\bx) \geq \tau \} } \rho \left( \frac{|\bx-\by|}{ \delta \tau } \right) \frac{|u(\bx)-u(\by)|^p}{ (\gamma(\bx))^\beta (\delta \tau)^{d+p} } \, \rmd \by \, \rmd \bx \\
            \leq& \left( \frac{1+\kappa_1 \delta}{1-\kappa_1 \delta} \right)^{|\beta|} \int_{B(0,1)} \int_0^1  \rho \left( |\bz| \right) \int_{\bbR^d} \mathds{1}_{ \{ \eta(\bx+t\delta \tau \bz) < (1+\kappa_1 \delta) \tau \} }  \mathds{1}_{ \{ \gamma(\bx+t\tau\delta\bz) \geq (1-\kappa_1 \delta) \tau \} }  \\
                &\qquad \qquad \frac{|\grad u(\bx+t\delta \tau \bz) \cdot \bz|^p}{ (\gamma(\bx+t\delta \tau \bz))^\beta } \, \rmd \bx \, \rmd t \, \rmd \bz \\
            =&\left( \frac{1+\kappa_1 \delta}{1-\kappa_1 \delta} \right)^{|\beta|} \int_{ \{ \bx \, : \, \eta(\bx) < (1+\kappa_1 \delta) \tau \} \cap \{ \bx \, : \, \gamma(\bx) \geq (1-\kappa_1 \delta) \tau \} } \frac{ |\grad u(\bx)|^p }{ \gamma(\bx)^\beta } \, \rmd \bx.
        \end{split}
    \end{equation*}

Now we complete the proof for a general $v \in W^{1,p}(\Omega;\beta)$. Recall that $\gamma$ is a Muckenhoupt $A_p$-weight by assumption and by \Cref{lma:ApWeight}.
Thus, as a special case of a density result for $A_p$-weighted Sobolev spaces of functions defined on Jones domains (see \cite[Theorem 6.1 and Remark 6.4]{Chua1992Extension}), there exists a sequence $\{ v_m \} \subset C^{\infty}_c(\bbR^d)$ such that $\vnorm{ v_m - \bar{E}v }_{W^{1,p}(\bbR^d,\gamma)} \to 0$ as $m \to \infty$. Each $v_m$ satisfies \eqref{eq:NonlocalTrunc:Remainder1:Pf1}, hence by Fatou's lemma
    \begin{equation*}
        \begin{split}
            &\int_{\Omega} \int_{\Omega} \mathds{1}_{ \{ \eta(\bx) < \tau \} } \mathds{1}_{ \{ \gamma(\bx) \geq \tau \} } \rho \left( \frac{|\bx-\by|}{ \delta \tau } \right) \frac{|v(\bx)-v(\by)|^p}{ (\gamma(\bx))^\beta (\delta \tau)^{d+p} } \, \rmd \by \, \rmd \bx \\
            \leq & \int_{\bbR^d} \int_{\bbR^d} \mathds{1}_{ \{ \eta(\bx) < \tau \} } \mathds{1}_{ \{ \gamma(\bx) \geq \tau \} } \rho \left( \frac{|\bx-\by|}{ \delta \tau } \right) \frac{|\bar{E} v(\bx)-\bar{E} v(\by)|^p}{ (\gamma(\bx))^\beta (\delta \tau)^{d+p} } \, \rmd \by \, \rmd \bx \\
            \leq & \liminf_{m \to \infty} \int_{\bbR^d} \int_{\bbR^d} \mathds{1}_{ \{ \eta(\bx) < \tau \} } \mathds{1}_{ \{ \gamma(\bx) \geq \tau \} } \rho \left( \frac{|\bx-\by|}{ \delta \tau } \right) \frac{|v_m(\bx)-v_m(\by)|^p}{ (\gamma(\bx))^\beta (\delta \tau)^{d+p} } \, \rmd \by \, \rmd \bx \\
            \leq & C(\beta,\kappa) \int_{ \{ \bx \, : \, \eta(\bx) < (1+\kappa_1 \delta) \tau \} \cap \{ \bx \, : \, \gamma(\bx) > (1-\kappa_1 \delta) \tau \} }  \frac{|\grad \bar{E} v(\bx)|^p}{\gamma(\bx)^\beta} \, \rmd \bx.
        \end{split}
    \end{equation*}

\end{proof}

\begin{lemma}\label{lma:NonlocalTrunc:Remainder2}
    For $\beta \in \bbR$, let $v \in \mathfrak{V}^{p}[\delta](\Omega;\beta)$.
    Then the function $\bar{v} := \frac{v}{\gamma^{\beta/p}}$ belongs to $\mathfrak{W}^p[\delta](\Omega;0)$, with 
    \begin{equation*}
        \begin{split}
            \vnorm{\bar{v}}_{ \mathfrak{W}^p[\delta](\Omega;0) } \leq C(d,p,\beta,\kappa) \vnorm{ v }_{ \mathfrak{V}^p[\delta](\Omega;\beta) }.
        \end{split}
    \end{equation*}
\end{lemma}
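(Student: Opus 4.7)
The plan is to bound the pointwise difference $\bar v(\by) - \bar v(\bx)$ by splitting off the weight dependence. Setting $\alpha := \beta/p$ and writing
\[
\bar v(\by) - \bar v(\bx) = \frac{v(\by) - v(\bx)}{\gamma(\by)^\alpha} + v(\bx) \left( \frac{1}{\gamma(\by)^\alpha} - \frac{1}{\gamma(\bx)^\alpha} \right),
\]
I will separate $|\bar v(\by) - \bar v(\bx)|^p \leq 2^{p-1}(I^p + II^p)$, where $I$ and $II$ denote the absolute values of the two terms on the right. The contribution of $I$ will feed into the seminorm $[v]_{\mathfrak{W}^{p}[\delta](\Omega;\beta)}$, whereas the contribution of $II$ will be absorbed by $\vnorm{v}_{L^p(\Omega;\beta+p)}$; both are dominated by $\vnorm{v}_{\mathfrak{V}^{p}[\delta](\Omega;\beta)}$.

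Two estimates drive the argument, each valid whenever $|\bx - \by| \leq \eta_\delta(\bx)$ and $\delta \in (0,\underline{\delta}_0)$. First, the bound \eqref{eq:Comp1} yields the comparability $\gamma(\by) \in [(1-\kappa\delta)\gamma(\bx),(1+\kappa\delta)\gamma(\bx)]$, from which $\gamma(\by)^s$ is equivalent to $\gamma(\bx)^s$ for any $s \in \bbR$ with constants depending only on $|s|$ and $\kappa\delta$. Second, the mean value theorem applied to $t \mapsto t^\alpha$, together with the Lipschitz estimate $|\gamma(\bx) - \gamma(\by)| \leq \kappa_1 |\bx - \by|$ from \eqref{assump:weight}, yields
\[
\bigl|\gamma(\bx)^\alpha - \gamma(\by)^\alpha\bigr| \leq C(\alpha,\kappa) \gamma(\bx)^{\alpha-1} |\bx - \by|.
\]
Combining this with the identity $1/\gamma(\by)^\alpha - 1/\gamma(\bx)^\alpha = (\gamma(\bx)^\alpha - \gamma(\by)^\alpha)/(\gamma(\bx)\gamma(\by))^\alpha$ and the comparability estimate gives the pointwise bound $II(\bx,\by) \leq C\,|v(\bx)|\,|\bx-\by|\,\gamma(\bx)^{-\alpha-1}$.

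When assembled inside the double integral defining $[\bar v]_{\mathfrak{W}^{p}[\delta](\Omega;0)}^p$, the $I^p$ contribution becomes, after replacing $\gamma(\by)^{-\beta}$ by $C\gamma(\bx)^{-\beta}$ via the comparability, a constant multiple of $[v]_{\mathfrak{W}^{p}[\delta](\Omega;\beta)}^p$. For the $II^p$ contribution, the support of $\rho$ forces $|\bx-\by|^p \leq \eta_\delta(\bx)^p$, so the factor $\eta_\delta(\bx)^p$ cancels one power in the denominator, leaving the double integral
\[
C \int_{\Omega} \frac{|v(\bx)|^p}{\gamma(\bx)^{\beta+p}} \int_{\Omega} \rho\!\left( \frac{|\by-\bx|}{\eta_\delta(\bx)} \right) \frac{d\by}{\eta_\delta(\bx)^d} \, d\bx,
\]
in which the inner integral is a bounded constant depending only on $\rho$, producing a constant multiple of $\vnorm{v}_{L^p(\Omega;\beta+p)}^p$. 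The pure $L^p$ part $\vnorm{\bar v}_{L^p(\Omega)}^p = \int_\Omega |v|^p \gamma^{-\beta} \, d\bx$ is handled by an elementary case analysis: since $\gamma$ is bounded above on the bounded domain $\Omega$, for each sign of $\beta$ and of $\beta+p$ one can bound $\gamma^{-\beta}$ by a constant multiple of $\gamma^{-\beta-p}$ (or apply the reverse inequality after a bounded factor) and conclude $\vnorm{\bar v}_{L^p(\Omega)} \leq C\vnorm{v}_{L^p(\Omega;\beta+p)}$.

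The main technical care concerns the uniformity of the mean value estimate across $\alpha \in \bbR$: when $\alpha - 1 < 0$, the factor $c^{\alpha-1}$ is maximized at the smallest $c$ in the interval between $\gamma(\bx)$ and $\gamma(\by)$, so it is precisely the assumption \eqref{assump:Horizon} (equivalently, $\kappa\delta < 1/3$) that prevents a blowup of the constant. With that caveat all constants depend only on $d$, $p$, $\beta$, and $\kappa$, as claimed.
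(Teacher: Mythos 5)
Your proposal is correct and follows essentially the same route as the paper's proof: split $\bar v(\by)-\bar v(\bx)$ into a difference-quotient term and a weight-difference term, control the latter by a mean-value estimate for $t\mapsto t^{-\beta/p}$ combined with the comparability $\gamma(\by)\approx\gamma(\bx)$ on $|\bx-\by|\le\eta_\delta(\bx)$ (i.e.\ \eqref{eq:Comp1}, which is where \eqref{assump:Horizon} enters), and absorb the two pieces into $[v]_{\mathfrak{W}^p[\delta](\Omega;\beta)}$ and $\vnorm{v}_{L^p(\Omega;\beta+p)}$ respectively. The only differences are cosmetic: you attach the weight at $\bx$ rather than $\by$ in the splitting (which spares the paper's final change of order of integration), and your passing reference to ``the support of $\rho$'' should really be to the ball $B(\bx,\eta_\delta(\bx))$ in the seminorm (or to the kernel equivalence of \Cref{thm:EnergySpaceIndepOfKernel}), which does not affect the argument.
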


\begin{proof}
    Since $|\bar{v}(\bx) - \bar{v}(\by)| \leq \gamma(\bx)^{-\beta/p} |v(\bx)-v(\by)| + |\gamma(\bx)^{-\beta/p}-\gamma(\by)^{-\beta/p}| |v(\by)|$, we have
    \begin{equation*}
        \begin{split}
            [\bar{v}]_{\mathfrak{W}^p[\delta](\Omega;0)}^p &\leq 2^{p-1} [v]_{\mathfrak{W}^p[\delta](\Omega;\beta)}^p \\
            &\quad + 2^{p-1} C(d,p) \int_\Omega \int_{B(\bx,\eta_\delta(\bx))} \frac{|\gamma(\bx)^{-\beta/p}-\gamma(\by)^{-\beta/p}|^p }{ (\eta_{\delta}(\bx))^{d+p} } |v(\by)|^p \, \rmd \by \, \rmd \bx.
        \end{split}
    \end{equation*}
    By \eqref{eq:Comp1}
    \begin{equation}\label{eq:NonlocalTrunc:GammaEst1}
        \begin{split}
            |\gamma(\bx)^{-\beta/p}-\gamma(\by)^{-\beta/p}| 
            &\leq \frac{|\beta|}{p} \kappa_1 \int_0^1 |\gamma(\bx) + t(\gamma(\by)-\gamma(\bx))|^{-\beta/p-1} |\gamma(\bx)-\gamma(\by)| \, \rmd t \\
            &\leq \frac{|\beta|}{p} \kappa_1^2 (1+C(\kappa)\delta) \gamma(\by)^{-\beta/p-1} |\bx-\by|.
        \end{split}
    \end{equation}
    Using this estimate in the second integral and then integrating in $\bx$ gives the desired result:
    \begin{equation}
        [\bar{v}]_{\mathfrak{W}^p[\delta](\Omega;0)}^p \leq C [v]_{\mathfrak{W}^p[\delta](\Omega;\beta)}^p + C \vnorm{v}_{L^p(\Omega;\beta+p)}^p.
    \end{equation}
\end{proof}

\begin{proof}[Proof of \Cref{thm:NonlocalTruncToNonlocal}]
    To begin, we use that $\eta(\bx) \leq \gamma(\bx)$ for all $\bx \in \Omega$, and split the integral defining $\cE_{\delta_n,\tau_n}(u)$ as follows:
    \begin{equation*}
        \begin{split}
            \cE_{\delta_n,\tau_n}(u) 
            &= \int_{\Omega \cap \{ \eta(\bx) \geq \tau_n \} \cap \{ \gamma(\bx) \geq \tau_n \} } \int_{\Omega} \rho \left( \frac{|\bx-\by|}{ \eta_{\delta_n}(\bx) } \right) \frac{|u(\bx)-u(\by)|^p}{ (\gamma(\bx))^\beta (\eta_{\delta_n}(\bx))^{d+p} } \, \rmd \by \, \rmd \bx \\
                &\qquad + \int_{\Omega \cap \{ \eta(\bx) < \tau_n \} \cap \{ \gamma(\bx) \geq \tau_n \} } \int_{\Omega} \rho \left( \frac{|\bx-\by|}{ \delta_n \tau_n } \right) \frac{|u(\bx)-u(\by)|^p}{ (\gamma(\bx))^\beta (\delta_n \tau_n)^{d+p} } \, \rmd \by \, \rmd \bx \\
                &\qquad + \int_{\Omega  \cap \{ \gamma(\bx) < \tau_n \}  } \int_{\Omega} \rho \left( \frac{|\bx-\by|}{ \delta_n \tau_n } \right) \frac{|u(\bx)-u(\by)|^p}{ \tau_n^{\beta} (\delta_n \tau_n)^{d+p} } \, \rmd \by \, \rmd \bx \\
            &:= I + II + III.
        \end{split}
    \end{equation*}
    Clearly $I \leq \cE_{\delta_n}(u) \leq C \cE_\delta(u)$ by \Cref{thm:InvariantHorizon} and \Cref{thm:EnergySpaceIndepOfKernel}, and by the dominated convergence theorem $I \to \cE_\delta(u)$ as $n \to \infty$. Therefore we just need to show that
    \begin{equation}\label{eq:NonlocalTrunc:MainPart}
        II + III \leq C \delta_n^{-p} \vnorm{u}_{ \mathfrak{W}^p[\delta](\Omega;\beta) } \qquad \text{ and } \qquad \lim\limits_{n \to \infty} II + III = 0.
    \end{equation}
    
    To this end, we estimate $II$.
    First, define $u_1 := u - u_\Gamma$, where $u_\Gamma:= E_\Gamma \circ T_\Gamma u$, where $E_\Gamma$ is defined in \Cref{thm:Extension} and where $T_\Gamma$ is defined as in \Cref{thm:Trace:Nonlocal}. 
    Next, define $u_2 := \frac{u_1}{ \gamma^{\beta/p} }$, which belongs to $\mathfrak{W}^p[\delta](\Omega;0)$ by \Cref{lma:NonlocalTrunc:Remainder2} and by \Cref{thm:Combined:Nonlocal}. 
    Third, denote the trace operator $T_{\p \Omega^*} : \mathfrak{W}^p[\delta](\Omega^*;0) \to W^{1-1/p,p}(\p \Omega^*)$ on the Lipschitz boundary of the domain $\Omega^*$ (see \Cref{thm:FxnSpaceProp} and also \cite{Scott2023Nonlocal}), and define a bounded linear extension operator $E_{\p \Omega^*} : W^{1-1/p,p}(\p \Omega^*) \to W^{1,p}(\Omega^*)$. Finally, define $u_{\p \Omega^*} := E_{\p \Omega^*} \circ T_{\p \Omega^*} u_2$ and define $u_3 := u_2 - u_{\p \Omega^*}$. Then we use the triangle inequality to get
    \begin{equation*}
        \begin{split}
            \frac{|u(\bx)-u(\by)|}{\gamma(\bx)^{\beta/p}}
            &\leq |u_3(\bx) - u_3(\by)| + |u_{\p \Omega^*}(\bx) - u_{\p \Omega^*}(\by)| \\
            &\quad + \frac{ |u_\Gamma(\bx) - u_\Gamma (\by)| }{ \gamma(\bx)^{\beta/p} } + \big| \gamma(\by)^{-\beta/p} - \gamma(\bx)^{-\beta/p} \big| |u_1(\by)|,
        \end{split}
    \end{equation*}
    for $\bx \in \Omega$ satisfying $\eta(\bx) < \tau_n$ and $\gamma(\bx) \geq \tau_n$ and $\by \in \Omega$,
    and we split the integral $II$ accordingly:
    \begin{equation*}
        II \leq C(p) (II_A + II_B + II_C + II_D).
    \end{equation*}
    We show that each piece has the appropriate upper bound, and we show that each piece also converges to $0$ as $n \to \infty$. 
    
    First, we note the following inequalities, valid in the integral $II$: 
    \begin{equation}\label{eq:NonlocalTrunc:BasicEstII}
        \begin{gathered}
        \eta(\by) \leq \eta(\bx) + \kappa_1 \delta_n \tau_n \leq (1+\kappa_1 \delta_n)\eta(\bx), \\
        |\gamma(\bx)-\gamma(\by)| \leq \kappa_1 \delta_n \tau_n \leq \kappa_1 \delta_n \gamma(\bx).
        \end{gathered}
    \end{equation}
    Then we estimate
    \begin{equation*}
        \begin{split}
            &II_A \\
            \leq& 2^{p-1} \left( \int_{\Omega \cap \{ \eta(\bx) < \tau_n \} \cap \{ \gamma(\bx) \geq \tau_n \} } \int_{B(\bx,\delta_n \tau_n)} \rho \left( \frac{|\bx-\by|}{ \delta_n \tau_n } \right) \frac{|u_3(\bx)|^p}{ (\delta_n \tau_n)^{d+p} } \, \rmd \by \, \rmd \bx \right. \\
            \left.+\right.&\left.\int_{\Omega \cap \{ \eta(\by) < (1+\kappa_1 \delta_n) \tau_n \} \cap \{ \gamma(\by) \geq (1-\kappa_1 \delta_n) \tau_n \} } \int_{B(\by,\delta_n \tau_n)} \rho \left( \frac{|\bx-\by|}{ \delta_n \tau_n } \right) \frac{|u_3(\by)|^p}{ (\delta_n \tau_n)^{d+p}  } \, \rmd \bx \, \rmd \by \right) \\
            \leq& C(p,\rho) \int_{\Omega \cap \{ \eta(\bx) < C(\kappa) \tau_n \} \cap \{ \gamma(\bx) \geq \frac{\tau_n}{C(\kappa)} \} } \frac{|u_3(\bx)|^p}{ (\delta_n \tau_n)^{p} } \, \rmd \bx \\
            \leq& \frac{ C(p,\rho,\kappa) }{ \delta_n^p } \int_{\Omega \cap \{ \eta(\bx) < C(\kappa) \tau_n \} } \frac{|u_3(\bx)|^p}{ \eta(\bx)^{p} } \, \rmd \bx.
        \end{split}
    \end{equation*}
    By the Hardy inequality \Cref{thm:FxnSpaceProp} item 7), 
    \Cref{lma:NonlocalTrunc:Remainder2}, and \Cref{thm:Combined:Nonlocal},
    \begin{equation}\label{eq:NonlocalTrunc:BdII1}
        \begin{split}
            \int_{\Omega} \frac{|u_3(\bx)|^p}{ \eta(\bx)^{p} } \, \rmd \bx 
            \leq C [u_2]_{\mathfrak{W}^p[\delta](\Omega;0)}^p 
            &\leq C\vnorm{u_1}_{ \mathfrak{V}^p[\delta](\Omega;\beta) }^p 
        \leq C \vnorm{u}_{\mathfrak{W}^p[\delta](\Omega;\beta)}^p.
        \end{split}
    \end{equation}
    Therefore, $\frac{|u_3(\bx)|^p}{ \eta(\bx)^{p} }  \in L^1(\Omega)$ and so by the dominated convergence theorem
    \begin{equation}\label{eq:NonlocalTrunc:ConvII1}
        \lim\limits_{n \to \infty} II_A \leq \lim\limits_{n \to \infty} \frac{C}{\delta_n^p} \int_{ \{ \bx : \, \eta(\bx) < C(\kappa) \tau_n \} } \frac{|u_3(\bx)|^p}{\eta(\bx)^p} \, \rmd \bx = 0.
    \end{equation}
    
    Second, thanks to \Cref{lma:NonlocalTrunc:Remainder1}, we have
    \begin{equation*}
        II_B \leq C \int_{ \{ \bx : \, \eta(\bx) < (1+\kappa_1 \delta_n) \tau_n \} } |\grad \bar{E} u_{\p \Omega^*}(\bx)|^p \, \rmd \bx.
    \end{equation*}
    By the classical Sobolev extension theorem, the boundedness of $E_{\p \Omega^*}$ and $T_{\p \Omega^*}$, \Cref{lma:NonlocalTrunc:Remainder2}, and \Cref{thm:Combined:Nonlocal}, 
    \begin{equation}\label{eq:NonlocalTrunc:BdII2}
        \begin{split}
        \vnorm{ \bar{E} u_{\p \Omega^*} }_{W^{1,p}(\bbR^d)} 
        \leq C(d,p,\Omega) \vnorm{ u_{\p \Omega^*} }_{W^{1,p}(\Omega)}
        &\leq C \vnorm{u_2}_{ \mathfrak{W}^p[\delta](\Omega;0) } \\
        &\leq C \vnorm{u_1}_{ \mathfrak{V}^p[\delta](\Omega;\beta) }
        \leq C \vnorm{u}_{\mathfrak{W}^p[\delta](\Omega;\beta)}.
        \end{split}
    \end{equation}
    Therefore, $|\grad \bar{E} u_{\p \Omega^*}|^p \in L^1(\bbR^d)$ and by the dominated convergence theorem
    \begin{equation}\label{eq:NonlocalTrunc:ConvII2}
        \lim\limits_{n \to \infty} II_B \leq C \lim\limits_{n \to \infty} \int_{ \{ \bx : \, \eta(\bx) < (1+\kappa_1 \delta_n) \tau_n \} } |\grad \bar{E} u_{\p \Omega^*}(\bx)|^p \, \rmd \bx = 0.
    \end{equation}
    
    Third, a similar argument can be used to estimate $II_C$; using \Cref{lma:NonlocalTrunc:Remainder1}
    \begin{equation*}
        II_C \leq C \int_{ \{ \bx : \, \eta(\bx) < (1+\kappa_1 \delta_n) \tau_n \} } \frac{|\grad \bar{E} u_{\Gamma}(\bx)|^p}{\gamma(\bx)^\beta} \, \rmd \bx.
    \end{equation*}
    By \Cref{thm:ApExtension}, \Cref{thm:Extension}, and \Cref{thm:Trace:Nonlocal}, 
    \begin{equation}\label{eq:NonlocalTrunc:BdII3}
        \begin{split}
        \int_{\bbR^d} \frac{|\grad \bar{E} u_{\Gamma} (\bx)|^p}{ \gamma(\bx)^\beta } \, \rmd \bx
        \leq C \vnorm{ u_{\Gamma} }_{W^{1,p}(\Omega;\beta)}
        &\leq C \vnorm{u}_{ \mathfrak{W}^p[\delta](\Omega;\beta) }.
        \end{split}
    \end{equation}
    Therefore, $\frac{|\grad \bar{E} u_{\Gamma}|^p}{\gamma(\bx)^\beta} \in L^1(\bbR^d)$ and by the dominated convergence theorem
    \begin{equation}\label{eq:NonlocalTrunc:ConvII3}
        \lim\limits_{n \to \infty} II_C \leq C \lim\limits_{n \to \infty} \int_{ \{ \bx : \, \eta(\bx) < (1+\kappa_1 \delta_n) \tau_n \} } \frac{|\grad \bar{E} u_{\Gamma}(\bx)|^p}{\gamma(\bx)^\beta} \, \rmd \bx = 0.
    \end{equation}

    Fourth, we use 
    \eqref{eq:NonlocalTrunc:BasicEstII} to get that
    \begin{equation*}
        |\gamma(\bx)^{-\beta/p}-\gamma(\by)^{-\beta/p}| \leq C(p,\beta,\kappa) \gamma(\by)^{-\beta/p-1} |\bx-\by|,
    \end{equation*}
    similarly to the estimate \eqref{eq:NonlocalTrunc:GammaEst1}. Then again using \eqref{eq:NonlocalTrunc:BasicEstII} and the change of variables $\bz = \frac{\bx-\by}{\delta_n \tau_n}$
    \begin{equation*}
        \begin{split}
        II_D 
        &\leq \frac{C}{|\Omega|^2} \int_{\Omega \cap \{ \eta(\by) < C(\kappa) \tau_n \} \cap \{ \gamma(\by) \geq \frac{\tau_n}{C(\kappa)} \} } \int_{B(0,1)} \rho \left( |\bz| \right) |\bz|^p \, \rmd \bz \frac{|u_1(\by)|^p}{ (\gamma(\by))^{\beta+p} }  \, \rmd \by \\
        &\leq C(d,p,\beta,\kappa,\Omega) \int_{\Omega \cap \{ \eta(\by) < C(\kappa) \tau_n \} \cap \{ \gamma(\by) \geq \frac{\tau_n}{C(\kappa)} \} } \frac{|u_1(\by)|^p}{ (\gamma(\by))^{\beta+p} } \, \rmd \by.
        \end{split}
    \end{equation*}
    By \Cref{thm:Combined:Nonlocal}, 
    \begin{equation}\label{eq:NonlocalTrunc:BdII4}
        \begin{split}
        \int_{\Omega} \frac{|u_1(\bx)|^p}{ \gamma(\bx)^{\beta+p} } \, \rmd \bx
        &\leq C(d,p,\beta,\Omega,\kappa) \vnorm{u}_{ \mathfrak{W}^p[\delta](\Omega;\beta) }.
        \end{split}
    \end{equation}
    Therefore, $\frac{|u_1(\bx)|^p}{\gamma(\bx)^\beta} \in L^1(\Omega)$ and by the dominated convergence theorem
    \begin{equation}\label{eq:NonlocalTrunc:ConvII4}
        \lim\limits_{n \to \infty} II_D \leq C \lim\limits_{n \to \infty} \int_{ \{ \bx : \, \eta(\bx) < C(\kappa) \tau_n \} } \frac{|u_1(\bx)|^p}{\gamma(\bx)^{\beta+p}} \, \rmd \bx = 0.
    \end{equation}
    Combining \eqref{eq:NonlocalTrunc:BdII1}-\eqref{eq:NonlocalTrunc:BdII2}-\eqref{eq:NonlocalTrunc:BdII3}-\eqref{eq:NonlocalTrunc:BdII4} gives
    \begin{equation}\label{eq:NonlocalTrunc:BdII}
        II \leq \frac{C}{\delta_n^p} \vnorm{u}_{ \mathfrak{W}^p[\delta](\Omega;\beta) },
    \end{equation}
    and combining \eqref{eq:NonlocalTrunc:ConvII1}-\eqref{eq:NonlocalTrunc:ConvII2}-\eqref{eq:NonlocalTrunc:ConvII3}-\eqref{eq:NonlocalTrunc:ConvII4} gives
    \begin{equation}\label{eq:NonlocalTrunc:ConvII}
        \limsup_{n \to \infty} II = 0.
    \end{equation}

    Last, we estimate $III$. Choose $n_0$ such that $(1+\underline{\delta}_0)\tau_n < R$, and write
    \begin{equation*}
        III = \sum_{\bx_0 \in \Gamma} \int_{\Omega \cap B(\bx_0,\tau_n)} \int_\Omega \rho \left(  \frac{|\bx-\by|}{\delta_n \tau_n} \right) \frac{|u(\bx)-u(\by)|^p}{ \tau_n^\beta (\delta_n \tau_n)^{d+p} } \, \rmd \by \, \rmd \bx. 
    \end{equation*}
    For each $\bx_0 \in \Gamma$, we have $|\by-\bx_0| \leq |\bx-\bx_0| + |\by-\bx| \leq (1+\delta_n) \tau_n$ in the integrand, so 
    \begin{equation*}
        \begin{split}
            &\int_{\Omega \cap B(\bx_0,\tau_n)} \int_\Omega \rho \left(  \frac{|\bx-\by|}{\delta_n \tau_n} \right) \frac{|u(\bx)-u(\by)|^p}{ \tau_n^\beta (\delta_n \tau_n)^{d+p} } \, \rmd \by \, \rmd \bx \\
            \leq& 2^{p-1} \int_{\Omega \cap B(\bx_0,\tau_n)} \int_\Omega \rho \left(  \frac{|\bx-\by|}{\delta_n \tau_n} \right) \frac{|u(\bx)-u(\bx_0)|^p}{ \tau_n^\beta (\delta_n \tau_n)^{d+p} } \, \rmd \by \, \rmd \bx \\
            &\quad + 2^{p-1} \int_{\Omega \cap B(\bx_0,(1+\delta_n)\tau_n)} \int_\Omega \rho \left(  \frac{|\bx-\by|}{\delta_n \tau_n} \right) \frac{|u(\by)-u(\bx_0)|^p}{ \tau_n^\beta (\delta_n \tau_n)^{d+p} } \, \rmd \bx \, \rmd \by \\
            \leq& \frac{C(p,\rho)}{\delta_n^p} \int_{\Omega \cap B(\bx_0,(1+\delta_n)\tau_n)} \frac{|u(\bx)-u(\bx_0)|^p}{ \tau_n^{\beta+p} } \, \rmd \bx.
        \end{split}
    \end{equation*}
    Since $\beta \geq 0$, we have 
    \begin{equation*}
        III \leq \frac{C(p,\rho)}{\delta_n^p} \sum_{\bx_0 \in \Gamma} \int_{\Omega \cap B(\bx_0,(1+\delta_n)\tau_n)} \frac{|u(\bx)-u(\bx_0)|^p}{ |\bx-\bx_0|^{\beta+p} } \, \rmd \bx.
    \end{equation*}
    Now, by \Cref{thm:Combined:Nonlocal} 
    \begin{equation}\label{eq:NonlocalTrunc:BdIII}
        III \leq \frac{C(p,\rho)}{\delta_n^p} \sum_{\bx_0 \in \Gamma} \int_{\Omega \cap B(\bx_0,(1+\delta_n)\tau_n)} \frac{|u(\bx)-u(\bx_0)|^p}{ |\bx-\bx_0|^{\beta+p} } \, \rmd \bx \leq \frac{C(p,\rho)}{\delta_n^p} [u]_{ \mathfrak{W}^p[\delta](\Omega;\beta) }^p.
    \end{equation}
    Therefore $\frac{|u(\bx)-u(\bx_0)|^p}{|\bx-\bx_0|^{\beta+p}} \in L^1(\Omega)$ and by the dominated convergence theorem
    \begin{equation}\label{eq:NonlocalTrunc:ConvIII}
        \lim\limits_{n \to \infty} III \leq C \lim\limits_{n \to \infty} \sum_{\bx_0 \in \Gamma} \int_{ \{ \bx : \, \gamma(\bx) < (1+\delta_0) \tau_n \} } \frac{|u(\bx)-u(\bx_0)|^p}{|\bx-\bx_0|^{\beta+p}} \, \rmd \bx = 0.
    \end{equation}
    We finally conclude \eqref{eq:NonlocalTrunc:MainPart}, and thus the proof, by using \eqref{eq:NonlocalTrunc:BdII}, \eqref{eq:NonlocalTrunc:BdIII}, \eqref{eq:NonlocalTrunc:ConvII}, and \eqref{eq:NonlocalTrunc:ConvIII}.
        
\end{proof}

\subsection{Elementary estimates}

\begin{lemma}\label{lma:NonlocalTrunc:}
    For $T_n$, $\zeta_n$, $\tau_n$ and $c_n$ as defined in \Cref{thm:GraphToNonlocalConv}, we have 
    \begin{align}
        \label{eq:CutoffHorizon:Comp1}
        \eta^{\tau_n}(T_n(\bx)) &\leq (1+\kappa_1 c_n/2) \eta^{\tau_n}(\bx), \quad \forall \bx \in \Omega, \\
        \label{eq:CutoffHorizon:Comp2}
        \eta^{\tau_n}(\bx) &\leq (1+\kappa_1 c_n/2) \eta^{\tau_n}(T_n(\bx)), \quad \forall \bx \in \Omega.
    \end{align}            
\end{lemma}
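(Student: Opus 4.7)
\medskip

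The plan is to exploit the Lipschitz continuity of $\eta(\bx) = \dist(\bx,\p\Omega)$ together with the essential sup bound $|T_n(\bx)-\bx| \leq \zeta_n/2 = c_n \tau_n / 2$. Note that both asserted inequalities are symmetric in the roles of $\bx$ and $T_n(\bx)$ in the sense that the estimate $|\eta(T_n(\bx))-\eta(\bx)| \leq \kappa_1 |T_n(\bx)-\bx|$ is symmetric (using $\kappa_1 \geq 1$ as guaranteed by \eqref{assump:Localization}, since $\eta$ is $1$-Lipschitz), so it suffices to explain \eqref{eq:CutoffHorizon:Comp1}; the proof of \eqref{eq:CutoffHorizon:Comp2} is identical after swapping $\bx$ and $T_n(\bx)$.

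The argument I would carry out is a direct case split on the value of $\eta(T_n(\bx))$ relative to $\tau_n$. If $\eta(T_n(\bx)) \leq \tau_n$, then $\eta^{\tau_n}(T_n(\bx)) = \tau_n \leq \eta^{\tau_n}(\bx)$, and the estimate holds trivially since $1 + \kappa_1 c_n/2 \geq 1$. If instead $\eta(T_n(\bx)) > \tau_n$, then Lipschitz continuity gives
\begin{equation*}
  \eta(T_n(\bx)) \leq \eta(\bx) + \kappa_1 |T_n(\bx)-\bx| \leq \eta(\bx) + \kappa_1 \frac{c_n \tau_n}{2} \leq \eta^{\tau_n}(\bx) + \kappa_1 \frac{c_n}{2} \eta^{\tau_n}(\bx),
\end{equation*}
where the last inequality uses both $\eta(\bx) \leq \eta^{\tau_n}(\bx)$ and $\tau_n \leq \eta^{\tau_n}(\bx)$. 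Hence $\eta^{\tau_n}(T_n(\bx)) = \max\{\eta(T_n(\bx)), \tau_n\} \leq (1 + \kappa_1 c_n/2) \eta^{\tau_n}(\bx)$, which is \eqref{eq:CutoffHorizon:Comp1}.

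There is no substantive obstacle here; the lemma is essentially a bookkeeping estimate that records how the truncated localization is affected by perturbation along the transportation map $T_n$. The only mild subtlety is to ensure that the truncation lower bound $\tau_n$ absorbs the additive Lipschitz error $\kappa_1 \zeta_n/2$ into a multiplicative factor, which is precisely why the scaling hypothesis $\zeta_n / \tau_n = c_n \to 0$ is built into the statement of \Cref{thm:GraphToNonlocalConv}.
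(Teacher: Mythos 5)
Your proof is correct and takes essentially the same route as the paper's: the Lipschitz bound on $\eta$, the estimate $|T_n(\bx)-\bx|\le \zeta_n/2 = c_n\tau_n/2$, and absorption of the additive error via $\tau_n\le\eta^{\tau_n}(\bx)$ and $\eta(\bx)\le\eta^{\tau_n}(\bx)$, the only (immaterial) difference being that the paper splits cases on $\eta(\bx)\lessgtr\tau_n$ rather than on $\eta(T_n(\bx))\lessgtr\tau_n$. One cosmetic remark: \eqref{assump:Localization} only posits $\kappa_0\ge 1$, not $\kappa_1\ge 1$, but since $\eta$ is exactly $1$-Lipschitz your use of $\kappa_1$ in the Lipschitz step matches the paper's own convention and is harmless.
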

\begin{proof}
    First, suppose that $\eta(\bx) < \tau_n$. Then
    \begin{equation*}
        \begin{split}
            \eta(T_n(\bx)) &\leq \eta(\bx) + \kappa_1 |T_n(\bx) - \bx| \\
            &\leq \eta(\bx) + \kappa_1 \vnorm{T_n - Id}_{L^\infty(\Omega)} \\
            &\leq \eta(\bx) + \frac{\kappa_1 c_n}{2} \tau_n < (1+\kappa_1  c_n/2) \tau_n.
        \end{split}
    \end{equation*}
    So we have that $\eta^{\tau_n}(T_n(\bx)) \leq (1+\kappa_1  c_n/2) \tau_n \leq (1+\kappa_1  c_n/2) \max \{ \eta(\bx), \tau_n \}$.
    Now suppose that $\eta(\bx) \geq \tau_n$. Then
    \begin{equation*}
        \begin{split}
            \eta(T_n(\bx)) &\leq \eta(\bx) + \kappa_1 |T_n(\bx) - \bx| \\
            &\leq \eta(\bx) + \kappa_1 \vnorm{T_n - Id}_{L^\infty(\Omega)} \\
            &\leq \eta(\bx) + \frac{\kappa_1 c_n}{2} \tau_n < (1+\kappa_1 c_n/2) \eta(\bx).
        \end{split}
    \end{equation*}
    So we have that $\eta^{\tau_n}(T_n(\bx)) \leq (1+\kappa_1 c_n/2) \eta(\bx) \leq (1+\kappa_1 c_n/2) \max \{\eta(\bx), \tau_n\}$. Thus \eqref{eq:CutoffHorizon:Comp1} is proved.

    The estimate \eqref{eq:CutoffHorizon:Comp2} is proved similarly, exchanging the roles of $\bx$ and $T_n(\bx)$ in the above estimates. 
\end{proof}

The function $\gamma$ has the same Lipschitz constant as $\eta$, so we can similarly establish the following theorem:

\begin{lemma}
    With $T_n$, $\zeta_n$, $\tau_n$, and $c_n$ defined as in \Cref{thm:GraphToNonlocalConv}, we have
    \begin{align}
        \label{eq:CutoffWeight:Comp1}
        \gamma^{\tau_n}(T_n(\bx)) &\leq (1+\kappa_1 c_n /2) \gamma^{\tau_n}(\bx), \\
        \label{eq:CutoffWeight:Comp2}
        \gamma^{\tau_n}(\bx) &\leq  (1+\kappa_1 c_n /2) \gamma^{\tau_n}(T_n(\bx)).
    \end{align}
\end{lemma}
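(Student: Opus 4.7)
The plan is to mirror the proof of the previous lemma line for line, since the only ingredients used there were that $\eta(\bx) = \dist(\bx,\p\Omega)$ is $\kappa_1$-Lipschitz and the definition of the truncation $\max\{\eta,\tau_n\}$. The weight $\gamma$ satisfies the same kind of Lipschitz bound: taking $|\alpha|=1$ in \eqref{assump:weight} gives $|\grad \gamma(\bx)| \leq \kappa_1$ on $\Omega \setminus \Gamma$, and combined with $\gamma \in C^0(\overline{\Omega})$ this yields $|\gamma(\bx)-\gamma(\by)| \leq \kappa_1 |\bx-\by|$ for all $\bx, \by \in \overline{\Omega}$. Since $\zeta_n = 2\vnorm{T_n - Id}_{L^\infty(\Omega)}$ and $c_n = \zeta_n/\tau_n$, this immediately gives
\begin{equation*}
    |\gamma(T_n(\bx)) - \gamma(\bx)| \leq \kappa_1 \vnorm{T_n - Id}_{L^\infty(\Omega)} = \frac{\kappa_1 c_n}{2} \tau_n, \qquad \forall \bx \in \Omega.
\end{equation*}

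For \eqref{eq:CutoffWeight:Comp1}, I would split into the two cases based on whether $\gamma(\bx) < \tau_n$ or $\gamma(\bx) \geq \tau_n$, exactly as in \Cref{lma:NonlocalTrunc:}. In the first case, the bound above gives $\gamma(T_n(\bx)) \leq \tau_n + \tfrac{\kappa_1 c_n}{2}\tau_n = (1+\tfrac{\kappa_1 c_n}{2})\tau_n$, so $\gamma^{\tau_n}(T_n(\bx)) \leq (1+\tfrac{\kappa_1 c_n}{2})\tau_n \leq (1+\tfrac{\kappa_1 c_n}{2})\gamma^{\tau_n}(\bx)$. In the second case, $\tau_n \leq \gamma(\bx)$ allows me to replace $\tau_n$ by $\gamma(\bx)$ in the Lipschitz estimate, yielding $\gamma(T_n(\bx)) \leq (1+\tfrac{\kappa_1 c_n}{2})\gamma(\bx)$, and once more $\gamma^{\tau_n}(T_n(\bx)) \leq (1+\tfrac{\kappa_1 c_n}{2})\gamma^{\tau_n}(\bx)$.

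For \eqref{eq:CutoffWeight:Comp2}, the same argument applies after swapping the roles of $\bx$ and $T_n(\bx)$; the Lipschitz estimate is symmetric, and the case split becomes $\gamma(T_n(\bx)) < \tau_n$ versus $\gamma(T_n(\bx)) \geq \tau_n$. No further ideas are required, so I expect no obstacle. The statement is essentially a transfer of the argument for $\eta$ to $\gamma$, made possible by the fact that the distance-like weight function inherits the same unit Lipschitz constant $\kappa_1$ from assumption \eqref{assump:weight}.
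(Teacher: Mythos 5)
Your proposal is correct and matches the paper's approach: the paper proves the analogous estimates for $\eta^{\tau_n}$ by the case split $\eta(\bx)<\tau_n$ versus $\eta(\bx)\geq\tau_n$ using $\|T_n-Id\|_{L^\infty}=c_n\tau_n/2$, and then disposes of the $\gamma^{\tau_n}$ lemma by remarking that $\gamma$ has the same Lipschitz constant, which is exactly the argument you spell out. The only cosmetic point is that the Lipschitz bound for $\gamma$ is the one the paper already uses elsewhere (e.g.\ in the proof of \eqref{eq:ConvEst:Lp}), so your derivation of it from \eqref{assump:weight} is consistent with the paper's conventions.
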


\begin{lemma}\label{lma:DiscToCont:LowerBound}
    With the assumptions of \Cref{thm:GraphToNonlocalConv}, define $q_{n,\delta} := \frac{ 1 }{ (1+\kappa_1 c_n/2) } - \frac{c_n}{\delta}$. Then we have for all $\bx$, $\by \in \Omega$,
    \begin{equation*}
        |\bx-\by| < \eta_{q_{n,\delta} \delta}^{\tau_n}(\bx) \qquad \Rightarrow \qquad |T_n(\bx)-T_n(\by)| < \eta_\delta^{\tau_n}(T_n(\bx)).
    \end{equation*}
\end{lemma}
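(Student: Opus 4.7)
The plan is to reduce everything to the triangle inequality plus the two Lipschitz-type comparisons for the truncated localization function $\eta^{\tau_n}$ established in \eqref{eq:CutoffHorizon:Comp2}, together with the key relations $\zeta_n = c_n \tau_n$ (immediate from $c_n := \zeta_n/\tau_n$) and $\eta^{\tau_n}(\bx) \geq \tau_n$ (immediate from the definition in \eqref{eq:TruncatedParameters}). The constant $q_{n,\delta}$ has been engineered precisely so that these inequalities combine to give the required ball-mapping property.

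First, I would apply the triangle inequality together with the transportation map bound:
\begin{equation*}
|T_n(\bx) - T_n(\by)| \leq |T_n(\bx) - \bx| + |\bx - \by| + |T_n(\by) - \by| \leq \zeta_n + |\bx-\by|,
\end{equation*}
using that $\|T_n - Id\|_{L^\infty(\Omega)} = \zeta_n/2$. Next, invoking the hypothesis $|\bx-\by| < \eta_{q_{n,\delta} \delta}^{\tau_n}(\bx) = q_{n,\delta}\, \delta\, \eta^{\tau_n}(\bx)$ and substituting the definition of $q_{n,\delta}$, along with $\zeta_n = c_n \tau_n$, gives
\begin{equation*}
|T_n(\bx) - T_n(\by)| < c_n \tau_n + \left( \frac{1}{1+\kappa_1 c_n/2} - \frac{c_n}{\delta} \right) \delta\, \eta^{\tau_n}(\bx) = c_n\big(\tau_n - \eta^{\tau_n}(\bx)\big) + \frac{\delta\, \eta^{\tau_n}(\bx)}{1+\kappa_1 c_n/2}.
\end{equation*}

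Since $\eta^{\tau_n}(\bx) \geq \tau_n$, the first term on the right-hand side is nonpositive, so
\begin{equation*}
|T_n(\bx) - T_n(\by)| < \frac{\delta\, \eta^{\tau_n}(\bx)}{1+\kappa_1 c_n/2}.
\end{equation*}
Finally, an application of \eqref{eq:CutoffHorizon:Comp2} bounds the numerator by $(1+\kappa_1 c_n/2)\, \eta^{\tau_n}(T_n(\bx))$, cancelling the denominator and yielding $|T_n(\bx)-T_n(\by)| < \delta\, \eta^{\tau_n}(T_n(\bx)) = \eta_\delta^{\tau_n}(T_n(\bx))$, as required.

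There is no real obstacle; the entire argument is a direct algebraic manipulation. The only subtlety worth flagging is that the two deficits encoded in $q_{n,\delta}$ — namely $c_n/\delta$ (absorbing the transportation error $\zeta_n$) and the factor $(1+\kappa_1 c_n/2)^{-1}$ (absorbing the Lipschitz distortion of $\eta^{\tau_n}$ under $T_n$) — each correspond to one of the two comparison steps above, so the computation unfolds in one clean pass without needing any additional input.
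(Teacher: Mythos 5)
Your proof is correct and follows essentially the same route as the paper: triangle inequality plus $\zeta_n = c_n\tau_n$, the hypothesis, the bound $\tau_n \le \eta^{\tau_n}(\bx)$ to absorb the transport error, and \eqref{eq:CutoffHorizon:Comp2} to pass from $\eta^{\tau_n}(\bx)$ to $\eta^{\tau_n}(T_n(\bx))$, with $q_{n,\delta}$ chosen exactly so the factors cancel. The only difference is cosmetic bookkeeping (you group $c_n(\tau_n - \eta^{\tau_n}(\bx))$ instead of factoring out $\max\{\eta(\bx),\tau_n\}$), and you correctly keep the inequality strict throughout.
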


\begin{proof}
    By the above lemmas,
    \begin{equation*}
        \begin{split}
            |T_n(\bx) - T_n(\by)| &\leq 2 \vnorm{T_n - Id}_{L^\infty(\Omega)} + |\bx-\by|  \\
            &\leq c_n \tau_n + q_{n,\delta} \delta \max \{ \eta(\bx), \tau_n \} \\
            &\leq (c_n+q_{n,\delta} \delta) \max \{ \eta(\bx), \tau_n \} \\
            &\leq (c_n+q_{n,\delta} \delta) (1 + \kappa_1 c_n/2) \delta \max \{ \eta(T_n(\bx)), \tau_n \} = \eta_\delta^{\tau_n}(T_n(\bx)).
        \end{split}
    \end{equation*}
\end{proof}

\begin{lemma}\label{lma:DiscToCont:UpperBound}
    With the assumptions of \Cref{thm:GraphToNonlocalConv}, define $Q_{n,\delta} := (1+\kappa_1 c_n/2) ( 1 + \frac{c_n}{\delta})$. Then we have for all $\bx$, $\by \in \Omega$,
    \begin{equation*}
        |T_n(\bx)-T_n(\by)| < \eta_{\delta}^{\tau_n}(T_n(\bx)) \qquad \Rightarrow \qquad |\bx-\by| < \eta_{Q_{n,\delta}\delta}^{\tau_n}(\bx).
    \end{equation*}
\end{lemma}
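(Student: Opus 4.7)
The plan is to mirror the proof of \Cref{lma:DiscToCont:LowerBound}, swapping the roles of $\bx$ versus $T_n(\bx)$ and of $\by$ versus $T_n(\by)$. The starting observation is the triangle inequality in the ``reverse direction,'' namely
\[
|\bx - \by| \;\leq\; |\bx - T_n(\bx)| + |T_n(\bx) - T_n(\by)| + |T_n(\by) - \by| \;\leq\; 2\Vnorm{T_n - Id}_{L^\infty(\Omega)} + |T_n(\bx) - T_n(\by)|,
\]
so $|\bx-\by| \leq \zeta_n + |T_n(\bx)-T_n(\by)| = c_n \tau_n + |T_n(\bx)-T_n(\by)|$, using $\zeta_n = c_n \tau_n$.

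Next I would insert the hypothesis $|T_n(\bx)-T_n(\by)| < \eta_\delta^{\tau_n}(T_n(\bx)) = \delta \max\{\eta(T_n(\bx)), \tau_n\}$ to obtain
\[
|\bx-\by| \;\leq\; c_n \tau_n + \delta \max\{\eta(T_n(\bx)), \tau_n\}.
\]
Then I would apply the comparison \eqref{eq:CutoffHorizon:Comp2}, which says $\max\{\eta(T_n(\bx)),\tau_n\} \geq \tfrac{1}{1+\kappa_1 c_n/2}\max\{\eta(\bx),\tau_n\}$ in one direction, but what I actually need is the other direction, namely $\max\{\eta(T_n(\bx)),\tau_n\} \leq (1+\kappa_1 c_n/2)\max\{\eta(\bx),\tau_n\}$, which is precisely \eqref{eq:CutoffHorizon:Comp1}. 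Using this together with the trivial bound $\tau_n \leq \max\{\eta(\bx),\tau_n\}$, I get
\[
|\bx-\by| \;\leq\; c_n \max\{\eta(\bx),\tau_n\} + \delta(1+\kappa_1 c_n/2)\max\{\eta(\bx),\tau_n\}.
\]

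To finish, I would factor using $1 \leq 1+\kappa_1 c_n/2$ to produce
\[
|\bx-\by| \;\leq\; (1+\kappa_1 c_n/2)(\delta + c_n) \max\{\eta(\bx),\tau_n\} \;=\; (1+\kappa_1 c_n/2)\bigl(1 + \tfrac{c_n}{\delta}\bigr)\delta \max\{\eta(\bx),\tau_n\} \;=\; \eta_{Q_{n,\delta}\delta}^{\tau_n}(\bx),
\]
which is the claim. There is no real obstacle here: the argument is a symmetric counterpart of \Cref{lma:DiscToCont:LowerBound}, and the only thing to watch is that the two inequalities \eqref{eq:CutoffHorizon:Comp1}-\eqref{eq:CutoffHorizon:Comp2} are applied in the correct direction — here we need the upper bound \eqref{eq:CutoffHorizon:Comp1} on $\eta^{\tau_n}(T_n(\bx))$ in terms of $\eta^{\tau_n}(\bx)$, whereas the previous lemma used \eqref{eq:CutoffHorizon:Comp2}.
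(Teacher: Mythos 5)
Your proposal is correct and is essentially the paper's own argument: triangle inequality, the strict hypothesis, the bound $\tau_n \leq \max\{\eta(\cdot),\tau_n\}$, and the comparison \eqref{eq:CutoffHorizon:Comp1} (which is indeed the right direction here), differing only in whether one factors $(c_n+\delta)$ before or after converting $\max\{\eta(T_n(\bx)),\tau_n\}$ to $\max\{\eta(\bx),\tau_n\}$. The strictness of the conclusion follows because the hypothesis enters as a strict inequality, exactly as in the paper's chain.
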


\begin{proof}
    By the above lemmas,
    \begin{equation*}
        \begin{split}
            |\bx-\by| &\leq 2 \vnorm{T_n - Id}_{L^\infty(\Omega)} + |T_n(\bx)-T_n(\by)|  \\
            &\leq  c_n \tau_n + \delta \max \{ \eta(T_n(\bx)), \tau_n \} \\
            &\leq (c_n + \delta) \max \{ \eta(T_n(\bx)), \tau_n \} \\
            &\leq (c_n + \delta) (1+\kappa_1 c_n/2) \max \{ \eta(\bx), \tau_n \} = \eta_{Q_{n,\delta} \delta}^{\tau_n}(\bx).
        \end{split}
    \end{equation*}
\end{proof}

\subsection{Discrete and truncated nonlocal energy comparison}

\begin{theorem}[Upper bound]\label{thm:Discrete:UpperBd}
    With all the assumptions of \Cref{thm:GraphToNonlocalConv}, and $Q_{n,\delta}$ as defined in \Cref{lma:DiscToCont:UpperBound},
    \begin{equation*}
        \cE_{n,\delta,\tau_n}(u \circ T_n^{-1}) \leq Q_{n,\delta}^{d+p} (1+\kappa_1 c_n/2)^{d+p+\beta}  |\Omega|^{-2} \cE_{Q_{n,\delta}\delta,\tau_n}(u), \qquad \forall u \in C^1(\overline{\Omega}).
    \end{equation*}
\end{theorem}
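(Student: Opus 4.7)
The plan is to rewrite $\cE_{n,\delta,\tau_n}(u \circ T_n^{-1})$ as a continuum double integral over $\Omega\times\Omega$ via the pushforward relation $\mu_n = (T_n)_{\#}\mu$, and then to majorize the resulting integrand pointwise using the Lipschitz comparisons for $\eta^{\tau_n}$ and $\gamma^{\tau_n}$ together with the support-containment lemma for the kernel.

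\emph{Step 1 (transport to a continuum integral).} For any nonnegative Borel function $F$ on $\cX_n\times\cX_n$ one has
\begin{equation*}
    \frac{1}{n^2}\sum_{\bx,\by\in\cX_n}F(\bx,\by) = \frac{1}{|\Omega|^2}\int_\Omega\!\int_\Omega F(T_n(\bx'),T_n(\by'))\,\rmd\bx'\,\rmd\by',
\end{equation*}
directly from $(T_n)_{\#}\mu = \mu_n$. Applied with $u_n := u \circ T_n^{-1}$, chosen so that $u_n \circ T_n = u$ a.e.\ on $\Omega$, this identity yields
\begin{equation*}
    \cE_{n,\delta,\tau_n}(u\circ T_n^{-1}) = \frac{1}{|\Omega|^2}\int_\Omega\!\int_\Omega \rho\!\left(\frac{|T_n(\by)-T_n(\bx)|}{\eta_\delta^{\tau_n}(T_n(\bx))}\right)\frac{|u(\bx)-u(\by)|^p}{(\gamma^{\tau_n}(T_n(\bx)))^\beta\,\eta_\delta^{\tau_n}(T_n(\bx))^{d+p}}\,\rmd\bx\,\rmd\by.
\end{equation*}

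\emph{Step 2 (non-kernel factors).} Combining \eqref{eq:CutoffWeight:Comp2} and \eqref{eq:CutoffHorizon:Comp2} with the identity $\eta_{Q_{n,\delta}\delta}^{\tau_n}(\bx) = Q_{n,\delta}\eta_\delta^{\tau_n}(\bx)$ yields
\begin{equation*}
    \frac{1}{(\gamma^{\tau_n}(T_n(\bx)))^\beta\,\eta_\delta^{\tau_n}(T_n(\bx))^{d+p}}
    \le (1+\kappa_1 c_n/2)^{d+p+\beta}\,\frac{Q_{n,\delta}^{d+p}}{(\gamma^{\tau_n}(\bx))^\beta\,\eta_{Q_{n,\delta}\delta}^{\tau_n}(\bx)^{d+p}},
\end{equation*}
which is already exactly the prefactor appearing in the statement of the theorem.

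\emph{Step 3 (kernel comparison and assembly).} It remains to establish the pointwise inequality
\begin{equation*}
    \rho\!\left(\frac{|T_n(\by)-T_n(\bx)|}{\eta_\delta^{\tau_n}(T_n(\bx))}\right)\le \rho\!\left(\frac{|\bx-\by|}{\eta_{Q_{n,\delta}\delta}^{\tau_n}(\bx)}\right).
\end{equation*}
Whenever the right-hand argument is at least $1$, the contrapositive of \Cref{lma:DiscToCont:UpperBound} forces the left-hand argument to be at least $1$, so the left-hand kernel vanishes and the inequality is trivial. On the complementary set one invokes the monotonicity of $\rho$ together with the comparison between the two scaled arguments that is forced by the very definition $Q_{n,\delta}=(1+\kappa_1 c_n/2)(1+c_n/\delta)$: the factor $(1+c_n/\delta)$ is built to absorb the transport discrepancy $|T_n(\by)-T_n(\bx)| \ge |\bx-\by|-c_n\tau_n$, while $(1+\kappa_1 c_n/2)$ absorbs the Lipschitz error in $\eta^{\tau_n}$ from \eqref{eq:CutoffHorizon:Comp2}. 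Multiplying the pointwise bounds from Steps 2 and 3 and integrating over $\Omega\times\Omega$ then gives the claimed inequality. The main technical obstacle is carrying out this kernel comparison cleanly; the algebraic bookkeeping tracks exactly how each of the factors making up $Q_{n,\delta}(1+\kappa_1c_n/2)$ compensates for one of the Lipschitz perturbations introduced by $T_n$.
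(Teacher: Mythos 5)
Your Steps 1 and 2 coincide with the paper's argument: the identity in Step 1 is exactly the change of variables \eqref{eq:DiscreteEnergy:CoV}, and the weight comparison in Step 2 is the same application of \eqref{eq:CutoffHorizon:Comp2} and \eqref{eq:CutoffWeight:Comp2} (using $\beta\ge 0$). The gap is in Step 3: the pointwise inequality $\rho(A)\le\rho(B)$, with $A=\tfrac{|T_n(\bx)-T_n(\by)|}{\eta_\delta^{\tau_n}(T_n(\bx))}$ and $B=\tfrac{|\bx-\by|}{\eta_{Q_{n,\delta}\delta}^{\tau_n}(\bx)}$, is false for general kernels satisfying \eqref{assump:VarProb:Kernel}. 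Your treatment of the region $\{B\ge 1\}$ via the contrapositive of \Cref{lma:DiscToCont:UpperBound} is fine, but on $\{B<1\}$ monotonicity of $\rho$ would require $B\le A$, and that fails: the transport error enters additively, $|\bx-\by|\le |T_n(\bx)-T_n(\by)|+c_n\tau_n$, and an additive error in the numerator cannot be absorbed by a fixed multiplicative inflation of the denominator when the ratio is small. Concretely, take $\bx\neq\by$ in the preimage under $T_n$ of a single data point (a set of positive measure, since $T_n$ pushes $\mu$ onto the empirical measure); then $A=0$ while $B>0$, so for any admissible $\rho$ that is strictly decreasing near the origin one has $\rho(B)<\rho(0)=\rho(A)$ on a set of positive measure. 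Quantitatively, asking that $\tfrac{(a+c_n\tau_n)}{K b}\le \tfrac{a}{b}$ hold for all $0\le a<b$ forces $K\ge 1+c_n\tau_n/a\to\infty$ as $a\to 0$, so no constant of the form $Q_{n,\delta}(1+\kappa_1 c_n/2)$ can work pointwise: the factor $Q_{n,\delta}$ compensates the transport discrepancy only at the support threshold $A\approx 1$, not at every scale of $A$.

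This is precisely the difficulty the paper's proof is organized around. Instead of a pointwise kernel comparison, the paper proves the bound first for the characteristic-function kernel $\rho=\mathds{1}_{B(0,1)}$, where only the support-level implication of \Cref{lma:DiscToCont:UpperBound} is needed (an indicator only registers whether its argument crosses the threshold); it then extends to nonincreasing piecewise-constant kernels by writing them as finite sums of such indicators, and finally to general $\rho$ by approximating from below with piecewise-constant kernels and applying the monotone convergence theorem. To complete your argument you would need to replace Step 3 by this layer-cake/approximation device (and verify the threshold implication for each indicator layer); the pointwise route you describe cannot be carried out as stated.
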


\begin{proof}
    By a change of variables (i.e. $\mu_n(U) = \mu(T_n^{-1}(U))$), we have
    \begin{equation}\label{eq:DiscreteEnergy:CoV}
        \begin{split}
            &\cE_{n,\delta,\tau_n}(u \circ T_n^{-1}) \\
            =& \int_{\Omega} \int_{\Omega} \rho \left( \frac{ |\bx-\by| }{ \eta_\delta^{\tau_n}(\bx) } \right) \frac{|u(T_n^{-1}(\bx)) -u(T_n^{-1}(\by))|^p}{\gamma^{\tau_n}(\bx)^{\beta} (\eta_\delta^{\tau_n}(\bx) )^{d+p}} \, \rmd \mu_n(\by) \, \rmd \mu_n(\bx) \\
            =& \frac{1}{|\Omega|^2} \int_{\Omega} \int_{\Omega} \rho \left( \frac{ |T_n(\bx)-T_n(\by)| }{ \eta_\delta^{\tau_n}(T_n(\bx))  } \right) \frac{|u(\bx) -u(\by)|^p}{\gamma^{\tau_n}(T_n(\bx))^\beta (\eta_\delta^{\tau_n}(T_n(\bx)) )^{d+p}} \, \rmd \by \, \rmd \bx.
        \end{split}
    \end{equation}
    
    \underline{Step 1}: Assume that $\rho(|\bx|) = \mathds{1}_{B(0,1)}(\bx)$. By \Cref{lma:DiscToCont:UpperBound}
    \begin{equation*}
        \rho \left( \frac{ |T_n(\bx)-T_n(\by)| }{ \eta_\delta^{\tau_n}(T_n(\bx)) } \right) \leq \rho \left( \frac{ |\bx-\by| }{ \eta_{Q_{n,\delta}\delta}^{\tau_n}(\bx) }  \right).
    \end{equation*}
    Therefore, along with \eqref{eq:CutoffHorizon:Comp2} and \eqref{eq:CutoffWeight:Comp2}
    \begin{equation*}
        \begin{split}
            &\cE_{n,\delta,\tau_n}(u \circ T_n^{-1}) \\
            \leq& \frac{ \left( 1 + \kappa_1 c_n/2\right)^{d+p+\beta} }{|\Omega|^{2}} \int_{\Omega} \int_{\Omega} \rho \left( \frac{ |\bx-\by| }{ \eta_{Q_{n,\delta}\delta}^{\tau_n}(\bx) } \right) \frac{|u(\bx) -u(\by)|^p}{ \gamma^{\tau_n}(\bx)^{\beta} (\eta_\delta^{\tau_n}(\bx))^{d+p}} \, \rmd \by \, \rmd \bx \\
            =& \frac{ Q_{n,\delta}^{d+p} \left( 1 + \kappa_1 c_n/2\right)^{d+p+\beta} }{|\Omega|^{2}} \int_{\Omega} \int_{\Omega} \rho \left( \frac{ |\bx-\by| }{ \eta_{Q_{n,\delta}\delta}^{\tau_n}(\bx) } \right) \frac{|u(\bx) -u(\by)|^p}{ \gamma^{\tau_n}(\bx)^{\beta} (\eta_{Q_{n,\delta}\delta}^{\tau_n}(\bx))^{d+p}} \, \rmd \by \, \rmd \bx.
        \end{split}
    \end{equation*}

    \underline{Step 2}: Assume that $\rho$ is a piecewise constant function satisfying \eqref{assump:VarProb:Kernel} sans the normalization condition. Then $\rho = \sum_{m = 1}^M \rho_m$ for some $M \in \bbN$ and for some $\rho_m$ as in Step 1. Denote the energies with kernel $\rho_m$ as $\cE_{n,\delta,\tau_n}^m$ and $\cE_{q_{n,\delta}\delta,\tau_n}^m$. Then by Step 1
    \begin{equation*}
        \begin{split}
            \cE_{n,\delta,\tau_n}(u\circ T_n^{-1}) 
            &= \sum_{m=1}^M \cE_{n,\delta,\tau_n}^m(u\circ T_n^{-1}) \\
            &\leq \frac{ Q_{n,\delta}^{d+p} \left( 1 + \kappa_1 c_n/2\right)^{d+p+\beta} }{|\Omega|^{2}} \sum_{m=1}^M \cE_{Q_{n,\delta}\delta,\tau_n}^m(u) \\
            &= \frac{ Q_{n,\delta}^{d+p} \left( 1 + \kappa_1 c_n/2\right)^{d+p+\beta} }{|\Omega|^{2}} \cE_{Q_{n,\delta}\delta,\tau_n}(u).
        \end{split}
    \end{equation*}

    \underline{Step 3}: Let $\rho$ satisfy \eqref{assump:VarProb:Kernel}. Then there exists an increasing sequence $\rho_m$ of piecewise constant functions, each of which satisfy the assumptions of Step 2, such that $\rho_m \nearrow \rho$ almost everywhere.
    Denote the energies with kernel $\rho_m$ as $\cE_{n,\delta,\tau_n}^m$ and $\cE_{q_{n,\delta}\delta,\tau_n}^m$. Then by Step 2 and the monotone convergence theorem
    \begin{equation*}
        \begin{split}
            \cE_{n,\delta,\tau_n}(u\circ T_n^{-1}) 
            &= \lim\limits_{m \to \infty} \cE_{n,\delta,\tau_n}^m(u\circ T_n^{-1}) \\
            &\leq \lim\limits_{m \to \infty} \frac{Q_{n,\delta}^{d+p} \left( 1 + \kappa_1 c_n/2\right)^{d+p+\beta}}{|\Omega|^2} \cE_{Q_{n,\delta}\delta,\tau_n}^m(u) \\
            &= \frac{ Q_{n,\delta}^{d+p} \left( 1 + \kappa_1 c_n/2\right)^{d+p+\beta} }{|\Omega|^2} \cE_{Q_{n,\delta}\delta,\tau_n}(u).
        \end{split}
    \end{equation*}

    Last, we note that $\cE_{n,\delta,\tau_n}(u\circ T_n^{-1})$ is finite for $u \in C^1(\overline{\Omega})$, since
    \begin{equation*}
        \cE_{Q_{n,\delta}\delta,\tau_n}(u) \leq C(\rho) \text{Lip}(u)^p \int_{\Omega} \frac{1}{\gamma^{\tau_n}(\bx)^\beta} \, \rmd \bx \leq C(\rho) \text{Lip}(u)^p \int_{\Omega} \frac{1}{\gamma(\bx)^\beta} \, \rmd \bx < \infty.
    \end{equation*}
\end{proof}

\begin{theorem}[Lower bound]\label{thm:Discrete:LowerBd}
    With all the assumptions of \Cref{thm:GraphToNonlocalConv}, and $q_{n,\delta}$ as defined in \Cref{lma:DiscToCont:LowerBound},
    \begin{equation*}
        \cE_{n,\delta,\tau_n}(u \circ T_n^{-1}) \geq \frac{q_{n,\delta}^{d+p}}{|\Omega|^2(1+\kappa_1 c_n/2)^{d+p+\beta}} \cE_{q_{n,\delta}\delta,\tau_n}(u), \qquad \forall u \in \mathfrak{W}^p[\delta](\Omega;\beta).
    \end{equation*}
\end{theorem}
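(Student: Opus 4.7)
The plan is to mirror the three-step structure of the upper bound proof in \Cref{thm:Discrete:UpperBd}, replacing \Cref{lma:DiscToCont:UpperBound} with \Cref{lma:DiscToCont:LowerBound} and reversing the direction of each comparison. The starting point is the change-of-variables identity \eqref{eq:DiscreteEnergy:CoV}, which expresses $\cE_{n,\delta,\tau_n}(u \circ T_n^{-1})$ as a continuum double integral involving the composed quantities $|T_n(\bx)-T_n(\by)|$, $\eta_\delta^{\tau_n}(T_n(\bx))$, and $\gamma^{\tau_n}(T_n(\bx))$.

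First I would treat the case $\rho(|\bx|) = \mathds{1}_{B(0,1)}(\bx)$. The kernel indicator satisfies
\begin{equation*}
\rho\!\left(\frac{|T_n(\bx)-T_n(\by)|}{\eta_\delta^{\tau_n}(T_n(\bx))}\right) \geq \rho\!\left(\frac{|\bx-\by|}{\eta_{q_{n,\delta}\delta}^{\tau_n}(\bx)}\right)
\end{equation*}
by the contrapositive of \Cref{lma:DiscToCont:LowerBound}. Then I would use \eqref{eq:CutoffHorizon:Comp2} to get $\eta_\delta^{\tau_n}(T_n(\bx)) \leq (1+\kappa_1 c_n/2)\, \eta_\delta^{\tau_n}(\bx)$, and, since $\beta \geq 0$, \eqref{eq:CutoffWeight:Comp2} to get $\gamma^{\tau_n}(T_n(\bx))^\beta \leq (1+\kappa_1 c_n/2)^\beta \gamma^{\tau_n}(\bx)^\beta$. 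The algebraic identity $\eta_\delta^{\tau_n}(\bx)^{d+p} = q_{n,\delta}^{-(d+p)} \eta_{q_{n,\delta}\delta}^{\tau_n}(\bx)^{d+p}$ then furnishes the factor $q_{n,\delta}^{d+p}$ and realigns the horizon in the denominator with the one in the kernel's argument, giving
\begin{equation*}
\cE_{n,\delta,\tau_n}(u \circ T_n^{-1}) \geq \frac{q_{n,\delta}^{d+p}}{|\Omega|^2 (1+\kappa_1 c_n/2)^{d+p+\beta}} \cE_{q_{n,\delta}\delta,\tau_n}(u).
\end{equation*}

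Second, I would extend to piecewise-constant nonnegative radial kernels by linearity: any such $\rho$ decomposes as a finite sum $\rho = \sum_{m=1}^M \rho_m$ where each $\rho_m$ is a positive multiple of a characteristic function $\mathds{1}_{B(0,r_m)}$, which reduces to the previous step after the trivial rescaling $\bz \mapsto \bz/r_m$ (absorbed into the mollifier). Third, for a general $\rho$ satisfying \eqref{assump:VarProb:Kernel}, I would choose an increasing sequence $\rho_m \nearrow \rho$ of such piecewise-constant kernels and apply the monotone convergence theorem to both sides, exactly as in Step 3 of the upper bound proof.

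The only subtlety, and the place where care is needed, is the sign requirement on $\beta$: the comparison of weights $\gamma^{\tau_n}$ via \eqref{eq:CutoffWeight:Comp2} must point in the correct direction, and this is where the hypothesis $\beta \geq 0$ from \Cref{thm:GraphToNonlocalConv} enters. No further obstacles are anticipated, since all of the elementary comparison lemmas needed are already in place and the argument is a direct mirror image of \Cref{thm:Discrete:UpperBd}.
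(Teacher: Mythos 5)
Your proposal is correct and follows essentially the same route as the paper: the change of variables \eqref{eq:DiscreteEnergy:CoV}, the kernel comparison from \Cref{lma:DiscToCont:LowerBound}, the weight/horizon comparisons with the factor $(1+\kappa_1 c_n/2)^{d+p+\beta}$ (using $\beta\geq 0$), the rescaling $\eta_{q_{n,\delta}\delta}^{\tau_n}=q_{n,\delta}\,\eta_{\delta}^{\tau_n}$ producing $q_{n,\delta}^{d+p}$, and then the piecewise-constant and monotone-convergence steps as in \Cref{thm:Discrete:UpperBd}. The only blemish is a citation swap: the inequalities you actually write and need, $\eta_\delta^{\tau_n}(T_n(\bx))\leq(1+\kappa_1 c_n/2)\eta_\delta^{\tau_n}(\bx)$ and $\gamma^{\tau_n}(T_n(\bx))\leq(1+\kappa_1 c_n/2)\gamma^{\tau_n}(\bx)$, are \eqref{eq:CutoffHorizon:Comp1} and \eqref{eq:CutoffWeight:Comp1}, not \eqref{eq:CutoffHorizon:Comp2} and \eqref{eq:CutoffWeight:Comp2}.
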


\begin{proof}
    The proof is similar to that of \Cref{thm:Discrete:UpperBd}.
    Recall the change of variables \eqref{eq:DiscreteEnergy:CoV}.
    
    \underline{Step 1}: Assume that $\rho(|\bx|) = y_0 \mathds{1}_{B(0,x_0)}(\bx)$ for some $x_0 > 0$, $y_0 > 0$.
    By \Cref{lma:DiscToCont:LowerBound} we have
    \begin{equation*}
        \rho \left( \frac{ |\bx-\by| }{ \eta_{q_{n,\delta}\delta}^{\tau_n}(\bx) } \right) \leq \rho \left( \frac{ |T_n(\bx)-T_n(\by)| }{ \eta_\delta^{\tau_n}(T_n(\bx)) } \right).
    \end{equation*}
    Therefore, along with \eqref{eq:CutoffHorizon:Comp1} and \eqref{eq:CutoffWeight:Comp1}
    \begin{equation*}
        \begin{split}
            &\cE_{n,\delta,\tau_n}(u \circ T_n^{-1}) \\
            \geq& \frac{1}{|\Omega|^2(1+\kappa_1 c_n/2)^{d+p+\beta}}  \int_{\Omega} \int_{\Omega} \rho \left( \frac{ |\bx-\by| }{ \eta_{q_{n,\delta}\delta}^{\tau_n}(\bx) } \right) \frac{|u(\bx) -u(\by)|^p}{\gamma^{\tau_n}(T_n(\bx))^{\beta} \eta_{\delta}^{\tau_n}(\bx)^{d+p}} \, \rmd \by \, \rmd \bx \\
            =& \frac{ q_{n,\delta}^{d+p} }{|\Omega|^2(1+\kappa_1 c_n/2)^{d+p+\beta} } \int_{\Omega} \int_{\Omega} \rho \left( \frac{ |\bx-\by| }{ \eta_{q_{n,\delta}\delta}^{\tau_n}(\bx) } \right) \frac{|u(\bx) -u(\by)|^p}{\gamma^{\tau_n}(T_n(\bx))^{\beta} \eta_{q_{n,\delta}\delta}^{\tau_n}(\bx)^{d+p}} \, \rmd \by \, \rmd \bx.
        \end{split}
    \end{equation*}
    The theorem is then established in two additional steps, identical to those in the proof of \Cref{thm:Discrete:UpperBd}.
\end{proof}

\begin{proof}[Proof of \Cref{thm:GraphToNonlocalConv}]
    First, since $Q_{n,\delta} \delta \to \delta$ as $n \to \infty$, we note the following as a consequence of \Cref{thm:Discrete:UpperBd} and \eqref{eq:TruncNonlocal:Finite}: there exists $n_0 \in \bbN$ depending on $\underline{\delta}_0$ and $R$ (i.e. on $\Gamma$) such that for all $n \geq n_0$
    \begin{equation*}
        \cE_{n,\delta,\tau_n}(v \circ T_n^{-1}) \leq \frac{C(d,p,\beta,\rho,\kappa,\Omega)}{\delta^p} \vnorm{v}_{\mathfrak{W}^p[\delta](\Omega;\beta)}^p, \qquad \forall v \in C^1(\overline{\Omega}).
    \end{equation*}
    Therefore, for general $u \in \mathfrak{W}^p[\delta](\Omega;\beta)$ we take a sequence $\{u_m\} \subset C^\infty(\overline{\Omega})$ converging to $u$ in $\mathfrak{W}^p[\delta](\Omega;\beta)$, and apply Fatou's lemma and the change of variables \eqref{eq:DiscreteEnergy:CoV} to get
    \begin{equation*}
        \begin{split}
             &\int_{\Omega} \int_{\Omega} \rho \left( \frac{ |T_n(\bx)-T_n(\by)| }{ \eta_\delta^{\tau_n}(T_n(\bx))  } \right) \frac{|u(\bx) -u(\by)|^p}{\gamma^{\tau_n}(T_n(\bx))^\beta (\eta_\delta^{\tau_n}(T_n(\bx)) )^{d+p}} \, \rmd \by \, \rmd \bx \\
             &\leq \liminf_{m \to \infty} \cE_{n,\delta,\tau_n}(u_m \circ T_n^{-1}) \\
             &\leq \frac{C}{\delta^p} \liminf_{m \to \infty} \vnorm{u_m}_{\mathfrak{W}^p[\delta](\Omega;\beta)}^p = \frac{C}{\delta^p} \liminf_{m \to \infty} \vnorm{u}_{\mathfrak{W}^p[\delta](\Omega;\beta)}^p.
        \end{split}
    \end{equation*}
    Thus \eqref{eq:Discrete:Finite} is established, where the discrete energy evaluated at functions $u \in \mathfrak{W}^p[\delta](\Omega;\beta)$ is defined via the coordinate change \eqref{eq:DiscreteEnergy:CoV}:
    \begin{equation*}
        \cE_{n,\delta,\tau_n}(u \circ T_n^{-1}) := \frac{1}{|\Omega|^2} \int_{\Omega} \int_{\Omega} \rho \left( \frac{ |T_n(\bx)-T_n(\by)| }{ \eta_\delta^{\tau_n}(T_n(\bx))  } \right) \frac{|u(\bx) -u(\by)|^p}{\gamma^{\tau_n}(T_n(\bx))^\beta (\eta_\delta^{\tau_n}(T_n(\bx)) )^{d+p}} \, \rmd \by \, \rmd \bx.
    \end{equation*}

    Finally, since
    \begin{equation*}
        \lim\limits_{n \to \infty} q_{n,\delta} = \lim\limits_{n \to \infty} Q_{n,\delta} = \lim\limits_{n \to \infty} \frac{q_{n,\delta}^{d+p}}{(1+\kappa_1 c_n/2)^{d+p+\beta}} = \lim\limits_{n \to \infty} Q_{n,\delta}^{d+p} (1+\kappa_1 c_n/2)^{d+p+\beta} = 1,
    \end{equation*}
    and since \eqref{eq:TruncNonlocal:Limit} implies
    \begin{equation*}
        \lim\limits_{n \to \infty} \cE_{q_{n,\delta}\delta,\tau_n}(u) = \lim\limits_{n \to \infty} \cE_{Q_{n,\delta}\delta,\tau_n}(u) = \cE_\delta(u),
    \end{equation*}
    the convergence result $\lim\limits_{n \to \infty} \cE_{n,\delta,\tau_n}(u \circ T_n^{-1}) = |\Omega|^{-2} \cE_\delta(u)$ follows from \Cref{thm:Discrete:UpperBd}, \Cref{thm:Discrete:LowerBd}, and the squeeze theorem.
\end{proof}

	\section{Conclusion}
In this work, nonlocal continuum operators and related nonlocal problems are proposed and analyzed to model the semi-supervised learning. They may be viewed as bridges linking the discrete graph-based model with the local PDE model, thus offering new theoretical tools to understand the learning of large data sets in various scaling limits.
The well-posed nonlocal variational framework also provides  opportunities to design different sampling algorithms and discretization strategies \cite{du2024asymptotically}.

    Our setting can treat the case in which the value of $\ell$, the dimension of the labeled data set $\Gamma$, can change from one connected component of $\Gamma$ to the other. This provides a framework to account for the spatial heterogeneity in the data source, which could be of interest in real-world applications.
        Treating more general sets (e.g., $\Gamma \subset \bbR^3$ is the union of the plane $[-1,1]^2 \times \{0\}$ with the  perpendicular line segment $\{0\}^2 \times [-1,1]$) is outside of the scope of this paper, and will be left for future investigations.

        We remark that our analysis remains true for energies $\cE_\delta$ with weight function $\eta(\bx)$ replaced with a generalized distance function, say $\bar{\lambda}(\bx)$, that is comparable to $\eta(\bx)$. 
        Even more generally, similar results hold for $\eta$ replaced with powers of $\bar{\lambda}(\bx)$;
        when $\gamma \equiv 1$, such analysis for general forms of heterogeneous localization has been undertaken explicitly in \cite{Scott2023Nonlocal,Scott2023Nonlocala}. 
    
    Other extensions, such as the $p\to \infty$ limit, that may lead to interesting applications. Meanwhile, while various limits have been established in this work, further investigations are needed to provide estimates on the rate of the convergence and the computational complexity involved. Answers to such questions will depend on the rate of localization, i.e. the power of $\bar{\lambda}$ taken in the heterogeneous localization $\eta$.

\textbf{Acknowledgments.}
    This work is supported in part by the National Science Foundation DMS-2309245 and DMS-1937254. 
    The authors thank Jeff Calder for inspiring and helpful discussions.

\appendix

\section{Nonlocal function space}

    \begin{proof}[Proof of \Cref{thm:InvariantHorizon}]
	The second inequality is trivial, so the proof is devoted to the first inequality.
	Let $n \in \bbN$.
	To begin, we apply the triangle inequality to the telescoping sum for $\bx \in \Omega$ and $\bs \in B(0, \delta_2 \eta(\bx))$:
	\begin{equation*}
		|u(\bx+\bs) - u(\bx)| \leq \sum_{i = 1}^n \left| u \left( \bx + \frac{i}{n} \bs \right) - u \left( \bx + \frac{i-1}{n} \bs \right)\right|.
	\end{equation*}
Setting $\bx_i := \bx + \frac{i-1}{n} \bs$ and using H\"older's inequality, we get 
	\begin{equation*}
		\begin{split}
			[u]_{ \mathfrak{W}^{p}[\delta_2](\Omega;\beta) }^p \leq \bar{c} n^{p-1} \sum_{i = 1}^n \int_{ \Omega } \int_{B(0,\delta_2 \eta(\bx) ) } \frac{ |u(\bx_i+\frac{1}{n}\bs)-u(\bx_i)|^p }{ |\delta_2 \eta(\bx)|^{d+p} \gamma(\bx)^\beta } \, \rmd \bs \, \rmd \bx.
		\end{split}
	\end{equation*}
	where $\bar{c} =  \frac{ \overline{C}_{d,p}(d+p) }{ \sigma(\bbS^{d-1}) }$ is the normalizing constant. Now, since the distance function $\eta$ is Lipschitz,
        $
            |\eta(\bx_i) - \eta(\bx)| \leq |\bx_i-\bx| \leq |\bs| \leq \delta_2 \eta(\bx)
        $,
	and rearranging this inequality and using the assumption \eqref{assump:Horizon} gives
        \begin{equation}\label{eq:VaryingLambda:SeminormComparison:Pf2}
		\frac{3}{4} \eta(\bx_i)
        \leq \frac{\eta(\bx_i)}{1+\delta_2 } \leq \eta(\bx) \leq \frac{\eta(\bx_i)}{1 - \delta_2 }
        \leq \frac{3}{2} \eta(\bx_i)
	\end{equation}
	for all $\bx \in \Omega$. Similarly, using the properties described in \eqref{assump:weight}
        \begin{equation}\label{eq:VaryingLambda:SeminormComparison:Pf3}
            |\gamma(\bx_i) - \gamma(\bx)| \leq \kappa_1 |\bs| \leq \kappa_1 \delta_2 \eta(\bx) \leq \kappa_1 \delta_2 
 \dist(\bx,\Gamma) \leq \kappa_0 \kappa_1 \delta_2 
\gamma(\bx).
        \end{equation}
 
	Using \eqref{eq:VaryingLambda:SeminormComparison:Pf2} and \eqref{eq:VaryingLambda:SeminormComparison:Pf3}, we come to
    \begin{equation*}\label{eq:VaryingLambda:SeminormComparison:Pf4}
		\begin{split}
		&[u]_{ \mathfrak{W}^{p}[\delta_2](\Omega;\beta) }^p \\
        \leq& \bar{c} n^{p-1} (1+\delta_2)^{d+p} \left( \frac{1+\kappa_0 \kappa_1 \delta_2}{1-\kappa_0 \kappa_1 \delta_2} \right)^{|\beta|} \sum_{i = 1}^n \int_{ \Omega } \int_{B(0,\frac{\delta_2}{1-\delta_2}  \eta(\bx_i)) } \frac{ |u(\bx_i+\frac{1}{n}\bs)-u(\bx_i)|^p }{ \gamma(\bx_i)^{\beta} |\delta_2 \eta(\bx_i)|^{d+p} } \, \rmd \bs \, \rmd \bx.
		\end{split}
	\end{equation*}
	Clearly $\bx_i \in \Omega$, so we can perform a change of variables in the outer integral; letting $\by = \bx_i = \bx + \frac{i-1}{n} \bs$ 
    we get
	\begin{equation*}
		\begin{split}
			&[u]_{ \mathfrak{W}^{p}[\delta_2](\Omega;\beta) }^p \\
            \leq& \bar{c} n^{p-1} (1+\delta_2)^{d+p} \left( \frac{1+\kappa_0 \kappa_1 \delta_2}{1-\kappa_0 \kappa_1 \delta_2} \right)^{|\beta|} \sum_{i = 1}^n \int_{ \Omega } \int_{B(0,\frac{\delta_2}{1-\delta_2}  \eta(\by)) } \frac{ |u(\by+\frac{1}{n}\bs)-u(\by)|^p }{ \gamma(\by)^{\beta} |\delta_2 \eta(\by)|^{d+p} } \, \rmd \bs \, \rmd \by \\
			=& \bar{c} n^{p} (1+\delta_2)^{d+p} \left( \frac{1+\kappa_0 \kappa_1 \delta_2}{1-\kappa_0 \kappa_1 \delta_2} \right)^{|\beta|} \int_{ \Omega } \int_{B(0,\frac{\delta_2}{1-\delta_2}  \eta(\by)) } \frac{ |u(\by+\frac{1}{n}\bs)-u(\by)|^p }{ \gamma(\by)^{\beta} |\delta_2 \eta(\by)|^{d+p} } \, \rmd \bs \, \rmd \by .
		\end{split}
	\end{equation*}
	Now perform a change of variables in the inner integral by $\bz = \frac{\bs}{n}$ to obtain
	\begin{equation*}
		\begin{split}
			&[u]_{ \mathfrak{W}^{p}[\delta_2](\Omega;\beta) }^p \\
			\leq& \bar{c} n^{d+p} (1+\delta_2)^{d+p} \left( \frac{1+\kappa_0 \kappa_1 \delta_2}{1-\kappa_0 \kappa_1 \delta_2} \right)^{|\beta|} \int_{ \Omega } \int_{B(0,\frac{\delta_2}{1-\delta_2} \frac{ \eta(\by) }{n}) } \frac{ |u(\by+\bz)-u(\by)|^p }{ \gamma(\by)^{\beta} |\delta_2 \eta(\by)|^{d+p} } \, \rmd \bz \, \rmd \by \\
			=& \bar{c} \left( \frac{n \delta_1 (1+\delta_2)}{\delta_2} \right)^{d+p} \left( \frac{1+\kappa_0 \kappa_1 \delta_2}{1-\kappa_0 \kappa_1 \delta_2} \right)^{|\beta|} \int_{ \Omega } \int_{B(0,\frac{\delta_2}{1-\delta_2} \frac{ \eta(\by) }{n}) } \frac{ |u(\by+\bz)-u(\by)|^p }{ \gamma(\by)^{\beta} |\delta_1 \eta(\by)|^{d+p} } \, \rmd \bz \, \rmd \by.
		\end{split}
	\end{equation*}
	By taking $n \in \bbN$ such that	\begin{equation*}
		\frac{\delta_2}{\delta_1 ( 1 - \delta_2)} < n < \frac{2\delta_2}{\delta_1 ( 1 - \delta_2)},
	\end{equation*}
        we have
	\begin{equation*}
		\begin{split}
			&[u]_{ \mathfrak{W}^{p}[\delta_2](\Omega;\beta) }^p 
			\leq \left( \frac{2(1+\delta_2)}{1-\delta_2} \right)^{d+p}  \left( \frac{1+\kappa_0 \kappa_1 \delta_2}{1-\kappa_0 \kappa_1 \delta_2} \right)^{|\beta|} [u]_{\mathfrak{W}^p[\delta_1](\Omega;\beta)}^p,
		\end{split}
	\end{equation*}
	as desired.
\end{proof}

\begin{proof}[Proof of \Cref{thm:EnergySpaceIndepOfKernel}]
    First, by \eqref{assump:VarProb:Kernel} we have
    \begin{equation*}
        C(\rho)^{-1} \mathds{1}_{ B(0,c_\rho) }(\bx) \leq \rho(|\bx|) \leq C(\rho) \mathds{1}_{ B(0,1) }(\bx)
    \end{equation*}
    for $C(\rho) > 1$. Next, by \eqref{assump:Localization}, 
    \begin{gather*}
        \frac{ \eta(\bx) }{\kappa_0} \leq \lambda(\bx) \leq \kappa_0 \eta(\bx).
    \end{gather*}
   Therefore,
    \begin{equation*}
    \begin{split}
        &\frac{1}{C(\rho)} \mathds{1}_{ \{ |\by-\bx| < \frac{c_\rho}{\kappa_0} \delta \eta(\bx)  \} } \frac{|u(\bx)-u(\by)|^p}{|\kappa_0 \delta \eta(\bx)|^{d+p} } \\
        &\leq \rho \left( \frac {|\by-\bx|}{\delta \lambda(\bx) } \right) \frac{|u(\bx)-u(\by)|^p}{|\delta \lambda(\bx)|^{d+p} }  \\
        &\leq C(\rho) \kappa_0^{d+p} \mathds{1}_{ \{ |\by-\bx| < \kappa_0 \delta \eta(\bx) \} } \frac{|u(\bx)-u(\by)|^p}{|\delta \eta(\bx)|^{d+p} }.
    \end{split}
    \end{equation*}
   The conclusion then follows -- after multiplying by $\gamma(\bx)^{-\beta}$ and integrating -- from the assumptions on $\delta$ and then using the independence of the bulk horizon $\delta$ as in \Cref{thm:InvariantHorizon}.
\end{proof}

    \section{Boundary-localized convolutions}\label{sec:Apdx:Conv}

        \begin{lemma}\label{lma:CoordChange2}
For a fixed $\bz \in B(0,1)$ and generalized distance $\lambda$ satisfying \eqref{assump:Localization},
    define the function $\bszeta_\bz^\veps : \Omega \to \bbR^d$ by 
    \begin{equation*}
    \bszeta_\bz^\veps(\bx) := \bx + \lambda_\veps(\bx) \bz, \quad
    \forall \veps \in (0,\underline{\delta}_0).
    \end{equation*}
    Then the following hold
    for all $\bx$, $\by \in \Omega$, for all $\delta \in (0,\underline{\delta}_0)$, and for all $\veps \in (0,\underline{\delta}_0)$:
    \begin{equation}\label{eq:bszetaproperties}
        \begin{gathered}
            \det \grad \bszeta_\bz^\veps(\bx) = 1 + \grad \lambda_\veps(\bx) \cdot \bz > 1-\kappa_1 \veps > \frac{2}{3}, \\
            \bszeta_\bz^\veps(\bx) \in \Omega \text{ and } 0 < (1- \kappa \veps) \eta(\bx) \leq \eta(\bszeta_\bz^\veps(\bx)) \leq (1+\kappa \veps) \eta(\bx) , \\
            0 < (1-\kappa \veps) \gamma(\bx) \leq \gamma(\bszeta_\bz^\veps(\bx)) \leq (1+\kappa \veps) \gamma(\bx), \text{ and } \\
            0 < (1- \kappa \veps) |\bx-\by| \leq | \bszeta_\bz^\veps(\bx) - \bszeta_\bz^\veps(\by)| \leq (1+\kappa \veps) |\bx-\by|.
        \end{gathered}
    \end{equation} 
\end{lemma}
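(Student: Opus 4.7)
The plan is to establish each of the four displayed properties in \eqref{eq:bszetaproperties} through direct computation, relying in each case on the Lipschitz bounds supplied by \eqref{assump:Localization}, \eqref{assump:weight}, and the smallness condition \eqref{assump:Horizon}. The unifying observation is that the perturbation $\bszeta_\bz^\veps(\bx) - \bx = \lambda_\veps(\bx)\bz$ satisfies $|\bszeta_\bz^\veps(\bx)-\bx| \leq \veps \lambda(\bx) \leq \veps \kappa_0 \eta(\bx)$; since $\veps < \underline{\delta}_0 = 1/(3\kappa)$ and $\kappa = \kappa_0^2\kappa_1 \geq \kappa_0$ (using $\kappa_0 \geq 1$ together with the relation $\kappa_0\kappa_1 \geq 1$ that accompanies this setup), all relative perturbations come out bounded by $1/3$ and uniform positivity follows.

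For the determinant identity, I would differentiate componentwise to get $\grad \bszeta_\bz^\veps(\bx) = \bI + \bz \otimes \grad \lambda_\veps(\bx)$, a rank-one perturbation of the identity. The matrix determinant lemma then yields $\det \grad \bszeta_\bz^\veps(\bx) = 1 + \grad \lambda_\veps(\bx) \cdot \bz$. The Lipschitz assumption in \eqref{assump:Localization}(ii) gives $|\grad \lambda_\veps(\bx)| \leq \veps \kappa_1 \leq \veps \kappa$, and combined with $|\bz| < 1$ and $\veps \kappa < 1/3$, this produces the lower bound $1-\kappa_1\veps > 2/3$. For the containment claim and the $\eta$-comparison, I would apply the $1$-Lipschitz property of the distance function $\eta(\bx) = \dist(\bx,\p\Omega)$, giving $|\eta(\bszeta_\bz^\veps(\bx)) - \eta(\bx)| \leq |\bszeta_\bz^\veps(\bx)-\bx| \leq \veps\kappa_0\eta(\bx) \leq \veps\kappa\eta(\bx)$; rearranging yields the two-sided bound, and the lower bound being strictly positive means $\bszeta_\bz^\veps(\bx)$ lies in the interior of $\Omega$.

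For the weight comparison, I would invoke the $|\alpha|=1$ case of \eqref{assump:weight}, which yields the Lipschitz estimate $|\gamma(\bx_1)-\gamma(\bx_2)| \leq \kappa_1|\bx_1-\bx_2|$, together with the chain $\lambda(\bx) \leq \kappa_0 \dist(\bx,\p\Omega) \leq \kappa_0 \dist(\bx,\Gamma) \leq \kappa_0^2 \gamma(\bx)$ used earlier in the text (noting $\Gamma \subset \p\Omega$). This gives $|\gamma(\bszeta_\bz^\veps(\bx))-\gamma(\bx)| \leq \kappa_1\veps\lambda(\bx) \leq \kappa_0^2\kappa_1 \veps\gamma(\bx) = \kappa\veps\gamma(\bx)$, which is exactly the desired estimate. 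Finally, the bi-Lipschitz bound on $\bszeta_\bz^\veps$ follows by writing $\bszeta_\bz^\veps(\bx)-\bszeta_\bz^\veps(\by) = (\bx-\by) + (\lambda_\veps(\bx)-\lambda_\veps(\by))\bz$ and applying the triangle inequality together with the Lipschitz bound $|\lambda_\veps(\bx)-\lambda_\veps(\by)| \leq \veps\kappa_1|\bx-\by|$; this gives $(1-\veps\kappa_1)|\bx-\by| \leq |\bszeta_\bz^\veps(\bx)-\bszeta_\bz^\veps(\by)| \leq (1+\veps\kappa_1)|\bx-\by|$, and the bound with $\kappa$ in place of $\kappa_1$ is weaker.

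The main obstacle is not conceptual but notational: carefully tracking the constants $\kappa_0$, $\kappa_1$, and their product $\kappa = \kappa_0^2\kappa_1$ so that each estimate uses the correct factor from \eqref{assump:Horizon} and so that the inequalities $\kappa_0, \kappa_1 \leq \kappa$ are used where needed to unify the constants appearing in the final statements. Once the constants are organized consistently, each claim reduces to one application of a Lipschitz bound plus arithmetic using $\veps\kappa < 1/3$.
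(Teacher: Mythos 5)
Your proof is correct and follows essentially the same route as the paper's: one Lipschitz estimate per line, using $|\bszeta_\bz^\veps(\bx)-\bx|\leq \veps\lambda(\bx)$ together with $\lambda \leq \kappa_0\eta \leq \kappa_0\dist(\cdot,\Gamma)\leq \kappa_0^2\gamma$ and $\veps\kappa<1/3$, plus the rank-one determinant identity that the paper leaves implicit. The only point to tighten is the membership claim: conclude $\bszeta_\bz^\veps(\bx)\in\Omega$ from $|\bszeta_\bz^\veps(\bx)-\bx|\leq \veps\kappa_0\eta(\bx)<\dist(\bx,\p\Omega)$, i.e.\ $\bszeta_\bz^\veps(\bx)\in B(\bx,\eta(\bx))\subset\Omega$, rather than from positivity of the lower bound for $\eta(\bszeta_\bz^\veps(\bx))$ alone, since the distance to $\p\Omega$ is also positive outside $\overline{\Omega}$.
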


\begin{proof}
    The positive lower bound on $\det \grad \bszeta_\bz^\veps$ follows from the properties of $\lambda$ and the assumption on $\underline{\delta}_0$. 
    The second line in \eqref{eq:bszetaproperties} follows from the Lipschitz continuity of the distance function $\eta$ and the bound $\lambda(\bx) \leq \kappa_0 \eta(\bx)$ implied by \eqref{assump:Localization}:
    \begin{equation*}
        |\eta(\bszeta_\bz^\veps(\bx))-\eta(\bx)| \leq \lambda_\veps(\bx)|\bz| \leq \kappa_0 \veps \eta(\bx) \leq \kappa \veps \eta(\bx).
    \end{equation*}
    The third line is shown similarly by using \eqref{assump:weight} and the fact that $\eta(\bx) = \dist(\bx,\p \Omega) \leq \dist(\bx,\Gamma)$;
    \begin{equation*}
        |\gamma(\bszeta_\bz^\veps(\bx))-\gamma(\bx)| \leq \kappa_1 \lambda_\veps(\bx)|\bz| 
        \leq \kappa_1 \kappa_0 \veps \eta(\bx) 
        \leq \kappa_1 \kappa_0 \veps d_{\Gamma}(\bx)
        \leq \kappa_1 \kappa_0^2 \veps \gamma(\bx)
        = \kappa \veps \gamma(\bx).
    \end{equation*}
    The fifth line of \eqref{eq:bszetaproperties} follows from the estimate
    \begin{equation*}
        \big| | \bszeta_\bz^\veps(\bx) - \bszeta_\bz^\veps(\by)| - |\bx-\by| \big| \leq |\lambda_\veps(\bx) - \lambda_\veps(\by)| |\bz| \leq \kappa_1 \veps |\bx-\by| \leq \kappa \veps |\bx-\by|.
    \end{equation*}
\end{proof}

Similar estimates hold when $\lambda$ is replaced by the distance function $\eta$; for instance,
\begin{equation}\label{eq:Det:DistanceFxn}
    \det \grad (\bx + \eta_\veps(\bx)\bz) = 1 + \grad \eta_\veps(\bx) \cdot \bz > 1 - \veps.
\end{equation} 

Using \eqref{lma:CoordChange2} and elementary algebraic estimates, we obtain the inequalities for $\beta \in \bbR$:
\begin{equation}\label{eq:CoordChange2:Conseq}
    \frac{1+\kappa \veps}{1-\kappa \veps} \frac{|\by-\bx|}{\eta_\delta(\bx)} \leq \frac{ |\bszeta_{\bz}^{\veps}(\by) - \bszeta_{\bz}^{\veps}(\bx)| }{\eta_\delta(\bszeta_{\bz}^{\veps}(\bx))} \quad \text{ and } 
    \quad \gamma( \bszeta_\bz^\veps(\bx))^{\beta} \leq \left(\frac{1+\kappa \veps}{1-\kappa \veps} \right)^{|\beta|} \gamma(\bx)^{\beta}.
\end{equation}

\begin{proof}[Proof of \Cref{thm:KnownConvResults:Nonlocal}, \eqref{eq:Intro:ConvEst}]
    The estimate $\vnorm{K_\delta u - u}_{L^p(\Omega;\beta+p)} \leq C \vnorm{(K_\delta u - u)\eta^{-1}}_{L^p(\Omega;\beta)}$ follows from applying the inequality $\eta(\bx) \leq \dist(\bx,\Gamma) \leq \kappa_0 \gamma(\bx)$, valid for all $\bx \in \Omega$, to the integrands. 
    Next, Jensen's inequality applied with the measure $\psi_\delta(\bx,\by) \, \rmd \by$ gives
    \begin{equation*}
        \begin{split}
            \vnorm{(K_\delta u - u)\eta^{-1}}_{L^p(\Omega;\beta)}^p &= \int_{\Omega} \frac{1}{\gamma(\bx)^\beta} \left| \delta \int_{\Omega} \psi_\delta(\bx,\by) \frac{(u(\bx) - u(\by))}{\delta \eta(\bx)} \, \rmd \by \right|^p \, \rmd \bx \\
            &\leq \delta^p \int_{\Omega} \frac{1}{\gamma(\bx)^\beta} \int_{\Omega} \psi_\delta(\bx,\by) \frac{|u(\bx) - u(\by)|^p}{\eta_\delta(\bx)^p} \, \rmd \by \, \rmd \bx.
        \end{split}
    \end{equation*}
    After multiplication by a suitable constant, the renormalized kernel satisfies \eqref{assump:VarProb:Kernel}. Then the estimate follows from the kernel equivalence given in \Cref{thm:EnergySpaceIndepOfKernel}.
\end{proof}

\begin{proof}[Proof of \Cref{thm:KnownConvResults:Nonlocal}, \eqref{eq:Intro:ConvEst:Deriv}]
    Since $\int_{\Omega} \psi_\delta(\bx,\by) \, \rmd \by = 1$, its gradient in $\bx$ vanishes, and so
	\begin{equation*}
		\begin{split}
			\grad K_{\delta}u (\bx) &= \intdm{\Omega}{\grad_{\bx} \psi_{\delta}(\bx,\by) u(\by)}{\by}  
			= \intdm{\Omega}{\grad_{\bx} \psi_{\delta}(\bx,\by) (u(\by)-u(\bx))}{\by}.
		\end{split}
	\end{equation*}
    By H\"older's inequality 
	\begin{equation}\label{eq:Intro:ConvEst:Deriv:Pf1}
		\begin{split}
			&\Vnorm{ \grad K_{\delta} u }_{L^p(\Omega;\beta)}^p \\
			&\leq \int_{\Omega}  \left( \eta_\delta(\bx)  \intdm{\Omega}{ |\grad_{\bx} \psi_{\delta}(\bx,\bz)|}{\bz}
   \right)^{p-1}
   \intdm{\Omega}{\frac{|\grad_{\bx} \psi_{\delta}(\bx,\by)|}{\gamma(\bx)^\beta \eta_\delta(\bx)^{p-1}} |u(\by)-u(\bx)|^p}{\by} \rmd \bx.
		\end{split}
	\end{equation}
        Now,
        
        \begin{equation*}
		\begin{split}
        \grad_{\bx} {\psi}_{\delta}(\bx,\by) &= (\psi')_\delta(\bx,\by) \left( \frac{\bx-\by}{|\bx-\by|} \frac{1}{\lambda_\delta(\bx)} - \frac{|\bx-\by|}{\lambda_\delta(\bx)} \frac{\grad \lambda_\delta(\bx)}{\lambda_\delta(\bx)} \right) \\
		& \quad - \psi_{\delta}(\bx,\by)  \frac{ \grad \lambda_\delta(\bx) }{\lambda_\delta(\bx)} d ,
		\end{split}
	\end{equation*}
        so therefore using the support of $\psi$ and the fact that $|\grad \lambda_\delta| \leq 1/3$
        \begin{equation*}
            |\grad _{\bx} \psi_{\delta}(\bx,\by)| \leq \frac{C}{\eta_\delta(\bx) } \Big( \psi_{\delta}(\bx,\by) 
		+ (|\psi'|)_{\delta}(\bx,\by) \Big),  \quad \forall \bx, \by \in \Omega.
        \end{equation*}
        We use this estimate in \eqref{eq:Intro:ConvEst:Deriv:Pf1}; since $\int_{\Omega} (|\psi'|)_\delta(\bx,\by) \, \rmd \by \leq C$ we have
        \begin{equation*}
		\begin{split}
		  \Vnorm{ \grad K_{\delta} u }_{L^p(\Omega;\beta)}^p 
		  \leq \int_{\Omega}
            \intdm{\Omega}{\frac{\psi_{\delta}(\bx,\by) 
		+ (|\psi'|)_{\delta}(\bx,\by)}{\gamma(\bx)^\beta \eta_\delta(\bx)^{p}} |u(\by)-u(\bx)|^p}{\by} \rmd \bx.
		\end{split}
	\end{equation*}
        Then the result follows from the kernel equivalence as in \Cref{thm:EnergySpaceIndepOfKernel}.
\end{proof}

    \begin{proof}[Proof of \Cref{thm:KnownConvResults:Nonlocal}, \eqref{eq:Comp:ConvolutionEstimate}]
    Let $u \in \mathfrak{W}^{p}[\delta](\Omega;\beta)$. For the $\lambda$ appearing in the definition of $K_\delta u$, define the function $\bszeta_\bz^\veps(\bx) = \bx + \lambda_\veps(\bx) \bz$ as in \Cref{lma:CoordChange2}.
    Now define the function $\theta(\veps) := \frac{1-\kappa \veps}{1+\kappa \veps}$ for $\veps \in (0,\underline{\delta}_0)$, where $\kappa$ is defined in \eqref{assump:Horizon}.
    Then by Jensen's inequality we have the following estimate for the quantity $I$:
    \begin{equation*}
        \begin{split}
        I &:= \int_{\Omega \cap \{\eta(\bx) < \theta(\veps) r \} } \int_\Omega \frac{1}{\gamma(\bx)^\beta} \rho \left( \frac{|\bx-\by|}{\eta_{\theta(\veps)\delta}(\bx)} \right) \frac{ |K_\veps u(\bx) - K_\veps u(\by)|^p }{ \eta_{\theta(\veps)\delta}(\bx)^{d+p} } \, \rmd \by \, \rmd \bx \\
        &\leq \int_{B(0,1)} \psi(|\bz|) \int_{\Omega \cap \{\eta(\bx) < \theta(\veps) r \} } \int_\Omega \frac{1}{\gamma(\bx)^\beta} \rho \left( \frac{|\bx-\by|}{\eta_{\theta(\veps)\delta}(\bx)} \right) \frac{ |u(\bszeta_\bz^\veps(\bx))-u(\bszeta_\bz^\veps(\by))|^p }{  \eta_{\theta(\veps)\delta}(\bx)^{d+p}  } \, \rmd \by \, \rmd \bx \, \rmd \bz.
        \end{split}
    \end{equation*}
    Now we use the inequalities \eqref{eq:CoordChange2:Conseq}.
    Since $\rho$ is nonincreasing, we get
    \begin{equation*}
    \begin{split}
        I
        &\leq
        \frac{ (1+\kappa \veps)^{d+p} }{ \theta(\veps)^{d+p+|\beta|} (1-\kappa_1 \veps)^{2} }
        \int_{B(0,1)} \psi \left( |\bz| \right) \int_{\Omega \cap \{ \eta(\bszeta_\bz^\veps(\bx)) < r \} } \int_{\Omega} 
        \rho \left( \frac{ |\bszeta_{\bz}^{\veps}(\by) - \bszeta_{\bz}^{\veps}(\bx)| }{\eta_\delta(\bszeta_{\bz}^{\veps}(\bx))}  \right) 
        \\
        &\qquad \frac{ \left|  u( \bszeta_{\bz}^\veps(\bx) ) - u( \bszeta_{\bz}^\veps(\by)  ) \right|^p }{ \gamma(\bszeta_\bz^\veps(\bx))^\beta \eta_\delta(\bszeta_{\bz}^{\veps}(\bx))^{d+p}}  \det \grad \bszeta_{\bz}^{\veps}(\bx) \det \grad \bszeta_{\bz}^{\veps}(\by)  \, \rmd \by \, \rmd \bx \, \rmd \bz.
    \end{split}
    \end{equation*}
    We apply the change of variables $\bar{\by} = \bszeta_{\bz}^\veps(\by) $, $\bar{\bx} = \bszeta_{\bz}^\veps(\bx)$, and -- using that $\bszeta_\bz^\veps(\bx) \in \Omega$ for all $\bx \in \Omega$ -- obtain 
    \begin{equation*}
    \begin{split}
        I&\leq 
        \frac{ (1+\kappa \veps)^{d+p} }{ \theta(\veps)^{d+p+|\beta|} (1-\kappa_1 \veps)^{2} }
        \int_{ \Omega \cap \{ \eta(\bx) < r \} } \int_{ \Omega }
        \rho \left( \frac{ |\bar{\by} - \bar{\bx}| }{ \eta_\delta(\bar{\bx})}  \right) \frac{ \left|  u(\bar{\bx}) - u(\bar{\by}) \right|^p }{\gamma(\bx)^\beta \eta_\delta(\bar{\bx})^{d+p}} \rmd \bar{\by} \, \rmd \bar{\bx}.
    \end{split}
    \end{equation*}
    Then \eqref{eq:Comp:ConvolutionEstimate} is established by setting $\phi(\veps) := \frac{ (1+\kappa \veps)^{d+p} }{ \theta(\veps)^{d+p+|\beta|} (1-\kappa_1 \veps)^{2} }$.
    \end{proof}

\begin{proof}[Proof of \Cref{thm:KnownConvResults:Nonlocal}, item 3)]
    Recall the definition of $\theta(\veps)$ in \eqref{eq:Comp:ConvolutionEstimate}. First, using \Cref{thm:InvariantHorizon} we obtain
    \begin{equation*}
        [K_\veps u-u]_{ \mathfrak{W}^p[\delta](\Omega;\beta) }^p \leq C [K_\veps u - u]_{ \mathfrak{W}^p[\theta(\veps)\delta](\Omega;\beta) }^p,
    \end{equation*}
    where $C$ is independent of both $\delta$ and $\veps$.
    Applying \eqref{eq:Comp:ConvolutionEstimate} with kernel $\rho(x) = \frac{ (d+p)\overline{C}_{d,p} }{ \sigma(\bbS^{d-1}) } \mathds{1}_{(-1,1)}(x)$ and $r > 0$, we get
    \begin{equation*}
        \begin{split}
		&\int_{ \Omega \cap \{ \eta(\bx) < \theta(\veps) r \} } \int_{ \Omega } \mathds{1}_{ \{ |\by-\bx| < \eta_{\theta(\veps)\delta}(\bx) \} } \frac{|K_{\veps} u(\bx) - K_{\veps}u(\by) - ( u(\bx) - u(\by) )|^p}{ \gamma(\bx)^{\beta} \eta_{\theta(\veps)\delta}(\bx)^{d+p}  } \, \rmd \by \, \rmd \bx \\
        &\leq 2^{p-1}(1+\phi(\veps)) \int_{ \Omega \cap \{ \eta(\bx) < r \} } \int_{ \Omega } \mathds{1}_{ \{ |\by-\bx| < \eta_\delta(\bx) \} } \frac{| u(\bx) - u(\by)|^p}{ \gamma(\bx)^{\beta} \eta_\delta(\bx)^{d+p}  } \, \rmd \by \, \rmd \bx.
        \end{split}
    \end{equation*}
    Since $\phi(\veps)$ is bounded from above independent of $\veps$, we can apply continuity of the right-hand side integral to conclude that for any $\tau > 0$ there exists $r > 0$ such that
	\begin{equation*}
		\sup_{\veps \in (0,\underline{\delta}_0)} \int_{ \Omega \cap \{ \eta(\bx) < \theta(\veps) r \} } \int_{ B(\bx,\eta_{\theta(\veps) \delta}(\bx)) } 
        \frac{|K_{\veps} u(\bx) - K_{\veps}u(\by) - ( u(\bx) - u(\by) )|^p}{ \gamma(\bx)^{\beta} \eta_{\theta(\veps)\delta}(\bx)^{d+p}  } \, \rmd \by \, \rmd \bx < \tau.
	\end{equation*}
    Now, whenever $|\bx-\by| \leq \delta \theta(\veps) \eta_(\bx)$ and $\eta(\bx) \geq \theta(\veps) r$ we have $\eta_\delta(\bx) \geq \delta \theta(\veps) r$ and $\eta(\by) \geq \eta(\bx) -\kappa_1|\by-\bx| \geq (1-\kappa_1 \delta \theta(\veps)) r$. Further, \eqref{eq:Comp1} holds. So therefore
	\begin{equation*}
		\begin{split}
		&\int_{ \Omega } \int_{ \Omega  \cap \{ \eta(\bx) \geq \theta(\veps) r \} } \mathds{1}_{ \{ |\by-\bx| < \eta_{\theta(\veps)\delta}(\bx) \} } \frac{|K_{\veps} u(\bx) - K_{\veps}u(\by) - ( u(\bx) - u(\by) )|^p}{ \gamma(\bx)^{\beta} \eta_{\theta(\veps)\delta}(\bx)^{d+p}  } \, \rmd \by \, \rmd \bx \\
		&\leq \frac{C}{(\delta r)^{p} } \Vnorm{ K_{\veps} u - u }_{L^p(\Omega;\beta)}^p.
		\end{split}
	\end{equation*}
	Since $\Vnorm{ K_{\veps} u - u }_{L^p(\Omega;\beta)}^p \to 0$ as $\veps \to 0$, we conclude that
	\begin{equation*}
		\limsup\limits_{\veps \to 0} [K_{\veps} u - u]_{ \mathfrak{W}^p[\delta](\Omega;\beta)}^p < \tau + C(r) \limsup\limits_{\veps \to 0} \Vnorm{ K_{\veps} u - u }_{L^p(\Omega;\beta)}^p = \tau,
	\end{equation*}
	and so the convergence follows since follows since $\tau > 0$ is arbitrary.  
\end{proof}

		\section{Muckenhoupt weights}
		
		A nonnegative function $w \in L^1_{loc}(\bbR^d)$ is called an $A_p$-\textit{weight} if
		\begin{equation*}
			\begin{gathered}
				[w]_{A_p} := \sup_{\bx_0 \in \bbR^d, R > 0} A_p(w,B(\bx_0,R)) < \infty, \\
				\text{ where } A_p(w,B(\bx_0,R)) := \left( \fint_{B(\bx_0,R)} w(\bx) \, \rmd \bx \right) \left( \fint_{B(\bx_0,R)} w(\bx)^{\frac{-1}{p-1}} \, \rmd \bx \right)^{p-1}.
			\end{gathered}    
		\end{equation*}
		
		\begin{lemma}\label{lma:ApWeight}
        Let $d \geq 2$, and let $\ell \in \{0,1,2,\ldots,d-1\}$. 
        Set $\bx = (\bx',\bx'') \in \bbR^{\ell} \times \bbR^{d-\ell}$, and define $w(\bx) = |\bx''|^{-\beta}$.
		Then $w$ is an $A_p$ weight if and only if
        $(d-\ell)(1-p) < \beta < d - \ell$.
		\end{lemma}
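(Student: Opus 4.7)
The plan is to prove both directions via direct estimates on $A_p(w, B)$. For necessity, observe that $w \in A_p$ forces both $w$ and $w^{-1/(p-1)} = |\bx''|^{\beta/(p-1)}$ to be locally integrable. Integrability near the singular set $\Sigma := \bbR^\ell \times \{0\}^{d-\ell}$ of $|\bx''|^{-\beta}$ requires $\beta < d-\ell$, while integrability near $\Sigma$ of $|\bx''|^{\beta/(p-1)}$ requires $-\beta/(p-1) < d-\ell$, equivalently $\beta > (d-\ell)(1-p)$.

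For sufficiency, I would exploit three symmetries of $w$: translation invariance in the $\bx'$-variables, rotational invariance in the $\bx''$-variables, and the homogeneity $w(R\bx) = R^{-\beta} w(\bx)$. A direct change of variables yields
$$A_p(w, B(\bx_0, R)) = A_p(w, B((0, \sigma \be), 1)),$$
where $\sigma := |\bx_0''|/R \geq 0$ and $\be$ is any fixed unit vector in $\bbR^{d-\ell}$; hence it suffices to bound the last quantity uniformly in $\sigma \geq 0$.

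I would then partition into two regimes. If $\sigma \geq 2$, then $|\bx''| \in [\sigma-1, \sigma+1]$ throughout $B((0,\sigma\be),1)$, so $|\bx''|$ is comparable to $\sigma$; direct computation yields $\fint w \approx \sigma^{-\beta}$ and $\fint w^{-1/(p-1)} \approx \sigma^{\beta/(p-1)}$, whose $A_p$ combination is uniformly $\approx 1$. If $0 \leq \sigma \leq 2$, then $B((0,\sigma\be),1) \subset B(0,3)$, and both averages are dominated by the fixed integrals $\int_{B(0,3)} |\bx''|^{-\beta} \, \rmd \bx$ and $\int_{B(0,3)} |\bx''|^{\beta/(p-1)} \, \rmd \bx$, which are finite precisely under the hypothesized range on $\beta$. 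Combining the two regimes yields a uniform bound on $A_p(w, B)$ over all balls $B$, establishing $w \in A_p$. The only real technical step is the scaling and symmetry reduction; once this is carried out, the remaining analysis reduces to elementary power-function integrals with no genuine obstacle.
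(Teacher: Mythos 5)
Your proposal is correct, and its core strategy coincides with the paper's: necessity is obtained by testing the $A_p$ condition on balls meeting the singular set (the paper tests on $B(0,1)$ after reducing to a point singularity; your local-integrability phrasing of $w$ and $w^{-1/(p-1)}$ is the same computation), and sufficiency is a two-regime analysis distinguishing balls far from the set $\{\bx''=0\}$ relative to their radius (where $|\bx''|$ is essentially constant, giving a bound like $3^{|\beta|}$) from balls close to it (where one enlarges to a ball centered on the singular set and uses scaling together with the power-integrability hypotheses). The genuine difference is how the case $\ell\geq 1$ is handled: the paper sandwiches each ball between comparable cylinders $\cC(\bx_0,\kappa R)\subset B(\bx_0,R)\subset \cC(\bx_0,R)$ and uses the product structure (Fubini in $\bx'$) to reduce the whole statement to the $\ell=0$ case in dimension $d-\ell$, whereas you exploit translation invariance in $\bx'$, rotation invariance in $\bx''$, and the homogeneity $w(R\bx)=R^{-\beta}w(\bx)$ to normalize every ball to $B((0,\sigma\be),1)$ and argue uniformly in $\sigma$. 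Your route avoids the cylinder-comparison constants and treats all $\ell$ in one stroke, at the (small) cost of carrying out the normalization carefully — in particular noting that the factor $R^{-\beta}$ from homogeneity cancels in the $A_p$ product, which your computation does correctly; the paper's route instead buys a clean induction-in-structure reduction to a previously known one-point weight. Both are complete; no gap.
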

		
		\begin{proof}
			First, suppose $\ell = 0$, so that $\bx'' = \bx$. Then it is well-known that
			\begin{equation*}
				w(\bx) = |\bx|^{-\beta} \text{ is an } A_p \text{ weight} \qquad \Leftrightarrow \qquad d(1-p) < \beta < d.
			\end{equation*}

				For the first implication, suppose that $[w]_{A_p} < \infty$. Then it holds that
				\begin{equation*}
					A_p(w,B(0,1)) = \left( \fint_{B(0,1)} w(\bx) \, \rmd \bx \right) \left( \fint_{B(0,1)} w(\bx)^{\frac{-1}{p-1}} \, \rmd \bx \right)^{p-1} \leq [w]_{A_p} < \infty.
				\end{equation*}
				The first integral is finite only if $\beta < d$, and the second integral is finite only if $\beta > d(1-p)$.
				
				For the reverse implication, we assume $d(1-p) < \beta < d$, and consider two different cases for estimating $A_p(w,B(\bx_0,R))$. First assume that $R < \frac{|\bx_0|}{2}$. Then for $\bx \in  B(\bx_0,R)$
				\begin{equation*}
					\begin{split}
						|\bx_0| \leq |\bx-\bx_0| + |\bx| \leq R + |\bx|, \qquad &\Rightarrow \qquad |\bx| > \frac{|\bx_0|}{2}, \\
						|\bx| \leq |\bx-\bx_0| + |\bx_0| \leq R + |\bx_0|, \qquad &\Rightarrow \qquad |\bx| \leq \frac{3|\bx_0|}{2}.
					\end{split}
				\end{equation*}
				Therefore 
				\begin{equation*}
					A_p(w,B(\bx_0,R)) \leq 3^{|\beta|}.
				\end{equation*}
				Now assume $R > \frac{|\bx_0|}{2}$. Then $B(\bx_0,R) \subset B(0,3R)$, and so
				\begin{equation*}
					A_p(w,B(\bx_0,R)) \leq C(d,p) A_p(w,B(0,3R).
				\end{equation*}
				But a change of coordinates gives $A_p(w,B(0,3R)) = A_p(w,B(0,1)) < \infty
				$, since $d(1-p) < \beta < d$.

			Now assume that $\ell \in \{1,\ldots,d-1\}$. Let $B(\bx_0,R) \subset \bbR^d$. 
			Then there exists $0 < \kappa < 1$ depending only on $d$ such that
			\begin{equation*}
				\cC(\bx_0,\kappa R) \subset B(\bx_0,R) \subset \cC(\bx_0,\kappa R),
			\end{equation*}
			where $\cC(\bx_0,R)$ is the cylindrical domain
			\begin{equation*}
				\cC(\bx_0,R) := \{ \bx = (\bx',\bx'') \in \bbR^d  \, : \, \vnorm{\bx'-\bx_0'}_{\infty} < R, \quad |\bx'-\bx_0''| \leq R \},
			\end{equation*}
			and where $\vnorm{\bx'}_\infty := \max_{1\leq i \leq \ell} |x_i|$. Moreover, $|B(\bx_0,R)| \approx_d \cC(\bx_0,R)$.
			It follows that 
			\begin{equation*}
				A_p(w,B(\bx_0,R)) \approx A_p(w,\cC(\bx_0,R))
			\end{equation*}
			where the constant of comparison depends only on $d$ and $p$. Moreover, $A_p(w,\cC(\bx_0,R)) = A_p(w,B_{d-\ell}(\bx_0'',R))$, where $B_{d-\ell}(\bx_0'',R))$ is the Euclidean ball in $d-\ell$ dimensions centered at $\bx_0''$ with radius $R$. This is exactly the form of the weight in the first case considered, with $d-\ell$ in place of $d$. Therefore, by applying the analysis of the first case we get that
			\begin{equation*}
				w \text{ is an } A_p \text{ weight} \qquad \Leftrightarrow \qquad (d-\ell)(1-p) < \beta < d - \ell.
			\end{equation*}
			
		\end{proof}



\begin{thebibliography}{10}
	
	\bibitem{antil2021fractional}
	Harbir Antil, Tyrus Berry, and John Harlim.
	\newblock Fractional diffusion maps.
	\newblock {\em Applied and Computational Harmonic Analysis}, 54:145--175, 2021.
	
	\bibitem{belkin2008towards}
	Mikhail Belkin and Partha Niyogi.
	\newblock Towards a theoretical foundation for {L}aplacian-based manifold
	methods.
	\newblock {\em Journal of Computer and System Sciences}, 74(8):1289--1308,
	2008.
	
	\bibitem{berry2016variable}
	Tyrus Berry and John Harlim.
	\newblock Variable bandwidth diffusion kernels.
	\newblock {\em Applied and Computational Harmonic Analysis}, 40(1):68--96,
	2016.
	
	\bibitem{braides2002gamma}
	Andrea Braides.
	\newblock {\em Gamma-convergence for Beginners}, volume~22.
	\newblock Clarendon Press, 2002.
	
	\bibitem{burago2015graph}
	Dmitri Burago, Sergei Ivanov, and Yaroslav Kurylev.
	\newblock A graph discretization of the {L}aplace--{B}eltrami operator.
	\newblock {\em Journal of Spectral Theory}, 4(4):675--714, 2015.
	
	\bibitem{Calder2018game}
	Jeff Calder.
	\newblock The game theoretic p-{L}aplacian and semi-supervised learning with
	few labels.
	\newblock {\em Nonlinearity}, 32(1):301, 2018.
	
	\bibitem{Calder2022Lipschitz}
	Jeff Calder, Nicol{\'a}s Garc{\'i}a~Trillos, and Marta Lewicka.
	\newblock Lipschitz regularity of graph {L}aplacians on random data clouds.
	\newblock {\em SIAM Journal on Mathematical Analysis}, 54(1):1169--1222, 2022.
	
	\bibitem{Calder2020Properly}
	Jeff Calder and Dejan Slep{\v{c}}ev.
	\newblock Properly-weighted graph {L}aplacian for semi-supervised learning.
	\newblock {\em Applied Mathematics \& Optimization}, 82:1111--1159, 2020.
	
	\bibitem{Chua1992Extension}
	Seng-Kee Chua.
	\newblock Extension theorems on weighted {S}obolev spaces.
	\newblock {\em Indiana University Mathematics Journal}, 41(4):1027--1076, 1992.
	
	\bibitem{coifman2006diffusion}
	Ronald~R Coifman and St{\'e}phane Lafon.
	\newblock Diffusion maps.
	\newblock {\em Applied and computational harmonic analysis}, 21(1):5--30, 2006.
	
	\bibitem{dong2020cure}
	Bin Dong, Haocheng Ju, Yiping Lu, and Zuoqiang Shi.
	\newblock Cure: Curvature regularization for missing data recovery.
	\newblock {\em SIAM Journal on Imaging Sciences}, 13(4):2169--2188, 2020.
	
	\bibitem{du2024asymptotically}
	Qiang Du, James~M Scott, and Xiaochuan Tian.
	\newblock Asymptotically compatible schemes for nonlinear variational models
	via gamma-convergence and applications to nonlocal problems.
	\newblock {\em Mathematics of Computation}, page arXiv preprint
	arXiv:2402.07749, 2025.
	
	\bibitem{Edmunds1985Embeddings}
	David~E Edmunds, Alois Kufner, and Ji{\v r}{\'i} R{\'a}kosn{\'i}k.
	\newblock Embeddings of {S}obolev spaces with weights of power type.
	\newblock {\em Zeitschrift f{\"u}r Analysis und ihre Anwendungen}, 4(1):25--34,
	1985.
	
	\bibitem{elmoataz2015p}
	Abderrahim Elmoataz, Matthieu Toutain, and Daniel Tenbrinck.
	\newblock On the p-{L}aplacian and $\infty$-{L}aplacian on graphs with
	applications in image and data processing.
	\newblock {\em SIAM Journal on Imaging Sciences}, 8(4):2412--2451, 2015.
	
	\bibitem{Fabes1982local}
	Eugene~B. Fabes, Carlos~E. Kenig, and Raul~P. Serapioni.
	\newblock The local regularity of solutions of degenerate elliptic equations.
	\newblock {\em Communications in Statistics-Theory and Methods}, 7(1):77--116,
	1982.
	
	\bibitem{Flores2022Analysis}
	Mauricio Flores, Jeff Calder, and Gilad Lerman.
	\newblock Analysis and algorithms for {$\ell_p$}-based semi-supervised learning
	on graphs.
	\newblock {\em Applied and Computational Harmonic Analysis}, 60:77--122, 2022.
	
	\bibitem{GarciaTrillos2020Error}
	Nicol{\'a}s Garc{\'\i}a~Trillos, Moritz Gerlach, Matthias Hein, and Dejan
	Slep{\v{c}}ev.
	\newblock Error estimates for spectral convergence of the graph {L}aplacian on
	random geometric graphs toward the {L}aplace--{B}eltrami operator.
	\newblock {\em Foundations of Computational Mathematics}, 20(4):827--887, 2020.
	
	\bibitem{GarciaTrillos2016Continuum}
	Nicol{\'a}s Garc{\'i}a~Trillos and Dejan Slep{\v{c}}ev.
	\newblock Continuum limit of total variation on point clouds.
	\newblock {\em Archive for rational mechanics and analysis}, 220:193--241,
	2016.
	
	\bibitem{Gilboa2008Nonlocal}
	Guy Gilboa and Stanley Osher.
	\newblock Nonlocal operators with applications to image processing.
	\newblock {\em Multiscale Model. Simul.}, 7:1005--1028, 2008.
	
	\bibitem{Gurka1988Continuous}
	Petr Gurka and Bohum{\'i}r Opic.
	\newblock Continuous and compact imbeddings of weighted {S}obolev spaces. {I}.
	\newblock {\em Czechoslovak Mathematical Journal}, 38(4):730--744, 1988.
	
	\bibitem{Hardy1952Inequalities}
	Godfrey~Harold Hardy, John~Edensor Littlewood, and George Poly{\'a}.
	\newblock {\em Inequalities}.
	\newblock Cambridge University Press, 1952.
	
	\bibitem{he2005laplacian}
	Xiaofei He, Deng Cai, and Partha Niyogi.
	\newblock {L}aplacian score for feature selection.
	\newblock {\em Advances in neural information processing systems}, 18, 2005.
	
	\bibitem{Kozlov2001Spectral}
	Vladimir Kozlov, Vladimir~G Maz'ya, and J{\"u}rgen Rossmann.
	\newblock {\em Spectral problems associated with corner singularities of
		solutions to elliptic equations}.
	\newblock Number~85. American Mathematical Society, 2001.
	
	\bibitem{Kufner1980Weighted}
	Alois Kufner.
	\newblock {\em Weighted {S}obolev spaces}.
	\newblock Teubner Texte Zur Mathematik. Springer, 1980.
	
	\bibitem{maskey2024fractional}
	Sohir Maskey, Raffaele Paolino, Aras Bacho, and Gitta Kutyniok.
	\newblock A fractional graph laplacian approach to oversmoothing.
	\newblock {\em Advances in Neural Information Processing Systems}, 36, 2024.
	
	\bibitem{nadler2009semi}
	Boaz Nadler, Nathan Srebro, and Xueyuan Zhou.
	\newblock Semi-supervised learning with the graph {L}aplacian: The limit of
	infinite unlabelled data.
	\newblock {\em Advances in neural information processing systems},
	22:1330--1338, 2009.
	
	\bibitem{Nekvinda1993Characterization}
	Ale{\v{s}} Nekvinda.
	\newblock Characterization of traces of the weighted {S}obolev space $
	{W}^{1,p}({\Omega}, d_{M}^{\epsilon})$ on ${M}$.
	\newblock {\em Czechoslovak Mathematical Journal}, 43(4):695--711, 1993.
	
	\bibitem{ponce2004new}
	Augusto~C Ponce.
	\newblock A new approach to {S}obolev spaces and connections to
	{$\Gamma$}-convergence.
	\newblock {\em Calc. Var. Partial Differential Equations}, 19(3):229--255,
	2004.
	
	\bibitem{Rakosnik1989embeddings}
	Ji{\v{r}}{\'i} R{\'a}kosn{\'i}k.
	\newblock On embeddings and traces in {S}obolev spaces with weights of power
	type.
	\newblock {\em Banach Center Publications}, 1(22):331--339, 1989.
	
	\bibitem{roith2023continuum}
	Tim Roith and Leon Bungert.
	\newblock Continuum limit of {L}ipschitz learning on graphs.
	\newblock {\em Foundations of Computational Mathematics}, 23(2):393--431, 2023.
	
	\bibitem{Scott2023Nonlocal}
	James~M Scott and Qiang Du.
	\newblock Nonlocal problems with local boundary conditions {I}: function spaces
	and variational principles.
	\newblock {\em SIAM Journal on Mathematical Analysis}, 56(3):4185--4222, 2024.
	
	\bibitem{Scott2023Nonlocala}
	James~M Scott and Qiang Du.
	\newblock Nonlocal problems with local boundary conditions {II}: {G}reen's
	identities and regularity of solutions.
	\newblock {\em SIAM Journal on Mathematical Analysis}, 57, 2025.
	
	\bibitem{shi2024continuum}
	Kehan Shi and Martin Burger.
	\newblock Continuum limit of $ p $-biharmonic equations on graphs.
	\newblock {\em arXiv preprint arXiv:2404.19689}, 2024.
	
	\bibitem{Shi2017Convergence}
	Zuoqiang Shi and Jian Sun.
	\newblock Convergence of the point integral method for {Laplace--Beltrami}
	equation on point cloud.
	\newblock {\em Research in the Mathematical Sciences}, 4(1):1--39, 2017.
	
	\bibitem{Slepcev2019Analysis}
	Dejan Slepcev and Matthew Thorpe.
	\newblock Analysis of p-{L}aplacian regularization in semisupervised learning.
	\newblock {\em SIAM Journal on Mathematical Analysis}, 51(3):2085--2120, 2019.
	
	\bibitem{Stein1970Singular}
	Elias~M. Stein.
	\newblock {\em Singular integrals and differentiability properties of
		functions}.
	\newblock Princeton Mathematical Series, No. 30. Princeton University Press,
	Princeton, N.J., 1970.
	
	\bibitem{streicher2023graph}
	Or~Streicher and Guy Gilboa.
	\newblock Graph {L}aplacian for semi-supervised learning.
	\newblock In {\em International Conference on Scale Space and Variational
		Methods in Computer Vision}, pages 250--262. Springer, 2023.
	
	\bibitem{Tian2017Trace}
	Xiaochuan Tian and Qiang Du.
	\newblock Trace theorems for some nonlocal function spaces with heterogeneous
	localization.
	\newblock {\em SIAM Journal on Mathematical Analysis}, 49(2):1621--1644, 2017.
	
	\bibitem{Trillos2015rate}
	Nicol{\'a}s~Garcia Trillos and Dejan Slep{\v{c}}ev.
	\newblock On the rate of convergence of empirical measures in
	{$\infty$}-transportation distance.
	\newblock {\em Canadian Journal of Mathematics}, 67(6):1358--1383, 2015.
	
	\bibitem{van2020survey}
	Jesper~E Van~Engelen and Holger~H Hoos.
	\newblock A survey on semi-supervised learning.
	\newblock {\em Machine learning}, 109(2):373--440, 2020.
	
	\bibitem{weihs2024consistency}
	Adrien Weihs and Matthew Thorpe.
	\newblock Consistency of fractional graph-{L}aplacian regularization in
	semisupervised learning with finite labels.
	\newblock {\em SIAM Journal on Mathematical Analysis}, 56(4):4253--4295, 2024.
	
	\bibitem{zhu2003semi}
	Xiaojin Zhu, Zoubin Ghahramani, and John~D Lafferty.
	\newblock Semi-supervised learning using gaussian fields and harmonic
	functions.
	\newblock In {\em Proceedings of the 20th International conference on Machine
		learning (ICML-03)}, pages 912--919, 2003.
	
\end{thebibliography}
\bibliographystyle{plain}

\end{document}